\newcommand{\map}[3]{#1\colon #2\to #3}
\newtheorem{theorem}{Theorem}[section]
\newtheorem{lemma}[theorem]{Lemma}
\newtheorem{prop}[theorem]{Proposition}
\newtheorem{cor}[theorem]{Corollary}
\theoremstyle{definition}
\newtheorem{defi}[theorem]{Definition}
\theoremstyle{remark}
\newtheorem{rem}[theorem]{Remark}
\newtheorem{example}[theorem]{Example}
\newcommand{\sym}{\mathrm{Sym}}
\newcommand{\mcg}{\mathrm{Mod}}
\newcommand{\pmcg}{\mathrm{PMod}}
\newcommand{\ppmcg}{\mathrm{PMod}^+}
\newcommand{\bndy}{\mathrm{Bndy}}
\newcommand{\curv}{\mathrm{Curv}}
\newcommand{\forget}[1]{\mathrm{for}_{#1}}
\newcommand{\cut}[1]{\mathrm{cut}_{#1}}
\newcommand{\arcs}{\mathrm{Arcs}}
\newcommand{\odd}{\mathrm{Odd}}
\newcommand{\sq}{\mathrm{sq}}
\newcommand{\sqa}{\mathrm{sq}^\ast}
\newcommand{\nhalf}{\left\lfloor\frac{n}{2}\right\rfloor}
\newcommand{\ctr}{crosscap transposition representation}
\newcommand{\A}{\mathcal{A}}
\newcommand{\B}{\mathcal{B}}
\newcommand{\X}{\mathcal{X}}
\newcommand{\Y}{\mathcal{Y}}
\newcommand{\T}{\mathcal{T}}
\newcommand{\Z}{\mathbb{Z}}
\newcommand{\C}{\mathcal{C}}
\newlength{\customwidth}
\numberwithin{equation}{section}
\title[Geometric representations of the braid group]{Geometric representations of the braid group on a nonorientable surface}
\author{Micha{\l} Stukow \and B{\l}a\.zej Szepietowski}
\address{Institute of Mathematics, Faculty of Mathematics, Physics and Informatics, University of Gda\'nsk, 80-308 Gda\'nsk, Poland} 
\keywords{Mapping class group, nonorientable surface, braid group, crosscap transposition}
\email{michal.stukow@ug.edu.pl}
\email{blazej.szepietowski@ug.edu.pl}
\begin{document}
\begin{abstract}
We classify homomorphisms from the braid group on $n$ strands to the pure mapping class group of a nonoriantable surface of genus $g$.
For $n\ge 14$ and $g\le 2\lfloor{n/2}\rfloor+1$ every such homomorphism is either cyclic, or it maps standard generators of the braid group  to either distinct Dehn twists, or distinct crosscap transpositions, possibly multiplied by the same element  of the centralizer of the image.   
\end{abstract}

\maketitle
\section{Introduction}
Let $S$ be a compact surface, orientable or nonorientable, possibly disconnected and with boundary. We define the mapping class group $\mcg(S)$ of $S$ to be the group of isotopy classes of all homeomorphisms of $S$. If $S$ is orientable then $\mcg(S)$ is usually called the extended mapping class group. The pure mapping class group   $\pmcg(S)$ of $S$ is the subgroup of $\mcg(S)$ consisting of elements preserving every component of $S$ and every component of $\partial S$. For orientable $S$ we denote by $\mcg^+(S)$ (respectively $\ppmcg(S)$) the subgroup of $\mcg(S)$ (resp. $\pmcg(S)$) consisting of orientation preserving mapping classes. 
The relative mapping class group $\mcg(S,\partial S)$ of a surface with nonempty boundary is the group of isotopy classes of  homeomorphisms of $S$ fixing every point of $\partial{S}$.

A connected orientable (resp. nonorientable) surface of genus $g$ with $b$ boundary components will be denoted by $S_{g,b}$ (resp.  $N_{g,b}$). If $b=0$ we drop it from the notation. Note that $N_{g,b}$ is obtained from $S_{0,g+b}$ by gluing $g$ M\"obius bands (also called crosscaps) along $g$ distinct boundary components of $S_{0,g+b}$ (Figure~\ref{r01}).

A \emph{geometric representation} of a group $G$ is any homomorphism from $G$ to $\mcg(S)$ for some surface $S$. A classification of representations of a given group $G$ in $\mcg(S)$ provides an information  which could be useful in the study of homomorphisms between mapping class groups. A great example is the work of Castel \cite{Castel} on representations of the braid group $\B_n$ in $\pmcg(S_{g,b})$ and its generalization due to Chen and  Mukherjea \cite{CM}.
Castel \cite{Castel} proved that for $n\ge 6$, $g\le n/2$ and $b\ge 0$ every homomorphism 
$\rho\colon\B_n\to\ppmcg(S_{g,b})$ is either cyclic or a transvection of the standard twist representation (defined below). This classification was extended in \cite{CM} to the case $g<n-2$ but in a reduced range, $n\ge 23$.

In this paper we generalize the results of \cite{Castel,CM} to the case of a nonorientable surface. That is we classify representations of $\B_n$ in $\pmcg(N_{g,b})$ for $n\ge 14$ and $g\le 2\left\lfloor n/2\right\rfloor+1$.
Our classification includes \emph{the \ctr}, defined in \cite{SzepPM}, which does not exist in the case of an orientable surface. 

 We denote the standard generating set of $\B_n$ by $\{\sigma_i\colon i=1,\dots,n-1\}$.
 The defining relations of $\B_n$ are
 \[\sigma_i\sigma_j=\sigma_j\sigma_i\quad\textrm{if\ } |i-j|\ne 1,\qquad \sigma_i\sigma_j\sigma_i=\sigma_j\sigma_i\sigma_j\quad\textrm{if\ } |i-j|=1.\]
\subsection{The standard twist representation}
By a \emph{curve} in $S$ we understand a simple closed curve. 
A curve is either two- or one-sided depending on whether its regular neighbourhood is an annulus or a M\"obius band respectively. All curves in an orientable surface are two-sided.
We denote by $I(a,b)$ the geometric intersection number of two curves $a,b$. A sequence $C=(a_1,\dots,a_{n-1})$ of two-sided curves in $S$ is called \emph{a chain of nonseparating curves} if 
$I(a_i,a_{i+1})=1$ for $1\le i\le n-2$ and $I(a_i,a_j)=0$ for $|i-j|>0$. If we fix an orientation of a regular neighbourhood of the union of the curves $a_i$, then $C$ determines \emph{the standard twist representation} $\rho_C\colon\B_n\to\pmcg(S)$ defined by \[\rho_C(\sigma_i)=t_{a_i},\quad i=1,\dots,n-1,\] where $t_{a_i}$ is the right-handed Dehn twist about $a_i$ with respect to the fixed orientation (Figure~\ref{r01}).
\begin{figure}[h]
\begin{center}
\includegraphics[width=0.8\customwidth]{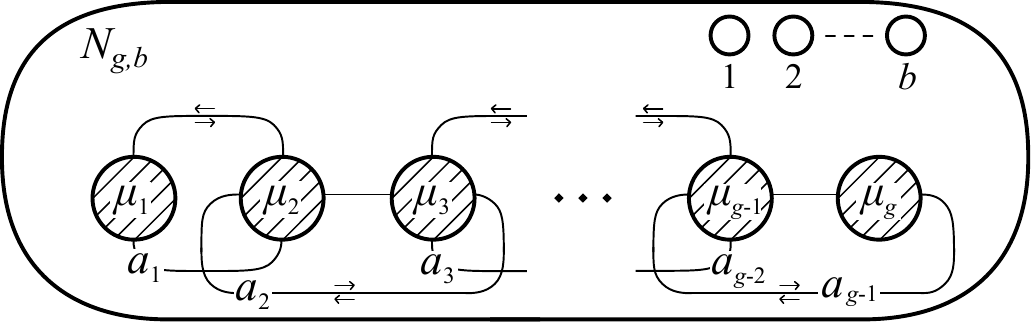}
\caption{Standard chain of nonseparating curves in $N_{g,b}$.}\label{r01} %
\end{center}
\end{figure}
\begin{rem}\label{rem:g_chain}
The chain of nonseparating curves from Figure \ref{r01} determines a standard twist representation $\rho_C\colon \B_g\to\pmcg(N_{g,b})$ for any $g\ge 3$ and $b\ge 0$. If $g$ is odd and $g\ge 5$, then this chain can be extended by adding a curve $a_g$ passing once through each of first $g-1$ crosscaps. This extended chain determines a standard twist representation $\rho_{C'}\colon \B_{g+1}\to\pmcg(N_{g,b})$.    
\end{rem}

\subsection{The \ctr}\label{ctr_def} 
Now let $N=N_{g,b}$ be a nonorientable surface.  
 A sequence $C=(a_1,\dots,a_{n-1})$ of separating curves in $N$ is called \emph{a chain of separating curves} if
\begin{enumerate}
\item $a_i$ bounds a one-holed Klein bottle  for $i=1,\dots,n-1$,
\item $I(a_i,a_{i+1})=2$ for $i=1,\dots,n-2$,
\item $I(a_i,a_j)=0$ for $|i-j|>1$.
\end{enumerate} 
Let $K_i$ be the one-holed Klein bottle bounded by $a_i$.
Note that $K_i\cap K_{i+1}$ is a M\"obius strip for $i=1,\dots,n-2$, and we denote its core curve by $\mu_{i+1}$. We also let $\mu_1$ and $\mu_{n}$ be the core curves of  $K_{1}\setminus K_{2}$ and $K_{n-1}\setminus K_{n-2}$ respectively (Figure \ref{r02}).
\begin{figure}[h]
\begin{center}
\includegraphics[width=0.8\customwidth]{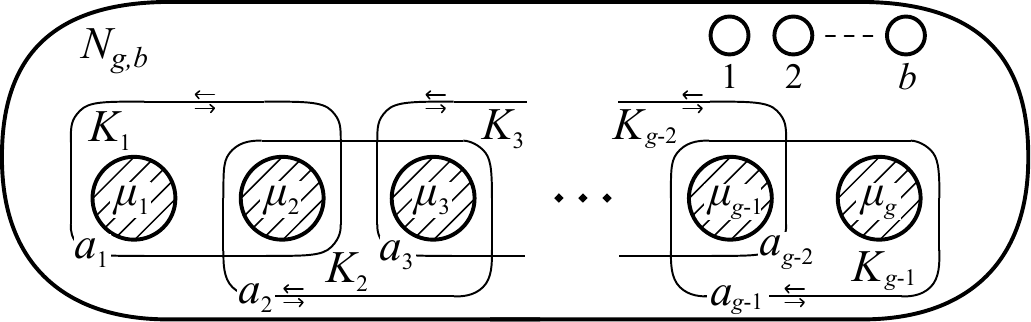}
\caption{Standard chain of separating curves in $N_{g,b}$.}\label{r02} %
\end{center}
\end{figure}
Fix an orientation of a regular neighbourhood of the union of $a_i$ for $i=1,\dots,n-1$ and let $t_{a_i}$ be 
the right-handed Dehn twist about $a_i$. Let $u_i$ be the \emph{crosscap transposition} supported in $K_i$, swapping $\mu_i$ and $\mu_{i+1}$, and such that $u_i^2=t_{a_i}$ (Figure \ref{r03}). 
It is proved in \cite{SzepPM} that the mapping
\[\theta_C(\sigma_i)=u_i,\quad i=1,\dots,n-1,\]
defines a homomorphism $\theta_C\colon\B_n\to\pmcg(N)$ called  
\emph{\ctr}. If $(g,b)\ne (n,0)$ then $\theta_C$ is injective.
\begin{figure}[h]
\begin{center}
\includegraphics[width=0.88\customwidth]{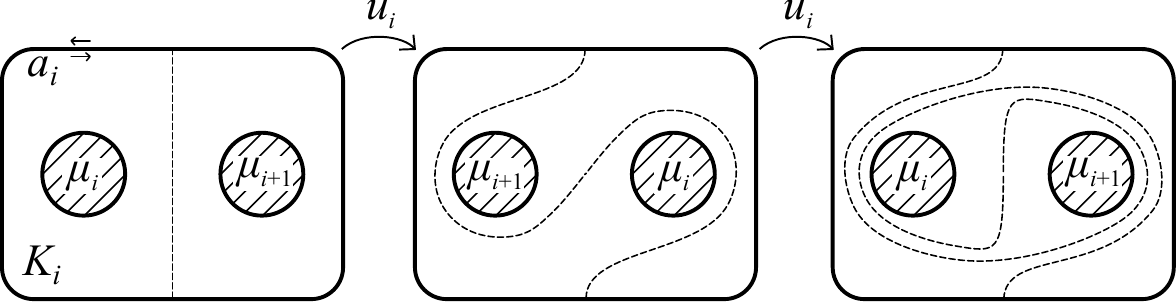}
\caption{Crosscap transposition $u_i$.}\label{r03} %
\end{center}
\end{figure}

Given a homomorphism $\rho\colon\B_n\to\pmcg(S)$ and an element $\tau\in\pmcg(S)$ such that $\tau$ commutes with $\rho(\sigma_i)$ for $1\le i\le n-1$, we define a homomorphism  $\rho^\tau\colon\B_n\to\mcg(S)$, called \emph{a transvection} of $\rho$, by 
 \[\rho^\tau(\sigma_i)=\tau\rho(\sigma_i),\quad i=1,\dots,n-1.\]
A homomorphism  $\rho\colon\B_n\to\pmcg(S)$ is called \emph{cyclic} if $\rho(\B_n)$ is a cyclic group. Note that a cyclic homomorphism is a transvection of the trivial one.

\subsection{The main theorems}

\begin{theorem}\label{thm:main_pmcg}
Let $n\ge 14$ and let $N=N_{g,b}$ with  $g\le 2\left\lfloor n/2\right\rfloor+1$ and $b\ge 0$. Then any homomorphism $\rho\colon\B_n\to\pmcg(N)$ is either cyclic, or is a transvection of a standard twist representation, or is a transvection of a \ctr.
\end{theorem}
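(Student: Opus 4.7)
The plan is to adapt the strategy of Castel \cite{Castel} and its refinement by Chen--Mukherjea \cite{CM} to the nonorientable setting, with a new layer of case analysis to detect the \ctr.

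The first phase is to analyze the single element $f_1 := \rho(\sigma_1)$ through its centralizer. Since $\sigma_1$ commutes with $\sigma_3,\dots,\sigma_{n-1}$, the element $f_1$ centralizes a copy of $\B_{n-2}$ inside $\pmcg(N)$. For $n\ge 14$ this is a large commuting family, and together with the Euler-characteristic bound furnished by $g\le 2\lfloor n/2\rfloor+1$, it should force the Nielsen--Thurston decomposition of $f_1$ on $N=N_{g,b}$ to be very restricted: no pseudo-Anosov components, and a canonical reduction system supported in a subsurface $\Sigma_1$ of small complexity. Using the braid relation $f_1f_2f_1=f_2f_1f_2$ with $f_2:=\rho(\sigma_2)$, one then pins down the local form of $f_1$ on $\Sigma_1$: up to a common ``transvection'' factor $\tau$ (commuting with the entire image), either $\Sigma_1$ is a regular neighbourhood of a two-sided nonseparating curve $a_1$ and $f_1=\tau t_{a_1}^{\pm 1}$, or $\Sigma_1$ is a one-holed Klein bottle bounded by a two-sided curve $a_1$ and $f_1=\tau u_1^{\pm 1}$ is a crosscap transposition. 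Alternative building blocks --- Dehn twists about one-sided curves, crosscap slides (Y-homeomorphisms), or Dehn twists about separating curves of other topological type --- have to be excluded by combining the braid relation with the commutations $f_1f_j=f_jf_1$ for $j\ge 3$.

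In the second phase, this local description is promoted to a global chain. Writing $f_i=\tau\phi_i$ for every $i$, the commutation relations force the supports of $\phi_i$ and $\phi_j$ to be disjoint for $|i-j|\ge 2$, while the braid relations force $I(a_i,a_{i+1})=1$ in the Dehn-twist case and $I(a_i,a_{i+1})=2$ (with each $a_i$ bounding a Klein bottle) in the crosscap transposition case. A change-of-coordinates argument inside $N_{g,b}$, combined with uniqueness up to the mapping class group for chains of nonseparating curves and for chains of Klein-bottle-bounding separating curves, then identifies $(\phi_1,\dots,\phi_{n-1})$ with one of the two standard chains up to conjugation. At that point $\rho$ becomes a transvection of $\rho_C$ or of $\theta_C$, yielding the trichotomy in the theorem.

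The hardest step, in my view, is the algebraic detection of a crosscap transposition inside $\pmcg(N)$. Because $u_i^2=t_{a_i}$, a crosscap transposition is a ``square root'' of a Dehn twist and shares many group-theoretic features with it, yet it permutes the two crosscaps of the bounded Klein bottle and so has no counterpart in the orientable story. Distinguishing $u_i$ from $t_{a_i}$ (and from products of $t_{a_i}$ with crosscap slides) purely from the abstract braid-and-commutation data, and then verifying that the resulting lift genuinely satisfies the global relations, is the delicate point that makes the nonorientable classification strictly richer than Castel's and, I expect, represents the main technical obstacle of the proof.
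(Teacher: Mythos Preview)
Your plan has the right broad shape (Nielsen--Thurston analysis, then local-to-global promotion), but it is missing the specific machinery the paper actually uses, and some of your proposed steps would not go through as stated.

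The paper does not analyse a single $f_1=\rho(\sigma_1)$ through its centralizer. Following \cite{CM}, it works with the entire totally symmetric set $\X_n=\{\sigma_i:i\ \text{odd}\}$ at once and studies the $\X_n$-labelled two-sided multicurve $\C^+(\rho(\X_n))$. Total symmetry plus the Euler-characteristic bound forces every component to be special, normal or exotic (Proposition~\ref{ch3:curves:AIC}); exotic curves are eliminated, and the all-normal case is shown to be cyclic via an analysis of the $\rho(\B_n)$-action on components (Propositions~\ref{prop:ch8:8.1} and~\ref{all_C_case}). This labelled-multicurve framework is what produces the combinatorial rigidity; a single centralizer argument does not obviously give it. A second, genuinely nonorientable, issue you do not address is that $\C(f)$ may consist entirely of one-sided curves. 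The paper handles this by working throughout with $\C^+(f)$ and introducing \emph{almost pseudo-Anosov} classes ($\C(f)\ne\emptyset$ but $\C^+(f)=\emptyset$), proving their centralizer is virtually cyclic (Proposition~\ref{apA:virtuallyCyclic}). Without this, a $\rho(\sigma_1)$ with only one-sided reduction curves would slip through your Phase~1.

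You also misidentify the hardest step. Once the transvection $\tau$ has been built and each $\tau^{-1}\rho(\sigma_i)$ is known to be supported in a one-holed Klein bottle $K_i$, distinguishing the crosscap transposition from other elements of $\mcg(K_i,\partial K_i)\cong\langle u_i,t_{b_i}\mid t_{b_i}u_it_{b_i}=u_i\rangle$ is a short homology-and-intersection computation (Proposition~\ref{prop:braid_on_N31}). The real work is earlier: showing that the separating special curves $(a_1,\dots,a_{n-1})$ actually form a chain with $I(a_i,a_{i+1})=2$ and $K_i\cap K_{i+1}$ a M\"obius band (Section~\ref{sec:separating}), which requires a delicate arc-and-domain analysis in the complementary holed sphere or projective plane, and then constructing the transvection element $\tau$ itself (Section~\ref{sec:transvection}). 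Your ``change-of-coordinates plus uniqueness of chains'' shortcut is not used and would not, on its own, deliver these intersection numbers.
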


\begin{theorem}\label{thm:main_relmcg}
Theorem \ref{thm:main_pmcg} still holds when $\pmcg(N)$ is replaced by $\mcg(N,\partial N)$.
\end{theorem}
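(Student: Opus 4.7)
The plan is to reduce to Theorem \ref{thm:main_pmcg} via the forgetful surjection $\phi\colon\mcg(N,\partial N)\to\pmcg(N)$, whose kernel $K\cong\Z^b$ is the central subgroup generated by the Dehn twists along the boundary components of $N$. Given $\rho\colon\B_n\to\mcg(N,\partial N)$, set $\bar\rho:=\phi\circ\rho$; by Theorem \ref{thm:main_pmcg}, $\bar\rho$ is cyclic, a transvection of a standard twist representation $\bar\rho_C$, or a transvection of a \ctr\ $\bar\theta_C$. In each case I will construct a reference homomorphism $\rho_0\colon\B_n\to\mcg(N,\partial N)$ lifting $\bar\rho$ and already of the desired form, and then compare $\rho$ with $\rho_0$.

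In the cyclic case all $\bar\rho(\sigma_i)$ coincide, since the $\sigma_i$ are conjugate in $\B_n$ and the image is abelian; any lift $c\in\mcg(N,\partial N)$ of this common value then gives the constant homomorphism $\rho_0(\sigma_i)=c$. In the transvection case $\bar\rho=\bar\rho_C^{\bar\tau}$, the twists $t_{a_i}$ are already defined in $\mcg(N,\partial N)$ and satisfy the braid relations there, and $\rho_0:=\rho_C^{\tilde\tau}$ works provided one can lift $\bar\tau$ to an element $\tilde\tau\in\mcg(N,\partial N)$ that still commutes with every $t_{a_i}$. This is the key geometric step: commutation with $\bar t_{a_i}$ in $\pmcg(N)$ forces $\bar\tau$ to preserve each $a_i$ with its twist orientation, so $\bar\tau$ admits a representative homeomorphism that restricts to the identity on a regular neighbourhood $U$ of $\bigcup a_i$; one then isotopes this representative, rel $U$, to fix $\partial N$ pointwise, producing the required $\tilde\tau$. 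The \ctr\ case $\bar\rho=\bar\theta_C^{\bar\tau}$ is parallel, but additionally requires the representative of $\bar\tau$ to preserve the crosscap pairs $\{\mu_i,\mu_{i+1}\}\subset K_i$ defining each $u_i$.

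Once $\rho_0$ is in hand, the comparison is formal. The pointwise quotient $\delta:=\rho\cdot\rho_0^{-1}$ takes values in $K$ because $\phi\rho=\phi\rho_0$, and centrality of $K$ forces the identity $\delta(gh)=\delta(g)\delta(h)$ by a one-line calculation, so $\delta$ is a homomorphism $\B_n\to K$. Since $K$ is abelian, $\delta$ factors through $\B_n^{\mathrm{ab}}\cong\Z$, and $\delta(\sigma_i)=k$ for a single $k\in K$. Substituting yields $\rho(\sigma_i)=ck$ in the cyclic case (still cyclic) and $\rho(\sigma_i)=(\tilde\tau k)\cdot t_{a_i}$ in the transvection case; the adjusted parameter $\tilde\tau k$ still centralises the generators (as $k$ is central), so $\rho$ is the transvection $\rho_C^{\tilde\tau k}$. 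The main technical obstacle is the key lifting lemma, in particular its \ctr\ instance, where commutation with $u_i$ is strictly stronger than commutation with $u_i^2=t_{a_i}$ and the representative of $\bar\tau$ must be arranged to preserve the internal structure of every Klein bottle $K_i$ simultaneously along the chain.
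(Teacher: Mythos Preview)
Your approach is essentially the paper's: reduce to Theorem~\ref{thm:main_pmcg} via the forgetful map with central kernel, lift the transvection element, and use centrality (the paper packages this comparison as Castel's Lemma~\ref{lem:central_ext}). Two small points. First, the image of $\forget{\partial N}$ is $\ppmcg(N)$, not all of $\pmcg(N)$; this is harmless since $\ppmcg(N)\le\pmcg(N)$ and Theorem~\ref{thm:main_pmcg} still applies. Second, you overwork the lifting in the crosscap case. You do not need a representative of $\bar\tau$ tailored to preserve the internal structure of each $K_i$; \emph{any} lift $\tau'$ of $\bar\tau$ already commutes with each $u_i$ in $\mcg(N,\partial N)$. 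Indeed, $\tau'$ preserves $K_i$ (since $\bar\tau$ does and boundary twists act trivially on interior curves), so $\tau' u_i(\tau')^{-1}$ is supported in $K_i$; its image in $\ppmcg(N)$ equals $u_i$, and the inclusion-induced map $\mcg(K_i,\partial K_i)\to\ppmcg(N)$ is injective (no component of $N\setminus K_i$ is a disk), so $\tau' u_i(\tau')^{-1}=u_i$. Equivalently, $[\tau',u_i]$ lies in the kernel $K=\langle t_d:d\in\bndy(N)\rangle$ and is supported in $K_i$, which is disjoint from $\partial N$, forcing it to be trivial. With this observation your ``main technical obstacle'' evaporates and the proof goes through exactly as you outline.
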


\begin{rem}\label{rem:existence}
 Let $n\ge 5$ and $N=N_{g,b}$. 
 \begin{enumerate}
     \item A {\ctr} of $\B_n$ in $\pmcg(N)$ (resp. $\mcg(N,\partial N)$) exists if and only if $g\geq n$.  
     \item It is not difficult to see that for $g\ge 4$ the maximum length of a chain of nonesepating curves in $N$ is 
     $g-1$ if $g$ is even, and $g$ if $g$ is odd. It follows that a standard twist representation of $\B_n$ in $\pmcg(N)$ (resp. $\mcg(N,\partial N)$) exists if and only if
     \[g\ge2\left\lfloor\frac{n-1}{2}\right\rfloor+1
     =\begin{cases}
     n-1 & \textrm{if\ $n$\ is\ even,}\\
     n & \textrm{if\ $n$\ is\ odd.}
     \end{cases}
          \]
 \end{enumerate}
\end{rem}

 Theorems \ref{thm:main_pmcg} and \ref{thm:main_relmcg} can be applied to classify homomorphisms between mapping class groups of nonorientable surfaces. In this paper we give a simple example of such application.

Let $\T(N)$ (resp. $\T(N,\partial{N})$)  denote the index $2$ \emph{twist subgroup} of $\ppmcg(N)$ (resp. $\mcg(N,\partial N)$) generated by all Dehn twists \cite{StukowOJM}. 
\begin{theorem}\label{thm:app}
    For $g\ge 13$, $b,b'\ge 0$ and $g'\le2\left\lfloor\frac{g-1}{2}\right\rfloor$, every homomorphism from $\T(N_{g,b})$ or $\T(N_{g,b},\partial N_{g,b})$  to $\pmcg(N_{g',b'})$ is trivial.
\end{theorem}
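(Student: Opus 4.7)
The plan is to exploit a braid representation inside $\T(N_{g,b})$ coming from a maximal chain of nonseparating curves, and then apply Theorem~\ref{thm:main_pmcg}. Given a homomorphism $\varphi\colon\T(N_{g,b})\to\pmcg(N_{g',b'})$, I first use Remark~\ref{rem:g_chain} and Remark~\ref{rem:existence}(2) to fix a chain of nonseparating curves $C=(a_1,\ldots,a_{n-1})$ of maximal length in $N_{g,b}$, where $n=g$ if $g$ is even and $n=g+1$ (using the extended chain) if $g$ is odd. In either case $n$ is even and, since $g\ge 13$, $n\ge 14$. The hypothesis $g'\le 2\lfloor(g-1)/2\rfloor$ translates into $g'\le n-2$, which is well below $2\lfloor n/2\rfloor+1=n+1$, so the composite $\psi:=\varphi\circ\rho_C\colon\B_n\to\pmcg(N_{g',b'})$ falls in the scope of Theorem~\ref{thm:main_pmcg}.

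Next I rule out two of the three cases: by Remark~\ref{rem:existence}(1) a \ctr{} of $\B_n$ in $\pmcg(N_{g',b'})$ requires $g'\ge n$, and by Remark~\ref{rem:existence}(2) with $n$ even a standard twist representation requires $g'\ge n-1$; both contradict $g'\le n-2$. Thus $\psi$ is cyclic, and since any cyclic homomorphism from $\B_n$ factors through $\B_n^{\mathrm{ab}}\cong\Z$, all standard generators share the same image. Writing $z$ for this common value gives $\varphi(t_{a_i})=z$ for every curve $a_i$ of $C$.

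The remaining work is to propagate this equality to a generating set of $\T(N_{g,b})$ and then force $z=1$. For propagation, applying the previous two steps to any maximal chain $C'$ sharing a curve with $C$ yields the same constant value $z$ on all twists along $C'$; a finite sequence of such shared-curve chain modifications then covers every Dehn-twist generator of $\T(N_{g,b})$ (for instance a Humphries-type generating set in \cite{StukowOJM}). Hence $\varphi(\T(N_{g,b}))\subseteq\langle z\rangle$ is cyclic and $\varphi$ factors through $H_1(\T(N_{g,b}))$; invoking the first author's computation in \cite{StukowOJM} that this abelianization vanishes in the relevant range forces $z=1$ and $\varphi\equiv 1$. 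The same argument applies verbatim to $\T(N_{g,b},\partial N_{g,b})$, since $\rho_C$ lifts to this group as well.

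I expect the main obstacle to be the propagation step: one must verify that the graph of maximal chains of nonseparating two-sided curves, with edges given by single-curve replacements, is connected and reaches a complete twist-generating set. Should the abelianization in the finishing step turn out to carry nontrivial torsion, a fallback is to apply a lantern relation supported in an orientable four-holed-sphere subsurface of $N_{g,b}$ whose seven boundary and interior curves all sit in chains already known to have $\varphi$-value $z$; the lantern gives $z^{4}=z^{3}$, again forcing $z=1$ directly.
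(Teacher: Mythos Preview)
Your overall architecture matches the paper's proof: produce a braid representation from a chain of nonseparating curves, apply Theorem~\ref{thm:main_pmcg}, rule out transvections of standard twist and crosscap transposition representations via Remark~\ref{rem:existence}, and then use perfectness of the twist subgroup. Your case split (taking $n=g$ for $g$ even and $n=g+1$ for $g$ odd) is a neat uniform way to ensure $n\ge 14$; the paper instead treats $g=13$ separately by extending the chain only there.

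The real divergence, and the gap in your argument, is the propagation step. You assert that successive maximal chains sharing a curve reach every Dehn-twist generator of $\T(N_{g,b})$, but you neither prove connectivity of this chain graph nor show that the relevant Humphries-type generators all lie on maximal chains of the required length. The paper avoids this issue entirely by invoking a specific generating set (Lemma~\ref{lem:twist_gens}): beyond the twists along $C$ and the boundary twists, there is a finite set $X$ such that each $c\in X$ satisfies $I(c,a_i)=0$ and $I(c,a_j)=1$ for some $a_i,a_j\in C$. Then $\varphi(t_c)$ commutes with $A=\varphi(t_{a_i})$ and also satisfies the braid relation with $A=\varphi(t_{a_j})$, which forces $\varphi(t_c)=A$. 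This single braid-plus-commutation trick replaces your entire chain-connectivity argument.

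Note also that for $\T(N,\partial N)$ your claim that the image is cyclic is too strong: boundary twists $t_d$ are not nonseparating and cannot lie on any chain, so you cannot conclude $\varphi(t_d)\in\langle z\rangle$ by your method. The paper simply observes that the image is generated by $A$ together with the $\varphi(t_d)$, hence abelian (as the $t_d$ are central), and then perfectness of $\T(N,\partial N)$ for $g\ge 7$ finishes the proof. Your fallback lantern argument is unnecessary once the propagation is handled this way.
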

For closed surfaces a similar result is proved in \cite{SzepAGT}.

\subsection{Comparison with the case of an orientable surface}
Similarly as in \cite{Castel, CM}, our major tool is the Nielsen-Thurston theory and the canonical reduction system, denoted by $\C(f)$ for a mapping class $f$, introduced by Birman, Lubotzky and McCarthy \cite{BLM}. We also use the totally symmetric sets of Kordek 
 and Margalit \cite{KM}. Our proof of Theorem \ref{thm:main_pmcg} follows \cite{CM} rather closely, but it also contains some significant innovations resulting from differences between the mapping class groups of orientable and nonorientable surfaces.

The mapping class group of a nonorientable surface of genus at least $2$ contains crosscap transpositions, which do not exist in the case of an orientable surface. They 
are elementary elements of $\mcg(N_{g,n})$ playing a similar role to Dehn twists and satisfying similar algebraic relations (see \cite{LSz,PSz}). However, geometric implications of relations between crosscap transpositions are not as well understood as in the case of Dehn twists. In particular, the braid relation \[t^k_at^l_bt^l_a=t^l_bt^k_at^l_b\] between non-zero powers of distinct Dehn twists $t_a$ and $t_b$ implies $I(a,b)=1$ and $|k|=|l|=1$ (see \cite{StukowFM}). This property of twists is used in our proof of Theorem~\ref{thm:main_pmcg} (see Proposition \ref{prop:braid_on_N31_nonsep}), as well as in \cite{Castel, CM}, but there is no analogous result for crosscap transpositions in the literature. Therefore, in order to prove that a given homomorphism $\rho\colon\B_n\to\pmcg(N_{g,n})$ 
is a transvection of a crosscap transposition representation, we constrain the intersection of the canonical reduction systems $\C(\rho(\sigma_i))$ corresponding to consecutive generators of $\B_n$ by a geometric argument, which requires the assumption $g\le 2\left\lfloor n/2\right\rfloor+1$.

 One feature in which the {\ctr} $\theta_C$ defined in Section \ref{ctr_def} differs from a transvection of a standard twist representation is that
$\C(\theta_C(\sigma_i))$ consists in a single \emph{separating} curve for $i=1,\dots,n-1$. To be more specific, if $C=(a_1,\dots,a_{n-1})$ is a chain of separating curves determining $\theta_C$ then $\C(\theta_C(\sigma_i))=\{a_i\}$. On the other hand, if $\rho$ is a transvection of a standard twist representation, then $\C(\rho(\sigma_i))$ contains no separating \emph{special} components (a component of $\C(\rho(\sigma_i))$ is special if it is not contained in any other $\C(\rho(\sigma_j))$, $j\ne i$). In the case of a nonorientable surface we have to consider both separating and non-separating special components of $\C(\rho(\sigma_i))$, which makes the process of reconstructing the chain of curves from the algebra of the braid group considerably more complicated -- see Section \ref{sec:separating}.


On a nonorientable surface, the canonical reduction system $\C(f)$ of a mapping class $f$ is in general more complicated than in the case of an orientable surface, as it may contain one-sided and two-sided curves. 
Similarly as in \cite{Castel, CM}, an important step in the proof of Theorem~\ref{thm:main_pmcg} consists in showing that if $\rho(\B_n)$ preserves some two-sided multicurve $\A$, then the action of $\rho(\B_n)$ on the components of $\A$ is cyclic (Proposition \ref{prop:ch8:8.1}). However, this fact is not true for multicurves with one-sided components. For example, the {\ctr} $\theta_C$ defined above preserves the multicurve $\A=\{\mu_1,\dots,\mu_n\}$ consisting of one-sided curves, and the induced homomorphism $\B_n\to\sym(\A)$ is surjective. Therefore, to avoid issues with one-sided curves, instead of $\C(f)$ we use the subset of its two-sided components, denoted by $\C^+(f)$.

If $\C^+(f)\ne\C(f)$, then $\C^+(f)$ is not a complete reduction system for $f$, which leads us to introducing the notion of an 
\emph{almost pseudo-Anosov} mapping class. By definition, $f$ is almost pseudo-Anosov if $\C(f)\neq\emptyset$ but $\C^+(f)=\emptyset$. We show that an almost pseudo-Anosov mapping class $f$ has an important property of pseudo-Anosov mapping classes, namely the centraliser of $f$ is virtually cyclic (Proposition \ref{apA:virtuallyCyclic}). 



\subsection{Outline of the paper.}
In Section \ref{sec:preli} we review a theorem of Birman and Chillingworth and canonical reduction systems.  We also introduce the notion of an almost pseudo-Anosov mapping class and establish its fundamental property.
In Section \ref{sec:total_sym} we review the notions of a totally symmetric set and a totally symmetric multicurve. 
Section \ref{sec:start} contains the outline of the proof of Theorem \ref{thm:main_pmcg} which occupies Sections \ref{sec:irred} through \ref{sec:finish}.
In Section \ref{sec:app} we deduce Theorem \ref{thm:main_relmcg} from Theorem \ref{thm:main_pmcg} and show how it implies Theorem \ref{thm:app}.
\section{Preliminaries}\label{sec:preli}
\subsection{The subgroup $\ppmcg(N)$.}
For a nonorientable surface with boundary $N=N_{g,b}$, we denote by $\ppmcg(N)$ the subgroup of $\pmcg(N)$ consisting of elements preserving orientation of each component of $\partial N$. We have the following exact sequences.
\begin{equation*}
1\to\ppmcg(N)\to\pmcg(N)\to\Z_2^b\to 1,    
\end{equation*}
\begin{equation*}
1\to\langle t_d\mid d\in\bndy(N)\rangle\to\mcg(N,\partial N)\stackrel{\forget{\partial N}}{\longrightarrow}\ppmcg(N)\to 1,
\end{equation*}
where $\forget{\partial N}$ is \emph{the forgetful homomorphism} and  $\bndy(N)$ denotes the set of components of $\partial{N}$.

\subsection{Birman-Chillingworth theorem and its generalizations.}
Let $p\colon S\to N$ be the orientation double covering, where $N=N_{g,b}$ and $S=S_{g-1,2b}$. Let $\tau\colon S\to S$ be the covering involution (orientation reversing involution without fixed points).
For any homomorphism $f\colon N\to N$  let $\widetilde{f}\colon S\to S$  be the unique orientation preserving lift of $f$.  The following theorem was proved in \cite{BirChil} for $b=0$ and $g\ge 3$, and in \cite{GGM} for $b\ge 1$ and $g\ge 1$.
\begin{theorem}\label{th:BirChil}
For $(g,b)\ne(2,0)$ the mapping $\theta(f)=\widetilde{f}$ defines an injective homomorphism $\theta\colon\mcg(N_{g,b})\to\mcg^+(S_{g-1,2b})$.
\end{theorem}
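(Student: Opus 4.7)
The strategy has two parts: first verify that $\theta$ is a well-defined group homomorphism, then prove injectivity.

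\emph{Well-definedness.} Because the deck group of $p\colon S\to N$ is generated by the orientation-reversing involution $\tau$, any $f\in\mathrm{Homeo}(N)$ has exactly two lifts to $S$, related by $\tau$, and exactly one of them is orientation-preserving; this defines $\widetilde{f}$ unambiguously. The homomorphism property $\widetilde{f\circ g}=\widetilde{f}\circ\widetilde{g}$ then follows from uniqueness, since $\widetilde{f}\circ\widetilde{g}$ is an orientation-preserving lift of $f\circ g$. Descent to isotopy classes is homotopy lifting in a covering space combined with the observation that orientation-preservation is a clopen condition, hence remains true along the whole lifted path of homeomorphisms.

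\emph{Injectivity.} Suppose $\widetilde{f}$ is isotopic to $\mathrm{id}_S$; I wish to conclude $f\simeq\mathrm{id}_N$. My plan is to invoke the Dehn--Nielsen--Baer theorem, which for a compact surface (orientable or nonorientable) of negative Euler characteristic embeds $\mcg$ into $\mathrm{Out}(\pi_1)$; together with the remaining low-complexity cases handled by hand, this covers the stated range. Set $G=\pi_1(N)$ and $H=\pi_1(S)\subset G$, an index-$2$ subgroup. Then $f_*|_H=\widetilde{f}_*$, and since $\widetilde{f}\simeq\mathrm{id}_S$, this restriction is inner in $H$. After composing $f_*$ with an appropriate inner automorphism of $G$, one may arrange $f_*|_H=\mathrm{id}_H$. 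The task then reduces to analysing $f_*(g_0)$ for some $g_0\in G\setminus H$ and showing $f_*=\mathrm{id}_G$.

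\emph{Main obstacle.} The delicate algebraic step is the upgrade from ``inner on the index-$2$ subgroup $H$'' to ``inner on $G$''. Writing $f_*(g_0)=a g_0$, the condition $f_*|_H=\mathrm{id}_H$ forces $a\in Z_G(H)$, and compatibility with the relation $f_*(g_0^2)=g_0^2$ furnishes further restrictions. In the range $(g,b)\neq(2,0)$ these constraints force $a=1$, because for nonorientable surface groups outside the Klein bottle case the centralizer $Z_G(H)$ is trivial. The excluded case genuinely fails: the Klein bottle $N_2$ admits a $Y$-homeomorphism whose lift to the torus is isotopic to the identity, yielding a nontrivial $\Z_2$ kernel of $\theta$. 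Consequently, justifying the centralizer-triviality claim for surface groups is the principal classical input required; handling the low-genus or small-boundary cases where Dehn--Nielsen--Baer needs supplementary arguments (e.g.\ a direct curve-by-curve Alexander method applied to $\tau$-invariant multicurves downstairs) is the secondary technicality.
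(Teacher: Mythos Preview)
The paper does not prove this theorem; it simply quotes it from Birman--Chillingworth (for $b=0$, $g\ge 3$) and Gon\c{c}alves--Guaschi--Maldonado (for $b\ge 1$). There is thus no in-paper argument to compare against.

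Your outline is nonetheless a valid route to the result. The well-definedness part is standard. For injectivity, your reduction via Dehn--Nielsen--Baer to the centralizer claim $Z_G(H)=1$ is correct and can be completed as follows: once $\chi(N)<0$ one has $Z(H)=1$ (so $Z_G(H)\cap H=1$), while any $a\in Z_G(H)\setminus H$ would make the outer action of the deck involution $\tau$ on $\pi_1(S)$ trivial, which is impossible since $\tau$ is orientation-reversing and hence nontrivial in $\mcg(S)$. This handles all $(g,b)$ with $g+b\ge 3$; the remaining cases $(1,0)$ and $(1,1)$ have $|\mcg(N_{g,b})|\le 2$ and are checked by hand. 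Your identification of the Klein bottle $(2,0)$ as the genuine exception, via the $Y$-homeomorphism, is also correct.
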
 
\begin{lemma}\label{lem:lift_ppmod}
Let  $\theta\colon\mcg(N)\to\mcg^+(S)$ be the homomorphism from Theorem \ref{th:BirChil}. Then $\theta^{-1}(\ppmcg(S))=\ppmcg(N)$.
\end{lemma}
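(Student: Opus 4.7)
The plan is to unwind the definition of the orientation double cover near the boundary, and then match up orientation data on $\partial N$ with the partition of $\partial S$ into the two sheets of $p$.

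First I would note that every component $d$ of $\partial N$ is two-sided, so an annular neighbourhood $U$ of $d$ in $N$ is orientable; consequently $p^{-1}(U)$ is the trivial double cover $U^+\sqcup U^-$, and $p^{-1}(d)=d^+\sqcup d^-$ where $p|_{d^\pm}\colon d^\pm\to d$ is a homeomorphism and $\tau$ swaps $d^+$ and $d^-$. This accounts for $\partial S$ having $2b$ components. Since $S$ inherits a global orientation, each $U^\pm$ is oriented, and via $p|_{U^\pm}$ this induces one of the two possible orientations on $U$; label the sheets so that $U^+$ and $U^-$ correspond to opposite orientations of $U$.

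Next I would establish the key geometric observation: for $f\in\mcg(N)$ with $f(d)=d$, the lift $\widetilde{f}$ preserves each of $d^+,d^-$ (rather than swapping them) if and only if $f|_U$ preserves orientation of $U$, equivalently $f|_d$ preserves orientation of $d$. The point is that a lift of $f|_U$ to $U^+$ lands in $U^+$ precisely when it matches the chosen orientation, and since $\widetilde{f}$ is by definition orientation-preserving on $S$, the lift restricted to $U^+$ chooses whichever of $U^\pm$ carries the orientation that $f|_U$ sends the chosen one to. The equivalence between orientation of the annulus $U$ and of its core $d$ is standard (assuming $f$ preserves each component of $\partial U$, which holds since $f$ preserves $d$).

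With this observation in hand, both inclusions are immediate. If $f\in\ppmcg(N)$, then for every boundary component $d$ of $N$ the class $f$ preserves $d$ and its orientation, so $\widetilde{f}$ preserves each of $d^+,d^-$; as $d$ ranges over $\bndy(N)$ this shows $\widetilde{f}\in\pmcg(S)$, and combined with the orientation-preservation built into $\theta$ we get $\widetilde{f}\in\ppmcg(S)$. Conversely, if $\widetilde{f}\in\ppmcg(S)$ then $\widetilde{f}$ preserves each component of $\partial S$, so $f(d)=p(\widetilde{f}(d^+))=p(d^+)=d$ shows $f\in\pmcg(N)$, and the key observation then forces $f|_d$ to preserve orientation for every $d$, giving $f\in\ppmcg(N)$.

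The only genuinely non-formal step is the correspondence between orientation of $f|_d$ and the sheet-permutation of $\widetilde{f}$ on $p^{-1}(d)$, which I expect to be the main point worth justifying carefully; the rest is bookkeeping of the definitions of $\pmcg$, $\ppmcg$, and $\theta$.
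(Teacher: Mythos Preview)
Your proposal is correct and follows essentially the same approach as the paper: both arguments reduce to the observation that, over a boundary component $d$ of $N$, the orientation-preserving lift $\widetilde{f}$ swaps the two sheets $d^+,d^-$ precisely when $f$ reverses the orientation of $d$. The paper packages this more tersely---working with oriented boundary curves $c$ of $S$ and noting that $\widetilde{f}(c)=\tau(c)$ is impossible since $\tau\circ\widetilde{f}$ reverses the orientation of $S$ and hence of $c$---whereas you unpack the local trivialisation of the cover over an annular neighbourhood; but the content is the same.
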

\begin{proof}
Let $c$ be an oriented boundary curve of $S$ and $f\in\mcg(N)$. We claim that $\widetilde{f}$ fixes $c$ if and only if $f$ fixes $p(c)$ (which is an oriented boundary curve of $N$).
We have $f(p(c))=p(\widetilde{f}(c))$. Clearly, if $\widetilde{f}(c)=c$ then $f(p(c))=p(c)$. Conversely, suppose that $p(c)=f(p(c))=p(\widetilde{f}(c))$. Then $\widetilde{f}(c)=c$ or
 $\widetilde{f}(c)=\tau(c)$. However, the latter is impossible because $\tau\circ\widetilde{f}$ is reversing orientation of $S$, and hence also of $c$. 
 \end{proof}
\begin{cor}\label{cor:pureBirChil}
For $(g,b)\ne(2,0)$ the mapping $\theta(f)=\widetilde{f}$ defines an injective homomorphism $\theta\colon\ppmcg(N_{g,b})\to\ppmcg(S_{g-1,2b})$.
\end{cor}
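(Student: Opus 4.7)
The plan is to obtain Corollary \ref{cor:pureBirChil} as an immediate synthesis of Theorem \ref{th:BirChil} and Lemma \ref{lem:lift_ppmod}; there is essentially no new content, only a book-keeping observation about domains and codomains.

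First I would note that $\ppmcg(N_{g,b})$ sits inside $\pmcg(N_{g,b})$ which sits inside $\mcg(N_{g,b})$, so the map $\theta$ of Theorem \ref{th:BirChil} restricts to $\ppmcg(N_{g,b})$. The remaining content is therefore to identify the codomain and verify injectivity of the restriction.

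For the codomain, I would invoke Lemma \ref{lem:lift_ppmod} directly: the equality $\theta^{-1}(\ppmcg(S))=\ppmcg(N)$ gives in particular the inclusion $\theta(\ppmcg(N_{g,b}))\subseteq \ppmcg(S_{g-1,2b})$, because every $f\in\ppmcg(N)$ belongs to the preimage $\theta^{-1}(\ppmcg(S))$ and hence its lift $\widetilde{f}$ lies in $\ppmcg(S)$. (One should double-check that the orientation-preservation component of being in $\ppmcg(S)$ is automatic: this is precisely the content of Theorem \ref{th:BirChil}, which produces $\widetilde{f}\in\mcg^+(S)$.) Finally, the restriction of an injective map to a subgroup is injective, so injectivity is inherited from Theorem \ref{th:BirChil}.

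There is no genuine obstacle here; the only thing one must be cautious about is the notational overloading of $\ppmcg$, which in the orientable case $S_{g-1,2b}$ encodes both orientation preservation and preservation of each boundary component, whereas in the nonorientable case $N_{g,b}$ it encodes only preservation of orientation of each boundary component (on top of purity). The Lemma is designed exactly to bridge these two conventions, so once it is in hand the corollary is a one-line consequence.
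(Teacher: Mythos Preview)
Your proposal is correct and matches the paper's approach: the corollary is stated without proof in the paper, as it follows immediately from Theorem~\ref{th:BirChil} and Lemma~\ref{lem:lift_ppmod} exactly as you describe. Your remark about the two meanings of $\ppmcg$ and why the lemma bridges them is accurate and is precisely the point.
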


Let us record two consequences of Theorem \ref{th:BirChil} which will be used later.
\begin{lemma}\label{lem:smallg}
Let $n\ge 6$ and $b\ge 0$. 
\begin{enumerate}
    \item For $g<\frac{n}{2}-1$ every representation of $\B_n$ in $\ppmcg(S_{g,b})$ is cyclic.
    \item For $g<\frac{n}{2}$ every representation of $\B_n$ in $\ppmcg(N_{g,b})$ is cyclic. 
\end{enumerate}
\end{lemma}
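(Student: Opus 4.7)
For part (1), my plan is to invoke Castel's classification \cite{Castel} directly: since $n\ge 6$ and $g<n/2-1<n/2$, every $\rho\colon\B_n\to\ppmcg(S_{g,b})$ is either cyclic or a transvection of a standard twist representation $\rho_C$ associated with some chain $C=(a_1,\dots,a_{n-1})$ of two-sided nonseparating curves. To exclude the second alternative, I would observe that the regular neighbourhood $N(C)$ of such a chain is a standard orientable subsurface of genus $\lfloor(n-1)/2\rfloor$ with one or two boundary components (depending on the parity of $n$), so an embedding of $C$ into $S_{g,b}$ forces $g\ge\lfloor(n-1)/2\rfloor$. A parity check shows that the hypothesis $g<n/2-1$ forces $g\le\lfloor(n-1)/2\rfloor-1$, contradicting this lower bound. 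Hence $\rho$ must be cyclic.

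For part (2), the strategy is to pass to the orientation double cover and reduce to part (1). When $(g,b)\ne(2,0)$, Corollary \ref{cor:pureBirChil} supplies an injective homomorphism $\theta\colon\ppmcg(N_{g,b})\to\ppmcg(S_{g-1,2b})$. The composition $\theta\circ\rho\colon\B_n\to\ppmcg(S_{g-1,2b})$ lives on a surface of genus $g-1$, and the hypothesis $g<n/2$ is exactly the inequality $g-1<n/2-1$ needed to apply part (1). Therefore $\theta\circ\rho$ is cyclic, and injectivity of $\theta$ upgrades this to cyclicity of $\rho$. The remaining small case $(g,b)=(2,0)$ is handled separately by the classical fact that $\mcg(N_2)$ is a finite abelian group isomorphic to $\Z_2\oplus\Z_2$: any homomorphism $\B_n\to\mcg(N_2)$ factors through the abelianisation $\B_n^{\mathrm{ab}}\cong\Z$ and therefore has cyclic image.

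The only delicate point in this plan is the parity check in part (1): one must verify in both parities that $g<n/2-1$ strictly excludes the value $g=\lfloor(n-1)/2\rfloor$. For even $n$ one compares the implication $g\le(n-4)/2$ with $\lfloor(n-1)/2\rfloor=(n-2)/2$, and for odd $n$ one compares $g\le(n-3)/2$ with $\lfloor(n-1)/2\rfloor=(n-1)/2$; both parities yield the desired strict separation. After this, the argument is pure bookkeeping with statements already recorded in the excerpt.
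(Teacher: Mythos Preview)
Your proof is correct and follows the same route as the paper: part~(1) via Castel's classification (with the genus count ruling out the twist alternative made explicit), and part~(2) via the double-cover embedding of Corollary~\ref{cor:pureBirChil}. Your separate treatment of $(g,b)=(2,0)$, which falls outside that corollary, is a detail the paper's two-line proof omits.
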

\begin{proof}
    The first statement follows from \cite[Theorem 1]{Castel}. The second statement follows from the first one and Corollary \ref{cor:pureBirChil}.
\end{proof}

\begin{lemma}\label{lem:izoS0N1}
\begin{enumerate}
    \item Let $f\in\mcg^+(S_{0,b})$ be a periodic mapping class. If $f$ fixes at least three boundary components of $S_{0,b}$, then $f$ is the identity.
    \item Let $f\in\mcg(N_{1,b})$ be a periodic mapping class. If $f$ fixes at least two boundary components of $N_{1,b}$ and preserves their orientation, then $f$ is the identity.
\end{enumerate}
\end{lemma}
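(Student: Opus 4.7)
The strategy is to prove part~(1) directly by capping off the boundary to pass to $S^2$, and then to reduce part~(2) to part~(1) via the orientation double cover.

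For part~(1), first choose a finite-order representative $\phi$ of $f$ (available by Nielsen realization on surfaces with boundary). Cap off every component of $\partial S_{0,b}$ by a disk in a $\phi$-equivariant way to obtain $S^2$, and extend $\phi$ across each cap by the cone construction to produce a periodic $\widehat\phi\colon S^2\to S^2$. The three boundary components fixed by $\phi$ become three fixed cone points of $\widehat\phi$. Reading ``fixes'' in parallel with part~(2)---so that $\phi$ preserves each of the three boundary circles together with its orientation---the cone extension is locally orientation-preserving at each of these three cone points, so $\widehat\phi$ is globally orientation-preserving on $S^2$. By Ker\'ekj\'art\'o's theorem, any finite-order self-homeomorphism of $S^2$ is topologically conjugate to an isometry; in the orientation-preserving case it is a rotation, which when nontrivial has exactly two fixed points. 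The existence of three fixed cone points therefore forces $\widehat\phi=\mathrm{id}$, whence $\phi=\mathrm{id}$ on $S_{0,b}$ and $f=1$.

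For part~(2), the assumption $b\ge 2$ gives $(g,b)\ne (2,0)$, so Theorem~\ref{th:BirChil} applies with $g=1$ and provides an injective homomorphism $\theta\colon\mcg(N_{1,b})\to\mcg^+(S_{0,2b})$; let $\widetilde f=\theta(f)$, which is again periodic. Each of the two boundary components of $N_{1,b}$ fixed with orientation by $f$ has two preimages in the double cover, and by the argument already given in the proof of Lemma~\ref{lem:lift_ppmod} each of these four preimages is fixed by $\widetilde f$ together with its orientation. Applying part~(1) to $\widetilde f$ yields $\widetilde f=\mathrm{id}$, and the injectivity of $\theta$ then gives $f=1$.

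The principal technical input is Ker\'ekj\'art\'o's classification of finite-order homeomorphisms of $S^2$; the main subtlety is to carry the coning construction through with enough care to see that $\widehat\phi$ really is orientation-preserving, after which the two-fixed-point bound on rotations closes part~(1) and part~(2) follows formally.
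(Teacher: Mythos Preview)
Your proof is correct and, for part~(2), identical to the paper's: both lift to the orientation double cover via Theorem~\ref{th:BirChil}, invoke the proof of Lemma~\ref{lem:lift_ppmod} to get four fixed boundary circles upstairs, and apply part~(1).

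For part~(1) the paper simply cites Castel; you supply the standard argument behind that citation (equivariant capping, Ker\'ekj\'art\'o, fixed-point count for rotations of $S^2$), so there is no real divergence. Your explicit reading of ``fixes'' as ``fixes with orientation'' is not just convenient but necessary: as literally stated with $f\in\mcg(S_{0,b})$ allowed to reverse orientation, part~(1) is false---a reflection of a pair of pants fixes all three boundary circles setwise yet is nontrivial. Your interpretation matches part~(2), matches how the lemma is actually applied later (see the proof of Lemma~\ref{lem:13:equal:S}, where orientation of the relevant boundary curves is preserved by Lemma~\ref{lem:13:centralize}), and is almost certainly what Castel's cited corollary assumes.
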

\begin{proof}
Part (1) is well known, see for example \cite[Corollary A.3.5]{Castel}. For part (2) consider the embedding  $\theta\colon\mcg(N_{1,b})\to\mcg^+(S_{0,2b})$  from Theorem \ref{th:BirChil}. If $f$ fixes two boundary components of $N_{1,b}$ and preserves their orientation, then $\theta(f)$ fixes four boundary components of $S_{0,2b}$ by the proof of Lemma \ref{lem:lift_ppmod}. By part (1), $\theta(f)$ is then the identity, and so is $f$.
\end{proof}

\subsection{Multicurves} \label{sub:sec:multicurves}
A simple closed curve in $N$ is \emph{trivial} if it bounds a disc or a M\"obius band.
We denote by $\curv(N)$ the set of isotopy classes of unoriented nontrivial simple closed curves in $N$ which are not isotopic to a boundary component.
The subset of $\curv(N)$ consisting of two-sided curves is denoted by $\curv^+(N)$.
A subset $\A=\{a_1,\dots,a_k\}$ of $\curv(N)$ is \emph{a multicurve} if $I(a_i,a_j)=0$ for $a_i\ne a_j$. If $\A\subset \curv^+(N)$, we say that $\A$ is \emph{two-sided}.
We denote by $N_\A$ the subsurface obtained by removing from $N$ an open regular neighbourhood of the union of the components of $\A$. 
Elements of $\bndy(N_\A)\cap\bndy(N)$ are called \emph{external boundary components} of $N_A$. If $\A$ is two-sided then every non-external component of $\partial N_\A$ is isotopic in $N$ to a component of $\A$. 

 Let $\mcg_\A(N)$ be the stabiliser of $\A$ in $\mcg(N)$ (the elements of $\mcg_\A(N)$ may permute the components of $\A$). There is a natural homomorphism
 \[\cut{\A}\colon\mcg_\A(N)\to\mcg(N_\A)\] called \emph{reduction homomorphism along $\A$.} By \cite[Proposition 23]{ParisWBLN} we have
 \begin{equation}\label{eq:ker_cut}
  \ker\cut{\A}=\langle t_a\mid a\in\A\cap\curv^+(N)\rangle.   
 \end{equation}

\subsection{Canonical reduction systems} 
We refer the reader to \cite{ParisWBLN} for a review, in the context of nonorientable surfaces, of the Nielsen-Thurston classification of mapping classes into three types: periodic, reducible and pseudo-Anosov.
Every $f\in\mcg(N)$ has an associated multicurve $\C(f)\subset\curv(N)$ called \emph{the canonical reduction system}, which is minimal among all multicurves $\A$ satisfying the property that $f(\A)=\A$ and for $m\ne 0$ such that $f^m$ fixes every component of $N_\A$, the restriction of $\cut{\A}(f^m)$ to the mapping class group of each component of $N_\A$ is either periodic or pseudo-Anosov \cite{BLM,WuY}.

Let $p\colon S\to N$ be the orientation double covering. It is proved in \cite{WuY} that for $f\in\mcg(N)$ we have
$\C(f)=p(\C(\widetilde{f}))$, where $\widetilde{f}\in\mcg(S)$ is the lift of $f$. This fact can be used to deduce the following basic facts about $\C(f)$ from the corresponding properties of $\C(\tilde{f})$ (see \cite[Proposition 2.2.7]{Castel}).
\begin{lemma}\label{lem:crs}
Let $f,g\in\mcg(N)$.
    \begin{enumerate}
        \item $\C(fgf^{-1})=f(\C(g))$.
        \item If $fg=gf$, then for any $a\in\C(f)$ and $b\in\C(g)$, $a\ne b$, we have $I(a,b)=0$.
        \item $\C(f)=\emptyset$ if and only if $f$ is either periodic or pseudo-Anosov.
    \end{enumerate}
\end{lemma}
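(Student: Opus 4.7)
The plan is to reduce every statement to the orientable case by lifting through the orientation double cover $p\colon S\to N$ and invoking the identity $\C(f)=p(\C(\widetilde{f}))$ from \cite{WuY}, together with the analogous facts for orientable surfaces stated in \cite[Proposition 2.2.7]{Castel}. The first thing I would record is that the assignment $f\mapsto\widetilde{f}$ is a homomorphism: since the orientation preserving lift is unique and the identity of $S$ is the orientation preserving lift of the identity of $N$, both $\widetilde{fg}$ and $\widetilde{f}\widetilde{g}$ are orientation preserving lifts of $fg$, hence equal. A second basic observation is that $\tau\widetilde{f}\tau^{-1}$ is also an orientation preserving lift of $f$, so $\tau\widetilde{f}=\widetilde{f}\tau$; therefore $\tau$ preserves $\C(\widetilde{f})$.

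For (1), the computation $p\circ\widetilde{f}=f\circ p$ gives $p(\widetilde{f}(\C(\widetilde{g})))=f(p(\C(\widetilde{g})))=f(\C(g))$, and by the orientable case applied to $\widetilde{f},\widetilde{g}$ the left side equals $p(\C(\widetilde{f}\widetilde{g}\widetilde{f}^{-1}))=p(\C(\widetilde{fgf^{-1}}))=\C(fgf^{-1})$. For (3) I would simply note that $\C(f)=\emptyset$ iff $\C(\widetilde{f})=\emptyset$, which by the orientable Nielsen--Thurston trichotomy is equivalent to $\widetilde{f}$ being periodic or pseudo-Anosov, which (by the convention of \cite{ParisWBLN,WuY} that defines the type of $f$ via its lift, and since $\widetilde{f}^m=\mathrm{id}$ forces $f^m=\mathrm{id}$) is equivalent to $f$ being periodic or pseudo-Anosov.

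The main work is in (2). Assume $fg=gf$, so $\widetilde{f}\widetilde{g}=\widetilde{g}\widetilde{f}$ by the homomorphism property. Given $a\in\C(f)$ and $b\in\C(g)$ with $a\ne b$, choose lifts $\widetilde{a}\in\C(\widetilde{f})$, $\widetilde{b}\in\C(\widetilde{g})$ with $p(\widetilde{a})=a$, $p(\widetilde{b})=b$; by the $\tau$-invariance of canonical reduction systems noted above, the curves $\tau(\widetilde{a})\in\C(\widetilde{f})$ and $\tau(\widetilde{b})\in\C(\widetilde{g})$. The hypothesis $a\ne b$ ensures $\widetilde{a}\notin\{\widetilde{b},\tau(\widetilde{b})\}$ and $\tau(\widetilde{a})\notin\{\widetilde{b},\tau(\widetilde{b})\}$, so the orientable case of (2) forces all four intersection numbers $I(\widetilde{a},\widetilde{b})$, $I(\widetilde{a},\tau\widetilde{b})$, $I(\tau\widetilde{a},\widetilde{b})$, $I(\tau\widetilde{a},\tau\widetilde{b})$ to vanish. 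The main subtlety, and the step I expect to cost the most care, is transferring this back to $I(a,b)$: because $p$ is a $2$-to-$1$ covering and each transverse intersection in $N$ has exactly two preimages in $S$, one obtains $I(p^{-1}(a),p^{-1}(b))=2\,I(a,b)$, and the left side is the sum of intersection numbers between components of $p^{-1}(a)$ and $p^{-1}(b)$, each of which is either $\widetilde{a}$ or $\tau(\widetilde{a})$ (resp.\ $\widetilde{b}$ or $\tau(\widetilde{b})$), regardless of whether $a$ or $b$ is one- or two-sided. Since every such term vanishes, $I(a,b)=0$.
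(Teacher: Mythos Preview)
Your proposal is correct and follows exactly the approach indicated in the paper: reduce to the orientable case via the identity $\C(f)=p(\C(\widetilde{f}))$ from \cite{WuY} and then invoke \cite[Proposition~2.2.7]{Castel}. The paper does not spell out any of the details you supply (the homomorphism property of the lift, $\tau$-invariance of $\C(\widetilde{f})$, and the covering argument relating $I(a,b)$ to intersection numbers of the lifts), so your write-up is a fully fleshed-out version of the paper's one-line justification.
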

We denote by $\C^+(f)$ the set of two-sided components of $\C(f)$
\[\C^+(f)=\C(f)\cap \curv^+(N).\]
Observe that (1) and (2) of Lemma \ref{lem:crs} are still true if $\C$ is replaced by $\C^+$. On the other hand, it is possible that $\C^+(f)=\emptyset$ but $\C(f)\ne\emptyset$.

\subsection{Almost pseudo-Anosov mapping classes.}
\begin{defi}
If $f\in \mcg(N)$ is such that $\C^+(f)=\emptyset$ and $\C(f)\neq \emptyset$, then we say that $f$ is \emph{almost pseudo-Anosov}.
\end{defi}

\begin{example}
   Let $S$ be a surface (orientable of not) with boundary and let $f'\in\mcg(S)$ be pseudo-Anosov. Choose $k$ components of $\partial S$ and glue a M\"obius strip $M_i$, $i=1,\dots,k$ along each of these components  to obtain a nonorientable surface $N$. Denote the core curve of $M_i$ by $\mu_i$. By extending $f'$ over $N$ we obtain an almost pseudo-Anosov $f\in\mcg(N)$ such that
   $\C(f)=\{\mu_1,\dots,\mu_k\}.$
   It follows from the proof of the next proposition that every almost pseudo-Anosov mapping class is obtained by this construction.
\end{example}

\begin{prop}\label{apA:virtuallyCyclic}
Let $f\in \mcg(N)$ be almost pseudo-Anosov and $\A=\C(f)$. Then
\begin{enumerate}
\item $\cut{\A}(f)$ is  pseudo-Anosov on $N_{\A}$;
\item the centralizer $C_{\mcg(N)}(f)$ of $f$ in $\mcg(N)$ is virtually cyclic.
\end{enumerate}
\end{prop}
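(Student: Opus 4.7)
The plan is to exploit two observations in tandem: every component of $\A=\C(f)$ is one-sided, and by definition of the canonical reduction system, some nonzero power of $\cut{\A}(f)$ is periodic or pseudo-Anosov on each component of $N_\A$. First I would observe that $N_\A$ is connected: a one-sided simple closed curve in a connected surface is automatically nonseparating (its regular neighbourhood is a M\"obius band with a single boundary circle), and one-sidedness is preserved under cutting, so cutting successively along the curves of $\A$ never disconnects the surface. Moreover, by \eqref{eq:ker_cut}, $\ker\cut{\A}$ is generated by Dehn twists about two-sided elements of $\A$; since $\A$ has none, $\cut{\A}\colon\mcg_\A(N)\to\mcg(N_\A)$ is injective.

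For part~(1), the CRS property on the connected surface $N_\A$ gives some $m\neq 0$ for which $(\cut{\A}(f))^m$ is periodic or pseudo-Anosov. If it were periodic, with $(\cut{\A}(f))^{mk}=1$ for some $k\ge 1$, then injectivity of $\cut{\A}$ would force $f^{mk}=1$, making $f$ periodic and hence $\C(f)=\emptyset$ by Lemma~\ref{lem:crs}(3), a contradiction. So $(\cut{\A}(f))^m$ is pseudo-Anosov; and since a mapping class on a connected surface is pseudo-Anosov whenever a positive power is (by the Nielsen--Thurston trichotomy, both periodic and reducible types are preserved under powers), $\cut{\A}(f)$ itself is pseudo-Anosov.

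For part~(2), any $g\in C_{\mcg(N)}(f)$ satisfies $g(\A)=g(\C(f))=\C(gfg^{-1})=\C(f)=\A$ by Lemma~\ref{lem:crs}(1), so $g\in\mcg_\A(N)$ and $\cut{\A}(g)$ commutes with $\cut{\A}(f)$ in $\mcg(N_\A)$. Injectivity of $\cut{\A}$ therefore provides an embedding
\[
C_{\mcg(N)}(f)\hookrightarrow C_{\mcg(N_\A)}\!\bigl(\cut{\A}(f)\bigr).
\]
Since $\cut{\A}(f)$ is pseudo-Anosov on the connected surface $N_\A$, its centraliser is virtually cyclic, and hence so is $C_{\mcg(N)}(f)$.

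The only non-routine ingredient is the last one: virtual cyclicity of the centraliser of a pseudo-Anosov mapping class on a (possibly nonorientable) surface. My plan for this step is to lift $\cut{\A}(f)$ to the orientation double cover of $N_\A$ through Theorem~\ref{th:BirChil} (the Klein bottle $N_{2,0}$ carries no pseudo-Anosov, so the theorem's exclusion is harmless) and invoke the classical fact that any mapping class commuting with a pseudo-Anosov must preserve its pair of invariant transverse measured foliations; the stabiliser of such a pair in $\mcg^+$ of the orientable cover is virtually cyclic by standard Nielsen--Thurston theory, and the conclusion for $N_\A$ follows by passing to the $\tau$-equivariant subgroup. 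Handling this covering argument cleanly — in particular, controlling whether a pseudo-Anosov lift exists or only a pseudo-Anosov power of the lift — is where most of the care will go.
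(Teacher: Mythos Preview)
Your argument is correct and follows essentially the same route as the paper: connectedness of $N_\A$ and injectivity of $\cut{\A}$ from \eqref{eq:ker_cut}, then the Nielsen--Thurston trichotomy for $\cut{\A}(f)$ with periodicity ruled out by injectivity, then the embedding of centralisers. The only difference is that for the final ingredient---virtual cyclicity of the centraliser of a pseudo-Anosov on a possibly nonorientable surface---the paper simply cites \cite[Proposition~24]{ParisWBLN} rather than redoing the lifting argument you sketch, so your planned covering step is unnecessary (and note $N_\A$ may be orientable, in which case Theorem~\ref{th:BirChil} does not apply but the classical result does directly).
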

\begin{proof}
Since $\A$ consists of one-sided curves, $N_{\A}$ is connected and \[\map{\cut{\A}}{\mcg_{\A}(N)}{\mcg(N_{\A})}\] is injective by \eqref{eq:ker_cut}. 
Let $f'=\cut{\A}(f)\in\mcg(N_{\A})$. By the definition of almost pseudo-Anosov, $\C(f)\neq \emptyset$ and $\C(f')=\emptyset$. In particular, $f$ is not periodic and by the injectivity of $\cut{\A}$ neither is $f'$. Hence, $f'$ is pseudo-Anosov.

If $g\in \mcg(N)$ and $gf=fg$, then by (1) of Lemma \ref{lem:crs}
\[g(\A)=g(\C(f))=\C(gfg^{-1})=\C(f)=\A.\]
Hence, $g\in \mcg_{\A}(N)$ and
\[C_{\mcg(N)}(f)\subset C_{\mcg_{\A}(N)}(f).\]
We have
\[\cut{\A}(C_{\mcg(N)}(f))\subset C_{\mcg(N_{\A})}(f'),\]
and because $\cut{\A}$ is injective, $C_{\mcg(N)}(f)$ is isomorphic to a subgroup of  $C_{\mcg(N_{\A})}(f')$, which is virtually cyclic by \cite[Poposition 24]{ParisWBLN}. It follows that $C_{\mcg(N)}(f)$ is also virtually cyclic.
\end{proof}

\begin{lemma}\label{apA:nofix}
If $f\in\mcg(N)$ is pseudo-Anosov or almost pseudo-Anosov, then $f(c)\ne c$ for every $c\in\curv^+(N)$.  
\end{lemma}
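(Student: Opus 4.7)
The plan is a case analysis on whether $f$ is pseudo-Anosov or almost pseudo-Anosov, in both cases eventually reducing to the classical fact that a pseudo-Anosov mapping class on an orientable surface fixes no isotopy class of essential simple closed curve.

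For the pseudo-Anosov case, I would work with the orientation double covering $\map{p}{S}{N}$ and the orientation-preserving lift $\widetilde{f}\in\mcg(S)$ from Theorem \ref{th:BirChil}. From $\C(f)=p(\C(\widetilde{f}))$ and $\C(f)=\emptyset$, together with the fact that every component of a multicurve in $S$ projects to a nontrivial curve in $N$, I obtain $\C(\widetilde{f})=\emptyset$; a periodic $\widetilde{f}$ would force $f$ to be periodic, so $\widetilde{f}$ is pseudo-Anosov on the orientable surface $S$. If $f(c)=c$ for some $c\in\curv^+(N)$, then two-sidedness gives $p^{-1}(c)=c_1\sqcup c_2$ with each $c_i$ essential in $S$; since $\widetilde{f}$ either fixes or swaps $\{c_1,c_2\}$, $\widetilde{f}^2$ fixes the isotopy class of $c_1$, contradicting the classical pseudo-Anosov theory on orientable surfaces.

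For the almost pseudo-Anosov case, set $\A=\C(f)$, which is nonempty and consists entirely of one-sided curves. Assuming $f(c)=c$ with $c\in\curv^+(N)$, I would first prove $c$ is disjoint from $\A$. Since $c$ is two-sided and $f(c)=c$, the conjugation formula gives $ft_cf^{-1}=t_c^{\pm 1}$, so $f$ commutes with $t_c^2$; moreover $\C(t_c^2)=\{c\}$. Since $\A$ is one-sided, $c\notin\A$, so Lemma \ref{lem:crs}(2) applied to the commuting pair $(f,t_c^2)$ yields $I(a,c)=0$ for every $a\in\A$. Then $c$ descends to a simple closed curve $\overline{c}$ in the connected surface $N_\A$; a routine check using essentiality and one-sidedness of the components of $\A$ shows $\overline{c}$ remains essential in $N_\A$. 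By Lemma \ref{lem:crs}(1), $f\in\mcg_{\A}(N)$, and $f(c)=c$ gives $\cut{\A}(f)(\overline{c})=\overline{c}$; however, Proposition \ref{apA:virtuallyCyclic}(1) asserts that $\cut{\A}(f)$ is pseudo-Anosov on $N_\A$, and the previous case applied on $N_\A$ (or the classical orientable result, if $N_\A$ happens to be orientable) produces the contradiction.

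The main obstacle is the almost pseudo-Anosov case, where the alleged fixed two-sided $c$ must be disentangled from the one-sided reduction curves comprising $\A$ before the problem can be reduced to the genuine pseudo-Anosov case on the cut surface. Pairing $f$ with $t_c^2$, whose canonical reduction system is the singleton $\{c\}$, and invoking Lemma \ref{lem:crs}(2) is the natural way to obtain the required disjointness. A secondary point is the verification that $\overline{c}$ stays essential in $N_\A$, which is routine but depends on the components of $\A$ being essential one-sided curves.
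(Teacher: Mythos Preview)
Your overall strategy coincides with the paper's: dispose of the pseudo-Anosov case by the standard fact, and in the almost pseudo-Anosov case show that the hypothetical fixed two-sided curve $c$ is disjoint from $\A=\C(f)$, then cut along $\A$ and contradict Proposition~\ref{apA:virtuallyCyclic}(1). The paper obtains the disjointness $I(c,a)=0$ for all $a\in\A$ by citing \cite{Kuno} directly (the nonorientable analogue of the Birman--Lubotzky--McCarthy fact that any curve fixed by $f$ is disjoint from $\C(f)$), whereas you try to manufacture it from Lemma~\ref{lem:crs}(2).

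Your deduction contains a slip, though. From $ft_cf^{-1}=t_c^{\pm 1}$ you conclude that $f$ commutes with $t_c^2$; but if the sign is $-1$ then $ft_c^2f^{-1}=t_c^{-2}\neq t_c^2$, so the commutation fails and Lemma~\ref{lem:crs}(2) is not applicable to the pair $(f,t_c^2)$. The fix is immediate: observe instead that $f^2$ commutes with $t_c$, since $f^2t_cf^{-2}=t_c^{(\pm 1)^2}=t_c$, and apply Lemma~\ref{lem:crs}(2) to the pair $(f^2,t_c)$ using $\C(f^2)=\C(f)=\A$ and $\C(t_c)=\{c\}$. With this correction your argument goes through; the remaining claims (essentiality of $\overline{c}$ in $N_\A$, and the double-cover treatment of the pure pseudo-Anosov case) are fine, if more explicit than the paper's one-line invocations.
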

\begin{proof}
    A pseudo-Anosov mapping class does not fix any nontrivial curve, so we assume that $f$ is almost pseudo-Anosov and $\A=\C(f)$. Suppose that $f(c)=c$ for some $c\in\curv^+(N)$. Then $I(c,a)=0$ for every $a\in\A$ (see \cite{Kuno}). Since $c\notin\A$, we see that $\cut{\A}(c)$ is a nontrivial curve in $N_\A$ fixed by $\cut{\A}(f)$. This is a contradiction, because  $\cut{\A}(f)$ is pseudo-Anosov by (1) of Proposition \ref{apA:virtuallyCyclic}.
\end{proof}
\section{Totally symmetric sets and multicurves}\label{sec:total_sym}
In this section we review the notions of a totally symmetric set introduced by Kordek and Margalit \cite{KM}, and totally symmetric multicurve introduced by Chen and Mukherjea \cite{CM}. We refer the reader to \cite{CM} for more background. 
\subsection{Totally symmetric sets.}
An ordered finite subset $X$ of a group $G$ is \emph{totally symmetric} if
\begin{enumerate}
    \item the elements of $X$ pairwise commute, and
    \item every permutation of $X$ can be obtained by conjugation by some element of $G$.
\end{enumerate}
The main example of a totally symmetric subset of $\B_n$ is the set of odd-index generators, which we denote by $\X_n$.
\[\X_n=\{\sigma_i\colon i\in\odd(n)\},\]
where $\odd (n)$ is the set of odd positive integers smaller that $n$. We will also use the following totally symmetric subset of the commutator subgroup $\B_n'$.
      \[\X'_n=\{\sigma_i\sigma_1^{-1}\colon i\in\odd(n), i\geq 3\}.\]
We have $|\X_n|=\lfloor\frac{n}{2}\rfloor$ and $|\X'_n|=\lfloor\frac{n}{2}\rfloor-1$. That $\X_n$ (respectively $\X'_n$) is a totally symmetric subset of $\B_n$ (respectively $\B_n'$) follows from the next useful lemma.
\begin{lemma}[Fact 2.2 in \cite{CM}]\label{lem:X_n_ts}
    Every permutation of $\X_n$ can be obtained by conjugation by some element of $\B_n'$.
\end{lemma}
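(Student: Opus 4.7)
The symmetric group on $\X_n$ is generated by the transpositions swapping two consecutive elements $\sigma_{2k-1}$ and $\sigma_{2k+1}$, so it suffices to exhibit, for each admissible $k$, an element $w_k\in\B_n'$ whose conjugation action on $\X_n$ swaps $\sigma_{2k-1}$ and $\sigma_{2k+1}$ while fixing every other element of $\X_n$.

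\textbf{Main construction.} For each $k$ with $1\le k$ and $2k+2\le n$, consider the half-twist on the four strands $2k-1,2k,2k+1,2k+2$, namely
\[\Delta_k=(\sigma_{2k-1}\sigma_{2k}\sigma_{2k+1})(\sigma_{2k-1}\sigma_{2k})(\sigma_{2k-1}).\]
It is a standard property of the half-twist in $\B_4$ that $\Delta_k\sigma_i\Delta_k^{-1}=\sigma_{4k-i}$ for $i\in\{2k-1,2k,2k+1\}$, so $\Delta_k$ indeed swaps $\sigma_{2k-1}$ and $\sigma_{2k+1}$. Because $\Delta_k$ is a word in $\sigma_{2k-1},\sigma_{2k},\sigma_{2k+1}$, the far-commutativity relations imply that $\Delta_k$ commutes with $\sigma_j$ for every $j\notin\{2k-2,2k-1,2k,2k+1,2k+2\}$. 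For an odd $j\ne 2k-1,2k+1$, the even indices $2k-2,2k,2k+2$ are automatically excluded, so $\Delta_k$ centralizes every element of $\X_n\setminus\{\sigma_{2k-1},\sigma_{2k+1}\}$.

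\textbf{Adjusting the exponent sum.} The only remaining obstacle is that the exponent sum of $\Delta_k$ in the generators equals $6$, and since $\B_n/\B_n'\cong\Z$ via the exponent-sum map, $\Delta_k\notin\B_n'$. To repair this, observe that the indices of any two distinct elements of $\X_n$ differ by an even integer $\ge 2$, so the elements of $\X_n$ pairwise commute; in particular $\sigma_{2k-1}$ itself centralizes all of $\X_n$. Consequently the element
\[w_k=\Delta_k\cdot\sigma_{2k-1}^{-6}\]
has exponent sum $0$, hence lies in $\B_n'$, and acts on $\X_n$ by conjugation exactly as $\Delta_k$ does, i.e.\ by swapping $\sigma_{2k-1}$ with $\sigma_{2k+1}$ and fixing the remaining generators. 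Composing the $w_k$'s realizes every adjacent transposition, and hence every permutation of $\X_n$, as conjugation by an element of $\B_n'$.

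\textbf{Anticipated obstacle.} The only subtle point is finding a correction term that both centralizes $\X_n$ and has exponent sum equal in absolute value to that of $\Delta_k$; the natural candidates built from the $\sigma_i$'s rarely satisfy both constraints at once. The key observation that unlocks the argument is that every element of $\X_n$ is itself such a correction term (because the gap between consecutive odd indices is exactly $2$), so an appropriate power of $\sigma_{2k-1}$ does the job with no further modification.
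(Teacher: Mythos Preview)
Your argument is correct. The paper does not supply its own proof of this lemma; it simply records it as Fact~2.2 of Chen--Mukherjea and uses it as a black box, so there is nothing to compare against line by line.

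That said, your construction is exactly the standard one, and all the verifications go through: the half-twist $\Delta_k$ on four strands conjugates $\sigma_{2k-1}\leftrightarrow\sigma_{2k+1}$ and, being a word in $\sigma_{2k-1},\sigma_{2k},\sigma_{2k+1}$, commutes with every other odd-index generator; the correction $\sigma_{2k-1}^{-6}$ lies in the centralizer of $\X_n$ because the odd-index generators pairwise commute, and it brings the exponent sum to zero. One small remark: the more conceptual way to phrase the ``adjusting the exponent sum'' step is that if some $g\in\B_n$ realizes a given permutation of $\X_n$, then $g\sigma_1^{-e(g)}\in\B_n'$ (where $e$ is the exponent sum) realizes the same permutation, since $\sigma_1$ centralizes $\X_n$. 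This avoids having to track the specific value~$6$ and makes the argument independent of the particular choice of $\Delta_k$, but it is the same idea as yours.
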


\subsection{Totally symmetric multicurves.} 
Let $X$ be a set. A multicurve $\A$ is \emph{$X$-labelled} if each component $a$ of $\A$ is associated with a non-empty subset of $X$ called the \emph{label} of $a$ and denoted by $l(a)$. If $l(a)=X$ then we say that $a$ has \emph{full label}. We denote by $\sym(X)$ the symmetric group of $X$.
\begin{defi}
An $X$-labelled two-sided multicurve $\A$ on $N=N_{g,b}$ is \emph{totally symmetric} if for every $\tau\in\sym(X)$, there is $f_\tau\in\pmcg_\A(N)$ such that $l(f_\tau(a))=\tau(l(a))$ for every $a\in\A$.
\end{defi}
If $X$ is a totally symmetric subset of $\B_n$ and $\rho\colon\B_n\to\pmcg(N)$ is a homomorphism, then 
 \[\C^+(\rho(X))=\bigcup_{x\in X}\C^+(\rho(x))\]
    is a totally symmetric  $X$-labelled multicurve, where the label of a component $c$ is
    \[l(c)=\{x\in X\colon c\in\C^+(\rho(x))\}.\]
\begin{rem}
    We emphasise that in this paper all totally symmetric multicurves are two-sided by definition. In particular, the multicurve $\C(\rho(X))$ defined to be the union of $\C(\rho(x))$ for $x\in X$ is not totally symmetric, unless it coincides with $\C^+(\rho(X))$.
\end{rem}
\subsection{The squeeze map and $\sq$-faithful multicurves.}
By squeezing each boundary component of $N_{g,b}$ to a point we obtain a closed surface $N_g$ and a surjective continuous map (notation borrowed from \cite{Castel})
\[\sq\colon N_{g,b}\to N_{g}.\]

\begin{defi}
    A two-sided multicurve $\A=\{c_1,\cdots, c_r\}$ in $N_{g,b}$ is \emph{$sq$-faithful} if the curves  $\sq(c_1),\cdots, \sq(c_r)$ are nontrivial and pairwise nonisotopic in $N_g$.
\end{defi}
If $\A$ is an $\sq$-faithful totally symmetric $X$-labelled multicurve in $N_{g,b}$, then $\sq(\A)$ is a totally symmetric $X$-labelled multicurve in $N_{g}$ with labels $l(\sq(c))=l(c)$.

The following lemma can be proved by the same arguments as assertions (2) and (3) of \cite[Lemma 10.10]{CM} and we omit its proof.
\begin{lemma}\label{lem:forgetful_curve}
Let $\A$ be totally symmetric $X$-labelled multicurve in $N_{g,b}$ with $|X|\ge 3$ and let $c$ be a component of $\A$.
If $\sq(c)$ is trivial or $\sq(c)=\sq(c')$ for some $c'\in\A$, $c\ne c'$, then $l(c)=X$.
\end{lemma}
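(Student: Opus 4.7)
The plan is to argue by contradiction: assume $l(c)\subsetneq X$ and use total symmetry to produce a second component of $\A$ sharing the same ``trivial squeeze'' property, then derive a topological contradiction from the fact that the symmetry mapping classes are pure.

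Since $l(c)$ is a proper nonempty subset of $X$, I pick a permutation $\tau\in\sym(X)$ with $\tau(l(c))\ne l(c)$. By total symmetry there exists $f_\tau\in\pmcg_\A(N)$ with $l(f_\tau(a))=\tau(l(a))$ for all $a\in\A$. Setting $c':=f_\tau(c)$, the component $c'$ of $\A$ is distinct from $c$, disjoint from $c$, and has the same topological type as $c$, while $f_\tau$ fixes every component of $\partial N$ setwise.

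In the first case, $\sq(c)$ is trivial, so $c$ bounds in $N_{g,b}$ a subsurface $\Sigma_c$ which is a disc with holes or a M\"obius band with holes; since $c$ is nontrivial in $\curv(N)$, the set $B_c\subseteq\bndy(N)$ of boundary components contained in $\Sigma_c$ is nonempty. The subsurface $\Sigma_{c'}=f_\tau(\Sigma_c)$ has the same topological type and, by pureness of $f_\tau$, contains exactly the same set $B_c$. The two disjoint subsurfaces $\Sigma_c,\Sigma_{c'}$ of $N_{g,b}$ are then either disjoint (forcing $B_c=\emptyset$, a contradiction) or nested; in the nested case, say $\Sigma_{c'}\subsetneq\Sigma_c$, the cobordism $\Sigma_c\setminus\Sigma_{c'}$ contains no boundary components of $N$ (all of $B_c$ already lies in $\Sigma_{c'}$) and has Euler characteristic zero, so it is an annulus, making $c$ and $c'$ isotopic and contradicting that they are distinct components of the multicurve $\A$.

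In the second case, $\sq(c)=\sq(c'')$ for some other component $c''\in\A$, so $c$ and $c''$ cobound in $N_{g,b}$ a planar subsurface $P$ whose inner holes form a nonempty subset $B_P\subseteq\bndy(N)$ (nonempty since otherwise $c$ and $c''$ would be isotopic). Applying $f_\tau$ produces another such region $P'=f_\tau(P)$ cobounded by $c'$ and $f_\tau(c'')$ with the same $B_P$, and the same disjoint-or-nested dichotomy, together with pureness, yields a contradiction in the same way. The main obstacle I expect is verifying the disjoint/nested dichotomy carefully when the bounded region is a M\"obius band with holes rather than a disc with holes, and in the second case when the cobounding region is nonorientable; this is the same topological case analysis that appears in parts (2) and (3) of \cite[Lemma 10.10]{CM}, which I would adapt directly.
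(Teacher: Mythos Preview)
Your approach is the same as the paper's: the paper omits the proof and simply cites assertions (2) and (3) of \cite[Lemma~10.10]{CM}, and your sketch is precisely an outline of that argument, likewise deferring to \cite{CM} for the topological case analysis.

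One point to watch when you carry it out: the ``disjoint or nested'' dichotomy for $\Sigma_c,\Sigma_{c'}$ is not exhaustive---there is also the possibility $\Sigma_c\cup\Sigma_{c'}=N$ (equivalently, $N\setminus\Sigma_c$ and $N\setminus\Sigma_{c'}$ are disjoint)---and, more tellingly, your sketch as written never invokes the hypothesis $|X|\ge 3$. That hypothesis is what lets you choose $\tau$ well: in Case~2, for instance, if $\tau$ happens to swap $l(c)$ and $l(c'')$ then $f_\tau$ simply exchanges the two boundary curves of $P$, so $P'=f_\tau(P)$ has the same boundary as $P$ and no contradiction follows; with $|X|\ge 3$ you can pick $\tau$ moving $l(c)$ outside $\{l(c),l(c'')\}$ and the argument goes through.
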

%

\begin{cor}\label{cor:sq_faithful}
If $\A$ is a totally symmetric $X$-labelled multicurve in $N_{g,b}$ with $|X|\ge 3$ such that every component of $\A$ has non-full label, then $\A$ is $\sq$-faithful.
\end{cor}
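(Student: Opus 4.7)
The plan is to obtain this corollary as an immediate consequence of Lemma \ref{lem:forgetful_curve} via its contrapositive, applied componentwise. Recall that $\A$ being $\sq$-faithful means that, writing $\A=\{c_1,\dots,c_r\}$, each $\sq(c_i)$ is nontrivial in $N_g$ and $\sq(c_i)$ is not isotopic to $\sq(c_j)$ whenever $i\ne j$.

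First I would fix an arbitrary component $c\in\A$. By the standing hypothesis of the corollary, $l(c)\ne X$, so the conclusion of Lemma \ref{lem:forgetful_curve} fails for $c$. Since $|X|\ge 3$ and $\A$ is totally symmetric, the lemma does apply, and so the contrapositive forces both alternatives in its hypothesis to fail simultaneously: $\sq(c)$ must be nontrivial in $N_g$, and $\sq(c)\ne \sq(c')$ for every other component $c'\in\A$. Running this argument over all components of $\A$ yields precisely the two conditions in the definition of $\sq$-faithfulness.

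This is essentially a bookkeeping argument and no substantial obstacle arises; the only subtle point is to interpret the equality ``$\sq(c)=\sq(c')$'' in Lemma \ref{lem:forgetful_curve} as equality of isotopy classes of simple closed curves in $N_g$, which matches the convention, used throughout the paper, of identifying curves with their isotopy classes in $\curv(N)$. Under this reading, the contrapositive of Lemma \ref{lem:forgetful_curve} gives exactly the nontriviality and pairwise nonisotopy required of the images $\sq(c_i)$.
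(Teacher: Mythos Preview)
Your proof is correct and is exactly the argument the paper intends: the corollary is stated without proof immediately after Lemma \ref{lem:forgetful_curve}, and your contrapositive reading of that lemma, applied to each component, is the natural derivation. Your remark about interpreting $\sq(c)=\sq(c')$ as equality of isotopy classes is also the correct reading in context.
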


 \begin{cor}\label{cor:notCbound}
 If $\A$ is a totally symmetric $X$-labelled multicurve in $N_{g,b}$ with $|X|\ge 3$, then at most $\frac{3}{2}(g-2)$ components of $\A$ have non-full label.
 \end{cor}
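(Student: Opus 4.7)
The plan is to restrict attention to the sub-multicurve of $\A$ carrying non-full labels and then apply a standard Euler characteristic count on a two-sided multicurve in the closed surface $N_g$.

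First, I would let $\A'$ denote the sub-multicurve of $\A$ consisting of those components $c$ with $l(c)\ne X$. Since every $\tau\in\sym(X)$ satisfies $\tau(X)=X$, the element $f_\tau\in\pmcg_\A(N)$ witnessing total symmetry of $\A$ preserves the property of having (or not having) full label and hence preserves $\A'$. This exhibits $\A'$ as a totally symmetric $X$-labelled multicurve in which every component has non-full label by construction. Corollary \ref{cor:sq_faithful} applied to $\A'$ then yields that $\A'$ is $\sq$-faithful, so $\sq(\A')$ is a two-sided multicurve in the closed surface $N_g$ with exactly $|\A'|$ nontrivial, pairwise nonisotopic components.

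Next I would bound the size of $\sq(\A')$ in $N_g$ by a standard Euler characteristic argument. Cutting $N_g$ along $\sq(\A')$ produces a (possibly disconnected) surface $W$ with $\chi(W)=2-g$ and $2|\A'|$ boundary components. Nontriviality of the curves excludes disc and M\"obius band components of $W$, and pairwise nonisotopy (together with the observation that $N_g$ is not a Klein bottle in the relevant range $g\ge 3$) excludes annular components. Hence each component $P_i$ of $W$ satisfies $\chi(P_i)\le -1$. Since a connected compact surface with $\chi\le -1$ has at most $2-\chi$ boundary components (the maximum being realised by a holed sphere), summing over the $p$ components of $W$ yields
\[2|\A'|\;\le\;\sum_{i=1}^{p}\bigl(2-\chi(P_i)\bigr)\;=\;2p+(g-2)\;\le\;3(g-2),\]
where the last inequality uses $p\le g-2$, which follows from $\chi(P_i)\le -1$ and $\sum_i\chi(P_i)=2-g$. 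This gives $|\A'|\le\tfrac{3}{2}(g-2)$, as required.

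The only real point requiring care is ruling out annular components of $W$, since an annulus glued to a regular neighbourhood of a single curve can recover $N_2$; however, a pairwise-nonisotopy violation rules out annuli whose two boundary components come from distinct curves of $\sq(\A')$, and the small-genus case is handled separately via the same bound, which collapses to $|\A'|=0$ and is forced by the labelling constraint. Beyond this minor verification, the argument is an application of Corollary \ref{cor:sq_faithful} combined with the standard multicurve bound.
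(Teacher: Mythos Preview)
Your proof is correct and follows essentially the same route as the paper: both arguments observe that the non-full-label components form a $\sq$-faithful two-sided multicurve in $N_g$ and then invoke the standard Euler-characteristic bound $r\le\tfrac{3}{2}(g-2)$ for such multicurves. The only cosmetic difference is that the paper applies Lemma~\ref{lem:forgetful_curve} directly to each non-full-label component of $\A$, whereas you first check that $\A'$ is itself totally symmetric and then invoke Corollary~\ref{cor:sq_faithful}; you also spell out the Euler-characteristic count (and the annulus/small-genus caveat) that the paper leaves as ``the standard argument.''
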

 \begin{proof}
 Let $c_1,\dots,c_r$ be all the components of $\A$ that have non-full label. By Lemma \ref{lem:forgetful_curve}, $\sq(c_1),\dots,\sq(c_r)$ are nontrivial and pairwise nonisotopic. It means that they form a two-sided multicurve in the closed surface $N_g$ and therefore $r\le\frac{3}{2}(g-2)$ by the standard argument using the Euler characteristic. 
 \end{proof}

 \subsection{Special, normal and exotic components.}
 \begin{defi}
Let $\A$ be a totally symmetric $X$-labeled multicurve in $N_{g,b}$ with $|X|=k$
 and let $a$ be a component of $\A$. We say that $a$ is
 \begin{enumerate}
     \item \emph{special} if $|l(a)|=1$,
     \item \emph{normal} if $|l(a)|=k$,
     \item \emph{exotic} if $|l(a)|=k-1$.
 \end{enumerate}
 \end{defi}
 The above definition is analogous to one in \cite[Sec. 3]{CM}, but our terminology is different. The terms ``special'' and ``normal'' are borrowed from \cite{Castel}.  
 The following proposition is analogous to \cite[Proposition 3.1]{CM}, but we don't assume that the surface is closed.
 \begin{prop}\label{ch3:curves:AIC}
    If $\A$ is totally symmetric $X$-labeled multicurve in $N_{g,b}$ with $|X|=k$ and $k^2-k>3(g-2)$, then every component of $\A$ is either special or normal or exotic. In particular, this statement holds when $k\geq 7$ and $g\leq 2k+1$.
\end{prop}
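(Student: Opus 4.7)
The plan is to argue by contradiction. Assume some component $a\in\A$ has label of ``intermediate'' size, i.e.\ $j:=|l(a)|$ satisfies $2\le j\le k-2$; in particular $k\ge 4$, so $|X|\ge 3$ and Corollary~\ref{cor:notCbound} is available. The goal is to produce so many non-full-label components that the bound of Corollary~\ref{cor:notCbound} is violated.

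The first step is to use total symmetry to exhibit, for every $j$-subset $T\subseteq X$, a component $a_T\in\A$ with $l(a_T)=T$. Since $\sym(X)$ acts transitively on $j$-subsets, there is $\tau\in\sym(X)$ with $\tau(l(a))=T$, and the element $f_\tau\in\pmcg_\A(N)$ supplied by the definition of totally symmetric multicurve carries $a$ to such a component $a_T:=f_\tau(a)$. The assignment $T\mapsto a_T$ is injective, because the label is a single well-defined subset attached to each component, so distinct labels $T\ne T'$ force distinct components. Consequently $\A$ contains at least $\binom{k}{j}$ pairwise distinct components of label size $j$, each of which is non-full since $j\le k-1$.

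Comparing with Corollary~\ref{cor:notCbound}, which bounds the number of non-full-label components by $\tfrac{3}{2}(g-2)$, I obtain $\binom{k}{j}\le\tfrac{3}{2}(g-2)$. Unimodality of binomial coefficients gives $\binom{k}{j}\ge\binom{k}{2}=\tfrac{k(k-1)}{2}$ throughout $2\le j\le k-2$, so $k(k-1)\le 3(g-2)$, contradicting the hypothesis $k^2-k>3(g-2)$.

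For the ``in particular'' clause I would verify that $k\ge 7$ and $g\le 2k+1$ imply $k^2-k>3(g-2)$: indeed $3(g-2)\le 6k-3$ and $k^2-k-(6k-3)=k^2-7k+3$ equals $3$ at $k=7$ and is increasing for $k\ge 7$. I do not foresee a serious obstacle, since everything reduces to the counting estimate just sketched; the only point requiring a moment's care is the injectivity of $T\mapsto a_T$, which is immediate from the uniqueness of the label on each component of $\A$.
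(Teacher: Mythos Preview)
Your argument is correct and essentially identical to the paper's: both contradict Corollary~\ref{cor:notCbound} by observing that an intermediate label size $j$ forces at least $\binom{k}{j}\ge\binom{k}{2}$ non-full-label components, violating $k^2-k>3(g-2)$. Your write-up is slightly more detailed (explicitly constructing $a_T$ via $f_\tau$ and verifying the $k\ge 7$ inequality), but the strategy is the same.
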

\begin{proof}
    If $c$ is a component of $\A$ that is not normal and $|l(c)|=l$, then $\A$ has at least $\binom{k}{l}$ components with non-full label. By Corollary \ref{cor:notCbound}, we have  \[\binom{k}{l}\le\frac{3}{2}(g-2).\]
    If  $k-2\geq l\geq 2$ then 
        \[\frac{k(k-1)}{2}\leq \binom{k}{l}\leq \frac{3}{2}(g-2),\]
 which contradicts the assumption $k(k-1)> 3(g-2)$. Thus $l\in\{1,k-1\}$, which means that $c$ is either special or exotic.

   It is easy to check that for $k\geq 7$ and $g\leq 2k+1$ we have $k^2-k>3(g-2)$.
\end{proof}

\subsection{A remark about the paper of Chen and Mukherjea.\label{CMbug}}
One difference between our paper and \cite{CM} is that we allow the surface to have boundary throughout the whole proof, whereas Chen and Mukherjea first work with a closed surface. Then they use the natural projection 
\[\sqa\colon\pmcg(S_{g,b})\to\mcg(S_g)\]
induced by the squeeze map $\sq\colon S_{g,b}\to S_g$
 to deduce information about $\C(f)$ for \(f\in\pmcg(S_{g,b})\) by looking at $\C(\sqa(f))$. 
 In Section 10.3 of \cite{CM} the authors claim that $\C(\sqa(f))$ can be obtained from $\sq(\C(f))$ by identifying isotopic curves and deleting trivial curves. Unfortunately, this claim is false in general, as the following examples demonstrate. By pointing out this issue we don't question correctness of the result of Chen and Mukherjea, only accuracy of their argument. In fact, our paper shows that the methods of \cite{CM}, such as their proof of Proposition 3.1 corresponding to our Proposition \ref{ch3:curves:AIC}, can be modified to work also for surfaces with boundary.
\begin{example}
 Let $\A=\{a,c,d\}$ be a multicurve in $S_{g,b}$, $b>0$, such that $\sq(a)\ne\sq(c)=\sq(d)$, and let $f=t_at_ct_d^{-1}$. Then
 $\C(f)=\A$ and  $\sqa(f)=t_{\sq(a)}$ so 
 \[\C(\sqa(f))=\{\sq(a)\}\ne\{\sq(a),\sq(c)\}=\sq(\C(f)).\]
\end{example}

\begin{example}
 By Penner's construction (see Theorem 14.4 in \cite{FM}) we can produce a pseudo-Anosov $f\in\pmcg(S_{g,b})$ such that $\sqa(f)$ is reducible.  
If multicurves $a=\{a_1,a_2,a_3\}$ and $b=\{b_1,b_2\}$ are as in Figure \ref{fig:pa:ex}, then $f=t_at_b^{-1}\in \pmcg(S_{2,2})$ is pseudo-Anosov. However, 
\[\sqa(f)=t_{\sq(a_2)}^2\]
is reducible.
 \begin{figure}[h]
\begin{center}
\includegraphics[width=0.8\customwidth]{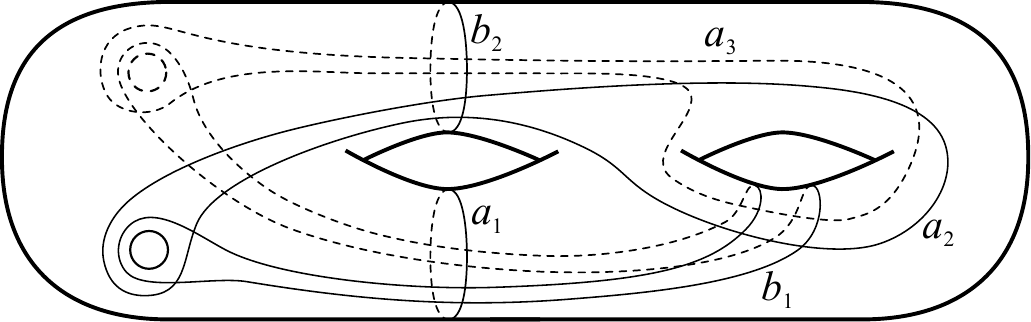}
\caption{Two multicurves $a=\{a_1,a_2,a_3\}$ and $b=\{b_1,b_2\}$ filling $S_{2,2}$.}\label{fig:pa:ex} %
\end{center}
\end{figure}
\end{example}

\section{Proof of Theorem \ref{thm:main_pmcg}}\label{sec:start}
Throughout the rest of this paper we assume that $n\ge 14$, $b\ge 0$, $g\leq2\lfloor{n}/{2}\rfloor+1$,  and 
$\rho\colon\B_n\to\pmcg(N_{g,b})$ is a homomorphism. 
We consider the totally symmetric $\X_n$-labelled multicurve
 \[\C^+(\rho(\X_n))=\bigcup_{\sigma_i\in\X_n}\C^+(\rho(\sigma_i)).\]
 By Proposition \ref{ch3:curves:AIC} every component of $\C^+(\rho(X_n))$ is either special, or normal or exotic.
 Here is the outline of the proof of Theorem \ref{thm:main_pmcg}.
 \begin{enumerate}
     \item In Section \ref{sec:irred} we consider the case where $\C^+(\rho(X_n))$ is empty, that is where $\rho(\sigma_i)$ is either periodic, or pseudo-Anosov or almost pseudo-Anosov. We prove that $\rho$ is cyclic in this case.
    
    \item In Section \ref{sec:cyclic_action} we prove that if all components of $\C^+(\rho(X_n))$ are normal, then $\rho$ is cyclic. 
    
    \item In Section \ref{sec:no_I_cirves} we prove that $\C^+(\rho(X_n))$ has no exotic components. 
    
    \item In Section \ref{sec:top_A} we assume that $\C^+(\rho(X_n))$ has special components and we denote by $A_i$ the set of components with label $\{\sigma_i\}$. We prove that one of the following two cases occurs.
    \begin{description}
        \item[Case 1] $A_i$ contains a unique separating curve $a_i$ for each $i$, or
        \item[Case 2] $A_i$ consists of a single nonseparating curve $a_i$ for each $i$.
    \end{description}

    \item In Section \ref{sec:delta} 
    we extend the notion of special and normal curves over all generators $\sigma_i$, including even $i$.

    \item In Section \ref{sec:separating} we prove that in Case 1 
    $(a_1,\dots,a_{n-1})$ is a chain of separating curves.

    \item In Section \ref{sec:transvection} we construct a transvection $\rho^\tau$ of $\rho$, such that
    \begin{enumerate}
        \item $\rho^\tau(\sigma_i)$ is the identity outside the one-holed Klein bottle bounded by $a_i$ for $1\le i\le n-1$ in Case 1, or
        \item $\rho^\tau(\sigma_i)$ is a power of Dehn twist about  $a_i$ for $1\le i\le n-1$ in Case 2.
    \end{enumerate}

    \item In Section \ref{sec:finish} we prove that
    \begin{enumerate}
        \item $\rho^\tau$ is a {\ctr} in Case 1, or
        \item $\rho^\tau$ is a standard twist representation in Case 2.
    \end{enumerate}
\end{enumerate}
\section{Periodic and (almost) pseudo-Anosov representations}\label{sec:irred}
In this section we prove Theorem \ref{thm:main_pmcg} in the case where $\C^+(\rho(\sigma_i))$ is empty, that is where
$\rho(\sigma_i)$ is either periodic, or pseudo-Anosov, or almost pseudo-Anosov.

\begin{lemma}\label{lem:cyclic_bdry}
For $b>0$, every finite subgroup of $\ppmcg(N_{g,b})$ is cyclic.
\end{lemma}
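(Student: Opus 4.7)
The plan is to reduce to the orientable setting via the Birman--Chillingworth double cover and then invoke the standard fact that the pure mapping class group of an orientable surface with non-empty boundary is torsion-free.

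First, since $b>0$ we have $(g,b)\ne(2,0)$, so Corollary \ref{cor:pureBirChil} applies and gives an injective homomorphism
\[\theta\colon \ppmcg(N_{g,b})\to\ppmcg(S_{g-1,2b}).\]
Because $2b>0$, the orientable surface $S_{g-1,2b}$ has non-empty boundary, and a classical result (going back to Nielsen realization; see, e.g., Farb--Margalit, Cor.~7.3 for the boundary-fixing version, combined with the fact that a periodic self-homeomorphism fixing each boundary component setwise is already isotopic to one fixing them pointwise after composing with appropriate boundary twists within its isotopy class) asserts that $\ppmcg(S_{g-1,2b})$ is torsion-free. Consequently any finite subgroup of $\ppmcg(N_{g,b})$ injects into a torsion-free group, must therefore be trivial, and is in particular cyclic.

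The only subtle point is that in the notation of this paper $\ppmcg(S_{g-1,2b})$ denotes mapping classes that permit boundary components to move by isotopy, whereas some references quote torsion-freeness only for $\mcg(S,\partial S)$. So the main (minor) obstacle is making the torsion-freeness argument work for the ``boundary-free-to-isotope'' version: one caps off each boundary circle with a marked disk, realizes a hypothetical finite-order element as a finite-order homeomorphism of the resulting closed surface with $2b$ distinguished marked points which it individually fixes (and with prescribed orientation at each), and observes that such a homeomorphism, being periodic and fixing the oriented frame at each marked point, must be the identity on a neighbourhood of each such point and hence trivial. This is essentially the same mechanism as in Lemma \ref{lem:izoS0N1} below, simply invoked on the orientable cover.

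Writing this up should be short; the rest of the work is purely invoking Corollary \ref{cor:pureBirChil} and the chosen reference for torsion-freeness, so I expect no further obstacle beyond locating the cleanest citation in the literature.
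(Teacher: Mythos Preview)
Your reduction to the orientable double cover via Corollary~\ref{cor:pureBirChil} is exactly what the paper does, but your claim that $\ppmcg(S_{g-1,2b})$ is torsion-free is false. Take $S_{1,2}$: capping the two boundary circles to marked points, the hyperelliptic involution of the torus fixes all four Weierstrass points, so in particular it fixes two chosen marked points individually and preserves orientation; hence it defines a nontrivial element of order~$2$ in $\ppmcg(S_{1,2})$. The flaw in your sketch is the step ``periodic and fixing the oriented frame at each marked point, must be the identity on a neighbourhood'': a finite-order orientation-preserving homeomorphism fixing a point acts locally as a \emph{rotation}, not as the identity. Preserving the orientation of the boundary circle only says the rotation angle is well defined modulo~$2\pi$; it does not force that angle to be zero.

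The paper instead invokes the weaker (and true) statement that every finite subgroup of $\ppmcg(S_{g-1,2b})$ is \emph{cyclic} (Castel, Lemma~A.3.11). The mechanism is exactly the Nielsen-realization picture you describe, but with the correct conclusion: a finite group acting by orientation-preserving isometries with a common fixed point embeds in the rotation group at that point, hence is cyclic. So your proof can be salvaged by replacing ``torsion-free'' with ``all finite subgroups cyclic'' and citing Castel (or reproving it along the lines just indicated); after that change it coincides with the paper's argument.
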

\begin{proof}
By \cite[Lemma A.3.11]{Castel} every finite subgroup of $\ppmcg(S_{g-1},2b)$ is cyclic. Since $\ppmcg(N_{g,b})$ is isomorphic to a subgroup of  $\ppmcg(S_{g-1,2b})$ by Corollary \ref{cor:pureBirChil}, the lemma follows. 
\end{proof}


The next lemma is analogous to \cite[Proposition 5.2]{CM}
\begin{lemma}\label{lem:periodic_closed}
If $\mcg(N_{g})$ contains a totally symmetric set consisting of $k\geq 6$ periodic elements, then $g\geq 2^{k-2}$.
\end{lemma}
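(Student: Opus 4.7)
The plan is to reduce the lemma to the analogous statement for closed orientable surfaces, namely \cite[Proposition 5.2]{CM}, by passing to the orientation double cover. First I would dispose of small $g$: the group $\mcg(N_1)$ is trivial, and $\mcg(N_2)$ is finite of order $4$, so for $g\le 2$ the group $\mcg(N_g)$ cannot contain a totally symmetric set of $k\ge 6$ pairwise distinct elements and the statement holds vacuously. Hence I may assume $g\ge 3$, so that $(g,0)\ne(2,0)$ and Theorem \ref{th:BirChil} supplies the injective homomorphism $\theta\colon\mcg(N_g)\to\mcg^+(S_{g-1})$ sending each mapping class to its orientation-preserving lift.

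Given a totally symmetric set $X=\{x_1,\dots,x_k\}\subset\mcg(N_g)$ of periodic elements, I would verify that the image $\theta(X)=\{\theta(x_1),\dots,\theta(x_k)\}$ is again a totally symmetric set of $k$ periodic elements in $\mcg^+(S_{g-1})$. Distinctness of the $\theta(x_i)$ follows from injectivity of $\theta$; their periodicity and pairwise commutativity follow from $\theta$ being a homomorphism; and for every permutation $\tau\in\sym(X)$, if $h\in\mcg(N_g)$ realises $\tau$ by conjugation then $\theta(h)$ realises the corresponding permutation of $\theta(X)$ by conjugation. Thus $\theta(X)$ is a totally symmetric subset of the mapping class group of the closed orientable surface $S_{g-1}$, consisting of $k\ge 6$ periodic elements.

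Applying \cite[Proposition 5.2]{CM} to $\theta(X)$ then yields a lower bound on the genus $g-1$ of the double cover; concretely, one obtains $g-1\ge 2^{k-2}$, and hence $g\ge 2^{k-2}+1>2^{k-2}$, completing the proof. The only obstacle I anticipate is a bookkeeping one: matching the precise form of \cite[Proposition 5.2]{CM} with the genus shift introduced by the double cover. If the orientable version yields only $g-1\ge 2^{k-2}-1$, the resulting bound $g\ge 2^{k-2}$ is exactly what is claimed, so either way the reduction delivers the lemma without further computation.
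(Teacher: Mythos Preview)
Your reduction via the Birman--Chillingworth lift is sound in principle: $\theta(X)$ is indeed a totally symmetric set of $k$ periodic elements in $\mcg^+(S_{g-1})$. The gap is purely quantitative. The bound in \cite[Proposition~5.2]{CM} for a closed orientable surface of genus $h$ is obtained by combining \cite[Lemma~5.3]{CM} (a totally symmetric set of $k$ commuting periodic elements yields a finite abelian subgroup of order at least $2^{k-1}$) with Maclachlan's bound $4h+4$ on the order of an abelian group of conformal automorphisms of a Riemann surface; this gives only $h\ge 2^{k-3}-1$. Applied to $h=g-1$ you obtain $g\ge 2^{k-3}$, a factor of $2$ short of the claimed $g\ge 2^{k-2}$. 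That missing factor is not cosmetic: in Proposition~\ref{ch:5:corollary5:5} with $n=14$ one has $k=6$ and $g\le 15$, and the argument relies on $15<2^{k-2}=16$; the weaker bound $2^{k-3}=8$ produces no contradiction there.

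The paper avoids this loss by staying on $N_g$. It applies \cite[Lemma~5.3]{CM} directly inside $\mcg(N_g)$ to obtain a finite abelian subgroup $G$ with $|G|\ge 2^{k-1}$, realizes $G$ geometrically via Nielsen realization for nonorientable surfaces \cite{CX,Kerck}, and then uses Rodr\'iguez's bound \cite{Rod} that an abelian group of automorphisms of a closed nonorientable Klein surface of genus $g\ne 6$ has order at most $2g$. Since $2^{k-1}\ge 32$ forces $g\ne 6$, this gives $2^{k-1}\le 2g$, i.e.\ $g\ge 2^{k-2}$. The point is that the nonorientable bound $2g$ is genuinely tighter than what one recovers by lifting to the orientation double cover and using the orientable bound $4(g-1)+4=4g$.
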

\begin{proof}
    By \cite[Lemma 5.3]{CM}, $\mcg(N_{g})$ contains an abelian finite subgroup $G$ of order at least $2^{k-1}$. 
    By  Kerckhoff's Theorem for nonorientable surfaces \cite{CX,Kerck}, 
    $G$ can be realized as a group of automorphisms of a nonorientable unbordered Klein surface of genus $g$. 
    By \cite{Rod}, the maximum order of an abelian group of automorphisms of such a surface is $2g$ for $g\neq 6$ and $16$ for $g=6$.
    Since $|G|\geq 2^{k-1}\geq 32$, we have $|G|\leq 2g$ and $2^{k-2}\leq g$.
\end{proof}

\begin{prop}\label{ch:5:corollary5:5}
    If $\rho(\sigma_1\sigma_3^{-1})$ is periodic, then $\rho$ is cyclic. 
\end{prop}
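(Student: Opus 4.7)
The plan is to exploit the totally symmetric subset $\X'_n \subset \B_n'$ of size $k = \lfloor n/2 \rfloor - 1 \ge 6$ (since $n \ge 14$). Because $\rho(\sigma_3\sigma_1^{-1}) = \rho(\sigma_1\sigma_3^{-1})^{-1}$ is periodic and all elements of $\X'_n$ are pairwise conjugate in $\B_n'$ (by Lemma \ref{lem:X_n_ts}) and pairwise commuting, the image $\rho(\X'_n)$ will be a totally symmetric set of pairwise commuting periodic elements. Moreover, $\rho(\X'_n) \subset \rho(\B_n') \subset \ppmcg(N_{g,b})$, since $\B_n'$ lies in the kernel of the abelianisation $\B_n \to \Z$ and $\pmcg(N_{g,b})/\ppmcg(N_{g,b}) \cong \Z_2^b$ is abelian.

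Next I would invoke the standard trichotomy for images of a totally symmetric set under a homomorphism: either all $k$ images are distinct, or they all coincide (if $\phi(x_1) = \phi(x_2)$, conjugating by an element realising the transposition $(2\,j)$ forces $\phi(x_j) = \phi(x_1)$ for every $j$). The distinct case will be ruled out as follows. For $b = 0$, Lemma \ref{lem:periodic_closed} applied to the $k \ge 6$ distinct totally symmetric periodic elements yields $g \ge 2^{k-2} \ge 16$, contradicting $g \le 2\lfloor n/2 \rfloor + 1 = 2k+3 \le 15$ for $k = 6$ (and the inequality only worsens as $k$ grows). For $b > 0$, Lemma \ref{lem:cyclic_bdry} forces the finite abelian subgroup $A = \langle\rho(\X'_n)\rangle$ to be cyclic, so $\mathrm{Aut}(A)$ is abelian; but total symmetry supplies an embedding of $\sym(k)$ into $\mathrm{Aut}(A)$ (the kernel of the conjugation map $N_{\pmcg(N)}(A) \to \mathrm{Aut}(A)$ coincides with the kernel of its restriction to $\rho(\X'_n)$), and $\sym(k)$ is non-abelian for $k \ge 3$, a contradiction.

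Hence there must be a single $y \in \ppmcg(N_{g,b})$ with $\rho(\sigma_i\sigma_1^{-1}) = y$ for every $i \in \odd(n)$, $i \ge 3$; equivalently, $\rho(\sigma_3) = \rho(\sigma_5) = y\rho(\sigma_1)$. Applying the same trichotomy to the image of the totally symmetric set $\X_n$, which now contains two coincident elements, forces all $\rho(\sigma_i)$ for $i \in \odd(n)$ to coincide. Thus $y = 1$ and there exists $c \in \ppmcg(N_{g,b})$ with $\rho(\sigma_i) = c$ for every odd $i$.

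Finally, for each even $j$ I would pick an odd index $\ell$ with $|j - \ell| \ge 3$, which exists since $n \ge 6$; then $\sigma_j$ and $\sigma_\ell$ commute in $\B_n$, so $d := \rho(\sigma_j)$ commutes with $\rho(\sigma_\ell) = c$. The braid relation between $\sigma_j$ and the adjacent odd generator $\sigma_{j-1}$ yields $cdc = dcd$; combined with $cd = dc$ this gives $c^2 d = d^2 c$, whence $c = d$. Hence $\rho(\sigma_i) = c$ for every generator $\sigma_i$, so $\rho(\B_n) = \langle c \rangle$ is cyclic. The main obstacle is ruling out the distinct case of the trichotomy, particularly the $b > 0$ argument: one must recognise that the $\sym(k)$-action realised by total symmetry must factor through the abelian automorphism group of the cyclic subgroup $A$, which is precisely what produces the contradiction.
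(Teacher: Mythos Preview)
Your argument is correct. For $b=0$ it is essentially identical to the paper's: both invoke the totally symmetric set $\X'_n$ and rule out the case of $\lfloor n/2\rfloor-1$ distinct periodic images via Lemma~\ref{lem:periodic_closed}.

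For $b>0$ the two proofs genuinely diverge. The paper argues directly, without totally symmetric machinery: since the finite abelian group generated by $\rho(\sigma_1\sigma_3^{-1})$ and $\rho(\sigma_1\sigma_5^{-1})$ is cyclic (Lemma~\ref{lem:cyclic_bdry}) and both elements have the same order, each one generates it; the same reasoning puts $\rho(\sigma_1\sigma_6^{-1})$ in that cyclic group, so it commutes with $\rho(\sigma_1\sigma_5^{-1})$, and the braid relation between $\sigma_5$ and $\sigma_6$ then forces $\rho(\sigma_5)=\rho(\sigma_6)$. Your route instead observes that if the images $\rho(\X'_n)$ were all distinct they would generate a finite cyclic group $A\subset\ppmcg(N)$, and total symmetry would yield an injective homomorphism $\sym(k)\hookrightarrow\mathrm{Aut}(A)$ (well-defined because an automorphism of $A$ is determined by its effect on generators), contradicting the fact that $\mathrm{Aut}(A)$ is abelian. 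Both arguments are short; the paper's has the mild advantage of producing $\rho(\sigma_5)=\rho(\sigma_6)$ (adjacent generators) in one stroke, while yours first collapses the odd generators via the $\X_n$-dichotomy and then handles the even ones by the commute-plus-braid trick. Your parenthetical justification for the embedding is slightly off target: the point is not about kernels of the conjugation map, but simply that the assignment $\tau\mapsto(y_i\mapsto y_{\tau(i)})$ is a well-defined group homomorphism because the $y_i$ generate $A$.

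In the endgame the paper is more efficient: once $\rho(\sigma_3)=\rho(\sigma_5)$ it notes that $\sigma_3\sigma_5^{-1}$ normally generates $\B_n'$, so $\rho$ factors through $\B_n/\B_n'\cong\Z$. Your explicit verification via $cdc=dcd$ and $cd=dc$ reaches the same conclusion with a little more work.
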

\begin{proof}
{\bf Case 1: $b>0$}. In this case we follow the proof of \cite[Proposition 5.2.1]{Castel}.
Since \[\pmcg(N_{g,b})/\ppmcg(N_{g,b})\cong \Z_2^b\] is abelian, $\rho(\B_n')$ is a subgroup of $\ppmcg(N_{g,b})$. 
Suppose that $\rho(\sigma_1\sigma_3^{-1})$ has order $m$. 
Then $\rho(\sigma_1\sigma_5^{-1})$ also has order $m$ and commutes with $\rho(\sigma_1\sigma_3^{-1})$. Let $G$ be the subgroup of  $\ppmcg(N_{g,b})$ generated by $\rho(\sigma_1\sigma_3^{-1})$ and $\rho(\sigma_1\sigma_5^{-1})$ and note that it is a finite abelian group, whose every element  has order at most $m$. But $G$ is cyclic by Lemma \ref{lem:cyclic_bdry}, so it must have order $m$. It follows that each of $\rho(\sigma_1\sigma_3^{-1})$ and $\rho(\sigma_1\sigma_5^{-1})$ individually generates $G$. In particular, $\rho(\sigma_1\sigma_5^{-1})$ belongs to the subgroup generated by $\rho(\sigma_1\sigma_3^{-1})$. By the same argument,
$\rho(\sigma_1\sigma_6^{-1})$ belongs to the subgroup generated by $\rho(\sigma_1\sigma_3^{-1})$. It follows that $\rho(\sigma_1\sigma_6^{-1})$ commutes with $\rho(\sigma_1\sigma_5^{-1})$. But these elements satisfy the braid relation, so 
$\rho(\sigma_1\sigma_5^{-1})=\rho(\sigma_1\sigma_6^{-1})$ and $\rho(\sigma_5)=\rho(\sigma_6)$. It follows that $\rho$ is cyclic.

\medskip
{\bf Case 2: $b=0$}. In this case we follow the proof of \cite[Corollary 5.5 ]{CM}.
By \cite[Lemma 2.3]{CM}, $\rho(\X'_n)$ is either a singleton, or a totally symmetric set consisting of  $\lfloor\frac{n}{2}\rfloor-1$  periodic elements. The latter is impossible by Lemma \ref{lem:periodic_closed} since for $n\geq 14$  and $k=\lfloor{n}/{2}\rfloor-1$ we have
    \[g\leq 2(k+1)+1=2k+3<2^{k-2}.\]
So $\rho(\sigma_3\sigma_1^{-1})=\rho(\sigma_5\sigma_1^{-1})$ and thus $\sigma_3\sigma_5^{-1}\in\ker\rho$. Since $\sigma_3\sigma_5^{-1}$ normally generates $\B_n'$ it follows that $\rho$ is cyclic.     
\end{proof}

Observe that Proposition \ref{ch:5:corollary5:5} implies Theorem \ref{thm:main_pmcg} in the case where $\rho(\sigma_1)$ is periodic. Indeed, if  $\rho(\sigma_1)$ is periodic then so is $\rho(\sigma_1\sigma_3^{-1})$, because $\sigma_1$ and $\sigma_3$ are conjugate and they commute. 
In the case where $\rho(\sigma_1)$ is pseudo-Anosov or almost pseudo-Anosov Theorem \ref{thm:main_pmcg} follows from the next proposition.

\begin{prop}
     If $\rho(\sigma_1)$ is pseudo-Anosov or almost pseudo-Anosov, then $\rho$ is cyclic.
\end{prop}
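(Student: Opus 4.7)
The plan is to adapt the argument of Proposition~\ref{ch:5:corollary5:5} by placing each commutator $\rho(\sigma_i\sigma_1^{-1})$ with $i\ge 3$ inside a finite subgroup of $\mcg(N)$, and then running the two cases of that proof essentially verbatim. First I would use the fact that $\sigma_i$ commutes with $\sigma_1$ whenever $i\ge 3$, so that each $\rho(\sigma_i)$ lies in the centralizer $C=C_{\mcg(N)}(\rho(\sigma_1))$, which is virtually cyclic by Proposition~\ref{apA:virtuallyCyclic} (or its classical analogue for pseudo-Anosov classes cited in that proof); let $T\subset C$ be its finite torsion subgroup. Each $\rho(\sigma_i)$ is conjugate in $\mcg(N)$ to $\rho(\sigma_1)$ and therefore shares its dilatation, and the log-dilatation homomorphism from $C$ to $\mathbb{R}$---defined directly on $N$ when $\rho(\sigma_1)$ is pseudo-Anosov, and on the cut surface $N_\A$ with $\A=\C(\rho(\sigma_1))$ when $\rho(\sigma_1)$ is almost pseudo-Anosov---has kernel $T$ and agrees on the intersection of any $\mcg(N)$-conjugacy class with $C$. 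This forces $\rho(\sigma_i\sigma_1^{-1})\in T$ for every $i\ge 3$.

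With the commutators trapped inside the finite group $T$, the two cases of the proof of Proposition~\ref{ch:5:corollary5:5} transfer with almost no change. For $b>0$ I would follow Case~1: the elements $\rho(\sigma_1\sigma_3^{-1})$, $\rho(\sigma_1\sigma_5^{-1})$ and $\rho(\sigma_1\sigma_6^{-1})$ all lie in $T\cap\ppmcg(N_{g,b})$ and share a common order $m$, because any two of them are conjugate in $\B_n$ by an element of $\langle\sigma_3,\dots,\sigma_{n-1}\rangle$, which in turn commutes with $\sigma_1$. Lemma~\ref{lem:cyclic_bdry} then forces each of the two commuting pairs to generate a cyclic subgroup of order $m$, so all three elements lie in $\langle\rho(\sigma_1\sigma_3^{-1})\rangle$; the Case~1 computation then upgrades this to $\rho(\sigma_5)\rho(\sigma_6)=\rho(\sigma_6)\rho(\sigma_5)$, and the braid relation between $\sigma_5$ and $\sigma_6$ yields $\rho(\sigma_5)=\rho(\sigma_6)$, whence $\rho$ is cyclic. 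For $b=0$ I would follow Case~2: by \cite[Lemma~2.3]{CM} the set $\rho(\X'_n)$ is either a singleton, forcing $\rho(\sigma_3)=\rho(\sigma_5)$ and hence $\rho$ cyclic, or a totally symmetric set of size $\lfloor n/2\rfloor-1\ge 6$ consisting of periodic elements of $T$; this second possibility is excluded by Lemma~\ref{lem:periodic_closed} together with the hypothesis $g\le 2\lfloor n/2\rfloor+1$, exactly as in the proof of Case~2.

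The main delicate point is the dilatation step in the almost pseudo-Anosov case, where the $\mcg(N)$-element $\alpha$ conjugating $\rho(\sigma_1)$ to $\rho(\sigma_i)$ need not \emph{a priori} preserve $\A$. What rescues the argument is that $\rho(\sigma_i)$ still preserves $\A$, because it commutes with $\rho(\sigma_1)$ and hence by Lemma~\ref{lem:crs}(1) satisfies $\rho(\sigma_i)(\A)=\C(\rho(\sigma_i)\rho(\sigma_1)\rho(\sigma_i)^{-1})=\A$; moreover $\cut{\A}$ is injective on $\mcg_\A(N)$ because $\A$ consists of one-sided curves, and $\cut{\A}(\rho(\sigma_i))$ commutes on $N_\A$ with the pseudo-Anosov $\cut{\A}(\rho(\sigma_1))$. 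Hence $\cut{\A}(\rho(\sigma_i))$ has empty canonical reduction system and cannot be periodic (by injectivity of $\cut{\A}$ and the infinite order of $\rho(\sigma_i)$), so it is pseudo-Anosov with the same invariant foliations as $\cut{\A}(\rho(\sigma_1))$. This in turn forces $\alpha(\A)=\C(\rho(\sigma_i))\subseteq\A$ by minimality of $\C$, so $\alpha$ itself descends to $N_\A$ and conjugates $\cut{\A}(\rho(\sigma_1))$ to $\cut{\A}(\rho(\sigma_i))$, from which the equality of dilatations follows.
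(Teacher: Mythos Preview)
Your approach is the same as the paper's in outline: use that the centralizer $C=C_{\mcg(N)}(\rho(\sigma_1))$ is virtually cyclic to deduce that $\rho(\sigma_1\sigma_3^{-1})$ is periodic, then finish. The paper in fact stops right there, citing \cite[Proposition~6.1]{CM} for the periodicity step and then invoking Proposition~\ref{ch:5:corollary5:5} directly. Your re-running of the two cases of that proposition inside $T$ is therefore unnecessary: once $\rho(\sigma_1\sigma_3^{-1})$ is periodic, Proposition~\ref{ch:5:corollary5:5} applies verbatim. Your third paragraph, handling the descent of the conjugator $\alpha$ to $N_\A$ in the almost pseudo-Anosov case, is a detail the paper omits entirely and is a nice addition.

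There is, however, one real gap. You assert that the signed log-dilatation homomorphism $\ell\colon C\to\mathbb{R}$ ``agrees on the intersection of any $\mcg(N)$-conjugacy class with $C$'', and use this to conclude $\rho(\sigma_i\sigma_1^{-1})\in T$ for all $i\ge 3$. This is not correct as stated. If $g=\alpha f\alpha^{-1}$ with $f,g\in C$ both pseudo-Anosov but $\alpha\notin C$, then $\alpha$ permutes the pair $\{F^+,F^-\}$ of invariant foliations of $\rho(\sigma_1)$; when $\alpha$ swaps them one obtains $\ell(g)=-\ell(f)$, not $\ell(g)=\ell(f)$. Conjugacy in $\mcg(N)$ only guarantees equal \emph{unsigned} dilatation, i.e.\ $|\ell(g)|=|\ell(f)|$. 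The fix is immediate and stays within your framework: $\sigma_3$ and $\sigma_5$ are conjugate by an element of $\langle\sigma_3,\dots,\sigma_{n-1}\rangle$, which commutes with $\sigma_1$, so the conjugator lies in $C$; since $\ell$ is a homomorphism to an abelian group it is a class function on $C$, hence $\ell(\rho(\sigma_3))=\ell(\rho(\sigma_5))$. Thus $\rho(\sigma_3\sigma_5^{-1})\in\ker\ell=T$ is periodic, and therefore so is its conjugate $\rho(\sigma_1\sigma_3^{-1})$. With this correction your argument goes through.
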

\begin{proof}
    The proof is essentially the same as in \cite[Proposition 6.1]{CM}, so we omit the details. The key fact is that the centralizer of a  pseudo-Anosov (respectively almost pseudo-Anosov) element is virtually cyclic by \cite[Poposition 24]{ParisWBLN} (respectively Lemma \ref{apA:virtuallyCyclic}). From this fact we deduce that $\rho(\sigma_1\sigma_3^{-1})$ is periodic (see \cite{CM} for details) and
    $\rho$ is cyclic by Proposition \ref{ch:5:corollary5:5}.
\end{proof}
\section{Multicurves preserved by the action of $\rho(\B_n)$}\label{sec:cyclic_action}
The main goal of this section is to prove Theorem \ref{thm:main_pmcg} in the case when all 
curves in the labeled multicurve $\C^+(\rho(\X_n))$ are normal -- see Proposition~\ref{all_C_case} below. This case is a bit tricky, because being \emph{totally symmetric} is rather meaningless in the case of normal curves. On the other hand, normal curves constitute a multicurve which is preserved by the action of $\B_n$, and this is the starting point for our analysis -- see Proposition~\ref{prop:ch8:8.1} below.
\begin{lemma}\label{lem:ch8:max:nonseparating}
    If $\A$ is a two-sided multicurve in $N=N_{g,b}$ such that $N_\A$ is connected, then $|\A|\leq \lfloor{g}/{2}\rfloor$. In particular $|\A|\leq g-2$ for $g\geq 4$.
\end{lemma}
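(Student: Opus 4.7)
The plan is to derive the bound by a standard Euler-characteristic count. Let $k=|\A|$. Since each curve in $\A$ is two-sided, cutting along it removes an open annular neighbourhood (with vanishing Euler characteristic) and produces two new boundary components per curve. Thus $\chi(N_\A)=\chi(N)=2-g-b$ and $N_\A$ has exactly $b+2k$ boundary components. The hypothesis that $N_\A$ is connected lets us compare these two quantities directly.

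Next I would split into cases according to whether the connected surface $N_\A$ is orientable or nonorientable. If $N_\A$ is orientable of genus $h\ge 0$, then
\[2-2h-(b+2k)=2-g-b,\]
so $g=2h+2k$ and $k=(g-2h)/2\le g/2$. If instead $N_\A$ is nonorientable of genus $h'\ge 1$, then
\[2-h'-(b+2k)=2-g-b,\]
so $g=h'+2k$ and $k=(g-h')/2\le (g-1)/2$. Since $k$ is an integer, both cases give $k\le\lfloor g/2\rfloor$, which is the first claim.

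For the secondary claim, it suffices to observe that $\lfloor g/2\rfloor\le g-2$ whenever $g\ge 4$, since this is equivalent to $g\ge 4$ for even $g$ and to $g\ge 3$ for odd $g$. There is no real obstacle to overcome here; the only mildly delicate point is to remember that after cutting a nonorientable surface along a two-sided multicurve the result may well be orientable (this is the first case above), so one must not assume that $N_\A$ inherits nonorientability from $N$.
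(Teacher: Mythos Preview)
Your proof is correct and follows essentially the same Euler-characteristic argument as the paper. The only cosmetic difference is that the paper avoids your orientable/nonorientable case split by using the single inequality $\chi(N_\A)\le 2-|\partial N_\A|$ (valid for any connected compact surface), which immediately yields $2-2|\A|-b\ge 2-g-b$; your case analysis recovers the same bound with a bit more bookkeeping.
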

\begin{proof}
By the assumption, $N_\A$ is a connected surface with $2|\A|+b$ boundary components. Hence
    \[2-2|\A|-b\geq \chi\left(N_\A\right)=\chi\left(N\right)=2-g-b.\]
    This proves that $|\A|\leq \frac{g}{2}$. If $g\geq 4$, then $\frac{g}{2}\leq g-2$.
\end{proof}
\begin{lemma}\label{lem:ch8:max:homologous}
    Let $g\geq 4$ and $\A$ be an $\sq$-faithful two-sided multicurve in $N_{g,b}$ such that the curves in $\sq(\A$) are pairwise homologous in $H_1(N_g,\Z_2)$ and of the same topological type. Then $|\A|\leq g-2$.
\end{lemma}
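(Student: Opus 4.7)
The plan is to cut $N_g$ along $\sq(\A)$ and combine an Euler characteristic count on the pieces with a $\Z_2$-homological constraint on the dual graph.

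Write $\sq(\A)=\{c_1,\dots,c_k\}$ and let $r$ be the number of connected components of $N_g\setminus\bigcup_{i}c_i$. First I would argue that each component is an essential subsurface: neither a disc, an annulus, nor a M\"obius band. Discs and M\"obius bands are ruled out because every $c_i$ is nontrivial in $N_g$ (as $\A$ is $\sq$-faithful), while annular pieces are ruled out because the $c_i$ are pairwise non-isotopic in $N_g$ (again by $\sq$-faithfulness, using $g\ge 4$ to exclude the annular neighbourhood case). Consequently every piece has Euler characteristic at most $-1$, and since cutting along a two-sided multicurve preserves $\chi$, summing over pieces gives $-r\ge\chi(N_g)=2-g$, whence $r\le g-2$.

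Next I would consider the dual graph $G$ whose vertices are the pieces and whose edges are the curves $c_i$, each edge joining the (possibly coinciding) pieces touching its two sides. Since $N_g$ is connected, so is $G$, giving $\dim_{\Z_2}H_1(G,\Z_2)=k-r+1$. The hypothesis $[c_1]=\dots=[c_k]$ in $H_1(N_g,\Z_2)$ then forces $\dim_{\Z_2}H_1(G,\Z_2)\le 1$: any $\Z_2$-$1$-cycle $S\subseteq E(G)$ can be realised --- by decomposing it into edge-disjoint simple graph cycles --- as a multiloop $\gamma_S\subset N_g$ whose transverse $\Z_2$-intersection number with $c_i$ equals $\mathbf{1}_{c_i\in S}$. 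Because the $\Z_2$-intersection pairing on $H_1(N_g,\Z_2)$ depends only on homology classes and $[c_i]=[c_j]$ for all $i,j$, we obtain $\mathbf{1}_{c_i\in S}=\mathbf{1}_{c_j\in S}$, so $S$ is either empty or equal to $E(G)$. Combining the two estimates yields
\[
k=r+\dim_{\Z_2}H_1(G,\Z_2)-1\le r\le g-2.
\]

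The step I expect to require the most care is the construction of the multiloop $\gamma_S$ realising a given $\Z_2$-$1$-cycle with the prescribed transverse intersections; this is routine once one embeds $G$ into $N_g$ via a chosen interior point in each piece together with a short transverse arc across each $c_i$, but the details should be written out carefully. I note that the hypothesis that the $c_i$ share a topological type in fact plays no role in this argument; only $\sq$-faithfulness, two-sidedness, and the common $\Z_2$-homology class of the $c_i$ are used.
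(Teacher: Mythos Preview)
Your argument is correct and follows a genuinely different route from the paper's. The paper splits into two cases according to whether the common class $[c_1]\in H_1(N_g,\Z_2)$ is zero. When $[c_1]=0$ every $c_i$ is separating; the paper then invokes the ``same topological type'' hypothesis to see that the one-holed subsurfaces cut off by the $c_i$ (each of nonorientable genus at least~$2$) are pairwise disjoint, whence $2|\A|\le g$. When $[c_1]\neq 0$ the paper argues that every complementary region has exactly two boundary curves and positive genus, so the number of regions equals $|\A|$ and the Euler-characteristic count gives $|\A|\le g-2$. Your proof replaces this dichotomy by a single uniform argument: the bound $r\le g-2$ on the number of complementary regions, together with the dual-graph estimate $\dim_{\Z_2}H_1(G,\Z_2)\le 1$ forced by the $\Z_2$-intersection pairing, yielding $|\A|=r-1+\dim_{\Z_2}H_1(G,\Z_2)\le r$. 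As you correctly observe, the ``same topological type'' hypothesis is never used, so your version is strictly stronger; the trade-off is that the paper's case-by-case analysis is more elementary, while your dual-graph method is cleaner and makes explicit where the homology hypothesis enters.
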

\begin{proof}
Let $\sq(\A)=\{c_1,\ldots,c_r\}$ and assume first that $[c_1]=0$ in $H_1(N_g,\Z_2)$. Hence, $c_1$ is a separating curve in $N_g$ and it bounds either a nonorientable subsurface of  genus at least 2, or an orientable subsurface of  genus at least 1. We assumed that all elements of $\A$ are of the same topological type, so the same statement is true for every $c_i$ and we have
\[r\leq \frac{g}{2}=\frac{2g-g}{2}\leq \frac{2g-4}{2}=g-2,\]
    as long as $g\geq 4$.
    
    If $[c_1]\neq 0$ in $H_1(N_g,\Z_2)$, then each of $c_i$ is nonseparating in $N=N_g$ and each component of $N_{\sq(\A)}$ has exactly two boundary components 
    (because elements of $\sq(\A)$ are pairwise homologous). In particular, $N_{\sq(\A)}$ has exactly $r$ components and the genus of each of these components is at least 1. Hence, 
    \[2-g=\chi(N)=\chi(N_{\sq(\A)})\leq -r,\]
 which gives $r\leq g-2$.
\end{proof}
\begin{lemma}
\label{prop:ch8:Claim} Assume that $g>4$ and $|X|\geq 3$. If $\A$ is a totally symmetric $X$-labelled multicurve in $N_{g,b}$ such that $|l(c)|=1$ for every $c\in \A$ and $\map{l}{\A}{2^X}$ is injective, then $|\A|\leq g-2$.
\end{lemma}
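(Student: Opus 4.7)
The approach is to combine the total symmetry of $\A$ with the nondegenerate $\Z_2$-intersection form on the closed surface $N_g$: a dichotomy on the homology classes of the components will reduce the statement either to Lemma~\ref{lem:ch8:max:homologous} or to the standard bound on totally isotropic subspaces.

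Because each component has a singleton (hence non-full) label and $|X|\ge 3$, Corollary~\ref{cor:sq_faithful} makes $\A$ $\sq$-faithful, so $\sq$ identifies $\A$ with a multicurve of the same cardinality in $N_g$. Injectivity of $l$ together with total symmetry forces $l(\A)$ to be $\sym(X)$-invariant, so by transitivity $l(\A)=\{\{x\}:x\in X\}$, whence $|\A|=|X|=:k$. Writing $\A=\{a_x\}_{x\in X}$ with $l(a_x)=\{x\}$, injectivity of $l$ and total symmetry yield $f_\tau(a_x)=a_{\tau(x)}$, so the $a_x$ are pairwise of the same topological type and the classes $v_x:=[\sq(a_x)]\in H_1(N_g,\Z_2)$ are permuted by the induced $\Z_2$-linear action of $f_\tau$ according to $\tau$.

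I next establish the dichotomy: either all $v_x$ coincide, or they are pairwise distinct with $\dim\mathrm{span}\{v_x\}\ge k-1$. Indeed, the subset $\{(x,y):v_x=v_y\}\subseteq X\times X$ is $\sym(X)$-invariant and contains the diagonal, so it equals either the diagonal or all of $X\times X$. In the distinct case, applying a transposition $(x,y)$ to any $\Z_2$-linear relation $\sum_z a_z v_z=0$ and subtracting yields $(a_x+a_y)(v_x+v_y)=0$; since $v_x\ne v_y$, this forces $a_x=a_y$ for every pair $x\ne y$, so the only possible nontrivial relation is $\sum_z v_z=0$ and $\dim\mathrm{span}\{v_x\}\ge k-1$.

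Finally, in the coinciding case the $\sq(a_x)$ are pairwise homologous and of common topological type, so Lemma~\ref{lem:ch8:max:homologous} directly yields $|\A|\le g-2$. In the distinct case, two-sidedness of each $a_x$ gives $v_x\cdot v_x=0$, and disjointness gives $v_x\cdot v_y=0$ for $x\ne y$, so $V:=\mathrm{span}\{v_x\}$ is totally isotropic for the $\Z_2$-intersection form on $H_1(N_g,\Z_2)$. Nondegeneracy of this form (by $\Z_2$-Poincar\'e duality, which holds even when $N_g$ is nonorientable) forces $V\subseteq V^\perp$ and hence $\dim V\le g/2$; combined with $\dim V\ge k-1$ this gives $k\le\lfloor g/2\rfloor+1\le g-2$ for all $g\ge 5$. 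The main obstacle is precisely this distinct case, where Lemma~\ref{lem:ch8:max:homologous} is unavailable and one has to extract total isotropy from two-sidedness and disjointness of the $a_x$, then invoke $\Z_2$-Poincar\'e duality to get a bound of the right order.
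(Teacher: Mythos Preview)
Your argument is correct and takes a genuinely different path from the paper's. Both proofs begin by passing to $N_g$ via Corollary~\ref{cor:sq_faithful} and both invoke Lemma~\ref{lem:ch8:max:homologous} when all the $\Z_2$-homology classes $v_x=[\sq(a_x)]$ coincide. The divergence is in how the remaining possibility is treated. The paper argues by contradiction: assuming $|\A|\ge g-1$, Lemma~\ref{lem:ch8:max:nonseparating} forces some proper nonempty subset $\{c_1,\dots,c_s\}$ of $\sq(\A)$ (with $s\le g-2$) to bound a subsurface of $N_g$, producing a $\Z_2$-homology relation $[c_1]+\cdots+[c_s]=0$; applying a transposition that swaps $c_1$ with some $c_{s+1}$ outside this subset while fixing the others then yields $[c_1]=[c_{s+1}]$, and iterating shows \emph{all} classes coincide, reducing everything to Lemma~\ref{lem:ch8:max:homologous}. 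In other words, the paper never actually confronts the ``pairwise distinct'' branch of your dichotomy---it shows that branch is incompatible with $|\A|\ge g-1$. You instead meet that branch head-on: the span $V=\mathrm{span}\{v_x\}$ is totally isotropic for the nondegenerate $\Z_2$-intersection form on $H_1(N_g,\Z_2)$ (two-sidedness gives $v_x\cdot v_x=0$, disjointness gives the off-diagonal vanishing), so $\dim V\le\lfloor g/2\rfloor$, and combined with your relation analysis $\dim V\ge k-1$ this yields $k\le\lfloor g/2\rfloor+1\le g-2$ for $g\ge 5$. Your route trades Lemma~\ref{lem:ch8:max:nonseparating} for a clean linear-algebra bound via $\Z_2$-Poincar\'e duality; the paper's route has the minor conceptual advantage of revealing that the distinct-classes case simply does not occur in the critical range.
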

\begin{proof}
By definition, $\A$ is two-sided and by Corollary \ref{cor:sq_faithful}, $\A$ is $\sq$-faithful. Suppose that 
$\sq(\A)=\{c_1,c_2,\ldots, c_r\}$ and $r\geq g-1$. 

Lemma \ref{lem:ch8:max:nonseparating} implies that some subset of $\sq(\A)$, say $\{c_1,\ldots, c_{s}\}$ where $s\leq g-2$,  bounds a subsurface in $N_g$. In particular, in $H_1(N_g,\Z_2)$ we have:
\[[c_1]+[c_2]+\ldots+[c_s]=0.\]
Since $\sq(\A)$ is totally symmetric, there exists an element of $\mcg(N_g)$ which transposes $c_1$ with $c_{s+1}$ and fixes all other components of $\sq(\A)$. Therefore,
\[[c_{s+1}]+[c_2]+\ldots+[c_s]=0\]
and $[c_{s+1}]=[c_1]$. This implies that components of $\sq(\A)$ are pairwise homologous. They are also of the same topological type, hence Lemma~\ref{lem:ch8:max:homologous} implies that $r\leq g-2$. This contradicts our assumption that $r\geq g-1$.
\end{proof}
\begin{defi}
We say that a component of a two-sided multicurve $\A$ is \emph{0-peripheral} if it separates a genus $0$ subsurface.
\end{defi}
\begin{lemma}\label{lem:0per}
Let $\A$ be a multicurve in  $N=N_{g,b}$ without 0-peripheral components. Every component $R$ of $N_\A$ satisfies
\[|\bndy(R)\cap\A|\le g.\] 
\end{lemma}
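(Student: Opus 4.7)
The plan is a direct Euler characteristic computation combined with a geometric consequence of the no-0-peripheral hypothesis.

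Classify each curve $a \in \A$ by how its annular neighborhood meets $\partial R$: of \emph{type (a)} if both boundary circles of its neighborhood lie in $\partial R$, of \emph{type (b)} if exactly one does, and of \emph{type (c)} if neither does; let $m'$, $m''$, $m'''$ be the respective counts. Each type-(a) curve contributes $2$ and each type-(b) curve contributes $1$ boundary circle of $R$, so the quantity to bound is $k := |\bndy(R) \cap \A| = 2m' + m''$.

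Form the enlarged subsurface $R^* \subset N$ by adjoining to $R$ the annular neighborhoods of all type-(a) curves. Each such gluing preserves Euler characteristic and removes two boundary circles. Let $X_1^*, \dots, X_{c^*}^*$ be the components of $\overline{N \setminus R^*}$, with $X_\ell^*$ having $m''_\ell$ internal (type-(b)) and $p'_\ell$ external boundary circles. Using $\chi(\Sigma) = 2 - \gamma(\Sigma) - |\partial \Sigma|$ for Euler genus $\gamma$, the additivity $\chi(N) = \chi(R^*) + \sum_\ell \chi(X_\ell^*)$ across circle glueings yields, after substitution, the master identity
\[
g + 2c^* - 2m'' \;=\; \gamma(R) + 2m' + \sum_{\ell=1}^{c^*}\gamma(X_\ell^*).
\]

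The crucial geometric input, and the only place the no-0-peripheral hypothesis enters, is the claim that \emph{if $m''_\ell = 1$ then $\gamma(X_\ell^*) \ge 1$}. Indeed, the unique internal boundary curve $a$ of such an $X_\ell^*$ is then the only route from $X_\ell^*$ to the rest of $N$, so $a$ separates $N$ with $X_\ell^*$ on one side. If additionally $\gamma(X_\ell^*) = 0$, then $X_\ell^*$ is planar; it cannot be a disc or a boundary-parallel annulus (those would force $a$ to be trivial or boundary-parallel, contradicting $a \in \curv(N)$), so $X_\ell^*$ is a planar surface with at least three boundaries, and $a$ then separates off a genus-$0$ subsurface, contradicting the hypothesis. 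Note also that $m''_\ell \ge 1$ for every $\ell$, since otherwise $X_\ell^*$ would be disconnected from $R^*$ in $N$, contradicting connectedness of $N$.

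To finish, let $n_1$ be the number of $\ell$ with $m''_\ell = 1$. The crucial claim gives $\sum_\ell \gamma(X_\ell^*) \ge n_1$, while $m'' = \sum_\ell m''_\ell \ge n_1 + 2(c^* - n_1) = 2c^* - n_1$ yields $n_1 \ge 2c^* - m''$. Substituting into the master identity gives
\[
g + 2c^* - 2m'' \;\ge\; \gamma(R) + 2m' + 2c^* - m'',
\]
which simplifies to $g \ge \gamma(R) + 2m' + m'' \ge 2m' + m'' = k$, completing the proof. The main obstacle I anticipate is the case analysis in the crucial input, especially ruling out all planar possibilities for $X_\ell^*$ with $m''_\ell = 1$; the bookkeeping to track internal versus external boundary circles through the formation of $R^*$ is routine but must be done carefully.
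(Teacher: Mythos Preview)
Your argument is correct. The paper, however, gives a much shorter proof by first passing to the closed surface via the squeeze map $\sq\colon N_{g,b}\to N_g$. Squeezing collapses precisely the external boundary circles, so $\sq(R)$ has exactly $m=|\bndy(R)\cap\A|$ boundary components; the no-0-peripheral hypothesis then says in one stroke that \emph{every} component $\sq(Q)$ of $\sq(N_\A)$ has $\chi(\sq(Q))\le 0$ (a disc component would force its unique bounding curve to separate off a genus-$0$ piece in $N$). Summing gives
\[
2-g=\chi(N_g)=\sum_{Q}\chi(\sq(Q))\le\chi(\sq(R))\le 2-m,
\]
and the lemma follows. Your construction of $R^*$ and the complementary pieces $X_\ell^*$, together with the case analysis showing $\gamma(X_\ell^*)\ge 1$ when $m''_\ell=1$, is doing by hand exactly what the squeeze map accomplishes in one move: it absorbs the external boundaries into the Euler-characteristic count so that the only remaining obstruction to $\chi\le 0$ on each complementary piece is the 0-peripheral condition. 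The paper's route avoids the type-(a)/(b)/(c) bookkeeping and the master identity entirely; your approach has the minor virtue of staying inside $N_{g,b}$ and making the role of each curve in $\A$ explicit, at the cost of a noticeably longer computation.
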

\begin{proof}
Let $\overline{N}=\sq(N)$. Since $\A$ has no 0-peripheral components, 
every component of $\overline{N}_{\sq(\A)}$ has a non-positive Euler characteristic. Note that if $m$ is the number of boundary components of $\sq(R)$, then $m=|\bndy(R)\cap\A|$ and 
\[2-g=\chi\left(\overline{N}\right)=\chi\left(\overline{N}_{\sq(\A)}\right)\leq\chi\left(\sq(R)\right)\le 2-m.\]
Hence $m\le g$.
\end{proof}
\begin{prop}\label{prop:ch8:8.1}
Assume that $n\geq 6$ and $g\leq n+1$. Let $\map{\rho}{\B_n}{\pmcg(N_{g,b})}$ be a homomorphism such that $\rho(\B_n)$ preserves the isotopy class of some 
two-sided multicurve $\A$. Then 
\begin{enumerate}
    \item the induced action of $\B_n$ on the set of components of $N_\A$ is cyclic;
    \item the induced action of $\B_n$ on the set of components of $\A$ is cyclic;
    \item elements of $\rho(\B_n')$ preserve sides and orientation of every component of~$\A$.
\end{enumerate}
\end{prop}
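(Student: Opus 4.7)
The plan is to analyze the two homomorphisms
\[\rho_1\colon\B_n\to\sym(\{\textrm{components of }N_{\A}\}),\qquad \rho_2\colon\B_n\to\sym(\A)\]
induced by the hypothesis, and to show that each has cyclic image. Part~(3) will then follow from the perfectness of $\B_n'$ for $n\geq 5$, which is a theorem of Gorin--Lin.

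Exploiting that $\rho(\B_n)\subset\pmcg(N)$ fixes every boundary component of $N$, every component of $N_{\A}$ meeting $\partial N$ is $\rho_1$-invariant, so the essential content of $\rho_1$ is its action on the \emph{moving} components---those with all boundary in $\A$. After discarding the $0$-peripheral components of $\A$ (which are themselves automatically fixed), an Euler characteristic bookkeeping argument combined with the squeeze $\sq\colon N_{g,b}\to N_g$, which absorbs the boundary data and reduces the count to the closed case, yields that the number of moving components is at most $g-2\leq n-1$. An analogous orbit-by-orbit analysis for $\rho_2$, using that curves in a single $\B_n$-orbit have the same topological type and---after a case analysis on the induced action on $H_1(N_g,\Z_2)$---may be assumed pairwise homologous, combines Lemmas~\ref{lem:ch8:max:nonseparating} and~\ref{lem:ch8:max:homologous} to give $|\A|_{\textrm{moving}}\leq g-2\leq n-1$.

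Once both actions factor through $\sym(m)$ for some $m\leq n-1$, I would invoke the classical theorem of Artin, in the form refined by Lin: for $n\geq 6$ and $m<n$, every homomorphism $\B_n\to\Sigma_m$ has cyclic image, giving (1) and (2). Alternatively, and more self-containedly, one argues via totally symmetric sets: the image $\rho_i(\X_n)$ is a totally symmetric subset of $\sym(m)$, whose cardinality is strictly bounded by $|\X_n|$ via the Kordek--Margalit bound, so two of its elements must coincide, and by transitivity of the conjugation action on $\X_n$ this propagates to $\rho_i(\sigma_1)=\rho_i(\sigma_3)$. A direct braid-relation computation then shows that the normal closure of $\sigma_1\sigma_3^{-1}$ in $\B_n$ equals $\B_n'$ for $n\geq 5$---in the quotient, the commutations of non-adjacent generators combine with the braid relations to force every $\sigma_i$ to coincide---so $\rho_i$ factors through $\B_n^{\mathrm{ab}}\cong\Z$.

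For part~(3), since $\rho_2(\B_n')=1$, the group $\rho(\B_n')$ fixes every component $a\in\A$; its action on the four-element set consisting of the two sides of $a$ and the two orientations of each side defines a homomorphism $\B_n'\to\Z_2\oplus\Z_2$, which must be trivial by perfectness of $\B_n'$. The main obstacle is the bound in the first step: establishing that both $|\A|_{\textrm{moving}}$ and the number of moving components of $N_{\A}$ are at most $n-1$ under the weak hypothesis $g\leq n+1$ and with $b\geq 0$ arbitrary. The orientable analogue of Castel assumed the much stronger bound $g\leq n/2$, so our argument must handle the extra room via a careful interplay between the squeeze map, the $\pmcg$ hypothesis that controls the boundary behavior, and the Euler characteristic on the resulting closed surface.
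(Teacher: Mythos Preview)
Your outline for parts~(1) and~(3) matches the paper's proof: discard $0$-peripheral curves, bound the number of components of $N_\A$ not meeting $\partial N$ by $g-2\le n-1$ via Euler characteristic, apply Artin; then use perfectness of $\B_n'$ for~(3).

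The gap is in part~(2). You claim that each $\B_n$-orbit in $\A$ has size at most $g-2\le n-1$ because its curves ``may be assumed pairwise homologous'' after a case analysis on $H_1(N_g,\Z_2)$. This does not follow. The induced homomorphism $\B_n\to\mathrm{Aut}(H_1(N_g,\Z_2))$ need not be trivial---it is not governed by $\B_n^{\mathrm{ab}}$---so curves in a single orbit can have distinct $\Z_2$-classes and neither Lemma~\ref{lem:ch8:max:nonseparating} nor Lemma~\ref{lem:ch8:max:homologous} applies. The homology argument you have in mind is the one from Lemma~\ref{prop:ch8:Claim}, but that lemma needs the full \emph{totally symmetric} hypothesis (the ability to realise a transposition fixing all other curves), which a mere $\B_n$-orbit does not provide.

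In fact the bound $\le n-1$ is not what is proved. The paper's route is genuinely different: it uses part~(1) first, so that $\B_n'$ fixes every component $R_i$ of $N_\A$, and then analyzes the action of $\B_n'$ (and, when $R_i$ is itself $\rho(\B_n)$-invariant, of $\B_n$) on each $\A_i=\bndy(R_i)\cap\A$ separately. Lemma~\ref{lem:0per} gives $|\A_i|\le g\le n+1$, and then three cases are handled: $|\A_i|<n$ by Lin's Theorem~A; $|\A_i|=n$ by Artin, which forces the standard $\Sigma_n$-action, so $\A_i$ becomes a totally symmetric multicurve and Lemma~\ref{prop:ch8:Claim} gives the contradiction $n\le g-2$; and $|\A_i|=n+1$ by Lin's Theorem~F, together with an Euler-characteristic argument showing $R_i$ must be $\rho(\B_n)$-fixed. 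The cases $|\A_i|\in\{n,n+1\}$ are exactly where your proposed bound fails and extra input is required.
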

\begin{proof}
Observe that $\B_n$ fixes every 0-peripheral component of $\A$, and also fixes the genus $0$ subsurface seprated by such a component.
Therefore we can assume that $\A$ has no 0-peripheral components.
Let 
\[N_\A=\bigcup_{i=1}^r R_i\cup \bigcup_{i={r+1}}^s R_i,\]
where each $R_i$, $1\leq i\leq r$, is a connected subsurface disjoint from the external boundary components of $N_\A$, and each $R_i$, $i>r$ is a component of $N_\A$ which contains at least one external boundary component. Since $\rho(\B_n)$ preserves $\A$, it permutes the set $\{R_i\}_{i=1}^{s}$. 

(1) Denote by $b_i$ the number of external boundary components of $R_i$ for $r+1\leq i\leq s$. Since $\A$ has no 0-peripheral components, 
if $R_i$ has genus $0$ then it has at least two non-external boundary components and $\chi(R_i)\le -b_i$. Moreover, $\chi(R_i)\leq -1$ for $1\leq i\leq r$, so 
\[\begin{aligned}
2-g-b&=\chi(N)=\chi(N_\A)\leq r\cdot (-1) -\sum_{i=r+1}^s b_i=-r-b,\\
          r&\leq g-2\leq(n+1)-2=n-1.
        \end{aligned}\]
By Artin's theorem \cite{Artin},  the action of $\B_n$ on $\{R_i\}_{i=1}^{r}$ is cyclic. 
Furthermore, elements of $\pmcg(N_{g,b})$ preserve the external boundary components, so $\rho(\B_n)$ fixes every component of $\bigcup_{i={r+1}}^s R_i$. Therefore, the action of $\B_n$ on $\{R_i\}_{i=1}^{s}$ is cyclic.

(2) By (1), the induced action of $\B_n'$ on elements of $\{R_i\}_{i=1}^{s}$ is trivial. Hence, there is a well defined induced action of $\B_n'$ on $\bndy(R_i)\cap \A$. Since all abelian quotients of $\B_n$ are cyclic, it is enough to prove that $\B_n'$ acts trivially on $\A$. Even more, it is enough to prove that $\B_n'$ acts trivially on $\A_i=\bndy(R_i)\cap \A$ for any $1\leq i\leq s$.


If $|\A_i|<n$, then by Theorem A of \cite{Lin}, the action of $\B_n'$ on $\A_i$ is trivial, so from now on assume that $|\A_i|\geq n$. 

If $\rho(\B_n)$ does not fix $R_i$, then there is another component $R_j$ of $N_\A$ homeomorphic to $R_i$. Moreover,  $\rho(\B_n)\subset \pmcg(N)$, so $i,j\leq r$. Then
\[\chi(R_j)=\chi(R_i)\leq 2-|\bndy(R_i)|\leq 2-n.\]
Since $g\leq n+1$, we have
\[\begin{aligned}
2\chi(R_i)-b&\geq \chi(N_\A)=\chi(N),\\
    4-2n-b&\geq 2-g-b\geq 2-(n+1)-b,\\
    3&\geq n.
\end{aligned}\]
This contradicts our assumption that $n\geq 6$. Hence, $\rho(\B_n)$ fixes $R_i$ and there is an induced action of $\B_n$ on $\A_i$. We can also assume that this action is transitive -- otherwise we can restrict the action to its orbits and use the same arguments.

If $|\A_i|=n$, then Artin's theorem \cite{Artin} implies that we have a standard action of $\B_n$ on $\A_i$. If we add $n$ different labels to components of $\A_i$, we get a totally symmetric multicurve in $N$ which satisfies the assumptions of Lemma \ref{prop:ch8:Claim}. Hence,
\[n=|\A_i|\leq g-2\leq n-1,\]
which is not possible. 


Finally, if $|\A_i|>n$, then by Lemma \ref{lem:0per}, 
\[|\A_i|\le g\le n+1,\]
so $|\A_i|=n+1$ and we can use Theorem F of \cite{Lin} to conclude that the action of $\B_n$ on $\A_i$ is cyclic. 
 
 (3) Let $a$ be a component of $\A$, $G$  the stabiliser of $a$ in $\pmcg(N)$ and $H$ the subgroup of $G$ consisting of elements that preserve sides and orientation of $a$. Note that $H$ is a normal subgroup of $G$ of index at most $4$ and $G/H$ is abelian.
 By (2) we have $\rho(\B_n')\subset G$ and since  $\B_n'$ is perfect for $n\ge 5$ \cite{GorLin}, we have $\rho(\B_n')\subset H$.
 \end{proof}

\begin{rem}\label{rem:twist_comm}
Under the assumptions of Proposition \ref{prop:ch8:8.1}, for $f\in\rho(\B'_n)$ and $a\in\A$ we have $ft_af^{-1}=t_a$. Indeed, by (2) we have $f(a)=a$, and by (3) $f$ preserves orientation of a regular neighbourhood of $a$.
\end{rem}

The proof of the next proposition is very similar to the proof of Proposition 8.1 of \cite{CM}, but we do not assume that the surface $N$ is closed.

\begin{prop}\label{all_C_case}
    If the labeled multicurve $\C^+(\rho(\X_n))$ contains only normal curves, then $\rho$ is cyclic.
\end{prop}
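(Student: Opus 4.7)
My plan has three stages. First, establish that $\rho(\B_n)$ preserves $\A := \C^+(\rho(\X_n))$, so that Proposition \ref{prop:ch8:8.1} applies. Second, show componentwise on $N_\A$ that $\cut{\A}\circ\rho$ is trivial on $\B_n'$. Third, conclude by perfectness of $\B_n'$ that $\rho$ factors through $\B_n/\B_n'\cong\Z$, hence is cyclic.

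\emph{Stage 1.} Since every component of $\A$ is normal, $\A=\C^+(\rho(\sigma_i))$ for every odd $i$. For any $j\in\{1,\dots,n-1\}$, the assumption $n\ge 14$ lets me pick an odd $i$ with $|i-j|\ge 2$, so $\sigma_i$ and $\sigma_j$ commute and Lemma \ref{lem:crs}(1) gives $\rho(\sigma_j)(\A)=\C^+(\rho(\sigma_i))=\A$. Hence $\rho(\B_n)$ preserves $\A$, and since all $\sigma_i$ are conjugate in $\B_n$, in fact $\C^+(\rho(\sigma_i))=\A$ for every $i$. The hypotheses $n\ge 6$ and $g\le n+1$ of Proposition \ref{prop:ch8:8.1} are satisfied, so the $\B_n$-action on components of $\A$ and of $N_\A$ is cyclic, and $\rho(\B_n')$ preserves each component of $\A$ together with its sides and orientation. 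In particular $\rho(\B_n')$ fixes every component of $N_\A$ and lies in the domain of $\cut{\A}$, whose kernel is the free abelian group $\langle t_a\mid a\in\A\rangle$ by \eqref{eq:ker_cut}.

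\emph{Stage 2.} For each component $R_k$ of $N_\A$, set $h_i^{(k)}:=\cut{\A}(\rho(\sigma_i))|_{R_k}$. Because $\A$ meets $R_k$ only on its boundary, $\C^+(h_i^{(k)})=\emptyset$, so each $h_i^{(k)}$ is periodic, pseudo-Anosov, or almost pseudo-Anosov. By Lemma \ref{lem:X_n_ts}, conjugation by $\B_n'$ realizes every permutation of $\X_n$, whence $\{h_i^{(k)}\mid i\in\odd(n)\}$ is totally symmetric in $\mcg(R_k)$. Since $|\odd(n)|=\lfloor n/2\rfloor\ge 7\ge 3$ and the only $\sym$-invariant equivalence relations on a set of this size are the trivial ones, either all $h_i^{(k)}$ for odd $i$ coincide or they are pairwise distinct. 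To rule out distinctness I treat the Nielsen--Thurston types: in the periodic case I squeeze the boundary of $R_k$ and, if $R_k$ is orientable, pass through the double cover from Corollary \ref{cor:pureBirChil}, placing a totally symmetric set of $\lfloor n/2\rfloor\ge 7$ distinct periodic elements in some $\mcg(N_{g''})$ with $g''\le g\le 2\lfloor n/2\rfloor+1$, contradicting Lemma \ref{lem:periodic_closed} since $2\lfloor n/2\rfloor+1<2^{\lfloor n/2\rfloor-2}$. In the pseudo-Anosov or almost pseudo-Anosov case the elements pairwise commute and lie in a common virtually cyclic centralizer by \cite[Proposition 24]{ParisWBLN} or Proposition \ref{apA:virtuallyCyclic}, and a count of conjugate commuting pseudo-Anosov classes of fixed stretch factor inside such a centralizer rules out $\lfloor n/2\rfloor$ distinct values. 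Thus all $h_i^{(k)}$ for odd $i$ coincide, and the analogous argument for the totally symmetric set $\{\sigma_i\mid i\in\even(n)\}$ does the same for even $i$. To match the two common values I apply Proposition \ref{ch:5:corollary5:5} (or its orientable counterpart from \cite{Castel}) to the representation $\B_n\to\mcg(R_k)$ induced from $\cut{\A}\circ\rho$ on a suitable stabilizer of $R_k$ (which contains $\B_n'$ since the $\B_n$-action on components factors through $\B_n/\B_n'$): the equality $\cut{\A}(\rho(\sigma_1\sigma_3^{-1}))|_{R_k}=1$ already obtained shows that $\sigma_1\sigma_3^{-1}$ goes to the identity, so the induced representation is cyclic; in a cyclic group the braid relations force all generators to coincide, so $h_i^{(k)}$ does not depend on parity and $\cut{\A}(\rho(\sigma_i\sigma_j^{-1}))|_{R_k}=1$ for all $i,j$. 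Hence $\cut{\A}(\rho(\B_n'))=1$.

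\emph{Stage 3 and main obstacle.} It follows that $\rho(\B_n')\subseteq\langle t_a\mid a\in\A\rangle$, which is abelian; since $\B_n'$ is perfect for $n\ge 5$ by \cite{GorLin}, we conclude $\rho(\B_n')=1$, so $\rho$ factors through $\B_n/\B_n'\cong\Z$ and is cyclic. The main obstacle will be the pseudo-Anosov/almost pseudo-Anosov subcase of Stage 2: quantifying precisely how large a totally symmetric, pairwise commuting, pairwise conjugate set of pseudo-Anosov (or almost pseudo-Anosov) classes can fit into a virtually cyclic centralizer, and carefully handling the bookkeeping when $\B_n$ permutes the components of $N_\A$ nontrivially through a cyclic action.
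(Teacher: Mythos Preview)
Your Stage~1 is fine and matches the paper. Stage~3 is also fine \emph{if} Stage~2 delivers $\cut{\A}(\rho(\B_n'))=1$. The problems are all in Stage~2, and the two obstacles you flag at the end are not mere bookkeeping but genuine gaps.

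\textbf{The elements $h_i^{(k)}$ need not exist.} You set $h_i^{(k)}=\cut{\A}(\rho(\sigma_i))|_{R_k}$, but $\rho(\sigma_i)$ may not preserve $R_k$: Proposition~\ref{prop:ch8:8.1} only says the action on components is cyclic, and since all $\sigma_i$ have the same image in $\B_n/\B_n'\cong\Z$, they all induce the \emph{same} possibly nontrivial permutation of $\{R_k\}$. In that case none of the $h_i^{(k)}$ is an element of $\mcg(R_k)$, and your totally symmetric set never gets off the ground. The paper fixes this by passing to a power: the kernel $K$ of the action on components has finite index, so $\sigma_1^p\in K$ for some $p\ge 1$, and then $\sigma_3^p\in K$ because it is conjugate to $\sigma_1^p$ by an element of $\B_n'\subset K$. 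One then works with $\overline{\rho}_i(\sigma_1^p)$ and $\overline{\rho}_i(\sigma_3^p)$ on each $R_i$. Your attempt to recover a homomorphism ``$\B_n\to\mcg(R_k)$ on a suitable stabilizer'' cannot succeed either: that stabilizer is $\langle\B_n',\sigma_1^p\rangle$, which is not $\B_n$ when $p>1$, so Proposition~\ref{ch:5:corollary5:5} does not apply to it.

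\textbf{The (almost) pseudo-Anosov case and the endgame.} Your ``count of conjugate commuting pseudo-Anosov classes inside a virtually cyclic centralizer'' is not a proof: the conjugating elements realizing the $\sym(\X_n)$-action need not lie in that centralizer, so you cannot bound the number of distinct $h_i^{(k)}$ this way. The paper avoids the whole issue of showing the $h_i^{(k)}$ coincide. Instead it only extracts the weak consequence that, on each $R_i$, the two commuting conjugate elements $\overline{\rho}_i(\sigma_1^p)$ and $\overline{\rho}_i(\sigma_3^p)$ (periodic, pseudo-Anosov, or almost pseudo-Anosov, with $\C^+=\emptyset$) share a common power; hence $\overline{\rho}(\sigma_1\sigma_3^{-1})$ has finite order $m$. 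This gives $\rho(\sigma_1\sigma_3^{-1})^m\in\ker\cut{\A}=\langle t_a\mid a\in\A\rangle$. Now comes the key trick you are missing: $\sigma_1\sigma_5^{-1}$ is conjugate to $\sigma_1\sigma_3^{-1}$ by an element of $\B_n'$, and by Proposition~\ref{prop:ch8:8.1}(3) and Remark~\ref{rem:twist_comm} such conjugation fixes every $t_a$, so $\rho(\sigma_1\sigma_3^{-1})^m=\rho(\sigma_1\sigma_5^{-1})^m$ in $\pmcg(N)$ itself, whence $\rho(\sigma_3\sigma_5^{-1})^m=1$. Thus $\rho(\sigma_1\sigma_3^{-1})$ is periodic in $\pmcg(N)$ and Proposition~\ref{ch:5:corollary5:5} finishes the job. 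This completely bypasses your periodic subcase (so no need to squeeze $R_k$, bound its genus, or worry about orientability of $R_k$; note also that Corollary~\ref{cor:pureBirChil} goes from nonorientable to orientable, not the reverse as you wrote).
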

\begin{proof}
    Since all components of $\A=\C^+(\rho(\X_n))$ are normal, for each  $i\in\odd(n)$ we have $\C^+(\rho(\sigma_i))=\A$. Moreover, each $\sigma_j$ with even  $j$ commutes with some $\sigma_i$ with $i\in\odd(n)$, hence Lemma \ref{lem:crs} implies that $\rho(\B_n)$ preserves the isotopy class of $\A$. In particular,  elements of $\rho(\B_n)$ permute the connected components $\{R_i\}_{i=1}^r$ of $N_\A$. By Proposition \ref{prop:ch8:8.1}, the induced action 
    \[\map{\pi\circ \cut{\A}\circ \rho}{\B_n}{\sym(\{R_i\}_{i=1}^r)}\]
    is cyclic, so $\B_n'\subset K=\ker\left(\pi\circ \cut{\A}\circ \rho\right)$. If we restrict $\rho$ to $K$, 
    we obtain a homomorphism
    \[\map{\overline{\rho}}{K}{\pmcg(R_1)\times \pmcg(R_2)\times\ldots\times \pmcg(R_r)},\]
    which further restricts to a homomorphism
    \[\map{\overline{\rho}_i}{K}{\pmcg(R_i)},\quad \text{for $1\leq i\leq r$.}\]
    Note that $K$ is of finite index in $\B_n$, hence 
    $\sigma_1^p\in K$ for some $p\geq 1$. Moreover, $\sigma_1$ and $\sigma_3$ are conjugate by an element of $\B_n'\subset K$, hence $\sigma_3^p\in K$ and $\overline{\rho}_i(\sigma_3)^p$ is conjugate to $\overline{\rho}_i(\sigma_1)^p$ in $\pmcg(R_i)$. These two elements also commute, and \[\C^+\left(\overline{\rho}_i(\sigma_1)^p\right)=\C^+\left(\overline{\rho}_i(\sigma_3)^p\right)=\emptyset .\]
    Therefore, they are either elements of the same finite order, or they are both pseudo-Anosov, or else they are both almost pseudo-Anosov. By Proposition \ref{apA:virtuallyCyclic}, in every case, there is $q\geq 1$ such that 
    \[\begin{aligned}
      \left(\overline{\rho}_i(\sigma_1)^p\right)^q&=\left(\overline{\rho}_i(\sigma_1)^p\right)^q,\\
      \overline{\rho}_i(\sigma_1\sigma_3^{-1})^{pq}&=1.
    \end{aligned}\]
    This implies that $\overline{\rho}(\sigma_1\sigma_3^{-1})$ has finite order. If we denote its order by $m$, then
    \[{\rho}(\sigma_1\sigma_3^{-1})^m\in \ker \cut{\A}=
    \langle t_a\mid a\in\A\rangle\]
    (see Section \ref{sub:sec:multicurves} for the properties of $\cut{\A}$). 
    Since $\sigma_1\sigma_5^{-1}$ is conjugate to $\sigma_1\sigma_3^{-1}$ by an element of $\B_n'$, from Proposition \ref{prop:ch8:8.1} and Remark \ref{rem:twist_comm} we obtain
    \[\begin{aligned}
        \rho(\sigma_1\sigma_3^{-1})^m&=\rho(\sigma_1\sigma_5^{-1})^m,\\
        \rho(\sigma_3\sigma_5^{-1})^m&=1.
    \end{aligned}\]
    Thus, we proved that $\rho(\sigma_3\sigma_5^{-1})$ has finite order. The same is true for its conjugate $\rho(\sigma_1\sigma_3^{-1})$, so Proposition \ref{ch:5:corollary5:5} implies that $\rho$ is cyclic.
\end{proof}

\section{Ruling out exotic curves}\label{sec:no_I_cirves}
The main goal of this section is to prove that the labelled multicurve 
$\C^+(\rho(\X_n))$ does not contain exotic curves -- see Proposition \ref{eliminate:exotic} below. The proof of this theorem follows the lines of the proof of Proposition 9.1 of \cite{CM}, however we want to correct one inaccuracy from the original proof (see Remark \ref{rem:9.1:expl} below), so we repeat the argument.
\begin{prop}\label{eliminate:exotic} 
    The labelled multicurve $\C^+(\rho(\X_n))$ cannot contain exotic curves.
\end{prop}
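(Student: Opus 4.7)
Set $k=\lfloor n/2\rfloor\ge 7$, so $|\X_n|=k$. The plan is to derive a contradiction from assuming that $\C^+(\rho(\X_n))$ has an exotic component. Write $\A=\C^+(\rho(\X_n))$ and let $\B\subset\A$ be the union of its exotic components. For each $j\in\odd(n)$ set $B_j=\{c\in\B\colon l(c)=\X_n\setminus\{\sigma_j\}\}$, so $\B=\bigcup_j B_j$. By total symmetry all $B_j$ have the same cardinality, which is positive by assumption.

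First I would show that $\rho(\B_n)$ preserves $\A$ (and therefore $\B$). For odd $i$, the element $\rho(\sigma_i)$ commutes with $\rho(\sigma_j)$ for every odd $j\ne i$, so by (1) of Lemma~\ref{lem:crs} it preserves each $\C^+(\rho(\sigma_j))$ as a set and thus preserves $\A$. For $g\in\B_n'$, Lemma~\ref{lem:X_n_ts} produces $\tau\in\sym(\X_n)$ with $gxg^{-1}=\tau(x)$ for all $x\in\X_n$, so $\rho(g)\C^+(\rho(x))=\C^+(\rho(\tau(x)))\subseteq\A$. Using the decomposition $\B_n=\langle\sigma_1\rangle\cdot\B_n'$, this shows that all of $\rho(\B_n)$ preserves $\A$; moreover the induced action permutes labels by an element of $\sym(\X_n)$, so it preserves label sizes and in particular the subset $\B$.

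Next I would apply Proposition~\ref{prop:ch8:8.1}(2). The hypothesis $g\le 2k+1\le n+1$ is satisfied, so the induced action of $\B_n$ on the components of $\B$ is cyclic. Since $\B_n'$ is perfect for $n\ge 5$, its image in $\sym(\B)$ is trivial; every element of $\rho(\B_n')$ therefore fixes each component of $\B$ individually and, in particular, preserves every class $B_j$ setwise. Total symmetry yields the opposite conclusion: for any $\tau\in\sym(\X_n)$, the element $g_\tau\in\B_n'$ from Lemma~\ref{lem:X_n_ts} satisfies $\rho(g_\tau)(B_j)=B_{j'}$ with $\sigma_{j'}=\tau(\sigma_j)$. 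Since $k\ge 2$ we may choose $\tau$ moving some $\sigma_j$, contradicting the previous sentence. Hence $\B=\emptyset$.

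The delicate point is verifying that $\rho(\B_n)$ (and not merely the subgroup generated by $\rho(\X_n)$) preserves $\A$: the even-indexed generators do not commute with all odd generators and so do not a priori preserve $\A$ on their own. This is handled by the decomposition $\B_n=\langle\sigma_1\rangle\cdot\B_n'$ together with the control on conjugation in $\B_n'$ provided by Lemma~\ref{lem:X_n_ts}.
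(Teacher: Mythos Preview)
Your argument has a genuine gap at the step you yourself flag as delicate. You write: ``For $g\in\B_n'$, Lemma~\ref{lem:X_n_ts} produces $\tau\in\sym(\X_n)$ with $gxg^{-1}=\tau(x)$ for all $x\in\X_n$.'' This misreads the lemma. Lemma~\ref{lem:X_n_ts} says that every permutation of $\X_n$ can be realized by conjugation by \emph{some} element of $\B_n'$; it does not say that conjugation by an \emph{arbitrary} $g\in\B_n'$ permutes $\X_n$. In fact most elements of $\B_n'$ do not normalize $\X_n$: for instance $g=\sigma_1\sigma_2^{-1}\in\B_n'$ sends $\sigma_3$ to $\sigma_1\sigma_2^{-1}\sigma_3\sigma_2\sigma_1^{-1}\notin\X_n$. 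So the decomposition $\B_n=\langle\sigma_1\rangle\cdot\B_n'$ does not help you conclude that $\rho(\B_n)$ preserves your $\A=\C^+(\rho(\X_n))$, and without that invariance Proposition~\ref{prop:ch8:8.1} cannot be applied. The remainder of your argument (deducing from Proposition~\ref{prop:ch8:8.1} that $\rho(\B_n')$ fixes each $B_j$, and contrasting this with total symmetry) is fine, but it rests on the unestablished invariance.

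The paper circumvents this by working not with all of $\C^+(\rho(\X_n))$ but with the smaller multicurve $\A=C\cup\bigcup_i E_i$ consisting only of the normal and exotic components. For an even-indexed generator $\sigma_j$, the paper exploits that $\sigma_j$ commutes with at least three odd-indexed generators far from $j$ (say $\sigma_5,\sigma_7,\sigma_9$ when $j=2$) and extracts each $E_i$ and $C$ by explicit set operations among the $\C^+(\rho(\sigma_l))$ for such $l$; this shows directly that $\rho(\sigma_j)$ preserves $\A$. Note that dropping the special components $A_i$ is essential here: $\rho(\sigma_2)$ need not preserve $\C^+(\rho(\sigma_1))$ or $\C^+(\rho(\sigma_3))$, so there is no reason it should preserve $A_1$ or $A_3$.
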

\begin{proof}
    By Proposition \ref{ch3:curves:AIC}, every curve of $\C^+(\rho(\X_n))$ is special, normal or exotic, so we can decompose this multicurve as follows:
    \[\C^+(\rho(\X_n))=\bigcup_{i\in\odd(n)}A_i\cup \bigcup_{i\in\odd(n)}E_i\cup C,\]
    where $C$ is the set of normal curves, $A_i$ are special curves contained in $\C^+(\rho(\sigma_i))$, and $E_i$ are exotic curves which do not contain $i$ in their label set. Define also 
    \[\A=\bigcup_{i\in\odd(n)}E_i\cup C.\]
    We will show that there is an induced action of $\B_n$ on $\A$. 

Since all elements in $\X_n$ commute, for any $i,j\in\odd(n)$,  $\rho(\sigma_j)$ preserves $\C^+(\rho(\sigma_i))$. Hence, $\rho(\sigma_j)$ preserves
     \[\bigcap_{i\in\odd(n)} \C^+(\rho(\sigma_i))=C\]
     and
     \[\bigcap_{i\in\odd(n),\ i\neq j} \C^+(\rho(\sigma_i))=\C\cup E_j.\]
     This implies that each element of $\rho(\X_n)$ preserves $C$ and every $E_j$ for $j\in \odd(n)$. In particular, there is an action of  $\rho(\X_n)$ on $\A$.

     We now argue that if $1\leq j\leq n$ is even, then $\rho(\sigma_j)$ also preserves $\A$. In order to simplify the notation, we assume that $j=2$. Since $\sigma_2$ commutes with $\sigma_i$ for $i=5,7,9$, $\rho(\sigma_2)$ preserves the following sets
     \[\begin{aligned}
        &\C^+(\rho(\sigma_7))\setminus\C^+(\rho(\sigma_5))=A_7\cup E_5,\\
    &\C^+(\rho(\sigma_9))\setminus\C^+(\rho(\sigma_5))=A_9\cup E_5,\\
     &\left(A_7\cup E_5\right)\cap \left(A_9\cup E_5\right)=E_5,\\
          &\C^+(\rho(\sigma_5))\setminus\C^+(\rho(\sigma_7))=A_5\cup E_7,\\
         &\C^+(\rho(\sigma_9))\setminus\C^+(\rho(\sigma_7))=A_9\cup E_7,\\
         &\left(A_5\cup E_7\right)\cap \left(A_9\cup E_7\right)=E_7.
     \end{aligned}\]
     So, $\rho(\sigma_2)$ preserves 
     \[\C^+(\rho(\sigma_5))\cap\C^+(\rho(\sigma_7))\cup E_5\cup E_7=\C\cup \bigcup_{i\in\odd(n)} E_i=\A.\]
     This completes the proof that there is an induced action (by permutations) of $\B_n$ on components of $\A$. 

     Suppose now that $\A\neq C$. Then every element of $\B_n$ which normalizes $\X_n$, permutes elements of the set 
     \[E=\left\{ E_i\right\}_{i\in\odd(n)}.\] Moreover, $\X_n$ is totally symmetric in $\B_n$, hence every permutation of $E$ can be realized via this action. This contradicts the statement (2) of Proposition \ref{prop:ch8:8.1}, which states that the action of $\B_n$ on $\A$ should be cyclic.
\end{proof}
\begin{rem}\label{rem:9.1:expl}
In the second step of the proof of Proposition 9.1 of \cite{CM} the authors use the fact that $\C^+(\rho(\sigma_5))\cup \C^+(\rho(\sigma_7))=\A$ (in their notation this multicurve is called $M$). This is, in general, not true, because the left-hand side contains special curves. However, as we showed above, this inaccuracy can be easily fixed.
\end{rem}
Throughout the rest of the proof we assume that $\C^+(\rho(\X_n))$ has some special components, for otherwise $\rho$ is cyclic by Proposition \ref{all_C_case}.
\section{Topological configurations of special curves}\label{sec:top_A}
The main goal of this section is to classify the possible configurations of totally symmetric $\X_n$-labelled multicurves in $N=N_{g,b}$, whose every component is special -- see Proposition~\ref{lem:Qclosed}. 

The proof of Proposition \ref{lem:Qclosed} is based on a series of lemmas. In order to simplify the statements of these lemmas, let us introduce some global notation and assumptions for the rest of this section. 
\begin{enumerate}
\item $\A$ is any totally symmetric $\X_n$-labelled multicurve in $N=N_{g,b}$ whose every component is special and $k=|\X_n|=\left\lfloor{n}/{2}\right\rfloor$.
    \item We denote by $A_i$ the set of components of $\A$ with label $\{\sigma_i\}$.
    \item For any component $R$ of $N_\A$, we denote 
    \[l(R)=\{\sigma_i\in \X_n\colon \bndy(R)\cap A_i\neq\emptyset\}.\]
    \item We assume that $k\geq 7$ and
    \begin{equation}\label{eq:special:g:k}
    g\leq 2k+1.
    \end{equation}
    So, in particular,
    \begin{equation}\label{eq:special:g:chi}
    \chi(N)=2-g-b\geq 1-2k-b.
    \end{equation}
\end{enumerate}

\begin{lemma}\label{lem:big_small_comp}
    Let $R$ be any component of $N_\A$. Then either $|l(R)|=1$ or $|l(R)|=|\X_n|$.
\end{lemma}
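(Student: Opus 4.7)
The plan is to argue by contradiction: suppose $|l(R)| = l$ for some $l$ with $2 \le l \le k-1$, and derive a contradiction by combining the $\sym(\X_n)$-orbit of $R$ supplied by total symmetry with the upper bound on $|\A|$ from Corollary~\ref{cor:notCbound}.

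The first key step is to rule out external boundary on $R$. Total symmetry gives, for each $\tau \in \sym(\X_n)$, an element $f_\tau \in \pmcg_\A(N)$ satisfying $l(f_\tau(a)) = \tau(l(a))$ for every $a \in \A$; I would use this to induce a $\sym(\X_n)$-action on components of $N_\A$ with $l(f_\tau(R')) = \tau(l(R'))$. Because $f_\tau \in \pmcg(N)$ fixes every component of $\partial N$ setwise, if $R$ contained an external boundary component $c$, then $f_\tau(R)$ would be the component of $N_\A$ containing $f_\tau(c)=c$, namely $R$ itself. This would force $\tau(l(R)) = l(R)$ for every $\tau$, hence $l(R) \in \{\emptyset,\X_n\}$, contradicting $2 \le l \le k-1$. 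The same argument applies to every component in the orbit of $R$, so no orbit component meets $\partial N$.

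The remainder is a counting estimate. The stabiliser of $l(R)$ in $\sym(\X_n)$ has index $\binom{k}{l}$, and components of $N_\A$ with distinct labels are distinct, so the orbit contains at least $\binom{k}{l}$ components of $N_\A$. Each such component has a boundary curve in $A_i$ for every $\sigma_i$ in its label, hence at least $l$ boundary components on $\A$. Since the curves of $\A$ are two-sided, the total incidence equals $2|\A|$, giving
\[
l\binom{k}{l} \;\le\; \sum_{R' \in \mathrm{orbit}(R)} |\bndy(R') \cap \A| \;\le\; 2|\A|.
\]
Since every component of $\A$ is special and $k\ge 7$, every component has non-full label, so Corollary~\ref{cor:notCbound} together with $g \le 2k+1$ yields $|\A| \le \tfrac{3}{2}(g-2) \le \tfrac{3}{2}(2k-1)$. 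Using the identity $l\binom{k}{l} = k\binom{k-1}{l-1} \ge k(k-1)$ valid for $2 \le l \le k-1$, the inequality collapses to $k(k-1) \le 3(2k-1)$, i.e.\ $k^2 - 7k + 3 \le 0$, which forces $k \le 6$ and contradicts $k \ge 7$. The main obstacle is the first step: leveraging $f_\tau \in \pmcg(N)$ to confine orbit components to the interior of $N$ away from $\partial N$; once this is established, the counting argument is routine.
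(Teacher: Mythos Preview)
Your proof is correct, but the approach differs from the paper's. The paper argues directly with Euler characteristic: it passes to the closed surface $N_g$ via $\sq$-faithfulness (Corollary~\ref{cor:sq_faithful}), observes that $N_g\setminus\sq(\A)$ has at least $\binom{k}{l}$ components of negative Euler characteristic when $2\le l\le k-2$, and then treats the case $l=k-1$ separately with a sharper bound using that each of the $k$ orbit components has at least $k-1$ non-external boundary curves. Your route instead counts boundary incidences --- each of the $\binom{k}{l}$ orbit components contributes at least $l$ non-external boundary curves to a total of $2|\A|$, which you then bound via Corollary~\ref{cor:notCbound}. Your argument is uniform in $l$ and avoids the case split; the paper's approach is slightly more self-contained in that it does not invoke Corollary~\ref{cor:notCbound}.

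One remark: your first step, ruling out external boundary on $R$, is valid but unnecessary. The incidence inequality $l\binom{k}{l}\le 2|\A|$ holds regardless of whether orbit components touch $\partial N$, since you are only counting \emph{non-external} boundary components (those arising from $\A$), and these sum to exactly $2|\A|$ over all of $N_\A$. So what you flag as ``the main obstacle'' is in fact a detour; the counting argument stands on its own.
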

\begin{proof}
Assume that $|l(R)|>1$.

If $|l(R)|<k-1$, then there exist at least 
    \[\binom{k}{|l(R)|}\geq \binom{k}{2}=\frac{k(k-2)}{2}\]
    different components of $N_\A$ (because $\A$ is totally symmetric with respect to  $\X_n$). By Corollary \ref{cor:sq_faithful}, $\A$ is sq-faithful so $N_g\setminus\sq(\A)$ has the same number of connected components as $N_\A=N_{g,b}\setminus \A$ and each of these components has negative Euler characteristic. Hence, inequality \eqref{eq:special:g:k} gives
\[\frac{k(k-2)}{2}\cdot (-1)\geq \chi(N_g)=2-g \geq 1-2k.\]
    This inequality is false as long as $k\geq 6$, so we proved that $|l(R)|\geq k-1$.

    If $|l(R)|=k-1$, then $R$ has at least $k-1$ boundary components which are not external, and there are at least $\binom{k}{k-1}=k$ different components homeomorphic to $R$. Thus, inequality \eqref{eq:special:g:chi} gives
    \[k(3-k)-b\geq  \chi(N_{g,b})\geq 1-2k-b.\]
    This inequality is false as long as $k\geq 5$, so we proved that $|l(R)|=k$.
\end{proof}
\begin{rem}
We follow the terminology from \cite{CM} and we say that the component $R$ of $N_\A$ is \emph{small} if  $|l(R)|=1$. 
If $R$ is not small, we call it a \emph{big} component of $N_\A$. By Lemma \ref{lem:big_small_comp}, $|l(R)|=k$ for big components of $N_\A$.
\end{rem}
\begin{lemma}\label{no:big:components} Let $m$ be the number of big components of $N_\A$. Then $m=1$ or $m=2$. Moreover, if $m=2$, then there are no small components of $N_\A$ and each component of $\A$ is non-separating in $N=N_{g,b}$.
\end{lemma}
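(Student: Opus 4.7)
The strategy is to combine an Euler characteristic count in the closed surface $N_g$ with the total symmetry of $\A$, and then to analyse a dual multigraph for the non-separating claim. Since every component of $\A$ is special, Corollary \ref{cor:sq_faithful} gives that $\A$ is $\sq$-faithful, so the components of $N_g\setminus \sq(\A)$ are in bijection with those of $N_\A$ via $R\mapsto \sq(R)$, and each $\sq(R)$ has exactly $a_R:=|\partial R\cap \A|$ boundary circles in $N_g$. By definition, big components satisfy $a_R\ge k$ and small ones satisfy $a_R\ge 1$.

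First I would prove that $\chi(\sq(R))\le 2-k$ for every big component and $\chi(\sq(R))\le -1$ for every small one. The big bound is just the Euler inequality $\chi\le 2-a_R$. For a small component, $\sq(R)$ is not a disc or a M\"obius band, since its boundary lies in $\sq(\A)$ which consists of nontrivial curves by $\sq$-faithfulness. The main technical obstacle is ruling out the annulus case for small components: if $\sq(R)$ were an annulus, its two boundary circles would be isotopic in $N_g$, and $\sq$-faithfulness would force them to be the two sides of a single curve $c\in\A$; but then for a regular neighbourhood $U$ of $c$ the subsurface $R\cup U$ of $N$ would satisfy $\partial(R\cup U)\subset\partial N$, and connectedness of $N$ would imply $R\cup U=N$, which in turn forces $\A=\{c\}$ (since the only non-trivial simple closed curve in an annulus is its core), contradicting $|\A|\ge k\ge 7$. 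Summing the Euler characteristic over all components then gives
\[
2-g=\chi(N_g)\le m(2-k)-s,
\]
which, using $g\le 2k+1$, rearranges to $(k-2)m+s\le 2k-1$.

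By total symmetry, the group $\sym(\X_n)$ permutes small components according to their labels, so $s$ is a multiple of $k$; in particular $s=0$ or $s\ge k$. If $s=0$ then $(k-2)m\le 2k-1$, which for $k\ge 7$ forces $m\le 2$; if $s\ge k$ then $(k-2)m\le k-1$, forcing $m\le 1$. Together these yield $m\in\{1,2\}$ and that $m=2$ implies $s=0$.

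For the non-separating assertion, assume $m=2$ with big components $B_1,B_2$ and form the dual multigraph $\Gamma$ on $\{B_1,B_2\}$ whose edges are the curves of $\A$ (an edge being a loop at $B_j$ when both sides of its curve lie in $B_j$). A curve $c\in\A$ is separating in $N$ precisely when its edge is a bridge in $\Gamma$; loops are never bridges, and in a two-vertex multigraph a $B_1$-$B_2$ edge is a bridge if and only if it is the unique such edge. Connectedness of $N$ provides at least one $B_1$-$B_2$ edge, and by total symmetry the number of $B_1$-$B_2$ edges with any fixed label $\sigma_i$ is independent of $i$; hence the total number of $B_1$-$B_2$ edges is at least $k\ge 7\ge 2$. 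Therefore no edge of $\Gamma$ is a bridge, and every curve in $\A$ is non-separating in $N$.
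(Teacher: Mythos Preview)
Your argument is essentially correct and close in spirit to the paper's, but there is one genuine gap: you never establish $m\ge 1$. Your inequality $(k-2)m+s\le 2k-1$ together with $s\in k\mathbb{Z}$ is perfectly compatible with $m=0$, $s=k$, so the conclusion ``$m\in\{1,2\}$'' does not follow from what you have written. The missing observation (which the paper makes in one line) is that since every curve of $\A$ is special, a boundary curve $c\in A_i$ of a small component $R$ forces $l(R)=\{\sigma_i\}$; hence two small components with different labels can never be adjacent. If $m=0$, connectedness of $N$ would then force all small components to carry the same label, contradicting the fact that $\A$ contains curves with every label (so $|\A|\ge k\ge 7$). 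Adding this sentence fixes the gap.

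Apart from this, your proof and the paper's are variations on the same Euler-characteristic count in $N_g$ via $\sq$-faithfulness. The paper treats the cases ``$m\ge 3$'' and ``$m=2$ with a small component'' separately, while you package both into the single inequality $(k-2)m+s\le 2k-1$; your explicit handling of the annulus case for small components spells out what the paper leaves as ``each component has negative Euler characteristic''. For the non-separating assertion the arguments diverge slightly: the paper observes that one separating curve in $\sq(\A)$ forces (by total symmetry) at least $k$ of them, hence more than two complementary regions in $N_g$; your dual-multigraph bridge argument is a clean alternative that reaches the same conclusion.
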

\begin{proof}
Our global assumption in this section is that there are only special curves in $\A$, hence two small components with different labels can not share a boundary curve from $\A$. This proves that there must be at least one big component, so $m\geq 1$.

If $N_\A$ has at least 3 big components, then Corollary \ref{cor:sq_faithful} implies that the same is true for components of $N_g\setminus\sq(\A)$. Each of these components has at least $k$ boundary components, so inequality \eqref{eq:special:g:k} gives
\[3(2-k)\geq \chi\left(N_g\setminus\sq(\A)\right)= \chi(N_g)=2-g\geq 1-2k.\]
This inequality is false as long as $k\geq 6$, so we proved that $m\leq 2$.

Assume now that $m=2$ and suppose that there is at least one small component of $N_\A$. Since $\A$ is totally symmetric, there must be at least $k=|\X_n|$ small components of $N_\A$. As before, Corollary \ref{cor:sq_faithful} implies that we have the same configuration of components of $N_g\setminus\sq(\A)$. Each of these components has negative Euler characteristics, hence
\[2(2-k)+k\cdot (-1) \geq \chi\left(N_g\setminus\sq(\A)\right)= \chi(N_g)=2-g\geq 1-2k.\]
This inequality is false as along $k\geq 4$. This proves that small components of $N_\A$ can exist only if $m=1$.

Lemma \ref{lem:big_small_comp} and the above analysis implies that if $m=2$, then 
$N_g\setminus\sq(\A)$ has two connected components, If $\sq(\A)$ contains at least one separating curve, then it contains at least $k$ such curves and the number of components of $N_g\setminus\sq(\A)$ is greater than 2. So, all curves in $\sq(\A)$ are non-separating. This implies the same conclusion for $\A$.
\end{proof}
\begin{lemma}\label{special-curves-separating-conf}
If $i\in\odd(n)$ and $A_i$ contains a separating curve $a_i$, then $a_i$ bounds a subsurface $K_i$ homeomorphic either to $N_{2,1}$ or $S_{1,1}$. Moreover, $A_i$ contains at most one separating curve.
\end{lemma}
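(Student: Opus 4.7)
The plan is to use total symmetry to replicate the separating curve across all labels of $\X_n$, then pin down the topological type of $K_i$ and the count of separating curves via an Euler characteristic argument.

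I begin with the total symmetry step. For every $j\in\odd(n)$, the transposition $(\sigma_i,\sigma_j)\in\sym(\X_n)$ is realised by some $f_{ij}\in\pmcg_\A(N)$ with $f_{ij}(A_i)=A_j$. Consequently, if $A_i$ contains $r\geq 1$ separating curves bounding subsurfaces of a common homeomorphism type $K_0$, each $A_j$ for $j\in\odd(n)$ does too, producing $rk$ separating curves in $\A$ of the same type, pairwise disjoint (as components of a multicurve) and pairwise non-isotopic.

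Next I would verify that the $rk$ bounded subsurfaces are pairwise disjoint. For any two such subsurfaces $K,K'\cong K_0$ with disjoint connected boundaries $a,a'$, each connected surface lies in a single component of the other's complement, so either $K\cap K'=\emptyset$, or one is properly contained in the other, or $K\cup K'=N$. A proper inclusion $K'\subsetneq K$ makes $K\setminus\mathrm{int}(K')$ a connected surface with $\chi=0$ and two boundary components, hence an annulus, forcing $a\sim a'$ --- contradicting non-isotopy. The remaining covering case $K\cup K'=N$ forces $b=0$ (each $K_0$ has a single boundary component) and gives $\chi(K\cap K')=g-4$; iterating the same analysis with a third subsurface $K''$ among the $rk\geq k\geq 7$ available yields the same annulus contradiction.

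Finally, I would run the Euler characteristic count. Since $a_i$ is a nontrivial separating curve, $K_0\cong S_{h,1}$ with $h\geq 1$ or $K_0\cong N_{h,1}$ with $h\geq 2$. The complement $R=N\setminus\bigcup K_j$ is connected (separating curves stay separating after removing other separated pieces) with $b+rk$ boundary components, and additivity of $\chi$ together with the standard genus accounting (each orientable $S_{h,1}$ piece contributes $2h$ to the nonorientable genus; each $N_{h,1}$ piece consumes $h$ crosscaps) yields
\[
g \geq 2hrk+1 \quad\text{in the orientable case}, \qquad g \geq hrk \quad\text{in the nonorientable case}.
\]
Combining with $g\leq 2k+1$ and $k\geq 7$, the first inequality forces $hr\leq 1$, hence $h=r=1$ and $K_0\cong S_{1,1}$; the second forces $hr\leq 2$ with $h\geq 2$, hence $h=2,r=1$ and $K_0\cong N_{2,1}$. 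In either case $r=1$, so $A_i$ contains a unique separating curve. The main obstacle is the covering case $K\cup K'=N$ in the disjointness step, which is rigid but requires careful topological bookkeeping to exclude when multiple such subsurfaces coexist; the remaining calculation is straightforward.
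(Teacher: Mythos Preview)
Your overall strategy---propagate the separating curve across labels via total symmetry and then run an Euler characteristic count---is the same as the paper's. The gap is in how you set up that count.

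You never specify which component of $N\setminus a_i$ is to be called $K_i$, and your disjointness step depends on a coherent choice. More concretely, in the covering case $K\cup K'=N$ you assert $\chi(K\cap K')=g-4$; but by inclusion--exclusion $\chi(K\cap K')=2\chi(K_0)-\chi(N)=2\chi(K_0)+g-2$ when $b=0$, so your formula already presupposes $\chi(K_0)=-1$, which is exactly what the final count is meant to establish. The sentence about ``iterating with a third subsurface $K''$'' does not resolve this: $K''$ can sit in $K\cap K'$ in several ways, and without knowing $\chi(K_0)$ you cannot force an annulus. Separately, you assume $K_0$ has a single boundary component (writing $K_0\cong S_{h,1}$ or $N_{h,1}$), but never argue that $K_i$ contains no component of $\partial N$; if it did, your genus bookkeeping at the end would be off.

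The paper avoids all three issues at once by invoking the preceding Lemma~\ref{no:big:components}: a separating curve in $\A$ forces $N_\A$ to have exactly one big component $R$, and $K_i$ is then \emph{defined} as the side of $a_i$ not containing $R$. With this choice, each $K_i$ is a union of small components of $N_\A$ all labelled $\{\sigma_i\}$, so distinct $K_i$'s are disjoint for free (different labels, same ambient decomposition). Since the elements of $\pmcg(N)$ realizing the permutations fix every component of $\partial N$ while permuting the $K_i$, no $K_i$ can contain an external boundary component, so $|\partial K_i|=1$. Then $\chi(R)+k\chi(K_1)\ge\chi(N)\ge 1-2k-b$ with $\chi(R)\le 2-k$ gives $\chi(K_1)\ge -1-1/k$, hence $\chi(K_1)=-1$; and the same estimate with $2k$ copies excludes a second separating curve in $A_i$. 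The moral is that your direct case analysis is trying to reprove, in disguise, the big/small dichotomy that the paper has already packaged into Lemma~\ref{no:big:components}.
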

\begin{proof}
Without loss of generality assume that $A_1$ contains a separating curve $a_1$. By Lemma~\ref{no:big:components},  there is exactly one big component $R$ of $N_\A$. Let $K_1$ be this connected component of $N\setminus a_1$ which is disjoint from $R$. Hence, all components of $N_\A$ contained in $K_1$ are small and labeled with $a_1$. Since $\A$ is totally symmetric, there are small components $\{K_i\}_{i\in\odd(n)}$ all homemorphic to $K_1$, pairwise disjoint and disjoint from $R$. Moreover, the action of $\pmcg(N_{g,b})$ permutes $\{K_i\}_{i\in\odd(n)}$, so there are no external boundary components in $K_i$ and $K_i$ has exactly one boundary component. The big component $R$ has at least $k$ boundary components, so inequality \eqref{eq:special:g:chi} yields
\[\begin{aligned}
    2-k+k\cdot \chi (K_1)-b&\geq \chi(N_\A)\geq 1-2k-b,\\
    \chi (K_1)&\geq \frac{-k-1}{k}=-1-\frac{1}{k}.
\end{aligned}\]
Since $K_1$ has negative Euler characteristics and $|\bndy(K_1)|=1$, we get that $\chi(K_1)=-1$ and $K_1$ is homeomorphic either to $N_{2,1}$ or $S_{1,1}$.

Finally, if $A_1$ contained more than one separating curve, then $N_{g,b}$ would contain 
at least $2k$ pairwise disjoint subsurfaces homeomorphic to $K_1$. Then
\[2-k+2k\cdot (-1) -b \geq \chi(N_\A)\geq 1-2k-b,\]
which contradicts our assumption that $k\geq 7$.
\end{proof}
\begin{lemma}\label{special-curves-nonseparating-conf}
If $i\in\odd(n)$ and there are no separating curves in $A_i$, then $A_i$ consists of a single non-separating curve and there are no small components of $N_\A$.
\end{lemma}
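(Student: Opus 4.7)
My plan is to suppose for contradiction that $A_1$ has no separating curves but either $|A_1|\ge 2$ or $N_\A$ has a small component, and derive contradictions by combining Euler characteristic estimates on the squeezed surface $N_g$ with a topological criterion for a curve to be separating. Let $t=|A_1|$; by total symmetry, $|A_i|=t$ for every $i\in\odd(n)$, the small components of $N_\A$ are evenly distributed across labels, and the total number of small components is $s=s_0 k$ ($s_0$ per label).

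By Lemma \ref{no:big:components} the number $m$ of big components of $N_\A$ satisfies $m\in\{1,2\}$. The case $m=2$ is quick: by the same lemma there are no small components, so
\[2-g=\chi(\sq(R))+\chi(\sq(R'))\le(2-b_R)+(2-b_{R'})=4-2tk,\]
hence $g\ge 2tk-2$, which for $t\ge 2$ contradicts $g\le 2k+1$. For $m=1$: by Corollary \ref{cor:sq_faithful} each $\sq(S_j)$ is non-trivial and not an annulus (distinct $\A$-components map to non-isotopic curves in $N_g$), so $\chi(\sq(S_j))\le -1$, and combined with $b_R\ge k$ (from $|l(R)|=k$) we get
\[2-g=\chi(\sq(R))+\sum_{j=1}^s\chi(\sq(S_j))\le(2-b_R)-s\le(2-k)-s,\]
so $s\le k+1$. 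If no small components exist, this reduces to $g\ge 2tk$, forcing $t=1$; if they do, then $s_0\ge 1$ gives $s\ge k$, hence $s=k$ and $s_0=1$.

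The key topological observation is the following: a curve $c\in A_i$ with a side at a small component $S$ of label $\{\sigma_i\}$ is non-separating in $N$ only if $S$ admits a second $\A$-connection to the big component $R$ via some curve in $\A\setminus\{c\}$. Indeed, $N\setminus c$ is obtained from $N_\A$ by gluing back $\A\setminus\{c\}$, and the two sides of $c$ lie in a single connected component of $N\setminus c$ precisely when $S$ can reach $R$ through the remaining gluings. When $t=1$, $|A_i|=1$ and $s_0=1$, so the unique curve $c_i\in A_i$ is the only $\A$-curve touching $S$; whether $b_S=1$ (one side at $S$, the other at $R$) or $b_S=2$ (both sides at $S$), cutting $c_i$ leaves $S$ isolated from $R$, making $c_i$ separating---contradicting the hypothesis. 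Hence no small components exist when $t=1$.

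It remains to eliminate $t\ge 2$ under $m=1$. The crude bound $g\ge 2tk-2s$ combined with $g\le 2k+1$ and $s\le k+1$ forces $t\le 2$, and for $t=2$ we have $s=k$ and $s_0=1$. I would then enumerate the possible distributions of the four sides of the two curves in $A_i$ between $R$ and the unique small $S$ of label $\{\sigma_i\}$: any configuration containing a curve with both sides at $R$ (``type (a)'') together with any curve touching $S$, or a curve with both sides at $S$ (``type (c)''), forces the isolation above and hence a separating curve in $A_1$; the all-type-(c) case makes $R$ miss the label $\sigma_i$, contradicting $|l(R)|=k$; the only remaining configuration is the ``pure'' one in which every curve of each $A_i$ has one side at $R$ and one at $S$, giving $b_S=2$, $b_R=2k$, and
\[2-g\le(2-b_R)+k\chi(\sq(S))\le(2-2k)-k=2-3k,\]
so $g\ge 3k>2k+1$, the final contradiction. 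The main obstacle is this last case analysis: one must carefully track, for each hybrid distribution of sides between $R$ and $S$, which cut isolates $S$ from $R$ and thereby produces a separating curve in $A_1$.
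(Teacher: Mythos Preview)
Your argument is correct but takes a noticeably longer route than the paper's. The paper reverses the order of the two conclusions and deploys your ``key topological observation'' once, uniformly, rather than case-by-case. Instead of splitting on $t=|A_i|$ and analysing how the sides of the curves in $A_i$ distribute between $R$ and the small component, the paper observes at the outset: if no curve in $A_i$ is separating and the union $Q_i$ of small components labelled $\{\sigma_i\}$ is nonempty, then $R$ and $Q_i$ must share at least two boundary curves (otherwise the unique connecting curve would separate $Q_i$ from the rest of $N$, or $Q_i$ would be isolated). This immediately yields $|\partial R|\ge 2k+b$ regardless of $t$, and a single Euler-characteristic inequality
\[
1-2k-b\le\chi(N)=\chi(R)+k\,\chi(Q_1)\le(2-2k-b)-k
\]
forces $k\le 1$, eliminating small components in one stroke. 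Only then does the paper bound $|A_i|$: with no small components and $m\le 2$ big ones, the total side count $\sum e_i=2k|A_i|+b$ together with $\chi(N)\le 2m-\sum e_i$ gives $|A_i|\le 1+\tfrac{3}{2k}<2$. Your six-way case analysis for $t=2$ is thus subsumed by the single ``at least two shared curves'' step; what you gain from the enumeration is only a more explicit picture of why each configuration fails.

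One small omission in your case analysis: you do not treat the configuration where both curves of $A_i$ have both sides at $R$ (type (a,a)). It is vacuous---the small component $S$ with $l(S)=\{\sigma_i\}$ would then have no $A_i$-boundary, contradicting its label---but it should be mentioned so that ``the only remaining configuration is the pure one'' is fully justified.
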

\begin{proof}
Suppose that there are small components of $N_\A$ and let $Q_i$ be the union of all small components labelled with $\{\sigma_i\}$. We assumed that there are no separating curves in $A_i$, hence $R$ and $Q_i$ must share at least two boundary components. Thus
\[\chi(R)\leq 2-2k-b\]
and
\[\begin{aligned}
    1-2k-b&\leq \chi(N)=\chi(N_\A)=\chi(R)+k\cdot \chi (Q_i),\\
    1-2k-b&\leq  2-2k-b+k\cdot (-1),\\
    k&\leq 1.
\end{aligned}\]
This proves, that $N_\A$ can not contain small components.

If $\{R_i\}_{i=1}^m$ is the set of big components of $N_\A$, then Lemma \ref{no:big:components} implies that $m=1$ or $m=2$. If $e_i=|\bndy(R_i)|$ for $i\leq m$, then
\[\sum_{i=1}^me_i=k\cdot 2|A_i|+b\]
and 
\[\begin{aligned}
1-2k-b&\leq \chi(N)=\chi(N_\A)\leq \sum_{i=1}^m (2-e_i),\\
1-2k-b&\leq 2m-2k|A_i|-b,\\
|A_i|&\leq \frac{2m+2k-1}{2k}=\frac{2m-1}{2k}+1\leq \frac{3}{2k}+1.
\end{aligned}\]
We assumed that $k\geq 7$, hence $|A_i|=1$.
\end{proof}

\begin{prop}\label{lem:Qclosed}
    The configuration of components of $N_\A$ is one of the following:
    \begin{enumerate}
        \item Each $A_i$, $i\in\odd(n)$, contains exactly one separating curve which bounds a small component $K_i$ of $N_\A$ homemorphic either to $N_{2,1}$ or $S_{1,1}$. Moreover, $N_\A\setminus \bigcup_{i\in\odd(n)} K_i$ is a big component homeomorphic to $S_{0,k+b}$ or $N_{1,k+b}$. 
        \item Each $A_i$, $i\in\odd(n)$, contains exactly one element -- a non-separating curve. In this case there are no small components of $N_\A$. Moreover, $N_\A$ consists of $m=1$ or $m=2$ big components, and
        \begin{enumerate}
            \item if $m=1$, then $N_\A$ is either $S_{0,2k+b}$ or $N_{1,2k+b}$.
            \item if $m=2$, then $N_\A=R_1\cup R_2$ and $\{R_1,R_2\}$ is one of the sets:
            \[\begin{aligned}
              &\{S_{0,k+b_1},S_{0,k+b_2},\},\ \{S_{0,k+b_1},N_{1,k+b_2},\},\ \{S_{0,k+b_1},S_{1,k+b_2},\},\\ 
              &\{S_{0,k+b_1},N_{2,k+b_2}\},\ \{S_{0,k+b_1},N_{3,k+b_2}\},\ \{N_{1,k+b_1},N_{1,k+b_2}\},\\
              &\{N_{1,k+b_1},S_{1,k+b_2}\},\ \{N_{1,k+b_1},N_{2,k+b_2}\},
            \end{aligned}\]
            where $b_1+b_2=b$.
        \end{enumerate}
    \end{enumerate}
\end{prop}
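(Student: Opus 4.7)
The plan is to split into two cases according to whether some $A_i$ contains a separating curve. By total symmetry this dichotomy propagates across all indices: a mapping class realizing the transposition of two labels sends $A_i$ bijectively to $A_j$ preserving topological type, so either every $A_i$ has a separating curve or none does. In the first case, Lemma \ref{special-curves-separating-conf} supplies a unique separating $a_i\in A_i$ bounding a subsurface $K_i$ homeomorphic to $N_{2,1}$ or $S_{1,1}$ with $\chi(K_i)=-1$, and Lemma \ref{no:big:components} (applied to the existence of small components) gives exactly one big component $R$ of $N_\A$.

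First I would rule out interior $\A$-curves inside each $K_i$. Since no component of $N_\A$ can be a disk, annulus, or M\"obius band (curves in $\A$ are nontrivial, pairwise non-isotopic, and non-boundary-parallel), every small component of $N_\A$ has $\chi\leq -1$. If $K_i$ contained $n_K\geq 2$ interior small components, their Euler characteristics would sum to $\chi(K_i)=-1$ with each summand $\leq -1$, a contradiction; hence $K_i$ is a single small component of $N_\A$. Next I would rule out additional non-separating curves in $A_i$ lying in $R$: if $A_i$ contained $s_R\geq 1$ such curves, then $|\partial R|=k(1+2s_R)+b$, and combining $\chi(R)\leq 2-|\partial R|$ with $\chi(R)=\chi(N)+k$ and $g\leq 2k+1$ would force $g\geq 2k(1+s_R)\geq 4k$, a contradiction. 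Therefore $|A_i|=1$, $|\partial R|=k+b$, and $\chi(R)=2-g-b+k$. Writing $R$ as $S_{g_R,k+b}$ or $N_{g_R,k+b}$ and solving against $g\leq 2k+1$ yields exactly $R=S_{0,k+b}$ with $g=2k$, or $R=N_{1,k+b}$ with $g=2k+1$, matching case (1) of the proposition.

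In the second case no $A_i$ contains a separating curve, so Lemma \ref{special-curves-nonseparating-conf} gives $|A_i|=1$ and no small components, while Lemma \ref{no:big:components} gives $m\in\{1,2\}$ big components. If $m=1$, the same Euler-characteristic bookkeeping with $|\partial N_\A|=2k+b$ yields $N_\A=S_{0,2k+b}$ or $N_{1,2k+b}$. If $m=2$, the crucial observation is that every $c_i\in A_i$ must have one side in each of $R_1,R_2$: if $c_i$ were a loop at $R_j$, it would not meet $R_{3-j}$, contradicting $l(R_{3-j})=\X_n$ (from Lemma \ref{lem:big_small_comp} applied to the big component $R_{3-j}$). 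Hence $|\partial R_j|=k+b_j$ with $b_1+b_2=b$. Writing $R_j\in\{S_{g_j,k+b_j},N_{g_j,k+b_j}\}$ and setting $s_j=2g_j$ when $R_j$ is orientable and $s_j=g_j$ otherwise, the identity $\chi(R_1)+\chi(R_2)=2-g-b$ becomes $s_1+s_2=g-2k+2\leq 3$; enumerating the pairs $(s_1,s_2)$ with $s_1\leq s_2$ and distinguishing orientable from nonorientable surfaces produces precisely the eight combinations listed in the proposition.

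The hard part will be the bookkeeping in case (1), where one must carefully eliminate both interior curves inside each $K_i$ and additional non-separating components of $A_i$ in $R$, so that the equality $|A_i|=1$ is fully justified; once this is established, the remaining classification in both cases reduces to a routine Euler-characteristic enumeration constrained by $g\leq 2k+1$.
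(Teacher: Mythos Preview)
Your approach matches the paper's: a separating/non-separating case split followed by Euler-characteristic bookkeeping against $g\le 2k+1$, invoking Lemmas~\ref{no:big:components}, \ref{special-curves-separating-conf}, \ref{special-curves-nonseparating-conf}. Two minor differences: the paper first reduces to the closed case $b=0$ via $\sq$-faithfulness (Corollary~\ref{cor:sq_faithful}) and then treats general $b$ by pullback, while you carry $b$ throughout; and in the $m=2$ subcase you supply an explicit reason (each $A_i$ is a singleton, so its unique curve must meet both big components since $l(R_{3-j})=\X_n$) for $|\partial R_j\cap\A|=k$, which the paper simply asserts.

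There is, however, a gap in your Case~1 bookkeeping. The inequality $\sum\chi\le -n_K$ only yields $n_K\le 1$; it does \emph{not} show that $K_i$ itself is a component of $N_\A$, because a single non-separating two-sided curve inside $K_i$ (such a curve exists in both $S_{1,1}$ and $N_{2,1}$) cuts $K_i$ into exactly one pair of pants, still giving $n_K=1$. Likewise, the identity $\chi(R)=\chi(N)+k$ that you use to exclude extra curves in the big region already presupposes that the small part of $N_\A$ has total Euler characteristic exactly $-k$, which is precisely what you are trying to establish; as written the argument is circular. A clean fix (implicitly behind the paper's terse equality $\chi(N_\A)=\chi(R)+k\chi(K_i)$): by total symmetry the small components of $N_\A$ come in $\sym(\X_n)$-orbits of size $k$, each member contributing $\chi\le -1$; combining $\chi(R)\le 2-k$ with $\chi(N_\A)\ge 1-2k$ (for $b=0$) forces exactly one such orbit, hence no small components in the big region and, by a second count on $|\partial R|$, no $\A$-curves interior to it either.
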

\begin{proof}
It is enough to prove the proposition for $b=0$. The general case follows by replacing $\A$ with $\sq(\A)$.

If $A_i$ contains a separating curve, then by Lemmas \ref{no:big:components} and \ref{special-curves-separating-conf}, $N_\A$ contains exactly one big component $R$ and $A_i$ contains exactly one separating curve which bounds a subsurface $K_i$ homeomorphic to $N_{2,1}$ or $S_{1,1}$. 
Hence,  
\[\begin{aligned}
    1-2k&\leq 2-g=\chi(N_g)=\chi(N_\A)=\chi(R)+k\cdot \chi(K_i),\\
    1&\leq \chi(R)+k.
\end{aligned}\]
Recall that $R$ has  $k$ boundary components, so $R$ is homeomorphic to $S_{0,k}$ or $N_{1,k}$.

Assume now that there are no separating curves in $A_i$. By Lemmas \ref{no:big:components} and \ref{special-curves-nonseparating-conf}, $N_\A$ consists of $m=1$ or $m=2$ big components. Moreover, $|A_i|=1$, so in the first case the big component $R$ has $2k$ boundary curves, and in the second case each of two big components $\{R_1,R_2\}$ has $k$ boundary curves. 

If $m=1$, then 
\[1-2k\leq 2-g=\chi(N_g)=\chi(N_\A)=\chi(R),\]
so $R$ is homeomorphic to $S_{0,2k}$ or $N_{1,2k}$. 

If $m=2$, then 
\[1-2k\leq 2-g=\chi(N_g)=\chi(N_\A)=\chi(R_1)+\chi(R_2).\]
Since $|\bndy(R_1)|=|\bndy(R_2)|=k$, we get the list of possibilities as in the statement.
\end{proof}


\section{Mutual position of special and normal curves}\label{sec:delta}
Let's introduce the following elements of $\B_n$:
\[\delta=\sigma_1\sigma_2\cdots\sigma_{n-1},\qquad \sigma_0=\delta\sigma_{n-1}\delta^{-1}.\]
 Now for $0\le i\le n-2$ we have 
 \[\delta\sigma_{i}\delta^{-1}=\sigma_{i+1}.\]
 It follows that $\sigma_0$ satisfies the relations
  \[\sigma_0\sigma_j=\sigma_j\sigma_0\quad\textrm{for\ } j\notin\{1,n-1\},\quad \sigma_i\sigma_0\sigma_i=\sigma_0\sigma_i\sigma_0\quad\textrm{for\ } i\in\{1,n-1\}.\]
  We also introduce the following totally symmetric subsets of $\B_n$:
  \[\Y_n=\delta \X_n\delta^{-1},\quad \Y'_n=\delta^{-1}\X_n\delta_n.\]
  If $n$ is even \[\Y'_n=\Y_n=\{\sigma_0,\sigma_2,\dots,\sigma_{n-2}\},\]
 whereas if $n$ is odd
 \[\Y'_n=\{\sigma_0,\sigma_2,\dots,\sigma_{n-3}\},\quad \Y_n=\{\sigma_2,\sigma_4,\dots,\sigma_{n-1}\}.\]
 Set $\Delta=\rho(\delta)$ and observe that
 \[\C^+(\rho(\Y_n))=\Delta(\C^+(\rho(\X_n))),\quad \left(\textrm{resp.\ }\C^+(\rho(\Y'_n))=\Delta^{-1}(\C^+(\rho(\X_n)))\right)\] is a totally symmetric $\Y_n$-labelled (resp. $\Y'_n$-labelled) multicurve whose every component is  either special or normal. 
\begin{lemma}\label{lem:normal_inv}
The set of normal components of $\C^+(\rho(\X_n))$ is equal to the  set of normal components of $\C^+(\rho(\Y_n))$ or $\C^+(\rho(\Y'_n))$ and is preserved by $\rho(\B_n)$.
\end{lemma}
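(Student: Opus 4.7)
The plan is to collapse both assertions of the lemma into one: that $\Delta=\rho(\delta)$ preserves the set $\mathcal{N}(\X)$ of normal components of $\C^+(\rho(\X_n))$, equivalently $\mathcal{N}(\X)=\bigcap_{i\in\odd(n)}\C^+(\rho(\sigma_i))$. Applying Lemma \ref{lem:crs}(1) to the relation $\delta\sigma_i\delta^{-1}=\sigma_{i+1}$ (with $\sigma_0=\delta\sigma_{n-1}\delta^{-1}$) yields $\Delta(\C^+(\rho(\sigma_i)))=\C^+(\rho(\sigma_{i+1}))$; intersecting over $\odd(n)$ gives
\[\Delta(\mathcal{N}(\X))=\mathcal{N}(\Y),\quad \Delta^{-1}(\mathcal{N}(\X))=\mathcal{N}(\Y').\]
Hence each asserted equality is equivalent to $\Delta(\mathcal{N}(\X))=\mathcal{N}(\X)$. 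Moreover, because the elements of $\X_n$ pairwise commute, Lemma \ref{lem:crs}(1) implies that each $\rho(\sigma_i)$ with $\sigma_i\in\X_n$ preserves every $\C^+(\rho(\sigma_j))$, $j\in\odd(n)$, and hence their intersection $\mathcal{N}(\X)$; since $\B_n=\langle\delta,\sigma_1\rangle$, the preservation of $\mathcal{N}(\X)$ by $\rho(\B_n)$ also reduces to $\Delta(\mathcal{N}(\X))=\mathcal{N}(\X)$.

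To prove this, I would apply Proposition \ref{lem:Qclosed} to both $\C^+(\rho(\X_n))=A\cup\mathcal{N}(\X)$ and $\C^+(\rho(\Y_n))=A^\Y\cup\mathcal{N}(\Y)=\Delta(A)\cup\Delta(\mathcal{N}(\X))$, where $A,A^\Y$ denote the respective special parts. Each multicurve is totally symmetric with only special and normal components (Propositions \ref{ch3:curves:AIC} and \ref{eliminate:exotic}), so each fits one of the configurations enumerated in Proposition \ref{lem:Qclosed} (Case 1 or Case 2). In each case the normal curves are supported on the big component(s) of the complement of the specials, and the substantive task is to show that $\Delta$ sends this big region into itself, which then forces $\Delta(\mathcal{N}(\X))\subseteq\mathcal{N}(\X)$, hence equality. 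The argument would combine the Euler-characteristic bounds from $g\le 2k+1$ and $k=\lfloor n/2\rfloor\ge 7$ with Proposition \ref{prop:ch8:8.1} applied to a carefully chosen $\rho(\B_n)$-invariant two-sided multicurve containing $\mathcal{N}(\X)$, in order to rule out any swap of big components by $\Delta$.

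The main obstacle is this topological case analysis. The difficulty is genuine: $\Delta$ need not preserve the set of specials, as the \ctr{} shows (there $A$ and $A^\Y$ are distinct multicurves of pairwise intersecting separating curves), so the rigidity argument cannot rest on the specials. In that example the lemma still holds only because $\mathcal{N}(\X)=\emptyset$; in general, one must exploit the rigidity of the normal region itself, checking each configuration of Proposition \ref{lem:Qclosed} and verifying that no swap of big components by $\Delta$ is compatible with the genus constraint.
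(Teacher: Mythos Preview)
Your reduction to showing $\Delta(\mathcal{N}(\X))=\mathcal{N}(\X)$ is correct, but you have missed the key observation that makes this lemma almost immediate, and your proposed topological route is both far harder than necessary and not clearly completable. The crucial point is that by this stage of the argument every component of $\C^+(\rho(\X_n))$ is either special or normal (Propositions~\ref{ch3:curves:AIC} and~\ref{eliminate:exotic}). Consequently, a curve lying in $\C^+(\rho(\sigma_i))\cap\C^+(\rho(\sigma_j))$ for \emph{any} two distinct $i,j\in\odd(n)$ has label of size at least $2$, hence is not special, hence is normal. Thus $\mathcal{N}(\X)=\C_i\cap\C_j$ for every pair of distinct odd indices, and likewise $\mathcal{N}(\Y)=\C_2\cap\C_4$. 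The paper then exploits a single braid element $\tau=\sigma_1\sigma_2\sigma_3\sigma_4$, which conjugates $\sigma_1,\sigma_3$ to $\sigma_2,\sigma_4$ while fixing $\sigma_7,\sigma_9$: setting $f=\rho(\tau)$ one gets $f(\mathcal{N}(\X))=f(\C_1\cap\C_3)=\C_2\cap\C_4=\mathcal{N}(\Y)$ and simultaneously $f(\mathcal{N}(\X))=f(\C_7\cap\C_9)=\C_7\cap\C_9=\mathcal{N}(\X)$, so $\mathcal{N}(\X)=\mathcal{N}(\Y)$. No topology is needed.

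By contrast, your plan to invoke Proposition~\ref{lem:Qclosed} and argue that $\Delta$ must preserve the big region has a real gap. As you yourself note, $\Delta$ does not preserve the special multicurve $A$, so $\Delta$ does not carry the big component of $N_A$ to itself but rather to the big component of $N_{\Delta(A)}$, a different subsurface; there is no evident rigidity forcing the normal curves in the first to coincide with those in the second. Your fallback, to apply Proposition~\ref{prop:ch8:8.1} to ``a carefully chosen $\rho(\B_n)$-invariant two-sided multicurve containing $\mathcal{N}(\X)$,'' is circular as stated: the natural candidate is $\mathcal{N}(\X)$ itself, whose $\rho(\B_n)$-invariance is exactly what you are trying to prove, and the union $\bigcup_j\C_j^+$ is not a multicurve since non-commuting $\sigma_i,\sigma_j$ may have intersecting reduction curves. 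The missing idea is the two-index characterization of $\mathcal{N}(\X)$ above; once you have it, the lemma is a two-line braid-group computation.
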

\begin{proof}
 For $0\le i\le n-1$ let $\C_i=\C^+(\rho(\sigma_i))$.
Let $C_\X$, $C_\Y$ and $C_{\Y'}$ denote the sets of normal components of respectively $\C^+(\rho(\X_n))$, $\C^+(\rho(\Y_n))$ and  $\C^+(\rho(\Y'_n))$. 
We have
\[C_\Y=\C_2\cap\C_4=C_{\Y'}.\]
Consider the element $\tau=\sigma_1\sigma_2\sigma_3\sigma_4$ and note that 
\begin{enumerate}
\item $\tau(\sigma_i)\tau^{-1}=\sigma_{i+1}$ for $i=1,2,3$,
\item $\tau(\sigma_j)\tau^{-1}=\sigma_j$ for $6\le j\le n-1$.
\end{enumerate}
Let $f=\rho(\tau)$. Then 
\[f(C_\X)=f(\C_1\cap\C_3)=\C_2\cap\C_4=C_\Y.\]
On the other hand
\[f(C_\X)=f(\C_7\cap\C_9)=\C_7\cap\C_9=C_\X.\]
Thus $C_\X=C_\Y$.
Since the set $C_\X=C_\Y$ is preserved by $\rho(\sigma_i)$ for $1\le i\le n-1$, it is preserved by $\rho(\B_n)$.
\end{proof}
\begin{rem}\label{def:A:normal}
We denote the union of normal curves by $\A^n$.
\[\A^n=\bigcap_{i=0}^{n-1}\C^+(\rho(\sigma_i)).\]
By Lemma \ref{lem:normal_inv}, $\A^n$ is $\rho(\B_n)$ invariant.
\end{rem}
\begin{rem}\label{def:ai:unique}
Let $A_i$ denote the set of special components of $\C^+(\rho(\sigma_i))$, for $0\le i\le n-1$:
\[A_i=\C^+(\rho(\sigma_i))\setminus\A^n.\]
We have $\Delta(A_i)=A_{i+1}$, where $A_n=A_0$.
By Lemmas \ref{special-curves-separating-conf} and \ref{special-curves-nonseparating-conf},  either $A_i$ contains a unique separating curve $a_i$ for all $0\le i\le n-1$, or $A_i$ consists of a single non-separating curve $a_i$ for all $0\le i\le n-1$. Throughout the rest of this paper $a_i$ denotes either the unique separating curve in $A_i$ (in the former case) or the unique element of $A_i$ (in the latter case).
\end{rem}
\begin{lemma}\label{lem:Delta_orbit}
\begin{enumerate}
    \item If $\sigma_j=\tau\sigma_i\tau^{-1}$ for some $0\le i,j\le n-1$ and $\tau\in\B_n$ then   $\rho(\tau)(a_i)=a_j$. 
    \item The curves $a_0,a_1,\dots,a_{n-1}$ are pairwise nonisotopic.
\end{enumerate}
\end{lemma}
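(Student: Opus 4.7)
My plan for part (1) is immediate. From $\sigma_j=\tau\sigma_i\tau^{-1}$ and Lemma \ref{lem:crs}(1) applied to $\rho(\tau)\rho(\sigma_i)\rho(\tau)^{-1}=\rho(\sigma_j)$ I get $\rho(\tau)(\C(\rho(\sigma_i)))=\C(\rho(\sigma_j))$, which restricts to $\C^+$ since $\rho(\tau)$ is a homeomorphism and hence preserves two-sidedness. Combined with the $\rho(\B_n)$-invariance of $\A^n$ (Remark \ref{def:A:normal}), this yields $\rho(\tau)(A_i)=A_j$. In Case 1, $\rho(\tau)$ respects the separating/non-separating distinction, so the unique separating curve $a_i\in A_i$ must map to $a_j$; in Case 2, $A_i=\{a_i\}$ and $A_j=\{a_j\}$, so the conclusion is immediate.

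For part (2), set $\Delta=\rho(\delta)$, so that by part (1) iterated we have $\Delta(a_i)=a_{i+1\bmod n}$. Since $\delta^n$ is central in $\B_n$, $\Delta^n$ fixes $a_0$; hence the size $s$ of the $\Delta$-orbit of $a_0$ divides $n$, the distinct curves among the orbit are $\{a_0,\dots,a_{s-1}\}$, and $a_k=a_{k+s}$ for every $k$. The goal is to show $s=n$, and I first dispose of two easy cases. If $s$ is even with $s<n$ (which forces $n$ even), then $a_0$ and $a_s$ would both be special components of the totally symmetric $\Y_n$-labelled multicurve $\C^+(\rho(\Y_n))$ bearing the distinct labels $\{\sigma_0\}$ and $\{\sigma_s\}$, which is impossible since distinct components of a multicurve are non-isotopic. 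If $s=1$ then all $a_i$ coincide, in particular $a_1=a_3$, contradicting their being distinct special components of $\C^+(\rho(\X_n))$.

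The remaining and hardest case is $s$ odd with $3\le s<n$; here label-bookkeeping alone is insufficient (for example $n=14$, $s=7$ is combinatorially self-consistent). My plan is to apply Proposition \ref{prop:ch8:8.1}(2) to the multicurve $\Lambda=\{a_0,\dots,a_{s-1}\}$. Because $s$ is odd and $|\odd(n)|\ge s$ (as $s\mid n$ and $n/s\ge 2$), the map $j\mapsto (2j+1)\bmod s$ is surjective onto $\Z/s\Z$, so $\{l\bmod s:l\in\odd(n)\}=\{0,\dots,s-1\}$; under the periodicity $a_k=a_{k\bmod s}$ this means the $\X_n$-special components already exhaust $\Lambda$, and the parallel argument with $\Y_n$ (and $\Y'_n$ when $n$ is odd) shows that the $\Y_n$- and $\Y'_n$-special components also equal $\Lambda$. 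Since $\X_n\cup\Y_n\cup\Y'_n$ exhausts $\{\sigma_0,\dots,\sigma_{n-1}\}$, each $\rho(\sigma_l)$ preserves $\Lambda$, so $\Lambda$ is a $\rho(\B_n)$-invariant multicurve. Proposition \ref{prop:ch8:8.1}(2) then forces the induced action of $\B_n$ on $\Lambda$ to be cyclic. But $\Delta|_\Lambda$ is an $s$-cycle, and each $\rho(\sigma_l)$ fixes $a_l\in\Lambda$ because $\rho(\sigma_l)$ preserves $A_l$ and $a_l$ is the unique separating (or sole) element of $A_l$; in a cyclic subgroup of $\sym(\Lambda)$ containing an $s$-cycle, no non-identity element has a fixed point (for $s\ge 2$). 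Hence every $\rho(\sigma_l)$ is trivial on $\Lambda$, forcing $\Delta=\rho(\sigma_1\cdots\sigma_{n-1})$ to be trivial on $\Lambda$ as well, contradicting that $\Delta|_\Lambda$ is a non-trivial $s$-cycle. This completes the proof; the subtle point, and the one I expect to be the main obstacle, is precisely this use of Proposition \ref{prop:ch8:8.1} to extract a genuine obstruction in the odd-proper-divisor case where no label-counting argument suffices.
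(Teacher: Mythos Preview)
Your proof of part (1) is correct and matches the paper's argument exactly.

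Your proof of part (2) is also correct, but it follows a different path from the paper's. The paper observes directly that the $\Delta$-orbit of $a_1$ contains the $k=\lfloor n/2\rfloor$ distinct curves $a_1,a_3,\dots,a_{2k-1}$, so its size $s$ satisfies $s\ge k$ and $s\mid n$; this forces $s=n$ when $n=2k+1$. For $n=2k$, if $s=k$ one would have $a_1=a_i$ for some even $i$, and the paper then chooses $\tau\in\B_n$ with $\tau\sigma_1\tau^{-1}=\sigma_j$ (some odd $j\ne 1$) and $\tau\sigma_i\tau^{-1}=\sigma_i$; applying part (1) gives $a_j=\rho(\tau)(a_1)=\rho(\tau)(a_i)=a_i=a_1$, contradicting $a_1\ne a_j$. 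This is a two-line conjugation trick rather than a case split on the parity of $s$.

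Your odd-$s$ argument is valid but heavier than necessary: once you have established that $\{a_i:\sigma_i\in\X_n\}=\Lambda$ and $\{a_i:\sigma_i\in\Y_n\}=\Lambda$, the fact that each $\rho(\sigma_l)$ commutes with every element of the totally symmetric set it belongs to already shows that $\rho(\sigma_l)$ fixes $\Lambda$ \emph{pointwise}, not merely setwise. Hence $\Delta=\rho(\sigma_1\cdots\sigma_{n-1})$ is the identity on $\Lambda$, giving the contradiction immediately; the appeal to Proposition~\ref{prop:ch8:8.1} and the fixed-point argument in a cyclic group are redundant. Your approach has the virtue of being self-contained and not requiring the existence of the specific $\tau$ used in the paper, at the cost of a longer case analysis.
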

\begin{proof} (1) We have $\rho(\tau)(\C^+(\rho(\sigma_i)))=\C^+(\rho(\sigma_j))$ and since $\rho(\tau)$ preserves $\A^n$  it maps $A_i$ to $A_j$ and hence $\rho(\tau)(a_i)=a_j$.

(2) The curves $a_0,a_1,\dots,a_{n-1}$ form the orbit $\langle\Delta\rangle.a_1$ 
It has at least $k$ elements, namely $a_1, a_3,\dots,a_{2k-1}$, and its cardinality divides $n$. If follows that $|\langle\Delta\rangle.a_1|=n$ if $n=2k+1$. If $n=2k$ then each of the orbits $\langle\Delta^2\rangle.a_1$ and  $\langle\Delta^2\rangle.a_2$ has $k$ elements and it suffices to show that 
these orbits are disjoint. Suppose that $a_1=a_i$ for some even $i$. There is an element $\tau\in\B_n$ and $j\in\odd(n)$, $j\ne 1$ such that $\tau\sigma_1\tau^{-1}=\sigma_j$ and $\tau\sigma_i\tau^{-1}=\sigma_i$. We have $a_j=\rho(\tau)(a_1)=\rho(\tau)(a_i)=a_i=a_1$, which is a contradiction, because $a_1\ne a_j$.
 \end{proof}

\begin{lemma}\label{lem:component_S}
 There is a component $S$ of $N_{\A^n}$ containing every $a_i$ for $0\le i<n$. Furthermore, $S$ is $\rho(\B_n)$ invariant.  
\end{lemma}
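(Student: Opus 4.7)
The plan is to take $S$ to be the component of $N_{\A^n}$ containing $a_1$, first verify that this component is preserved by $\rho(\sigma_1)$, then promote this to $\rho(\B_n)$-invariance via Proposition~\ref{prop:ch8:8.1}, and finally transport $a_1$ to every other $a_i$ by applying $\Delta=\rho(\delta)$.

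First I would observe that every $a_i$ descends to a well-defined nontrivial two-sided curve in $N_{\A^n}$: both $a_i$ and every component of $\A^n$ sit inside the single multicurve $\C^+(\rho(\sigma_i))$, so they are pairwise disjoint and pairwise non-isotopic, which means $a_i$ survives cutting along $\A^n$ and lies in a unique component. Let $S$ denote the component containing $a_1$. The case $\A^n=\emptyset$ is immediate with $S=N$, so from now on I assume $\A^n\neq\emptyset$.

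The key step is the claim that $\rho(\sigma_1)(a_1)=a_1$. Since $\rho(\sigma_1)$ commutes with itself it preserves $\C^+(\rho(\sigma_1))$, and by Lemma~\ref{lem:normal_inv} together with Remark~\ref{def:A:normal} it preserves $\A^n$; hence it preserves $A_1=\C^+(\rho(\sigma_1))\setminus\A^n$. The dichotomy in Remark~\ref{def:ai:unique} leaves only two possibilities: either $A_1=\{a_1\}$, or $A_1$ contains $a_1$ as its unique separating component. In both cases $\rho(\sigma_1)$ must fix $a_1$, and therefore preserves the component $S$ containing it.

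To finish, I would apply Proposition~\ref{prop:ch8:8.1} to the multicurve $\A^n$; its hypotheses $n\ge 6$ and $g\le n+1$ hold since we have assumed $n\ge 14$ and $g\le 2\lfloor n/2\rfloor+1\le n+1$. Thus the induced action of $\rho(\B_n)$ on the set of components of $N_{\A^n}$ is cyclic, and since in any abelian quotient of $\B_n$ all the $\sigma_i$ have the same image, this cyclic action is generated by the permutation induced by $\rho(\sigma_1)$. The previous step shows that permutation fixes $S$, so every element of $\rho(\B_n)$ fixes $S$. In particular $\Delta(S)=S$, and since $\Delta(a_i)=a_{i+1}$ (indices modulo $n$) by Remark~\ref{def:ai:unique}, an immediate induction from $a_1\in S$ yields $a_i\in S$ for every $0\le i<n$. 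The argument presents no essential obstacle; the only mild technicality is to dispose of the trivial case $\A^n=\emptyset$ separately, since Proposition~\ref{prop:ch8:8.1} is stated for an honest multicurve.
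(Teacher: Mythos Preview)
Your proof is correct and relies on the same two ingredients as the paper's argument: Proposition~\ref{prop:ch8:8.1} applied to $\A^n$, and the fact that $\rho(\sigma_i)$ fixes $a_i$. The only difference is the order of the two conclusions: the paper first puts all the $a_i$ into one component by moving them with elements of $\rho(\B_n')$ (which act trivially on the set of components), and then observes that this component is fixed by every $\rho(\sigma_i)$; you instead first show $S$ is $\rho(\B_n)$-invariant via cyclicity of the action and then carry $a_1$ to each $a_i$ with $\Delta$.
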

\begin{proof}
For $0\le i\le n-2$ there is an element $\tau_i\in\B_n'$ such that $\tau_i\sigma_i\tau_i^{-1}=\sigma_{i+1}$. We have $\rho(\tau_i)(a_i)=a_{i+1}$ which means that $a_i$ for $0\le i\le n-1$ are in one $\rho(\B_n')$-orbit. By Proposition~\ref{prop:ch8:8.1}  $\rho(\B_n')$ preserves every component of $N_{\A^n}$, which implies that the curves $a_i$ must all be in the same component. This component is preserved by $\rho(\sigma_i)$ for $1\le i\le n-1$, hence it is $\rho(\B_n)$ invariant. 
\end{proof}

By composing $\rho$ with $\cut{\A^n}$ we obtain new representations $\rho_1,\rho_2$.
\[\cut{\A^n}\circ\rho=(\rho_1,\rho_2)\colon\B_n\to\mcg(S)\times\mcg(N_{\A^n}\setminus S).\]
If $S=N_{\A^n}$ then $\rho_2$ does not exist.

\begin{lemma}\label{lem:rho2cyclic}
Suppose $S\ne N_{\A^n}$. Then
 $\rho_2\colon\B_n\to\mcg(N_{\A^n}\setminus S)$ is cyclic.  
\end{lemma}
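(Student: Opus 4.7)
The plan is to first establish that $\C^+(\rho_2(\sigma_i))=\emptyset$ for every $i\in\{0,1,\dots,n-1\}$, and then apply arguments parallel to those of Section~\ref{sec:irred} to conclude that $\rho_2$ is cyclic.

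I would begin by proving the identity
\[\C^+(\rho(\sigma_i))=\A^n\cup\{a_i\}\qquad\text{for every } i\in\{0,1,\dots,n-1\}.\]
For $i\in\odd(n)$ this is immediate from Propositions~\ref{ch3:curves:AIC} and~\ref{eliminate:exotic} together with Remark~\ref{def:ai:unique}: every component of $\C^+(\rho(\sigma_i))\subseteq\C^+(\rho(\X_n))$ is either normal (and thus a component of $\A^n$) or special with label $\{\sigma_i\}$ (and thus equal to $a_i$). For the remaining $i$, I would write $\sigma_i=\tau\sigma_j\tau^{-1}$ with $j\in\odd(n)$ and $\tau\in\B_n$, and transport the identity from $\sigma_j$ to $\sigma_i$ using Lemma~\ref{lem:crs}(1), Lemma~\ref{lem:normal_inv}, and Lemma~\ref{lem:Delta_orbit}(1). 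Since $a_i\in S$ by Lemma~\ref{lem:component_S}, no curve of $\C^+(\rho(\sigma_i))$ meets $N_{\A^n}\setminus S$. Moreover, $\A^n\subseteq\C(\rho(\sigma_i))$, so the minimality of the canonical reduction system yields $\C(\cut{\A^n}(\rho(\sigma_i)))=\C(\rho(\sigma_i))\setminus\A^n$, whence $\C^+(\rho_2(\sigma_i))=\emptyset$. Consequently, the restriction of $\rho_2(\sigma_i)$ to each connected component $R_j$ of $N_{\A^n}\setminus S$ is periodic, pseudo-Anosov, or almost pseudo-Anosov.

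To finish, I would replay the reasoning of Section~\ref{sec:irred} component by component. By Proposition~\ref{prop:ch8:8.1}(1) applied to $\A^n$, the action of $\B_n$ on the components of $N_{\A^n}$ is cyclic, so $\B_n'$ preserves every $R_j$, and by Proposition~\ref{prop:ch8:8.1}(3) it acts on $R_j$ by elements of $\ppmcg(R_j)$. The main obstacle is that Proposition~\ref{ch:5:corollary5:5} and the subsequent (almost) pseudo-Anosov proposition are formulated for representations of the entire group $\B_n$ on connected surfaces, whereas each projection $\rho_2^j\colon\B_n'\to\ppmcg(R_j)$ is defined only on the commutator subgroup. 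I would sidestep this by recycling the key inputs directly at the level of $\B_n'$: since $\rho_2^j(\sigma_1)$ and $\rho_2^j(\sigma_3)$ are commuting conjugates (via some element of $\rho(\B_n')$), they either generate a finite abelian group in the periodic case (using cyclicity of finite subgroups of $\ppmcg$ via Lemma~\ref{lem:cyclic_bdry}, Lemma~\ref{lem:periodic_closed}, and their orientable analogs) or lie in a virtually cyclic centralizer in the (almost) pseudo-Anosov case (by Proposition~\ref{apA:virtuallyCyclic} and \cite[Proposition~24]{ParisWBLN}); in either situation $\rho_2^j(\sigma_1\sigma_3^{-1})$ has finite order, and running the conjugacy argument from Case~1 of the proof of Proposition~\ref{ch:5:corollary5:5} forces it to be trivial. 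Since these projections are trivial for every $j$ and $\sigma_1\sigma_3^{-1}\in\B_n'$ preserves every $R_j$, it follows that $\rho_2(\sigma_1\sigma_3^{-1})=1$; since $\sigma_1\sigma_3^{-1}$ normally generates $\B_n'$, this gives $\rho_2(\B_n')=1$. Therefore $\rho_2$ factors through $\B_n/\B_n'\cong\Z$ and is cyclic.
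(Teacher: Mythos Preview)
Your approach diverges substantially from the paper's, and it has two real gaps.

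First, the identity $\C^+(\rho(\sigma_i))=\A^n\cup\{a_i\}$ is not justified. Remark~\ref{def:ai:unique} only says that $a_i$ is the unique \emph{separating} curve of $A_i$ in the separating case; it does not rule out an additional nonseparating special curve in $A_i$ (indeed Lemma~\ref{lem:R:separating:normal}(1), proved only later and using Proposition~\ref{prop:ssc}, addresses exactly this possibility). To get $\C^+(\rho_2(\sigma_i))=\emptyset$ you would need every special curve, not just $a_i$, to lie in $S$; Lemma~\ref{lem:component_S} locates only the $a_i$, and nothing prevents a second $\rho(\B_n')$-orbit of special curves from sitting in another component of $N_{\A^n}$. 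Second, once you pass to a component $R_j$ via $\rho_2^j\colon\B_n'\to\ppmcg(R_j)$, the expressions $\rho_2^j(\sigma_1)$, $\rho_2^j(\sigma_3)$ are undefined: $\sigma_1,\sigma_3\notin\B_n'$, and there is no reason for $\rho_2(\sigma_1)$ to preserve $R_j$. The periodic/(almost) pseudo-Anosov dichotomy you want to invoke therefore has no subject. One can repair this by passing to a power as in the proof of Proposition~\ref{all_C_case}, but that is considerably more work than you indicate and still presupposes the first gap is closed.

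The paper avoids both issues by a much shorter route that never computes $\C^+(\rho_2(\sigma_i))$. It observes that every component of $N_{\A^n}\setminus S$ sits inside $N_\A$ (with $\A=\{a_i:i\in\odd(n)\}$), hence by Proposition~\ref{lem:Qclosed} has orientable genus at most $1$ or nonorientable genus at most $3$. Since $\rho_2(\B_n')\subset\ppmcg(N_{\A^n}\setminus S)$, the assignment $\psi(\sigma_i)=\rho_2(\sigma_i\sigma_{n-1}^{-1})$ defines an honest braid-group homomorphism $\psi\colon\B_{n-2}\to\ppmcg(N_{\A^n}\setminus S)$, to which the small-genus black box Lemma~\ref{lem:smallg} applies componentwise (as $n-2\ge 12$). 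Cyclicity of $\psi$ gives $\rho_2(\sigma_1)=\rho_2(\sigma_2)$, hence $\rho_2$ is cyclic. The key trick---manufacturing a $\B_{n-2}$-representation out of commutator-subgroup elements---is what lets the paper bypass all Nielsen--Thurston analysis here.
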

\begin{proof}
By Proposition \ref{prop:ch8:8.1}  $\rho(\B_n')$ preserves every component of $N_{\A^n}$ and  every component of $\A^n$ with orientation. Thus \[\rho_2(\B_n')\subset\ppmcg(N_{\A^n}\setminus S).\] We define a homomorphism \[\psi\colon\B_{n-2}\to \ppmcg(N_{\A^n}\setminus S)\] by
$\psi(\sigma_i)=\rho_2(\sigma_i\sigma_{n-1}^{-1})$ for $1\le i\le n-3$. 
Since $N_{\A^n}\setminus S$ is a subsurface of $N_\A$, where $\A=\bigcup_{i\in\odd(n)}a_i$, each component of $N_{\A^n}\setminus S$ is either an orientable surface of genus at most $1$ or a nonorientable surface of genus at most $3$ by Lemma \ref{lem:Qclosed}. Since $n-2\ge 12$, $\psi$ is cyclic by Lemma \ref{lem:smallg}. 
We have
$\rho_2(\sigma_1\sigma_{n-1}^{-1})=\rho_2(\sigma_2\sigma_{n-1}^{-1})$ and  
 $\rho_2(\sigma_1)=\rho_2(\sigma_2)$,
which implies that $\rho_2$ is also cyclic.
\end{proof}

\section{Intersection of separating special curves}\label{sec:separating}
In this section we assume 
that $\C^+(\rho(\sigma_i))$ contains a separating special curve $a_i$ for $0\le i<n$. 
By Lemma \ref{special-curves-nonseparating-conf}, $a_i$ is the unique separating special curve in $\C^+(\rho(\sigma_i))$ and it bounds a subsurface $K_i$ homeomorphic either to $N_{2,1}$ or $S_{1,1}$. By Lemma \ref{lem:Delta_orbit}, the curves $a_i$ for $0\le i<n$ 
are pairwise nonisotopic. The purpose of this section is to prove the following proposition.

\begin{prop}\label{prop:ssc}
The sequence $(a_1,\dots,a_{n-1})$ is a chain of separating curves. That is
\begin{enumerate}
\item $K_i$ is a one-holed Klein bottle for $1\le i\le n-1$;
\item $I(a_i,a_{i+1})=2$ for $1\le i\le n-2$;
\item $I(a_i,a_j)=0$ for $|i-j|>1$.
\end{enumerate}
\end{prop}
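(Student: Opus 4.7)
The plan is the following. Part (3) is immediate: since $\sigma_i$ and $\sigma_j$ commute for $|i-j|>1$, Lemma~\ref{lem:crs}(2) applied to $\rho(\sigma_i),\rho(\sigma_j)$ gives that the curves $a_i\in\C^+(\rho(\sigma_i))$ and $a_j\in\C^+(\rho(\sigma_j))$ are either equal or disjoint, and Lemma~\ref{lem:Delta_orbit}(2) rules out equality; hence $I(a_i,a_j)=0$. For parts (1) and (2) I start from the braid relation rewritten as $(\sigma_i\sigma_{i+1})\sigma_i(\sigma_i\sigma_{i+1})^{-1}=\sigma_{i+1}$. Lemma~\ref{lem:crs}(1) then yields $\rho(\sigma_i\sigma_{i+1})\bigl(\C^+(\rho(\sigma_i))\bigr)=\C^+(\rho(\sigma_{i+1}))$, and the $\rho(\B_n)$-invariance of the normal multicurve $\A^n$ (Remark~\ref{def:A:normal}) forces $\rho(\sigma_i\sigma_{i+1})(a_i)=a_{i+1}$. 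In particular $\rho(\sigma_i\sigma_{i+1})(K_i)=K_{i+1}$, whence $K_i\cong K_{i+1}$ for every $i$.

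Next I would rule out $I(a_i,a_{i+1})=0$. If the two curves were disjoint and one of $K_i\subset K_{i+1}$, $K_{i+1}\subset K_i$ held, then the annular complement $K_{i+1}\setminus\mathrm{int}(K_i)$ (with $\chi=0$ and two boundary circles) would force $a_i\simeq a_{i+1}$, contradicting Lemma~\ref{lem:Delta_orbit}(2). In the remaining disjoint case $K_i\cap K_{i+1}=\emptyset$, I pass to the reduction $\rho_1\colon\B_n\to\mcg(S)$ along $\A^n$ with $S$ the component of Lemma~\ref{lem:component_S}. In $S$ the canonical reduction system of $\rho_1(\sigma_i)$ reduces to $\{a_i\}$ on its two-sided part, so after cutting along $\{a_i,a_{i+1}\}$ each of $\rho_1(\sigma_i)$, $\rho_1(\sigma_{i+1})$ is trivial on the other's small side, and they therefore commute. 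Commutativity together with the braid relation then gives $\rho_1(\sigma_i)=\rho_1(\sigma_{i+1})$, contradicting $a_i\ne a_{i+1}$. Since two distinct separating curves always meet in an even number of points, we obtain $I(a_i,a_{i+1})=2k$ for some $k\ge 1$.

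The main obstacle I foresee is the final step: pinning down $k=1$ and excluding $K_i\cong S_{1,1}$. The plan is to analyze the $\langle\rho(\sigma_i),\rho(\sigma_{i+1})\rangle$-invariant subsurface $L$ obtained as a regular neighbourhood of $K_i\cup K_{i+1}$, whose Euler characteristic $\chi(L)=-2-\chi(K_i\cap K_{i+1})$ is determined by $k$ and the topology of the overlap. Using the braid relation on $L$, I would show that $k\ge 2$ produces auxiliary separating curves inside $\C^+(\rho(\sigma_i))$ incompatible with the uniqueness of the separating special curve $a_i$ guaranteed by Lemma~\ref{special-curves-separating-conf}. For $k=1$ the overlap $K_i\cap K_{i+1}$ must be a single Möbius band; this simultaneously forces $K_i\cong N_{2,1}$, yielding (1), and gives $I(a_i,a_{i+1})=2$, yielding (2). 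To independently rule out $K_i\cong S_{1,1}$, I would observe that in that case $L$ would be orientable, and Corollary~\ref{cor:pureBirChil} combined with Castel's theorem (via Lemma~\ref{lem:smallg}(1)) would force any such configuration to arise from a chain of \emph{nonseparating} curves, contrary to our standing assumption that the $a_i$ are separating.
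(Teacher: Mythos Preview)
Your argument for (3) is fine and matches the paper. The remainder, however, has real gaps, and your overall strategy diverges sharply from what actually works.

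\textbf{The disjoint case.} You claim that if $K_i\cap K_{i+1}=\emptyset$ then after cutting along $\{a_i,a_{i+1}\}$ ``each of $\rho_1(\sigma_i),\rho_1(\sigma_{i+1})$ is trivial on the other's small side''. This is not justified. Knowing that $\C^+(\rho_1(\sigma_i))$ lies in $A_i$ (and even that may contain nonseparating curves besides $a_i$) tells you only that the restriction of $\rho_1(\sigma_i)$ to the complement of $a_i$ is periodic or (almost) pseudo-Anosov on each piece, not that it is the identity. Nor does $\rho(\sigma_i)$ fix $a_{i+1}$: the braid relation gives $\rho(\sigma_i\sigma_{i+1})(a_{i+1})=a_i$, not $\rho(\sigma_i)(a_{i+1})=a_{i+1}$. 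So you cannot conclude commutativity, and the contradiction does not follow.

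\textbf{The step $k=1$ and the exclusion of $S_{1,1}$.} Your sketch here is not a proof. The assertion that ``$k\ge 2$ produces auxiliary separating curves inside $\C^+(\rho(\sigma_i))$'' is the entire content of the proposition and you give no mechanism for producing such curves. Likewise, the appeal to Castel via Lemma~\ref{lem:smallg}(1) to exclude $K_i\cong S_{1,1}$ fails: the representation $\rho$ takes values in $\pmcg(N)$, not in the mapping class group of the orientable patch $L$, so there is no homomorphism to which Castel's theorem applies; and Lemma~\ref{lem:smallg} concerns $\B_n$, not the subgroup $\langle\sigma_i,\sigma_{i+1}\rangle$.

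\textbf{What the paper does instead.} The paper never argues with $\C^+$ or Euler characteristic of $L$; it works directly with tight representatives and analyses the \emph{domains} of $K_1\cap K_2$ and $K_2\cap R$, where $R=N\setminus\bigcup_{i\in\odd(n)}\mathrm{int}(K_i)$. Using the symmetry $F=\rho(\sigma_1\sigma_2\sigma_1\sigma_3\sigma_2\sigma_1)$ (which swaps $a_1\leftrightarrow a_3$ and fixes $a_i$ for $i\ge 5$) and the shift $\Delta^2$, one shows: every component of $K_2\cap R$ is a rectangle; the components of $K_1\cap K_2$ are rectangles together with exactly one hexagon or one M\"obius band; an alternating-ends argument in the genus~$\le 1$ surface $R$ rules out the hexagon; and a further such argument rules out any rectangles, leaving $K_1\cap K_2$ a single M\"obius band. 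This simultaneously gives $I(a_1,a_2)=2$ and forces $K_1\cong N_{2,1}$. The key inputs are arc-combinatorics in $R$ (three disjoint arcs between two boundary circles always separate; two nonseparating pairs must alternate) together with the global symmetries $F$ and $\Delta^{\pm 2}$, none of which appear in your plan.
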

Note that (3) follows from the fact that $\sigma_i$ commutes with $\sigma_j$ for $|i-j|>1$, so it suffices to prove (1) and (2). The proof is similar to that of  \cite[Proposition 8.0.1]{Castel} and is broken into several lemmas. The idea is to show that $K_1\cap K_2$ is a M\"obius band. 

For the rest of this section we fix representatives of $a_i$ in tight position, which we denote by the same symbols. 
We know that $K_i$ and $K_j$ are disjoint for $|i-j|>1$ because $a_i$ and $a_j$ are not isotopic.
Let \[R=N\setminus\bigcup_{i\in\odd(n)}int(K_i).\] Since we assume $g\le 2\lfloor n/2\rfloor+1$, $R$ is either a surface of genus $0$ or a nonorientable surface of genus $1$. 

\begin{lemma}\label{lem:homeo_F}
There exists a homeomorphism $F$ of $N$, preserving the boundary components of $N$, such that:
\begin{enumerate}
\item $F(a_i)=a_{4-i}$ for $i\in\{1,2,3\}$, and
\item $F(a_i)=a_i$ for $i\ge 5$.
\end{enumerate}
\end{lemma}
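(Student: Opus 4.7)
The plan is to realise $F$ as (a convenient representative of) $\rho(\tau)$, where $\tau\in\B_n$ is chosen so that conjugation by $\tau$ carries out the required Artin-generator permutation, and then apply Lemma \ref{lem:Delta_orbit} to translate this into an action on the curves $a_i$. The braid I would take is the positive half-twist $\Delta_4=(\sigma_1\sigma_2\sigma_3)(\sigma_1\sigma_2)\sigma_1$, regarded as an element of $\B_n$ via the standard inclusion $\B_4\hookrightarrow\B_n$ on the first four strands. The standard conjugation rule $\Delta_m\sigma_i\Delta_m^{-1}=\sigma_{m-i}$ applied with $m=4$ gives $\tau\sigma_i\tau^{-1}=\sigma_{4-i}$ for $i\in\{1,2,3\}$; and because $\Delta_4$ is supported entirely on the first four strands, it commutes with $\sigma_i$ for every $i\ge 5$.

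Applying part (1) of Lemma \ref{lem:Delta_orbit} to each of these conjugation relations immediately yields $\rho(\tau)(a_i)=a_{4-i}$ for $i\in\{1,2,3\}$ and $\rho(\tau)(a_i)=a_i$ for $i\ge 5$. Since $\rho(\tau)\in\pmcg(N)$, every boundary component of $N$ is preserved setwise by this mapping class. I would then take $F$ to be any homeomorphism of $N$ representing $\rho(\tau)$, and, if necessary, replace it by an isotopic homeomorphism so that the equalities $F(a_i)=a_{4-i}$ for $i\in\{1,2,3\}$ and $F(a_i)=a_i$ for $i\ge 5$ hold on the nose rather than merely up to isotopy.

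The lemma is essentially a bookkeeping statement and presents no genuine obstacle; the only non-trivial step is identifying the right conjugating braid, and the half-twist $\Delta_4$ is the natural candidate once one recalls its ``reversal'' action on Artin generators. Observe that the statement carefully omits $a_0$ and $a_4$ from the conclusion, which is necessary since $\tau\sigma_4\tau^{-1}\ne\sigma_4$ (the generator $\sigma_4$ involves strand $4$, which $\Delta_4$ also moves); consequently $F$ need not, and in general will not, fix $a_4$, in line with what is stated.
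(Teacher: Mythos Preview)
Your proof is correct and essentially identical to the paper's: the paper also defines $F$ as a representative of $\rho(\sigma_1\sigma_2\sigma_1\sigma_3\sigma_2\sigma_1)$, which is the same braid as your $\Delta_4=\sigma_1\sigma_2\sigma_3\sigma_1\sigma_2\sigma_1$ since $\sigma_1$ and $\sigma_3$ commute. The appeal to Lemma~\ref{lem:Delta_orbit}(1) is exactly the right mechanism for passing from the conjugation relations to the action on the curves $a_i$.
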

\begin{proof}
By the same argument as in the proof of Step 2 of \cite[Proposition 8.0.1]{Castel}, we can define $F$ to be a representative of $\rho(\sigma_1\sigma_2\sigma_1\sigma_3\sigma_2\sigma_1)$.
\end{proof}
We also choose a homeomorphism $\Delta$ of $N$ representing $\rho(\delta)$ such that $\Delta(a_i)=a_{i+1}$ for $i\in\{0,1,\dots,n-1\}$.
Note that $\Delta^2$ and $F$ preserve $R$.

\medskip

Suppose that $S$ is a subsurface of $N$ and $d_1,d_2$ are two (not necessarily distinct) connected components of $\partial{S}$. We denote by $\arcs(S,d_1,d_2)$ the set of all arcs
 properly embedded  in $S$ with one end in $d_1$ and the other end in $d_2$.
Suppose that $\alpha\in\arcs(S,d,d)$. Let $d\cap\alpha=\{P,Q\}$ and denote by $\overline{\alpha}$ a simple closed curve in $S$ composed of $\alpha$ and an arc of $d$ from $P$ to $Q$ (a connected component of $d\setminus\{P,Q\}$). Note that this notation is ambiguous as it depends on the choice of an arc of $d$. We say that $\alpha$ is one-sided (respectively two-sided) if $\overline{\alpha}$ is one-sided (respectively two-sided).
If $\A$ is a union of pairwise disjoint arcs in $S$, then we denote by $S_\A$ the surface obtained by cutting $S$ along $\A$. We say that $\alpha$ is separating in $S$ if $S_\A$ is disconnected.


\begin{lemma}\label{lem:arcs13}
Every component of $a_2\cap R$ belongs to $\arcs(R,a_1,a_3)$.    
\end{lemma}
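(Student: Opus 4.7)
The plan is to analyze the components of $a_2\cap R$, which are a priori properly embedded arcs or simple closed curves in $R$, and to rule out every type except arcs from $a_1$ to $a_3$. The argument proceeds in three stages: locate arc endpoints, rule out closed components, and rule out arcs with both endpoints on the same curve $a_1$ or $a_3$.

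First, since $a_2$ lies in the interior of $N$, arc endpoints lie on $\partial R\cap\mathrm{int}(N)=\bigcup_{i\in\odd(n)}a_i$. For odd $i\notin\{1,3\}$ we have $|i-2|>1$, so $K_i$ and $K_2$ are disjoint (as noted in the discussion preceding Lemma~\ref{lem:homeo_F}); in particular $a_2\cap a_i=\emptyset$. Hence arc endpoints necessarily lie on $a_1\cup a_3$.

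Second, to rule out closed components, note that connectedness of $a_2$ would force $a_2\subset R$, so $K_2\cap K_i=\emptyset$ for every $i\in\odd(n)$, and consequently $K_2\subset R$ (since $K_2$ is connected with boundary in $R$ and has interior disjoint from each $K_i$). By Proposition~\ref{lem:Qclosed}(1) we have $R\cong S_{0,k+b}$ or $N_{1,k+b}$, while $K_2\cong S_{1,1}$ or $N_{2,1}$; passing to the orientation double cover (or comparing nonorientable genus and orientability directly) shows that no such embedding exists, contradicting $a_2\subset R$.

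Third, to rule out arcs with both endpoints on $a_1$: applying the homeomorphism $F$ from Lemma~\ref{lem:homeo_F}, which fixes $a_2$, swaps $a_1\leftrightarrow a_3$, and preserves $R$, one sees that the number of $a_1$-$a_1$ arcs of $a_2\cap R$ equals the number of $a_3$-$a_3$ arcs, so it suffices to eliminate the former. Following the strategy of \cite[Proposition~8.0.1]{Castel}, I would combine the braid relation $\sigma_1\sigma_2\sigma_1=\sigma_2\sigma_1\sigma_2$ with the description of the canonical reduction systems $\C^+(\rho(\sigma_i))$ from Remark~\ref{def:ai:unique} to obtain $\rho(\sigma_2\sigma_1)(a_2)=a_1$, and then use the restricted topology of the one-holed surface $K_1$ to derive a contradiction from the presence of an $a_1$-$a_1$ arc via a case analysis of the possible arc types. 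This third step is the main obstacle; in particular the nonorientable case $K_1\cong N_{2,1}$ requires arguments beyond the orientable treatment of \cite{Castel}.
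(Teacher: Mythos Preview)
Your first two steps are fine; for step~2 the paper argues more tersely, simply asserting $K_2\not\subset R$ (which indeed follows from the genus bound on $R$ as you observe), so that every component of $a_2\cap R$ is an arc with ends on $a_1\cup a_3$.

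The gap is in your third step, which you yourself flag as incomplete. You propose to invoke $\rho(\sigma_2\sigma_1)(a_2)=a_1$ and then run a case analysis of arc types inside $K_1$; this is neither carried out nor obviously tractable, and it is not the route the paper takes. The paper's argument stays entirely inside $R$ and uses only the homeomorphism $F$ of Lemma~\ref{lem:homeo_F}. Suppose $\alpha\subset a_2$ lies in $\arcs(R,a_1,a_1)$. Then $\alpha'=F(\alpha)\subset a_2$ lies in $\arcs(R,a_3,a_3)$, and $\alpha,\alpha'$ are disjoint since $a_2$ is embedded. Closing $\alpha$ up with a subarc of $a_1$ gives a simple closed curve $\overline\alpha$ in $R$ disjoint from $F(\overline\alpha)$; because $R$ has genus at most $1$, two disjoint homeomorphic nonseparating curves cannot coexist, so $\overline\alpha$ separates $R$. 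Let $D$ be the component of $R_\alpha$ not containing $a_3$; then $F(D)$ does not contain $a_1$, forcing $D\cap F(D)=\emptyset$ and hence $D$ has genus $0$. But then $D$ contains some boundary component $d$ of $R$ other than $a_1,a_3$, and $F$ fixes every such $d$ (by Lemma~\ref{lem:homeo_F}(2) and the fact that $F$ preserves the components of $\partial N$), contradicting $D\cap F(D)=\emptyset$.

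The moral is that the symmetry $F$ together with the low genus of $R$ and the many $F$-fixed boundary components of $R$ already suffice; there is no need to look inside $K_1$ or to use the braid relation for $\sigma_1,\sigma_2$.
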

\begin{proof}
We have $K_2\subset R\cup K_1\cup K_3$ and $K_2\not\subset R$ so every component of $a_2\cap R$ is an arc with ends on $a_1\cup a_3$.
It suffices to show that no component of $a_2\cap R$ belongs to $\arcs(R,a_1,a_1)$.
Suppose $\alpha\subset a_2$ and $\alpha\in\arcs(R,a_1,a_1)$. 
Let $\alpha'=F(\alpha)$, where $F$ is the homeomorphism from Lemma \ref{lem:homeo_F} and observe that $\alpha'\subset a_2$ and $\alpha'\in\arcs(R,a_3,a_3)$. Since $a_2$ has no self-intersection, $\alpha$ and $\alpha'$ are disjoint. 
Consider a simple closed curve  $\overline{\alpha}$ composed of $\alpha$ and an arc of $a_1$. Observe that $F(\overline{\alpha})\cap\overline{\alpha}=\emptyset$, and since $R$ has genus at most $1$, $\overline{\alpha}$ must be separating. Let $D$ be the connected component of $R_\alpha$ not containing $a_3$.  Then $F(D)$ does not contain $a_1$, and thus $D\cap F(D)=\emptyset$. It follows that $D$ has genus $0$. But then $D$ must contain at least one component $d$ of $\partial{R}$ different from $a_1$ and $a_3$. But $F$ is fixed on such components, hence $d\subset D\cap F(D)$, a contradiction.
\end{proof}

\begin{lemma}\label{lem:3sep}
Let $a_i, a_j$ be distinct components of $\partial R$ and $\alpha_1,\alpha_2,\alpha_3$ pairwise disjoint elements of $\arcs(R,a_i,a_j)$. Then $\alpha_1\cup\alpha_2\cup\alpha_3$ is separating in $R$.
\end{lemma}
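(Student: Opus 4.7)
My approach is to argue by contradiction using an Euler-characteristic computation together with a boundary-component count. Suppose that $R':=R_{\alpha_1\cup\alpha_2\cup\alpha_3}$ is connected. Cutting a surface along a properly embedded arc raises its Euler characteristic by $1$ and cannot raise the orientable or nonorientable genus, so together with the hypothesis $g\le 2\lfloor n/2\rfloor+1$, which forces $R\in\{S_{0,k+b},N_{1,k+b}\}$ for $k=\lfloor n/2\rfloor$, the topological type of $R'$ must be $S_{0,b'}$ or $N_{1,b'}$ and $\chi(R')=\chi(R)+3$.

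Solving for $b'$ case by case: if $R=S_{0,k+b}$ then $R'=S_{0,k+b-3}$; if $R=N_{1,k+b}$ and $R'$ is orientable, then $R'=S_{0,k+b-2}$; if $R=N_{1,k+b}$ and $R'$ is nonorientable, then $R'=N_{1,k+b-3}$. In every case $|\partial R'|\le k+b-2$.

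On the other hand, I count $|\partial R'|$ directly. The $k+b-2$ boundary components of $R$ other than $a_i$ and $a_j$ lie outside a regular neighbourhood of $a_i\cup a_j\cup\alpha_1\cup\alpha_2\cup\alpha_3$ and so survive unchanged in $\partial R'$, while cutting assembles the three subarcs of $a_i$ between consecutive endpoints of the $\alpha_l$'s, the three analogous subarcs of $a_j$, and the six lateral copies $\alpha_l^\pm$ (twelve arcs in total) into at least one new boundary circle. Hence $|\partial R'|\ge k+b-1$, contradicting the upper bound above, so $R'$ must be disconnected.

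The step most likely to require care is the verification that the nonorientable genus of $R'$ cannot exceed that of $R$, which ensures the list of admissible topological types for $R'$ is exhaustive; this follows from the fact that any one-sided simple closed curve in $R'$ remains one-sided when regarded as a curve in $R$ via the natural embedding $R'\cong R\setminus U$, where $U$ is an open regular neighbourhood of $\alpha_1\cup\alpha_2\cup\alpha_3$.
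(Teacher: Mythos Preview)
Your argument is correct and takes a genuinely different route from the paper's. The paper proceeds by successive cuts: after cutting along $\alpha_1$ (which joins distinct boundary components and therefore leaves the genus of $R$ unchanged), the arcs $\alpha_2,\alpha_3$ have both endpoints on a single boundary circle of $R_{\alpha_1}$; in a surface of genus $0$ or nonorientable genus $1$ such an arc either separates or is one-sided, and in the latter case cutting drops the genus to $0$, so $\alpha_3$ must separate $R_{\alpha_1\cup\alpha_2}$. Your global Euler-characteristic-plus-boundary count replaces this inductive case analysis with a single parity/count contradiction, which is tidy and avoids the one-sided versus two-sided dichotomy entirely. The paper's approach, on the other hand, makes the geometric mechanism transparent and uses no numerics beyond ``genus $\le 1$''.

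One small point to tighten: your final paragraph justifies only that the \emph{nonorientable} genus of $R'$ cannot exceed that of $R$. To make the list of admissible types exhaustive you also need to exclude $R'\cong S_{h,b'}$ with $h\ge 1$ when $R=N_{1,k+b}$. This follows because $N_{1,k+b}$ embeds in $\mathbb{RP}^2$, so every compact subsurface of it (in particular $R'\cong R\setminus U$) does as well; hence $R'$ is either planar or a holed projective plane and cannot contain a one-holed torus. With that sentence added, your case list $R'\in\{S_{0,b'},N_{1,b'}\}$ is fully justified and the contradiction $|\partial R'|\ge k+b-1>k+b-2\ge |\partial R'|$ goes through.
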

\begin{proof}
Let $R_1=R_{\alpha_1}$ and consider $\alpha_2$ and $\alpha_3$ as arcs in $R_1$ with ends on one component of $\partial R_1$. If $\alpha_2$ is separating in $R_1$, then $\alpha_1\cup\alpha_2$ is separating in $R$. If $\alpha_2$ is nonseparating in $R_1$ then $\alpha_2$ must be one-sided and $R_1$ has genus $1$. But then $R_2=R_{\alpha_1\cup\alpha_2}$ has genus $0$ and $\alpha_3$ has both ends on the same component of $\partial R_2$, so $\alpha_3$ is separating in $R_2$.
\end{proof}

We say that $\{\alpha_1,\alpha_2\}\subset\arcs(R,a_i,a_j)$ is a nonseparating pair of arcs if $\alpha_1,\alpha_2$ are disjoint and $\alpha_1\cup\alpha_2$ is nonseparating in $R$. Note that such pairs cannot exist if $R$ has genus $0$ (see the proof of Lemma \ref{lem:3sep}).

\begin{lemma}\label{lem:pairs}
Suppose that $R$ has genus $1$. Let $a_r, a_s, a_t$ be distinct components of $\partial R$ and $\alpha_1,\alpha_2,\beta_1,\beta_2$ pairwise disjoint arcs such that $\{\alpha_1,\alpha_2\}\subset\arcs(R,a_r,a_s)$, $\{\beta_1,\beta_2\}\subset\arcs(R,a_r,a_t)$ are nonseparating pairs of arcs. Then the ends of $\alpha_1\cup\alpha_2$ alternate the ends of  $\beta_1\cup\beta_2$ along $a_r$ (Figure \ref{fig:alternate}).
\end{lemma}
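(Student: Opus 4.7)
I would argue by contradiction, exploiting that a surface of nonorientable genus $1$ has very limited room for nonseparating simple closed curves. Suppose the endpoints of $\alpha_1 \cup \alpha_2$ on $a_r$ do not alternate with those of $\beta_1 \cup \beta_2$. Then I can choose disjoint subarcs $\lambda,\mu\subset a_r$ with $\partial\lambda$ equal to the two endpoints of $\alpha_1,\alpha_2$ on $a_r$ and $\partial\mu$ equal to the two endpoints of $\beta_1,\beta_2$ on $a_r$, together with arbitrary joining arcs $\lambda'\subset a_s$ and $\mu'\subset a_t$; since $a_s\ne a_t$, $\lambda'$ and $\mu'$ automatically lie on disjoint boundary circles. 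The concatenations $c_\alpha=\alpha_1\cup\lambda'\cup\alpha_2\cup\lambda$ and $c_\beta=\beta_1\cup\mu'\cup\beta_2\cup\mu$ are disjoint simple closed curves in $R$, which I would push slightly off $\partial R$ into the interior. Since removing boundary arcs does not affect the number of components of the complement, the nonseparating pair hypothesis on each pair translates directly into: both $c_\alpha$ and $c_\beta$ are nonseparating simple closed curves in $R$.

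The heart of the argument is the claim that every nonseparating simple closed curve in $R\cong N_{1,m}$ is one-sided. This I would verify via the orientation double cover $p\colon \tilde{R}\to R$ with $\tilde{R}\cong S_{0,2m}$ and deck involution $\tau$. For a two-sided curve $c\subset R$, the preimage $p^{-1}(c)$ consists of two disjoint circles $\tilde{c}_1\sqcup\tilde{c}_2$ swapped by $\tau$, and any two disjoint simple closed curves in $S^2$ bound exactly three regions (two discs and an annulus), so the same is true in $\tilde{R}$. Since $\tau$ has order two, it must fix one of these regions and swap the other two; consequently $R_c$ has precisely two components, and $c$ is separating.

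Applying this to both $c_\alpha$ and $c_\beta$, both must be one-sided. Cutting $R$ along the one-sided nonseparating curve $c_\alpha$ produces a connected surface with $\chi=1-m$ and $m+1$ boundary components (the one-sided cut adds a single new boundary circle), which by the classification of surfaces must be the orientable $S_{0,m+1}$. However $c_\beta\subset R_{c_\alpha}$ is disjoint from $c_\alpha$, so its regular neighbourhood in $R_{c_\alpha}$ coincides with the M\"obius band neighbourhood it inherits from $R$---and an orientable surface cannot contain an embedded M\"obius band. This contradiction establishes the alternation. The main subtlety I anticipate is translating between the ``nonseparating pair of arcs'' condition on the $\alpha_i$'s and the ``nonseparating simple closed curve'' condition on $c_\alpha$, which requires checking that the auxiliary boundary arcs $\lambda,\lambda'$ do not alter the component count of the complement; the rest is a straightforward chain of Euler characteristic bookkeeping combined with the double-cover analysis.
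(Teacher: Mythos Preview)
Your argument is correct and follows the same line as the paper's: assuming non-alternation, you close up each pair of arcs along disjoint boundary subarcs to obtain two disjoint nonseparating simple closed curves in $R$, which is impossible when $R$ has nonorientable genus~$1$. The paper states this last impossibility in one line without justification, whereas you supply a complete proof of it via the orientation double cover and an Euler-characteristic count; that extra detail is sound.
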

\begin{figure}[h]
\begin{center}
\includegraphics[width=0.43\customwidth]{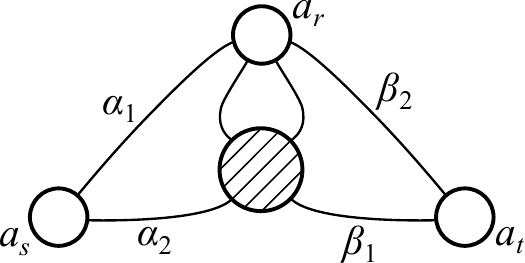}
\caption{The ends of $\alpha_1\cup\alpha_2$ alternate the ends of  $\beta_1\cup\beta_2$ along $a_r$.}\label{fig:alternate} %
\end{center}
\end{figure}
\begin{proof}
Let $A_1,A_2,B_1,B_2$ be the ends in $a_r$ of $\alpha_1,\alpha_2,\beta_1,\beta_2$ respectively. Suppose that $A_1,A_2$ don't alternate $B_1,B_2$ along $a_r$. Then there exist disjoint arcs $\gamma_1,\gamma_2\subset a_r$ such that $A_1,A_2$ are the ends of $\gamma_1$ and $B_1,B_2$ are the ends of $\gamma_2$. Let $\overline{\alpha}$ (resp $\overline{\beta}$) be a simple closed curve in $R$ composed of the arcs $\alpha_1$, $\gamma_1$,  $\alpha_2$ and an arc of $a_s$ from $A'_1$ to $A'_2$ (resp.  $\beta_1$, $\gamma_2$, $\beta_2$ and an arc of $a_t$ from $B'_1$ to $B'_2$). Note that $\overline{\alpha}$ and $\overline{\beta}$ are nonseparating and disjoint. This is a contradiction, because $R$ has genus $1$.
\end{proof}



Following \cite{Castel}, we call every component of  $K_i\cap K_{i+1}$ or $K_i\cap R$ a {\it domain}.
The boundary of a domain $D$ is the union of finitely many arcs of the curves $a_j$, which we call {\it sides} of $D$. A component of $\partial D\cap a_j$ is called $a_j$-side of $D$. 
A domain is called a {\it rectangle} (respectively a {\it hexagon}) if it has four (respectively six) sides and it is homeomorphic to a disc.
\begin{lemma}\label{lem:rectangles}
Every  component of $K_2\cap R$ is a rectangle.    
\end{lemma}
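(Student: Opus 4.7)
The strategy is to fix an arbitrary component $D$ of $K_2\cap R$, analyse the combinatorics of $\partial D$, and then use an Euler-characteristic computation in $K_2$ to pin down that $D$ is a disc with exactly four sides.

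First, I will examine $\partial D$. Its arcs lie on $a_1$, $a_2$ or $a_3$: the $a_2$-arcs lie in $R$ while the $a_1$- and $a_3$-arcs lie in $K_2$. Because $a_1\cap a_3=\emptyset$, two consecutive non-$a_2$ sides along any boundary cycle must be of the same type, and by Lemma~\ref{lem:arcs13} each $a_2$-arc in $\partial D$ has one endpoint on $a_1$ and the other on $a_3$. These two observations force every boundary cycle of $D$ to have $4k$ sides for some $k\ge 1$, appearing in the cyclic pattern $a_2,a_1,a_2,a_3$; in particular such a cycle contributes $k$ arcs of $a_1$, $k$ arcs of $a_3$, and $2k$ arcs of $a_2$.

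Second, I will set up the global counting. Lemma~\ref{lem:homeo_F} provides a homeomorphism of $N$ swapping $a_1$ and $a_3$ and fixing $a_2$, so $|a_1\cap a_2|=|a_2\cap a_3|=2p$ and $K_2$ is cut by $p$ disjoint arcs of $a_1$ and $p$ disjoint arcs of $a_3$. Since $K_1\cap K_3=\emptyset$, the curve $a_1$ is disjoint from $K_3$ and $a_3$ from $K_1$; hence each arc of $a_1\cap K_2$ locally separates a piece of $K_1\cap K_2$ from a piece of $K_2\cap R$, and is therefore a side of exactly one component of $K_2\cap R$. Summing, the total number of $a_1$-sides across all components of $K_2\cap R$ is $p$, and the same holds for $a_3$-sides.

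Third, I will compute Euler characteristic. Cutting $K_2$ along the $2p$ disjoint properly embedded arcs in $(a_1\cup a_3)\cap K_2$ raises $\chi$ by $2p$, and partitions $K_2$ into the components of $K_1\cap K_2$, $K_3\cap K_2$ and $K_2\cap R$, giving
\[
\chi(K_1\cap K_2)+\chi(K_3\cap K_2)+\chi(K_2\cap R)=\chi(K_2)+2p=-1+2p.
\]
Minimal position of the pairs $(a_1,a_2)$ and $(a_2,a_3)$ rules out bigon components of $K_1\cap K_2$ and $K_3\cap K_2$, so each such component has Euler characteristic $\le 0$. Hence $\chi(K_2\cap R)\ge -1+2p$. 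Now let $N^+$ be the number of disc components of $K_2\cap R$: disc components contribute $\chi=1$ while non-disc components contribute $\chi\le 0$, so $N^+\ge\chi(K_2\cap R)\ge -1+2p$. On the other hand each disc contributes at least one $a_1$-side to the global $a_1$-count, so $N^+\le p$. Combining, $-1+2p\le p$, so $p\le 1$. Once $p=1$, we obtain $N^+=1$ and conclude that $D$ is a disc whose unique boundary cycle has exactly $4$ sides, i.e.\ a rectangle.

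The main obstacle is the nonorientable refinement of the Euler-characteristic step: one must check that the ``no bigon in minimal position'' conclusion really forces each component of $K_1\cap K_2$ and $K_3\cap K_2$ to have $\chi\le 0$ even when $K_2$ is a one-holed Klein bottle. This reduces to showing that a hypothetical disc component of $K_1\cap K_2$ bounded by a single arc of $a_1$ and a single arc of $a_2$ yields an honest bigon between $a_1$ and $a_2$ in $N$, contradicting minimal position; this is ensured by the two-sidedness of $a_1$ and $a_2$ in $N$. A secondary subtlety, to be addressed elsewhere in the proof of Proposition~\ref{prop:ssc}, is the case $p=0$, in which $K_1\cap K_2=\emptyset$ and $K_2\cap R=K_2$; this situation must be excluded separately, presumably via the braid relation between $\sigma_1$ and $\sigma_2$ combined with Lemma~\ref{lem:crs}, which forces $I(a_1,a_2)>0$.
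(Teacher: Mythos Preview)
Your Euler-characteristic argument breaks at the sentence ``Minimal position of the pairs $(a_1,a_2)$ and $(a_2,a_3)$ rules out bigon components of $K_1\cap K_2$ and $K_3\cap K_2$, so each such component has Euler characteristic $\le 0$.'' Ruling out bigons only rules out disc components with \emph{two} sides; it does not rule out disc components with four or more sides. In fact the paper explicitly allows (and later analyses) rectangle components of $K_1\cap K_2$, each of which is a disc with $\chi=1$ (see Lemma~\ref{lem:domains} and Figure~\ref{fig:regions}). So the inequality $\chi(K_2\cap R)\ge -1+2p$ is unfounded, and the chain $-1+2p\le N^+\le p$ collapses. Note too that your conclusion $p\le 1$ is strictly stronger than Lemma~\ref{lem:rectangles}: it amounts to $I(a_1,a_2)\le 2$, which in the paper is only obtained after the further Lemmas~\ref{lem:domains}, \ref{lem:nohexagons} and~\ref{lem:K1_cap_K2}. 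That alone should signal that an argument this short cannot suffice here.

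The paper's proof is quite different and does not use any global Euler-characteristic count. It fixes a single component $D\subset K_2\cap R$, takes an $a_3$-side $\gamma$ of $D$ together with the two $a_2$-sides $\alpha_1,\alpha_2$ meeting $\gamma$, and shows that $\alpha_1\cup\alpha_2$ must separate $R$. The key tool is the shift $\Delta^2$: if $\{\alpha_1,\alpha_2\}$ were a nonseparating pair of arcs, then so would be $\{\Delta^2(\alpha_1),\Delta^2(\alpha_2)\}\subset\arcs(R,a_3,a_5)$, and Lemma~\ref{lem:pairs} forces their ends to alternate along $a_3$, putting an endpoint of $\Delta^2(\alpha_i)\subset a_4$ inside $\gamma\subset K_2$, a contradiction. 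Once $\alpha_1\cup\alpha_2$ separates $R$, $D$ is trapped on one side and hence has exactly four sides; a second application of $\Delta^2$ (two disjoint copies of $D$ in $R$) excludes the M\"obius-band possibility. Your combinatorial description of $\partial D$ in the first paragraph is correct and harmless, but the rest of the argument needs to be replaced by this kind of local separating-arc analysis using $\Delta^2$.
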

\begin{proof}
Let $D$ be a component of $K_2\cap R$. By Lemma \ref{lem:arcs13} the $a_2$-sides of $D$ belong to $\arcs(R,a_1,a_3)$. 
Let $\gamma$ be an $a_3$-side of $D$ and $\alpha_1,\alpha_2$ two $a_2$-sides whose ends on $a_3$ are the ends of $\gamma$ (Figure \ref{fig:rect}). 
\begin{figure}[h]
\begin{center}
\includegraphics[width=0.7\customwidth]{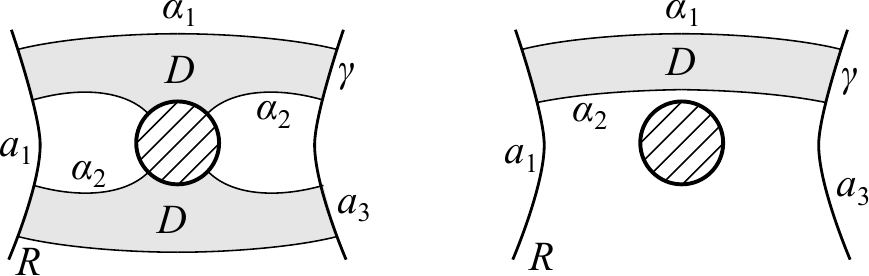}
\caption{The case where $\alpha_1\cup\alpha_2$ doesn't separate $R$ (left) versus the case where $\alpha_1\cup\alpha_2$ separates $R$ (right).}\label{fig:rect} %
\end{center}
\end{figure}
We claim that $\alpha_1\cup\alpha_2$ is separating in $R$.  Suppose that $\{\alpha_1,\alpha_2\}$ is a nonseparating pair of arcs in $R$ (left hand side of Figure \ref{fig:rect}). Then  $\{\Delta^2(\alpha_1),\Delta^2(\alpha_2)\}\subset\arcs(R,a_3,a_5)$ is also a nonseparating pair of arcs in $R$.  By Lemma \ref{lem:pairs}, the ends of $\Delta^2(\alpha_1)\cup\Delta^2(\alpha_2)$ alternate the ends of $\alpha_1\cup\alpha_2$ along $a_3$. It follows that $\gamma$ contains an end of $\Delta^2(\alpha_1)\cup\Delta^2(\alpha_2)$, which is a contradiction because $\Delta^2(\alpha_1)\cup\Delta^2(\alpha_2)\subset K_4$ and $\gamma\subset K_2$. 
Thus $R_{\alpha_1\cup\alpha_2}$ is disconnected and one of its components is $D$ (right hand side of Figure \ref{fig:rect}). We see that $D$ has four sides and it remains to show that it is homeomorphic to a disc.  Indeed, $D$ is not a M\"obius band, because $D\cap\Delta^2(D)=\emptyset$, $D\cup\Delta^2(D)\subset R$ and $R$ cannot contain two disjoint M\"obius bands. \end{proof}



\begin{lemma}\label{lem:domains}
The components of $K_2\cap K_1$ consist in some rectangles and either
exactly one hexagon, or exactly one M\"obius band whose boundary is the union of an arc of $a_2$ and an arc of $a_1$.
\end{lemma}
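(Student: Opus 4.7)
My plan is to carry out an Euler characteristic computation and then classify the components by a $2\chi-n$ bookkeeping. First I would count the arcs of intersection. By Lemma~\ref{lem:rectangles} the subsurface $K_2\cap R$ consists of some number $r$ of rectangles; each rectangle has two $a_2$-sides, and by Lemma~\ref{lem:arcs13} every $a_2$-arc in $R$ joins $a_1$ to $a_3$. Counting endpoints on $a_1$ yields $|a_1\cap a_2|=2r$, and consequently $a_1\cap K_2$ and $a_2\cap K_1$ each consist of exactly $r$ properly embedded arcs.

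Next I would cut $K_2$ along the $2r$ disjoint arcs $(a_1\cup a_3)\cap K_2$. Since cutting along $k$ disjoint properly embedded arcs increases the Euler characteristic by $k$, the resulting disjoint union satisfies
\[\chi(K_2\cap K_1)+\chi(K_2\cap R)+\chi(K_2\cap K_3)=\chi(K_2)+2r=2r-1.\]
The homeomorphism $F$ from Lemma~\ref{lem:homeo_F} preserves the isotopy class of $a_2$ and swaps $a_1$ with $a_3$, so after isotoping to fix $a_2$ setwise it restricts to a homeomorphism $K_2\cap K_1\to K_2\cap K_3$, giving $\chi(K_2\cap K_1)=\chi(K_2\cap K_3)$. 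Combined with $\chi(K_2\cap R)=r$, this forces $\chi(K_2\cap K_1)=(r-1)/2$, and in particular $r$ is odd.

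Finally, I would classify the components of $K_2\cap K_1$. Each component $D$ is a compact surface whose boundary circles consist of arcs alternating between $a_1$ and $a_2$, and $D$ contains no bigon because $a_1\cup a_2$ is in tight position. Let $n_D\ge 1$ denote the number of $a_1$-arcs on $\partial D$, which equals the number of $a_2$-arcs on $\partial D$. A brief case check on compact surface types shows that $2\chi(D)-n_D$ vanishes precisely when $D$ is a rectangle, equals $-1$ precisely when $D$ is a hexagon or a M\"obius band whose boundary is the union of one $a_1$-arc and one $a_2$-arc, and is at most $-2$ in every remaining possibility (higher $2k$-gons with $k\ge 4$, annuli, M\"obius bands with four or more boundary arcs, and surfaces of negative Euler characteristic). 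Summing over components gives
\[\sum_{D}\bigl(2\chi(D)-n_D\bigr)=2\chi(K_2\cap K_1)-r=(r-1)-r=-1.\]
Since every summand is non-positive, exactly one component has $2\chi(D)-n_D=-1$ and all other components are rectangles, which is the asserted dichotomy.

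The one delicate point is the $F$-symmetry: without the equality $\chi(K_2\cap K_1)=\chi(K_2\cap K_3)$ coming from Lemma~\ref{lem:homeo_F} the Euler characteristic count would not close up. Once that symmetry is in hand, the classification is a routine surface bookkeeping.
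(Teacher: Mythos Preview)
Your argument is correct and is essentially the paper's own proof: your quantity $2\chi(D)-n_D$ is exactly twice the paper's contribution $\xi(D)=\chi(D)-\tfrac{1}{2}|D\cap(a_1\cup a_3)|$, and both proofs hinge on the same $F$-symmetry from Lemma~\ref{lem:homeo_F} to balance the $K_1\cap K_2$ and $K_3\cap K_2$ contributions. The only cosmetic difference is that the paper enumerates the possible domain types via a figure (and defers the torus case to Castel), whereas your abstract case split on $(\chi(D),n_D)$ handles both the torus and Klein bottle cases uniformly without the picture.
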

 \begin{proof} 
If $K_i$ are one-holed tori, then $K_2\cap K_1$ consist in exactly one hexagon and some rectangles  by the proof of Step 4 of \cite[Proposition 8.0.1]{Castel}, so we consider only the case where $K_i$ are one-holed Klein bottles. 
Then each component $D$ of $K_1\cap K_2$ is one of the shaded domains shown in Figure \ref{fig:regions}.
\begin{figure}[h]
\begin{center}
\includegraphics[width=0.95\customwidth]{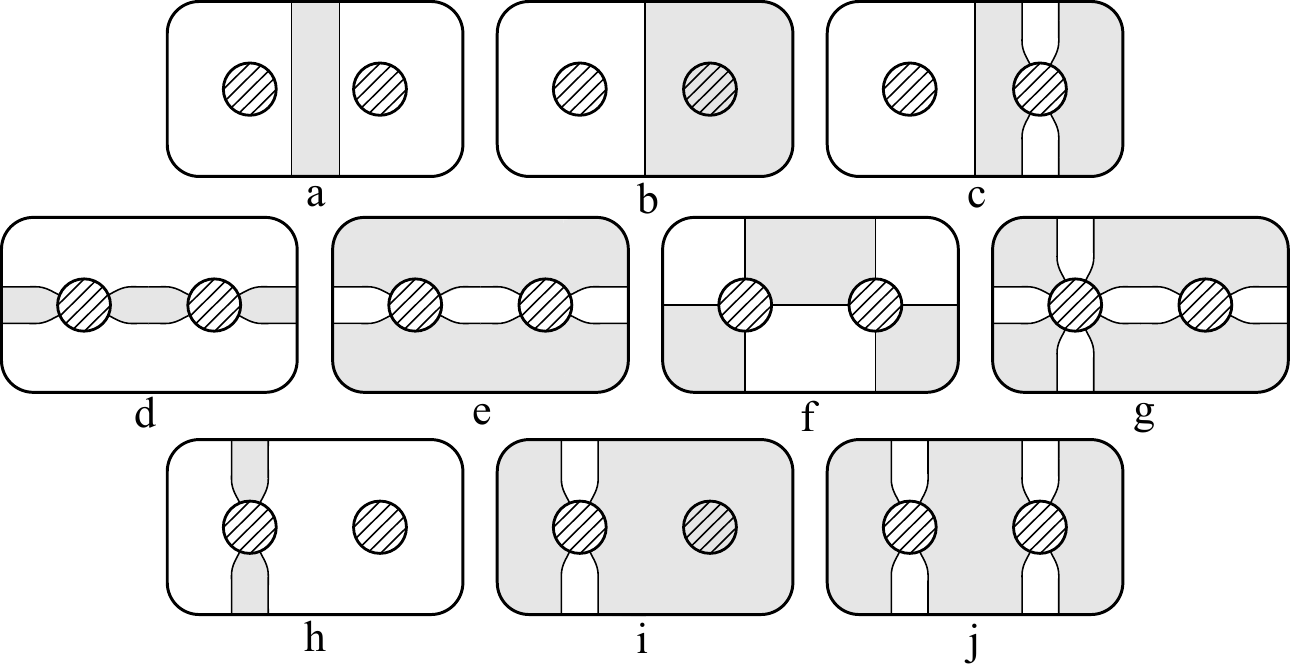}
\caption{Possible components (domains) of $K_1\cap K_2$ as subsets of $K_1$, where $K_1$ is a one-holed Klein bottle.}\label{fig:regions} %
\end{center}
\end{figure}
This can be proved by inspection of three cases: 
\begin{enumerate}
\item  where at least one $a_2$-side of $D$ is separating in $K_1$ (configurations: a, b, c),
\item  where at least one $a_2$-side of $D$ is two-sided and nonseparating in $K_1$ (configurations: d, e, f, g),
\item  where all $a_2$-sides of $D$ are one-sided in $K_1$ (configurations: h, i, j).
\end{enumerate}
Let $\mathcal{D}(K_2)$ denote the set of components of $K_2\cap K_1$, $K_2\cap K_3$ and $K_2\cap R$ and 
notice that $K_2$ is the union of elements of $\mathcal{D}(K_2)$. For $D\in\mathcal{D}(K_2)$ we  determine its contribution $\xi(D)$ to the Euler characteristic of $K_2$ in such a way that
\[\chi(K_2)=\sum_{D\in\mathcal{D}(K_2)}\xi(D).\]
Two distinct domains are either disjoint, or they have a common $a_1$-side, or a common $a_3$-side. Conversely, every ($a_1$ or $a_3$)-side belongs to exactly two domains. It follows that $\xi(D)$ can be defined as
\[\xi(D)=\chi(D)-\frac{1}{2}\chi(D\cap(a_1\cup a_3)),\]
where $D\cap(a_1\cup a_3)$ is the union of ($a_1$ or $a_3$)-sides of $D$. Since the Euler characteristic of a side is $1$, $\chi(D\cap(a_1\cup a_3))$ is simply the number of ($a_1$ or $a_3$)-sides of $D$.
By Lemma \ref{lem:rectangles} the components of $K_2\cap R$ are rectangles with $\xi(D)=0$.
Let us compute the contribution of the possible domains of $K_1\cap K_2$ from Figure \ref{fig:regions}:
\begin{enumerate}
    \item $\xi(D)=0$ if $D$ is a rectangle (configurations: a, d, h),
    \item $\xi(D)=-\frac{1}{2}$ if $D$ is a hexagon (configurations: c, f) or the M\"obius band (configuration~b),
    \item $\xi(D)=-1$ if $D$ is one of the remaining domains (configurations: e, g, i, j).
\end{enumerate}
The homeomorphism $F$ from Lemma \ref{lem:homeo_F} maps the components of $K_1\cap K_2$ on the components 
$K_3\cap K_2$ preserving their contribution: $\xi(F(D))=\xi(D)$. Since $\xi(K_2)=-1$, we see that exactly one component $D$ of $K_1\cap K_2$ is not a rectangle and $\xi(D)=-\frac{1}{2}$, thus $D$ is either a hexagon (configurations: c, f) or the M\"obius band (configuration b). 
\end{proof} 
\begin{lemma}\label{lem:nohexagons}
No component of $K_2\cap K_1$ is a hexagon.
\end{lemma}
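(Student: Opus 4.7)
We argue by contradiction: suppose that some component $H$ of $K_2\cap K_1$ is a hexagon. By Lemma~\ref{lem:homeo_F}, the homeomorphism $F$ carries $H$ to a hexagonal component $F(H)$ of $K_2\cap K_3$, and applying a representative $\Delta$ of $\rho(\delta)$ yields an analogous hexagonal component of $K_{j+1}\cap K_{j}$ for every $j$.

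\emph{Step 1 (arc counts).} The hexagon $H$ contributes three $a_2$-sides, so $|a_2\cap K_1|\geq 3$. Since each intersection point of $a_1$ with $a_2$ lies on the boundary of exactly one arc of $a_2\cap K_1$ (the opposite side of the crossing, being outside $K_1$, must lie in $R$ because $K_3\cap a_1=\emptyset$), we obtain
\[
I(a_1,a_2)=2\,|a_2\cap K_1|\geq 6.
\]
Tracing the curve $a_2$ through the decomposition of $N$, its arcs alternate in the cyclic pattern $K_1,R,K_3,R,K_1,R,K_3,R,\dots$, so $|a_2\cap R|=I(a_1,a_2)\geq 6$, and every component of $a_2\cap R$ belongs to $\arcs(R,a_1,a_3)$ by Lemma~\ref{lem:arcs13}.

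\emph{Step 2 (propagation via $\Delta^2$).} Since $\Delta^2$ sends $a_j$ to $a_{j+2}$ and preserves $R$, the hexagonal configuration in $K_2\cap K_1$ is transported to a hexagonal configuration in $K_4\cap K_3$. Consequently $|a_4\cap R|\geq 6$, with all arcs lying in $\arcs(R,a_3,a_5)$. Because $K_2$ and $K_4$ are disjoint, every arc of $a_2\cap R$ is disjoint from every arc of $a_4\cap R$; in particular there are at least $6$ endpoints on $a_3$ coming from each of $a_2$ and $a_4$.

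\emph{Step 3 (topological contradiction).} By Lemma~\ref{lem:Qclosed}, $R$ is $S_{0,k+b}$ or $N_{1,k+b}$, so the genus of $R$ is at most $1$. In the genus-$0$ case, Lemma~\ref{lem:3sep} forces any three disjoint arcs of $a_2\cap R$ to separate $R$; iterating this with the at least six arcs of $a_2\cap R$, together with the symmetric collection of arcs of $a_4\cap R$, yields a decomposition of $R$ into more pieces than permitted by the Euler characteristic identity $\chi(R)=2-k-b$. In the genus-$1$ case, we exhibit a nonseparating pair of arcs in $a_2\cap R$ and a nonseparating pair in $a_4\cap R$; Lemma~\ref{lem:pairs} then requires their endpoints on $a_3$ to alternate, whereas a direct inspection of hexagonal configurations (c) and (f) in Figure~\ref{fig:regions} (distinguished by whether the $a_2$-sides of $H$ are separating or two-sided non-separating in $K_1$) shows the actual cyclic order to be incompatible with this alternation.

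The main obstacle is Step~3: namely, translating the combinatorial information encoded in the hexagon into a rigid topological obstruction. The argument must handle the two hexagonal configurations separately, and in the genus-$1$ subcase it relies on exhibiting explicit nonseparating pairs of arcs whose endpoints on $a_3$ violate the alternation property of Lemma~\ref{lem:pairs}. Everything else in the proof is bookkeeping with arc counts and the symmetries provided by $F$ and $\Delta^2$.
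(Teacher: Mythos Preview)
Your Steps~1 and~2 are fine, but as you yourself flag, Step~3 is where the argument lives, and what you have written there is not a proof. Worse, the genus-$0$ branch as stated does not work: merely having many disjoint arcs of $a_2$ and of $a_4$ in $R$ produces no contradiction with $\chi(R)=2-k-b$. Cutting a planar surface along $m$ arcs joining two boundary circles is perfectly consistent with any value of $\chi(R)$; you can always arrange all $a_2$-endpoints on $a_3$ to be consecutive and all $a_4$-endpoints to occupy the complementary arc of $a_3$, and then nothing is forced to cross. So raw arc counts cannot finish the job --- you need information about the \emph{cyclic order} of endpoints on $a_3$, and your outline does not extract that.

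The paper's proof supplies exactly this missing ingredient, and in a way that avoids your genus-$0$/genus-$1$ split entirely. One transports the hexagon $D\subset K_1\cap K_2$ into $K_3$ in two ways: $H=F(D)\subset K_2\cap K_3$ and $H'=\Delta^2(D)\subset K_4\cap K_3$. The crucial observation is that $H$ and $H'$ are disjoint hexagons in $K_3$ whose $a_3$-sides \emph{alternate} along $a_3$ (this is forced by the combinatorics of configuration~(f) in Figure~\ref{fig:regions}). The three $a_2$-sides of $H$ extend to three disjoint arcs $\alpha_1,\alpha_2,\alpha_3\in\arcs(R,a_3,a_1)$, and likewise the $a_4$-sides of $H'$ extend to $\beta_1,\beta_2,\beta_3\in\arcs(R,a_3,a_5)$, with the $a_3$-endpoints of the $\alpha$'s and $\beta$'s alternating. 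Lemma~\ref{lem:3sep} (valid for genus $\le 1$, so no case split is needed) says $\alpha_1\cup\alpha_2\cup\alpha_3$ separates $R$; the alternation then forces some $\beta_j$ to have its $a_3$-end and its $a_5$-end in different components, hence to cross some $\alpha_i$ --- contradiction, since $a_2\cap a_4=\emptyset$.

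There is also a second case you have not addressed: when the hexagon is of type~(c) (one $a_2$-side separating in $K_1$), the three sides of $H$ do not by themselves alternate with those of $H'$; the paper shows that in this case $K_2\cap K_3$ must contain an additional rectangle with one-sided $a_2$-sides, and then the sides of $H$ together with that rectangle alternate with their $\Delta^2$-images, reducing to the same contradiction. Your sketch does not distinguish these two hexagonal types until the genus-$1$ subcase, and even there says only that ``direct inspection'' will reveal an incompatibility, without identifying what the incompatibility is.
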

 \begin{proof} 
 Suppose that one component of $K_2\cap K_1$, say $D$ is a hexagon.
First we assume that $K_i$ are one-holed Klein bottles and consider two cases.

\medskip
 \noindent{\bf Case 1: no $a_2$-side of $D$ is separating in $K_1$ (Fig. \ref{fig:regions} f))}. Let
 \[H=F(D)\subset K_2\cap K_3,\quad H'=\Delta^2(D)\subset K_4\cap K_3.\] Observe that $H$ and $H'$ are disjoint hexagons contained in $K_3$, such that the $a_3$-sides of $H$ alternate the $a_3$-sides of $H'$ along $a_3$ (Figure \ref{fig:caseA}). 
 \begin{figure}[h]
\begin{center}
\includegraphics[width=0.35\customwidth]{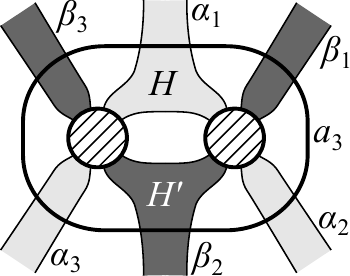}
\caption{The hexagons $H\subset K_3\cap K_2$ and  $H'\subset K_3\cap K_4$ and arcs  $\alpha_1,\alpha_2,\alpha_3\subset a_2$, $\beta_1,\beta_2,\beta_3\subset a_4$ from Case 1.}\label{fig:caseA} %
\end{center}
\end{figure}
 There exist pairwise disjoint arcs $\alpha_1,\alpha_2,\alpha_3\in\arcs(S,a_3,a_1)$ contained in $a_2$, and pairwise disjoint arcs $\beta_1,\beta_2,\beta_3\in\arcs(S,a_3,a_5)$ contained in $a_4$, such that the ends of  $\alpha_1\cup\alpha_2\cup\alpha_3$ alternate the ends of $\beta_1\cup\beta_2\cup\beta_3$ along $a_3$ (Figure \ref{fig:caseA}). By Lemma \ref{lem:3sep} $R'=R\setminus(\alpha_1\cup\alpha_2\cup\alpha_3)$ is disconnected. 
Let $X$ be the component of $R'$ containing $a_5$ and observe that at least one arc $\beta_i$ has one end in $R\setminus R'$ and the other end in $a_5\subset R'$. Such $\beta_i$ has to intersect  $\alpha_1\cup\alpha_2\cup\alpha_3$, which is a contradiction because $\beta_i\subset a_4$ and  $\alpha_1\cup\alpha_2\cup\alpha_3\subset a_2$.


\medskip
 \noindent{\bf Case 2: one $a_2$-side of $D$ is separating in $K_1$ (Fig. \ref{fig:regions} c))}.
  First we show that $K_3\setminus int(K_2)$ is homeomorphic to $K_3\cap K_2$. 
  Since \[K_3\setminus int(K_2)=\Delta(K_2\setminus int(K_1),\] it suffices to show that 
 $K_2\setminus int(K_1)$ is homeomorphic to $K_3\cap K_2$.  We have $K_2\subset K_1\cup K_3\cup R$ and
 \[K_2\setminus int(K_1)=(K_3\cap K_2)\cup(R\cap K_2),\] which is homeomorphic to  $K_3\cap K_2$ since $R\cap K_2$ is a sum of disjoint rectangles.
 
Let $H=F(D)\subset K_2\cap K_3$. Since $K_1\setminus D$ is nonorientable, so is $K_3\setminus H$, and $H\ne K_2\cap K_3$ by the observation above. Thus $K_2\cap K_3$ contains at least one rectangle $E$ with one-sided $a_2$-sides (Fig. \ref{fig:regions} f)). Let $H'=\Delta^2(D)$
and $E'=\Delta^2(F(E))$ and observe that $H'$ and $E'$ are components of $T_3\cap T_4$ and that the $a_3$-sides of $H\cup E$ alternate the $a_3$-sides of $H'\cup E'$ along $a_3$ (Figure \ref{fig:caseB}). 
\begin{figure}[h]
\begin{center}
\includegraphics[width=0.48\customwidth]{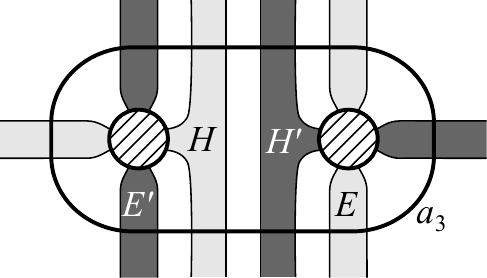}
\caption{The hexagons and rectangles from Case 2. $H\cup E\subset K_3\cap K_2$, $H'\cup E'\subset K_3\cap K_4$.}\label{fig:caseB} %
\end{center}
\end{figure}
This leads to a contradiction as in Case 1.

\medskip
If $K_i$ are one-holed tori, then by the proof of Step 5 of \cite[Proposition 8.0.1]{Castel},  that there are hexagons $H\subset K_2\cap K_3$ and $H'\subset K_4\cap K_3$ such that the sides of $H$ alternate the sides of $H'$ along $a_3$.This leads to a contradiction as in Case 1.
\end{proof}

\begin{lemma}\label{lem:K1_cap_K2}
$K_1\cap K_2$ is a M\"obius band.    
\end{lemma}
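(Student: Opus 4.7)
By Lemmas \ref{lem:domains} and \ref{lem:nohexagons}, the components of $K_1 \cap K_2$ consist of the distinguished Möbius band $M$ (with $\partial M$ the union of an arc of $a_1$ and an arc of $a_2$) together with some number of rectangle components. My plan is to show by contradiction that no rectangle components occur, so that $K_1 \cap K_2 = M$.

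Suppose that $E \subset K_1 \cap K_2$ is a rectangle component. The strategy is to adapt Case 1 of the proof of Lemma \ref{lem:nohexagons}. The union $M \cup E$ possesses three $a_2$-sides (one from $M$, two from $E$) inside $K_1$. I would transport $M \cup E$ into $K_3$ by two routes: applying the homeomorphism $F$ from Lemma \ref{lem:homeo_F} gives $F(M) \cup F(E) \subset K_2 \cap K_3$, while applying $\Delta^2$ gives $\Delta^2(M) \cup \Delta^2(E) \subset K_3 \cap K_4$. Together this produces three pairwise disjoint arcs of $a_2 \cap K_3$ and three pairwise disjoint arcs of $a_4 \cap K_3$, together with three $a_3$-arcs coming from the $K_2$-side and three from the $K_4$-side.

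Using Lemma \ref{lem:rectangles} to pass through $R \cap K_2$ (each component of which is a rectangle and hence offers no obstruction) and Lemma \ref{lem:arcs13}, I would then extract three pairwise disjoint arcs $\alpha_1, \alpha_2, \alpha_3 \in \arcs(R, a_1, a_3)$ lying on $a_2$, and, by the same procedure applied to $\Delta^2(M)\cup\Delta^2(E)$, three pairwise disjoint arcs $\beta_1, \beta_2, \beta_3 \in \arcs(R, a_3, a_5)$ lying on $a_4$. Lemma \ref{lem:3sep} then guarantees that $\alpha_1 \cup \alpha_2 \cup \alpha_3$ separates $R$. The mirror construction of the two triples forces the endpoints of the $\alpha_i$'s and the $\beta_j$'s on $a_3$ to interleave, so some $\beta_j$ must travel from a component of $R\setminus(\alpha_1\cup\alpha_2\cup\alpha_3)$ not containing $a_5$ to the component containing $a_5$, and therefore must cross some $\alpha_i$. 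Since $\beta_j \subset a_4$ and $\alpha_i \subset a_2$, this contradicts $a_2 \cap a_4 = \emptyset$, which holds because $\sigma_2$ and $\sigma_4$ commute (by Lemma \ref{lem:crs}).

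The main obstacle is establishing the interleaving of the $a_3$-endpoints rigorously. Unlike the hexagon case of Lemma \ref{lem:nohexagons}, here the three $a_2$-sides and three $a_3$-sides on the $K_2$-side are split between two components of different topological types (a Möbius band and a rectangle), and the same split occurs on the $K_4$-side. One must verify that the single $a_3$-arc contributed by $F(M)$ is positioned between two of the three $a_3$-arcs contributed by $\Delta^2(M)\cup\Delta^2(E)$, and symmetrically for $\Delta^2(M)$. This amounts to observing that the $K_2$- and $K_4$-halves of $K_3$ realize homeomorphic pictures attached along $a_3$, and that if the two patterns failed to interleave one could construct within $R$ a nonseparating pair of arcs contradicting Lemma \ref{lem:pairs}, exactly as in the earlier alternation arguments.
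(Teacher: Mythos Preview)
Your plan is in the right spirit, but the ``main obstacle'' you identify is a genuine gap, and the hand-wave at the end does not close it. Concretely: inside $K_3$ the two M\"obius bands $M_3=F(M)$ and $M'_3=\Delta^2(M)$ are neighbourhoods of the two crosscap cores, and their complement in $K_3$ is a disc $Q_3$ with two $a_3$-sides. Both rectangles $F(E)$ and $\Delta^2(E)$ sit in $Q_3$, each separating $M_3$ from $M'_3$, so they are linearly ordered. If $F(E)$ happens to lie on the $M_3$-side of $\Delta^2(E)$, then reading around $a_3$ the $K_2$-sides occur in positions $1,5,6$ and the $K_4$-sides in positions $2,3,4$: the two triples do \emph{not} interleave, and your three-arc separation argument via Lemma~\ref{lem:3sep} breaks down. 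Nothing in your setup rules out this configuration; the appeal to Lemma~\ref{lem:pairs} at the end is vague and does not produce the needed nonseparating pair.

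The paper's proof avoids this difficulty by bringing in $K_0$ (via $\Delta^{-2}$). The second M\"obius band $M'_1=\Delta^{-2}(M_3)\subset K_0\cap K_1$ lets one see that any rectangle $D\subset K_1\cap K_2$ separates $M_1$ from $M'_1$. One then chooses $D$ extremal (closest to $M'_1$) and shows---using an alternation argument of exactly the type you sketch, but now as a \emph{contradiction} forcing the $K_0$-rectangles to the other side---that a single $a_2$-side $\gamma$ of $D$ separates $K_0\cap K_1$ from $K_2\cap K_1$. The two $a_2$-arcs $\alpha_1,\alpha_2$ in $R$ abutting $\gamma$ are then shown to separate $R$ (here Lemma~\ref{lem:pairs} is used, with the $\Delta^{-2}$-translates playing the role of the second pair). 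The final contradiction is not Lemma~\ref{lem:3sep} but a connectedness argument: the set $R\cap\bigcup_{i=4}^{n}a_i$ (with $a_n=a_0$) is connected and meets both sides of $\alpha_1\cup\alpha_2$, forcing an intersection with $a_2$. So the paper uses two arcs and $K_0$; your three-arc approach without $K_0$ cannot control the relative position of $F(E)$ and $\Delta^2(E)$.
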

\begin{proof} By Lemmas \ref{lem:domains} and \ref{lem:nohexagons} the components of $K_2\cap K_1$ consist in exactly one M\"obius band and some rectangles. In particular, the $K_i$ are one-holed Klein bottles. We have to show that there are no rectangles in  $K_2\cap K_1$.

Let $M_1$ (respectively $M_3$) be the M\"obius band component of $K_1\cap K_2$ (respectively $K_3\cap K_2$) and let 
\[M'_3=\Delta^2(M_1)\subset K_3\cap K_4,\quad  M'_1=\Delta^{-2}(M_3)\subset K_1\cap K_0.\]
Suppose that $K_1\cap K_2$  contains at least one rectangle component $D$. Then $D$ separates $M_1$ from $M'_1$ in $K_1$ and by choosing $D$ appropriately we can assume that all other rectangle components of $K_1\cap K_2$ (if there are any) 
are on the same side of $D$ as $M_1$.
%
We can also assume that all rectangle components of $K_0\cap K_1$ are on the same side of $D$ as $M_1'$. For otherwise there would be a rectangle $D'\subset K_1\cap K_0$ on the same side of $D$ as $M_1$ and the $a_1$-sides of $M'_1\cup D'$ would alternate the $a_1$-sides of $M_1\cup D$ along $a_1$ (Figure \ref{fig:step6a}). 
\begin{figure}[h]
\begin{center}
\includegraphics[width=0.8\customwidth]{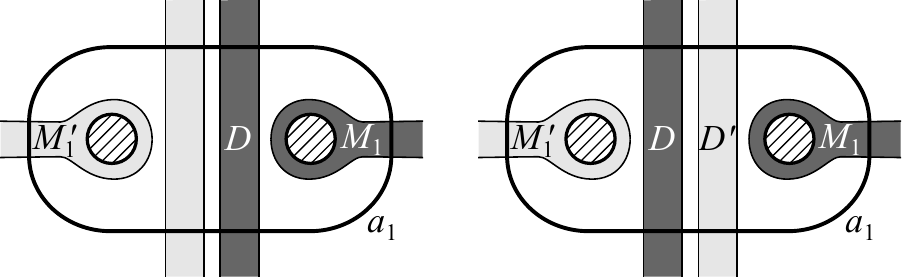}
\caption{The case where all components of $K_1\cap K_0$ are on the same side of $D$ versus the case where there are components of $K_1\cap K_0$ on both sides of $D$.}\label{fig:step6a} %
\end{center}
\end{figure}
This leads to a contradiction, as in the proof of Lemma \ref{lem:nohexagons} (Case 1). 

Let $\gamma$ be the $a_2$-side of $D$ separating $K_1\cap K_0$ form $K_1\cap K_2$ and let  $\alpha_1,\alpha_2\in\arcs(R,a_1,a_3)$ be disjoint arcs contained in $a_2$ whose ends on $a_1$ are the same as the ends of $\gamma$ (Figure \ref{fig:step6b}).
\begin{figure}[h]
\begin{center}
\includegraphics[width=0.8\customwidth]{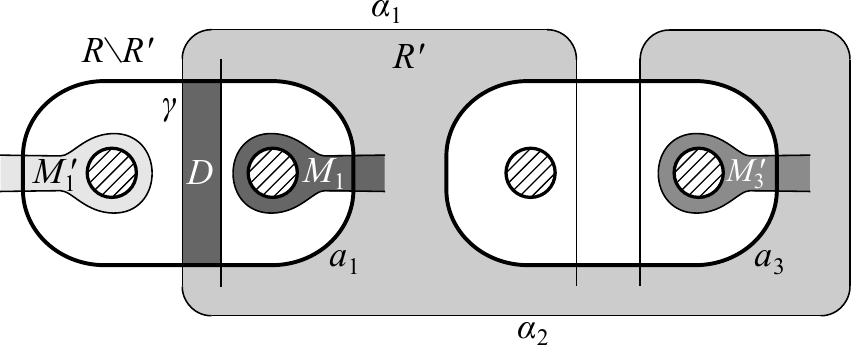}
\caption{$\alpha_1\cup\alpha_2$ is separating in $R$. $T_2\cap R$ is entirely contained in the component $R'$.}\label{fig:step6b} %
\end{center}
\end{figure}
We claim that $R\setminus(\alpha_1\cup\alpha_2)$ is disconnected. For suppose that $\{\alpha_1,\alpha_2\}$ is a nonseparating pair of arcs in $R$ and consider the pair  of arcs $\{\beta_1,\beta_2\}\subset\arcs(R,a_1,a_{n-1}\}$ defined as
$\beta_i=\Delta^{-2}(\alpha_i)$ for $i=1,2$. Since $\beta_1\cup\beta_2$ is contained in $a_0$, it is disjoint form $\alpha_1\cup\alpha_2$. By Lemma \ref{lem:pairs}  the ends of $\alpha_1\cup\alpha_2$ alternate the ends of  $\beta_1\cup\beta_2$ along $a_1$, which contradicts the assumption that $\gamma$ separates  $K_0\cap K_1$ from  $K_2\cap K_1$.
Let $R'$ be the component of $R\setminus(\alpha_1\cup\alpha_2)$ containing $T_2\cap R$. Observe that $K_4\cap R'\ne\emptyset$ and $K_0\cap (R\setminus R')\ne\emptyset$ (Figure \ref{fig:step6b}). The set \[X=R\cap\bigcup_{i=4}^n a_i,\] where $a_n=a_0$ is connected and has nonempty intersection with $R'$ and $R\setminus R'$. It follows that $X$ intersects $\alpha_1\cup\alpha_2\subset a_2$, which is a contradiction. 
We have proved that $K_1\cap K_2$ contains no rectangles, i.e. $K_1\cap K_2=M_1$, which completes the proof. 
\end{proof}

\begin{proof}[Proof of Proposition \ref{prop:ssc}]
By Lemma \ref{lem:K1_cap_K2}, $K_1\cap K_2$ is a M\"obius band whose boundary is the union of an arc of $a_2$ and an arc of $a_1$, hence $I(a_1,a_2)=2$, and by symmetry $I(a_i,a_{i+1})=2$ for all $1\le i\le n-1$. Since $K_1$ contains a M\"obius band, it cannot be a torus, so it is a Klein bottle.
\end{proof}
\section{Constructing a transvection of $\rho$}\label{sec:transvection}
The assumptions for this section are the same as in Section \ref{sec:delta}, so assume that $k\geq 7$ and $g\leq 2k+1$, where $k=\nhalf$. 
We also assume that there are special components in $\C^+(\X_n)$.

As in Remark \ref{def:A:normal}, we denote by $\A^n$ the union of normal components of $\C^+(\X_n)$. By Lemma \ref{lem:normal_inv}, $\A^n$ is the union of normal components of $\C^+(\Y_n)$ as well.

As in Remark \ref{def:ai:unique}, for any $1\leq i< n$, we denote by $A_i$ the set of special components of $\C^+(\rho(\sigma_i))$ and $a_i$ denotes either the unique separating curve in $\C^+(\rho(\sigma_i))$ or the unique element of $\C^+(\rho(\sigma_i))$ if there are no separating curves in $\C^+(\rho(\sigma_i))$. Moreover, by Proposition~\ref{prop:ssc}, if $a_i$ is separating, then it bounds a Klein bottle $K_i$ with one boundary component $a_i$.

Recall, that by Lemma~\ref{lem:component_S}, there exists a component $S$ of $N_{\A^n}$ containing all $a_i$. If $\A^n$ is empty, then we define $S=N$.
\begin{lemma}\label{lem:R:separating:normal} Suppose that $A_i$ contains a separating curve $a_i$.
\begin{enumerate}
    \item If $A_i$ contains a curve $c\ne a_i$ then $c$ is the unique two-sided and non-separating curve in $K_i$.
    \item $K_i\cap\A^n=\emptyset$. In other words $K_i\subset S$. 
\end{enumerate}
\end{lemma}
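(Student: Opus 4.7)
The plan is to prove both statements by exploiting the total symmetry of $\C^+(\rho(\X_n))$ given by Lemma \ref{lem:X_n_ts}: for any permutation of $\X_n$ there is $\tau\in\B_n'$ whose image under $\rho$ permutes the labels accordingly, hence permutes the one-holed Klein bottles $K_l$ and the separating special curves $a_l$ in the corresponding way. The key topological input is that the one-holed Klein bottle $K_l=N_{2,1}$ contains, up to isotopy, exactly one essential two-sided simple closed curve other than its boundary $a_l$; denote it by $m_l$. Since $m_l$ is essential and non-boundary-parallel in $K_l$, a standard annulus argument shows that $m_l$ cannot be isotoped into $N\setminus K_l$.

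For (2), I will assume some $c'\in\A^n\cap K_i$ exists and derive a contradiction. Being two-sided, nontrivial and not isotopic to $a_i$, the curve $c'$ must be isotopic to $m_i$. Choose $j\in\odd(n)\setminus\{i\}$ and $\tau\in\B_n'$ realizing the transposition $\sigma_i\leftrightarrow\sigma_j$; by Lemma \ref{lem:Delta_orbit}(1) we have $\rho(\tau)(a_i)=a_j$, hence $\rho(\tau)(K_i)=K_j$ and $\rho(\tau)(m_i)\simeq m_j$. On the other hand, $\A^n$ is $\rho(\B_n)$-invariant by Lemma \ref{lem:normal_inv}, and Proposition \ref{prop:ch8:8.1}(2) applied to $\A^n$ shows that $\rho(\B_n')$ fixes each component of $\A^n$ up to isotopy; thus $\rho(\tau)(c')\simeq c'$, yielding $m_i\simeq m_j$. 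This contradicts the fact that $K_i$ and $K_j$ are disjoint (Proposition \ref{prop:ssc}(3)) and $m_i$ cannot be isotoped out of $K_i$.

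For (1), I will argue that if $c\in A_i$ satisfies $c\neq a_i$ then necessarily $c\subset K_i$ and $c\simeq m_i$. Lemma \ref{special-curves-separating-conf} gives non-separability of $c$ in $N$, and $c\in\C^+$ gives two-sidedness. As a component of the multicurve $\A=\C^+(\rho(\X_n))$, the curve $c$ is disjoint from every $a_l$, so it lies in the big piece $R=N\setminus\bigcup_{l\in\odd(n)}\operatorname{int}(K_l)$ or in some $K_l$. An Euler-characteristic computation using $\chi(K_l)=-1$ and $g\le 2k+1$ yields $\chi(R)=2-g-b+k$, and combined with the $k+b$ boundary components of $R$ this forces $R$ to be either $S_{0,k+b}$ or $N_{1,k+b}$; both admit no two-sided non-separating simple closed curves, since cutting along such a curve would demand a surface of negative genus. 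Hence $c\not\subset R$, because a two-sided separating curve in $R$ would separate $N$ (each $K_l$ adheres to a single side of $c$ via $a_l$). If $c\subset K_l$ for some $l\neq i$, then $c\simeq m_l$; pick $l'\in\odd(n)\setminus\{i,l\}$ and $\beta\in\B_n'$ realizing $\sigma_l\leftrightarrow\sigma_{l'}$ while fixing $\sigma_i$, so that $\rho(\beta)$ preserves $A_i$ setwise and swaps $K_l$ with $K_{l'}$, forcing $m_{l'}\in A_i$. Next pick $i''\in\odd(n)\setminus\{i,l'\}$ and $\gamma\in\B_n'$ realizing $\sigma_i\leftrightarrow\sigma_{i''}$ while fixing $\sigma_{l'}$; then $\rho(\gamma)$ fixes $K_{l'}$ and hence fixes $m_{l'}$, while $\rho(\gamma)(A_i)=A_{i''}$, so $m_{l'}\in A_{i''}$, contradicting that $A_i$ and $A_{i''}$ carry disjoint singleton labels. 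The inequality $k\ge 7$ supplies the required distinct indices. Hence $c\subset K_i$ and $c\simeq m_i$.

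The main obstacle is the second part of (1): ruling out $c\subset K_l$ for some $l\neq i$ requires the double-conjugation above, first transplanting $c$ between Klein bottles while preserving its singleton label $\{\sigma_i\}$, then moving the label to $\{\sigma_{i''}\}$ while keeping the curve fixed; the resulting curve would then carry two distinct singleton labels, which is impossible for a component of the multicurve.
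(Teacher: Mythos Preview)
Your proof is correct. Part (2) follows essentially the same route as the paper: pick $\tau\in\B_n'$ sending $\sigma_i$ to some other $\sigma_j$, so that $\rho(\tau)$ carries $K_i$ to the disjoint $K_j$ while fixing every component of $\A^n$, contradicting $c'\in\A^n\cap K_i$.

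For part (1) your argument diverges from the paper's in the step ruling out $c\subset K_l$ with $l\neq i$. The paper's route here is shorter and more geometric: once $c$ is identified with the unique two-sided non-separating curve $b_l$ in $K_l$, the chain structure of Proposition~\ref{prop:ssc} forces $b_l$ to intersect both $a_{l-1}$ and $a_{l+1}$ (since $K_l$ is a union of the two M\"obius bands $K_{l-1}\cap K_l$ and $K_l\cap K_{l+1}$, and $b_l$ cannot sit inside either one). But $\sigma_i$ commutes with at least one of $\sigma_{l-1},\sigma_{l+1}$ whenever $l\neq i$, so $c\in\C^+(\rho(\sigma_i))$ would have to be disjoint from the corresponding $a_{l\pm 1}$; this forces $l=i$ immediately. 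Your double-conjugation argument---first transporting $c$ from $K_l$ to $K_{l'}$ while preserving the label $\{\sigma_i\}$, then relabelling while fixing $K_{l'}$---is a valid alternative that stays purely at the level of label combinatorics and does not invoke the intersection pattern $I(b_l,a_{l\pm1})\neq 0$. It is a bit longer, but it has the mild advantage of using only the multicurve structure and total symmetry rather than the finer chain geometry established in Section~\ref{sec:separating}.
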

\begin{proof}
(1) Suppose that $A_1$ contains a curve $c$, $c\ne a_1$. Then $c$ is two-sided and  Lemma \ref{special-curves-separating-conf} implies that $c$ is non-separating. Since $c$ is disjoint from
$a_i$ for all $i\in\odd(n)$, either $c$ is in $K_j$ for some $j\in\odd(n)$ or $c$ is in $N\setminus\bigcup_{i\in\odd(n)} K_i$. The latter is not possible, as $N\setminus\bigcup_{i\in\odd(n)} K_i$ is either a holed sphere or a holed projective plane (see~Proposition~\ref{lem:Qclosed}), and so it contains no two-sided non-separating curve.
So $c$ is the unique two-sided and non-separating curve in $K_j$ for some $j\in\odd(n)$ and it has non-zero intersection with $a_{j-1}$ and $a_{j+1}$ by Proposition \ref{prop:ssc}. It follows that $j=1$ because  $\sigma_1$ commutes with at least one of $\sigma_{j-1}$ or $\sigma_{j+1}$ for $j\ne 1$.

(2) Let $\tau\in\B_n'$ be such that $\tau\sigma_1\tau^{-1}=\sigma_{3}$. Then $\rho(\tau)(a_1)=a_{3}$ and $\rho(\tau)(K_1)=K_{3}$.
Because $K_1\cap K_3=\emptyset$ and $\rho(\B_n')$ fixes every component of $\A^n$ by Proposition \ref{prop:ch8:8.1}, we must have $K_1\cap\A^n=\emptyset$.
\end{proof}

\begin{lemma}\label{lem:componentR}
Let $\A=\bigcup_{i\in\odd(n)}A_i$. 
    There exists a component $R$ of $S_\A=N_{\A\cup\A^n}$ such that:
    \begin{enumerate}
        \item $a_i\in\bndy(R)$ for all $i\in\odd(n)$,
        \item $R$ is either a holed sphere or a holed projective plane.
    \end{enumerate}
\end{lemma}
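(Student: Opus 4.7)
The strategy is to apply Proposition~\ref{lem:Qclosed} to the totally symmetric $\X_n$-labelled multicurve $\A$ of special curves in $N$, locate a big component $R^{\ast}$ of $N_{\A}$ homeomorphic to $S_{0,\ast}$ or $N_{1,\ast}$ whose boundary contains every $a_i$ with $i\in\odd(n)$, and then intersect with $S$ to produce a component $R$ of $S_{\A}$ with the required properties.

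In Case 1 (each $a_i$ separating), Proposition~\ref{lem:Qclosed}(1) yields $N_{\A}=R^{\ast}\sqcup\bigsqcup_{i\in\odd(n)}K_i$ with $R^{\ast}\cong S_{0,k+b}$ or $N_{1,k+b}$, and Lemma~\ref{lem:R:separating:normal}(2) gives $K_i\subset S$. Set $R:=S\setminus\bigsqcup_{i\in\odd(n)}K_i$. Since $S$ is connected and each $K_i$ is a subsurface of $S$ with a single boundary curve $a_i$, $R$ is connected. It is a component of $S_{\A}$ with every $a_i$ on its boundary, and as a connected subsurface of $R^{\ast}$, it is a holed sphere or holed projective plane.

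In Case 2 (each $A_i=\{a_i\}$ with $a_i$ non-separating), Proposition~\ref{lem:Qclosed}(2) provides either a unique big component of $N_{\A}$ homeomorphic to $S_{0,2k+b}$ or $N_{1,2k+b}$, or two big components whose pair appears in the list of Proposition~\ref{lem:Qclosed}(2)(b). Every entry in that list contains a term of the form $S_{0,\ast}$ or $N_{1,\ast}$, so we may choose a big component $R^{\ast}$ that is a holed sphere or holed projective plane; every $a_i$ lies on $\bndy R^{\ast}$. Since $a_i\subset S$ by Lemma~\ref{lem:component_S}, a collar of $a_i$ in $R^{\ast}$ lies in $R^{\ast}\cap S$. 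Let $R$ be the component of $S_{\A}$ containing the $a_1$-collar; as $R\subset R^{\ast}$, $R$ is a holed sphere or holed projective plane. To see that $a_j\in\bndy R$ for every $j\in\odd(n)$, pick $\tau\in\B_n'$ with $\tau\sigma_1\tau^{-1}=\sigma_j$ (Lemma~\ref{lem:X_n_ts}); then $\rho(\tau)$ preserves both $\A$ and $\A^n$, the latter by Lemma~\ref{lem:normal_inv}. Proposition~\ref{prop:ch8:8.1}(1) applied to the $\rho(\B_n)$-invariant multicurve $\A^n$ shows that $\rho(\B_n')$ fixes every component of $N_{\A^n}$, so $\rho(\tau)$ preserves $S$. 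A further argument, relying on total symmetry and the cyclicity conclusion of Proposition~\ref{prop:ch8:8.1}(1) applied to a $\rho(\B_n)$-invariant multicurve enlarging $\A\cup\A^n$, shows that $\rho(\tau)$ preserves $R^{\ast}$ and each component of $S_{\A}$; in particular $\rho(\tau)(R)=R$, and since $\rho(\tau)$ sends the $a_1$-collar to the $a_j$-collar, the latter lies in $R$.

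The main obstacle is the last step in Case 2: establishing that $\rho(\tau)$ preserves $R^{\ast}$ and each component of $S_{\A}$. This is immediate when the big components of $N_{\A}$ are topologically distinct or when $N_{\A}$ has a unique big component, but the sub-case of Case 2b with two homeomorphic big components requires a finer invariance argument based on a suitable $\rho(\B_n)$-invariant multicurve and the perfectness of $\B_n'$.
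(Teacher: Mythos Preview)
Your Case~1 is essentially the paper's argument. Your Case~2, however, has a genuine gap at exactly the place you flag as an ``obstacle.''

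The suggested fix---applying Proposition~\ref{prop:ch8:8.1}(1) to a $\rho(\B_n)$-invariant multicurve \emph{enlarging} $\A\cup\A^n$---cannot work as stated: any $\rho(\B_n)$-invariant enlargement would have to contain the special curves $a_i$ for \emph{all} $0\le i\le n-1$, but consecutive $a_i,a_{i+1}$ have nonzero geometric intersection (this is eventually Proposition~\ref{prop:braid_on_N31_nonsep}), so no such multicurve exists. Moreover, even granting that $\rho(\tau)$ preserves $R^{\ast}$, you still have not shown it fixes the particular component $R\subset R^{\ast}$ of $S_{\A}$; $\rho(\tau)$ permutes $\A$ nontrivially, so it permutes the pieces of $R^{\ast}$ cut along $\A^n$.

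The paper avoids trying to prove $\rho(\tau)(R)=R$ altogether. Instead it splits on whether $Q\cap\A^n=\emptyset$. If so, the chosen big component $Q$ is already a component of $S_{\A}$ and one takes $R=Q$; then $l(R)=\X_n$ by definition of ``big.'' If not, take $R$ to be any component of $Q_{\A^n\cap Q}$ with $a_1\in\bndy(R)$ and pick $c\in\bndy(R)\cap\A^n$. For $\tau\in\B_n'$ normalizing $\X_n$, $\rho(\tau)$ fixes $c$ (Proposition~\ref{prop:ch8:8.1}(2)), so $c\in\bndy(\rho(\tau)(R))$; but at most two components of $S_{\A}$ can have $c$ on their boundary, whereas if $l(R)\ne\X_n$ then varying $\tau$ produces at least $\binom{k}{|l(R)|}\ge k\ge 7$ distinct label sets $l(\rho(\tau)(R))$, hence $\ge 7$ distinct components---a contradiction. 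This counting argument works uniformly, independent of whether $m=1$ or $m=2$ and of whether the big components are homeomorphic.

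You could rescue your direct approach by invoking Proposition~\ref{prop:ch8:8.1}(3): if $\bndy(R)\cap\A^n\ne\emptyset$, then $\rho(\tau)$ preserves both $c$ \emph{and its sides}, forcing $\rho(\tau)(R)=R$ immediately (exactly one component of $S_{\A}$ lies on each side of $c$); and if $\bndy(R)\cap\A^n=\emptyset$ then $R=R^{\ast}$ and you are done. But this is not the route you sketched.
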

\begin{proof}
    If $a_1$ is separating, then let
    \[R=S\setminus\bigcup_{i\in\odd(n)}K_i.\]
    By Lemma \ref{lem:R:separating:normal}, $R$ is a component of $S_\A$ and by Proposition \ref{lem:Qclosed}, $R$ is either a holed sphere or a holed projective plane. This completes the proof in this case, so for the rest of the proof assume that $a_1$ is non-separating.

    By Proposition \ref{lem:Qclosed} $N_\A$ contains at least one big component $Q$ that is either a holed sphere or a holed projective plane. If $Q\cap\A^n=\emptyset$ then $Q$ is also a component of $S_\A$ and we take
    $R=Q$.
    Suppose $Q\cap\A^n\ne\emptyset$ and let $\A'=Q\cap\A^n$. Now we take $R$ to be any component of $Q_{\A'}$ such that $a_1\in\bndy(R)$. Note that $R$ is a component of $S_\A$ and it is either a holed sphere or a holed projective plane. It remains to show that $\bndy(R)$ contains all components of $\A$.
    Let \[l(R)=\{\sigma_i\in\X_n\colon a_i\in\bndy(R)\}\]
    and choose $c\in\bndy(R)\cap\A^n$. Suppose that $\tau\in\B_n'$ is such that $\tau\X_n\tau^{-1}=\X_n$.
    Then $R'=\rho(\tau)(R)$ is a component of $S_\A$ such that
    \[l(R')=\tau l(R)\tau^{-1}\]
    and $c\in\bndy(R')$ because $\rho(\B_n')$ fixes every component of $\A^n$ by Proposition \ref{prop:ch8:8.1}. Suppose $l(R)\ne\X_n$.
    By Lemma \ref{lem:X_n_ts}, every permutation of $\X_n$ can be obtained by conjugation by an element of $\B_n'$, so we can obtain at least $k\ge 7$ different components $R'$ by changing $\tau$. This is a contradiction, because $c$ belongs to at most two different components of $S_\A$.
    \end{proof}

\begin{lemma}\label{lem:13:centralize}
If $i\ge 5$, then $\rho(\sigma_1\sigma_3^{-1})$ fixes $a_i$ and preserves its orientation and sides.
\end{lemma}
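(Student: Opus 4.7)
The plan is to place $\sigma_1\sigma_3^{-1}$ inside the commutator subgroup of a subgroup of $\B_n$ whose image under $\rho$ centralises $\rho(\sigma_i)$, and then apply the abelian-quotient argument from the proof of Proposition~\ref{prop:ch8:8.1}(3). Set $f=\rho(\sigma_1\sigma_3^{-1})$ and introduce
\[H=\langle\sigma_j\colon 1\le j\le n-1,\ j\notin\{i-1,i+1\}\rangle\subset\B_n.\]
Every generator of $H$ either equals $\sigma_i$ or commutes with $\sigma_i$, so $H$ is contained in the centraliser of $\sigma_i$ in $\B_n$; note that $\sigma_1,\sigma_3\in H$ because $i\ge 5$.

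First I would show $\rho(H)\subset G$, where $G\subset\pmcg(N)$ denotes the stabiliser of $a_i$. For $g\in H$, $\rho(g)$ commutes with $\rho(\sigma_i)$, so by Lemma~\ref{lem:crs}(1) (applied to $\C^+$, as noted after the lemma) it preserves $\C^+(\rho(\sigma_i))$, and by Lemma~\ref{lem:normal_inv} it preserves $\A^n$. Hence $\rho(g)$ preserves $A_i=\C^+(\rho(\sigma_i))\setminus\A^n$. By Remark~\ref{def:ai:unique} combined with Lemmas~\ref{special-curves-separating-conf} and~\ref{special-curves-nonseparating-conf}, $a_i$ is either the unique element of $A_i$ (non-separating case) or the unique separating element of $A_i$ (separating case); in either case $\rho(g)(a_i)=a_i$, because $\rho(g)$ preserves topological type. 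In particular $f(a_i)=a_i$ as $\sigma_1,\sigma_3\in H$.

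For sides and orientation I would mimic the structural observation used in the proof of Proposition~\ref{prop:ch8:8.1}(3): the subgroup $H_0\subset G$ of mapping classes preserving both sides and orientation of $a_i$ is normal in $G$ with abelian quotient $G/H_0$ of order at most $4$. Hence $\rho([H,H])=[\rho(H),\rho(H)]\subset H_0$. Since $i-1\ge 4$ we have $\sigma_1,\sigma_2,\sigma_3\in H$, so $\B_4=\langle\sigma_1,\sigma_2,\sigma_3\rangle\subset H$. Applying Lemma~\ref{lem:X_n_ts} inside $\B_4$ yields $\tau\in\B_4$ with $\tau\sigma_1\tau^{-1}=\sigma_3$, which rewrites as
\[\sigma_1\sigma_3^{-1}=\sigma_1\tau\sigma_1^{-1}\tau^{-1}=[\sigma_1,\tau]\in[\B_4,\B_4]\subset[H,H],\]
and hence $f\in H_0$.

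The only step that calls for care is showing $\rho(g)(a_i)=a_i$ rather than merely $\rho(g)(A_i)=A_i$; in the non-separating case $A_i$ is a singleton, so there is nothing to do, while in the separating case one needs both the uniqueness of the separating curve in $A_i$ (from Lemmas~\ref{special-curves-separating-conf} and~\ref{lem:R:separating:normal}(1)) and the fact that homeomorphisms preserve the separating versus non-separating dichotomy.
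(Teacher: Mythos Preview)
Your proof is correct and follows essentially the same approach as the paper: show that $\rho(\sigma_1),\rho(\sigma_2),\rho(\sigma_3)$ lie in the stabiliser $G$ of $a_i$, express $\rho(\sigma_1\sigma_3^{-1})$ as a commutator in $G$, and use that $G/H_0$ is abelian. The paper is simply more direct, writing down the explicit conjugator $\rho(\sigma_2\sigma_1\sigma_3\sigma_2)\in G$ rather than introducing the auxiliary subgroup $H$ and invoking Lemma~\ref{lem:X_n_ts}; your extra care in justifying $\rho(g)(a_i)=a_i$ spells out what the paper leaves implicit via Lemma~\ref{lem:Delta_orbit}(1).
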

\begin{proof}
Let $G$ denote the stabiliser of $a_i$ in $\pmcg(N)$ and $H$ the subgroup of $G$ consisting of elements preserving the orientation and sides of $a_i$. Then $H$ is a normal subgroup of $G$ of index at most $4$. 
Since $\rho(\sigma_1)$, $\rho(\sigma_2)$ and $\rho(\sigma_3)$ commute with $\rho(\sigma_i)$ for $i\ge 5$, they fix $a_i$. It follows that 
    $\rho(\sigma_1)$ and $\rho(\sigma_3)$ are conjugate in $G$ (by an element $\rho(\sigma_2\sigma_1\sigma_3\sigma_2)$). Moreover, $G/H$ is abelian, so
    $\rho(\sigma_1\sigma_3^{-1})\in[G,G]\subseteq H$.
    \end{proof}
\begin{lemma}\label{lem:id_on_subs}
Suppose that $F$ is a subsurface of a surface $M$ and both $F$ and $M$ have negative Euler characteristic. If $f\in\mcg(M)$ is  a periodic mapping class such that $f(F)=F$ and $f|_F=id_F$. Then $f=id$.    
\end{lemma}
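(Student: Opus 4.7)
The plan is to realize $f$ by an honest finite-order homeomorphism of $M$ and then invoke the rigidity of isometries of hyperbolic surfaces. First I would apply the Nielsen realization theorem for nonorientable surfaces (the same result cited in the excerpt as \cite{CX,Kerck}) to choose a representative $\phi\colon M\to M$ of $f$ with $\phi^k=\mathrm{id}_M$ for some $k\ge 1$. Since $f$ preserves the isotopy class of $F$, a standard equivariant-isotopy argument allows us to arrange $\phi(F)=F$ as sets, after replacing $F$ by an isotopic copy if necessary.

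Next I would argue that $\phi|_F=\mathrm{id}_F$ pointwise. The restriction $\phi|_F$ is a finite-order self-homeomorphism of $F$ whose mapping class in $\mcg(F)$ is, by hypothesis, $f|_F=\mathrm{id}_F$. Because $\chi(F)<0$, the identity component $\mathrm{Homeo}_0(F)$ of the homeomorphism group is contractible (Hamstrom-type results, which underlie Nielsen realization and hold also in the nonorientable case); in particular it is torsion-free, so the finite-order homeomorphism $\phi|_F$, being isotopic to the identity, must actually equal $\mathrm{id}_F$.

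Finally, I would use Kerckhoff's theorem for nonorientable surfaces again to equip $M$ with a hyperbolic metric (or, if $\chi(M)=-1$ and $M$ has small genus, any constant-curvature structure consistent with $\chi(M)<0$) with respect to which the finite cyclic group $\langle\phi\rangle$ acts by isometries. The fixed-point set of an isometry of a connected Riemannian surface is a closed totally geodesic submanifold, so if it contains the interior of $F$ (which is open and $2$-dimensional in $M$), it must be all of $M$. Consequently $\phi=\mathrm{id}_M$, and therefore $f=\mathrm{id}$ in $\mcg(M)$.

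The only slightly delicate points are the equivariant adjustment $\phi(F)=F$ and the torsion-freeness of $\mathrm{Homeo}_0(F)$ in the nonorientable setting, but both are classical and follow from the same body of results (Kerckhoff, Hamstrom, and the standard structure theory of finite-order surface homeomorphisms) that is already invoked elsewhere in the paper.
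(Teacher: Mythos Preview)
Your overall strategy is correct and coincides with the paper's approach: the paper's proof is the single sentence ``The proof of \cite[Claim 10.6]{CM} works also for a nonorientable surface,'' and what you have sketched (Nielsen realization, straightening $\partial F$ to geodesics so that $\phi(F)=F$, then using that an isometry fixing an open set pointwise is the identity) is precisely that argument.

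One correction to your justification of the middle step: the implication ``$\mathrm{Homeo}_0(F)$ contractible $\Rightarrow$ torsion-free'' is false for topological groups in general---the unitary group $U(\mathcal{H})$ of an infinite-dimensional Hilbert space is contractible by Kuiper's theorem yet contains every finite group. The fix is already implicit in your setup: the $\phi$ produced in your first step is an isometry of a hyperbolic metric on $M$, and once $\partial F$ is replaced by its geodesic representative, $\phi|_F$ is an isometry of the hyperbolic subsurface $F$; an isometry of a hyperbolic surface with $\chi<0$ that is isotopic to the identity is the identity (lift to $\mathbb{H}^2$ and use that a non-elementary discrete subgroup of $\mathrm{Isom}(\mathbb{H}^2)$ has trivial centralizer). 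With this adjustment your proof is complete, and you can also drop the second appeal to Kerckhoff in the final step, since the invariant metric is already in hand from the first.
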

\begin{proof}
The proof of \cite[Claim 10.6]{CM} works also for a nonorientable surface. 
\end{proof}
\begin{lemma}\label{lem:equal_on_subs}
Suppose that $F$ is a subsurface of a surface $M$ and both $F$ and $M$ have negative Euler characteristic. Let $f\in\mcg(M)$ be such mapping class that:
\begin{enumerate}
    \item $\C^+(f)=\emptyset$;
    \item $f$ fixes the isotopy class of some nontrivial two-sided simple closed curve in $M$;
    \item $f(F)=F$ and $f|_{F}=id_F$. 
\end{enumerate}
Then $f=id_M$.
\end{lemma}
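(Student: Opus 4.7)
The plan is to use hypothesis (1) to split into three cases according to the Nielsen--Thurston type of $f$, then eliminate the non-periodic cases via the fixed-curve hypothesis (2), reducing the lemma to the previous Lemma \ref{lem:id_on_subs}.

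First I would observe that the condition $\C^+(f)=\emptyset$ leaves exactly three possibilities for $f$: either $\C(f)=\emptyset$, in which case by Lemma \ref{lem:crs}(3) the class $f$ is periodic or pseudo-Anosov; or $\C(f)\ne\emptyset$ while $\C^+(f)=\emptyset$, in which case $f$ is almost pseudo-Anosov by definition. Next I would invoke Lemma \ref{apA:nofix}, which asserts that neither a pseudo-Anosov nor an almost pseudo-Anosov mapping class fixes the isotopy class of any two-sided curve in $\curv^+(M)$. Hypothesis (2) of our lemma directly contradicts both of these cases, so $f$ must be periodic.

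Once $f$ is periodic, hypotheses (3) give exactly the assumptions of Lemma \ref{lem:id_on_subs}: $F$ is a subsurface of $M$, both have negative Euler characteristic, $f$ preserves $F$ setwise, and $f|_F=\mathrm{id}_F$. Applying that lemma yields $f=\mathrm{id}_M$, completing the proof.

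There is no real obstacle here; the main content is recognising that hypothesis (1) forces $f$ into one of the three Nielsen--Thurston types (with ``almost pseudo-Anosov'' replacing what in the orientable setting would be a dichotomy) and that Lemma \ref{apA:nofix} is precisely tailored to rule out the two non-periodic types using hypothesis (2). The lemma thus serves as a strengthening of Lemma \ref{lem:id_on_subs} from the periodic case to the case of general $f$ with $\C^+(f)=\emptyset$, at the cost of adding the fixed-curve assumption (2).
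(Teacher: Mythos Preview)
Your proof is correct and follows exactly the same route as the paper: use hypothesis (1) to reduce to the trichotomy periodic/pseudo-Anosov/almost pseudo-Anosov, eliminate the latter two via Lemma~\ref{apA:nofix} and hypothesis (2), then finish with Lemma~\ref{lem:id_on_subs}.
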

\begin{proof}
By (1) we know that $f$ is either periodic or pseudo-Anosov or almost pseudo-Anosov on $M$. By (2) and Lemma \ref{apA:nofix}, $f$ is neither pseudo-Anosov, nor almost pseudo-Anosov on $M$, so $f$ is periodic on $M$. Then, (3) and Lemma \ref{lem:id_on_subs} imply that $f=id_M$.
\end{proof}
If $a_1$ is separating, then we define $N(a_i)$ to be the interior of $K_i$ for $0\leq i<n$. Otherwise, we define $N(a_i)$ to be the interior of a regular neighbourhood of $a_i$ in $N$, $0\leq i<n$. In order to unify the arguments regardless of the separability of $a_1$, we introduce the following notation:
\[\begin{aligned}
 N_i^c&=N\setminus N(a_i), &&\text{for $i\in\{1,\ldots,n-1\}$,}\\
 N_{i,j}^c&=N\setminus \left(N(a_i)\cup N(a_j)\right), &&\text{for $i,j\in\{1,\ldots,n-1\}$,}\\
 N_{i,j,k}^c&=N\setminus \left(N(a_i)\cup N(a_j)\cup N(a_k)\right), &&\text{for $i,j,k\in\{1,\ldots,n-1\}$}.
\end{aligned}\]
In an analogous way, we define $S_i^c=N_i^c\cap S$ and $S_{i,j}^c=N_{i,j}^c\cap S$. 
    \begin{lemma}\label{lem:13:equal:S}
        $\rho(\sigma_1)$ and $\rho(\sigma_3)$ are equal on $S_{1,3}^c$. 
    \end{lemma}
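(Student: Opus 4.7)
The plan is to set $f := \rho(\sigma_1\sigma_3^{-1}) \in \rho(\B_n')$ and show that its restriction $\bar f$ to $S_{1,3}^c$ is trivial in $\mcg(S_{1,3}^c)$. First I would check that $\bar f$ is well-defined: $\rho(\sigma_1)$ and $\rho(\sigma_3)$ each fix the isotopy classes of both $a_1$ and $a_3$ (since $A_1=\{a_1\}$, $A_3=\{a_3\}$ are preserved by any generator commuting with $\sigma_1$ or $\sigma_3$), and by Proposition \ref{prop:ch8:8.1}(3) the commutator $f$ preserves each component of $\A^n$ with its sides and orientation. Hence $f$ stabilises the isotopy class of the subsurface $S_{1,3}^c$, and the restriction $\bar f\in\mcg(S_{1,3}^c)$ makes sense.

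Next I would invoke Lemma \ref{lem:equal_on_subs} with $M=S_{1,3}^c$ and with a suitable subsurface $F\subset M$ of negative Euler characteristic on which $\bar f|_F$ is trivial. The natural choice for $F$ is a regular neighbourhood in $S$ of $\bigcup_{i\ge 5}a_i$, augmented (when $a_1$ is separating) by the one-holed Klein bottles $K_i$ for $i\ge 5$. By Lemma \ref{lem:13:centralize}, $\bar f$ fixes each $a_i$, $i\ge 5$, with sides and orientation, and since $f$ commutes with $\rho(\sigma_i)$ for every $i\ge 5$ — whose square acts as a Dehn twist along $a_i$ (or whose restriction locally encodes a crosscap transposition in the separating case) — the action of $\bar f$ is rigidified on $F$.

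To apply Lemma \ref{lem:equal_on_subs} two further properties of $\bar f$ are needed: the existence of a fixed nontrivial two-sided curve in $M$, which is immediate from $a_5$; and the equality $\C^+(\bar f)=\emptyset$. For the latter, Lemma \ref{lem:crs}(2) forces any curve in $\C^+(\bar f)$ to be disjoint from every $a_i$, $i\ge 5$, and from every component of $\A^n$ adjacent to $S$; an inspection of the configurations of Proposition \ref{lem:Qclosed} then rules out the existence of such a curve in $S_{1,3}^c\setminus F$, the residual pieces being planar or once-punctured projective. Lemma \ref{apA:nofix} excludes the pseudo-Anosov and almost pseudo-Anosov cases for $\bar f$, and Lemma \ref{lem:equal_on_subs} concludes $\bar f=\mathrm{id}$.

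The main obstacle is the verification that $\C^+(\bar f)=\emptyset$: this requires case analysis over the topological configurations of $N_\A$ listed in Proposition \ref{lem:Qclosed}, and — more subtly — must cope with the possibility that $\C(\bar f)\setminus\C^+(\bar f)$ contains one-sided curves, which would make $\bar f$ almost pseudo-Anosov rather than periodic or pseudo-Anosov. This is precisely the phenomenon for which the notion of an almost pseudo-Anosov mapping class (Proposition \ref{apA:virtuallyCyclic}, Lemma \ref{apA:nofix}) was introduced earlier in the paper, and it is also the place where the proof departs most from the orientable arguments of Castel and Chen–Mukherjea.
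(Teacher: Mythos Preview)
Your proposal has a genuine gap in the choice of $F$ and in the claim that $\bar f|_F=\mathrm{id}_F$.

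First, the subsurface $F$ you describe is not well-controlled at this stage of the proof. In the non-separating case the intersection pattern of consecutive curves $a_i,a_{i+1}$ has not yet been established (that is Proposition~\ref{prop:braid_on_N31_nonsep}, proved only after the transvection is constructed), so a regular neighbourhood of $\bigcup_{i\ge 5}a_i$ could well be a disjoint union of annuli, hence of Euler characteristic zero, violating the hypothesis of Lemma~\ref{lem:equal_on_subs}. In the separating case you include the $K_i$, but then you must show $f|_{K_i}=\mathrm{id}$, and Lemma~\ref{lem:13:centralize} only tells you that $f$ fixes the boundary curve $a_i$ with its sides and orientation; it says nothing about the interior of $K_i$.

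Second, and more seriously, your justification that ``the action of $\bar f$ is rigidified on $F$'' because $\rho(\sigma_i)^2$ is a Dehn twist or $\rho(\sigma_i)$ is locally a crosscap transposition is circular: these are exactly the conclusions of Propositions~\ref{prop:braid_on_N31_nonsep} and~\ref{prop:braid_on_N31}, which come \emph{after} the present lemma and depend on it through Proposition~\ref{prop:existence:tau}. At this point we know only the canonical reduction system structure of $\rho(\sigma_i)$, not its actual form.

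The paper avoids both problems by taking $F=R$, the big component of $S_\A$ supplied by Lemma~\ref{lem:componentR}, whose topological type is already pinned down as a holed sphere or holed projective plane. The key step you are missing is then Lemma~\ref{lem:izoS0N1}: a periodic element of $\mcg(S_{0,*})$ or $\mcg(N_{1,*})$ that fixes enough oriented boundary components is the identity. Since $f|_R$ fixes the boundary curves $a_i$, $i\ge 5$, with orientation (Lemma~\ref{lem:13:centralize}), this forces $f|_R=\mathrm{id}$ directly, and Lemma~\ref{lem:equal_on_subs} finishes the job with $M=S_{1,3}^c$. Your proposed case analysis over Proposition~\ref{lem:Qclosed} for $\C^+(\bar f)=\emptyset$ is then unnecessary.
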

    \begin{proof}
Let $R$ be the component of $S_\A=N_{\A\cup\A^n}$ as in the statement of Lemma \ref{lem:componentR}. By Lemma \ref{lem:13:centralize}, $\rho(\sigma_1\sigma_3^{-1})$ fixes $R$ and preserves orientation of the boundary components $a_i$ for $i\ge 5$. 
By Lemma \ref{lem:izoS0N1}, $\rho(\sigma_1\sigma_3^{-1})$ is the identity on $R$. Therefore, $f=\rho(\sigma_1\sigma_3^{-1})$,  $M=S_{1,3}^c$ and $F=R$ satisfy the assumptions of Lemma \ref{lem:equal_on_subs}. Hence, $\rho(\sigma_1)=\rho(\sigma_3)$ on $M=S_{1,3}^c$.
%
    \end{proof}
    \begin{lemma}\label{lem:13:equal:N}
    $\rho(\sigma_1)$ and $\rho(\sigma_3)$ are equal on $N_{1,3}^c$ 
    \end{lemma}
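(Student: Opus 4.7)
Set $f=\rho(\sigma_1\sigma_3^{-1})\in\pmcg(N)$; proving the lemma amounts to showing that $f|_{N_{1,3}^c}$ is the identity in $\mcg(N_{1,3}^c)$. Because $\sigma_1\sigma_3^{-1}\in\B_n'$, Proposition \ref{prop:ch8:8.1}(2)--(3) ensures that $f$ preserves every component of $\A^n$ together with its sides and orientation; in particular $f\in\pmcg_{\A^n}(N)$ and $f$ preserves $S$ and every component of $N_{\A^n}$. The plan is to combine Lemmas \ref{lem:13:equal:S} and \ref{lem:rho2cyclic} to localise $f|_{N_{1,3}^c}$ inside $\ker(\cut{\A^n})$ in $\mcg(N_{1,3}^c)$, and then to pin down the remaining twist coefficients using a braid-theoretic identity.

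For the first part, note that $\sigma_1$ and $\sigma_3$ are conjugate in $\B_n$ and $\rho_2$ has cyclic (hence abelian) image by Lemma \ref{lem:rho2cyclic}, so $\rho_2(\sigma_1)=\rho_2(\sigma_3)$ and hence $\rho_2(f)=1$; equivalently, $\cut{\A^n}(f)$ acts as the identity on every component of $N_{\A^n}\setminus S$. Combining this with Lemma \ref{lem:13:equal:S}, which says that $f$ acts as the identity on $S_{1,3}^c$, yields that $\cut{\A^n}(f|_{N_{1,3}^c})$ is the identity on every component of $(N_{1,3}^c)_{\A^n}$, namely $S_{1,3}^c$ and the components of $N_{\A^n}\setminus S$. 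By \eqref{eq:ker_cut} there are therefore integers $r_c$ with
\[
f|_{N_{1,3}^c}\;=\;\prod_{c\in\A^n}t_c^{r_c}
\]
in $\mcg(N_{1,3}^c)$; note that twists along $a_1,a_3$ do not appear, since they are boundary-parallel in $N_{1,3}^c$ and hence trivial in the paper's convention (where $\mcg$ does not fix boundary).

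The final and main step is to force $r_c=0$ for every $c\in\A^n$. The curves in $\A^n$ are pairwise disjoint (Lemma \ref{lem:crs}(2) applied to commuting $\rho(\sigma_i)$'s), pairwise non-isotopic (as distinct components of $\C^+(\rho(\X_n))$), essential and two-sided, and not boundary-parallel in $N_{1,3}^c$ (they are not isotopic to $a_1,a_3$ since $a_1,a_3$ are \emph{special} while the $c$'s are \emph{normal}); hence $\{t_c:c\in\A^n\}$ generates a free abelian subgroup of $\mcg(N_{1,3}^c)$, so uniqueness of twist decompositions applies. To conclude, I would exploit the identity
\[
\rho(\sigma_1\sigma_3^{-1})\,\rho(\sigma_3\sigma_5^{-1})\;=\;\rho(\sigma_1\sigma_5^{-1})
\]
together with the fact that $\rho(\sigma_3\sigma_5^{-1})$ and $\rho(\sigma_1\sigma_5^{-1})$ admit analogous decompositions — the arguments of this lemma apply verbatim to them — and that elements of $\B_n'$ conjugating $\sigma_1\sigma_3^{-1}$ to $\sigma_3\sigma_5^{-1}$ or to $\sigma_1\sigma_5^{-1}$ act trivially on every $c\in\A^n$ by Proposition \ref{prop:ch8:8.1}. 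Equating the $\A^n$-twist coefficients on the two sides then gives $2r_c=r_c$, forcing $r_c=0$, and thus $f|_{N_{1,3}^c}=1$. The central obstacle throughout is this last coefficient comparison, as everything else reduces to bookkeeping once the multicurve structure of $\A^n\cup\{a_1,a_3\}$ is established.
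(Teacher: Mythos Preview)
Your proposal is correct and follows essentially the same approach as the paper's proof: both reduce $\rho(\sigma_1\sigma_3^{-1})|_{N_{1,3}^c}$ to a multitwist $\prod_{c\in\A^n}t_c^{r_c}$ via Lemmas~\ref{lem:13:equal:S} and~\ref{lem:rho2cyclic}, then kill the coefficients $r_c$ by exploiting that $\rho(\sigma_1\sigma_3^{-1})$, $\rho(\sigma_3\sigma_5^{-1})$, $\rho(\sigma_1\sigma_5^{-1})$ are conjugate by elements of $\rho(\B_n')$ commuting with each $t_c$ (Remark~\ref{rem:twist_comm}). The only cosmetic difference is that the paper phrases the final step as ``$\rho(\sigma_1\sigma_3^{-1})=\rho(\sigma_1\sigma_5^{-1})$ on $N_{1,3,5}^c$, hence $\rho(\sigma_3\sigma_5^{-1})=1$ there, hence on $N_{3,5}^c$, hence by symmetry $\rho(\sigma_1\sigma_3^{-1})=1$ on $N_{1,3}^c$'', whereas you compute $2r_c=r_c$ directly from the product identity; these are the same argument.
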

    \begin{proof}
        By Lemmas \ref{lem:13:equal:S} and \ref{lem:rho2cyclic}, we know that $\rho(\sigma_1\sigma_3^{-1})$ is the identity on $S_{1,3}^c$ and on $N_{\A^n}\setminus S$. It follows that $\rho(\sigma_1\sigma_3^{-1})$ is a multitwist about normal curves on $N_{1,3}^c$. Similarly, $\rho(\sigma_1\sigma_5^{-1})$ is a multitwist about normal curves on $N_{1,5}^c$. We know that $\sigma_1\sigma_3^{-1}$ and $\sigma_1\sigma_5^{-1}$ are conjugate in $\B_n'$ and every element of $\rho(\B_n')$ fixes every normal curve and preserves the orientation of its regular neighbourhood, by  Proposition \ref{prop:ch8:8.1} and Remark \ref{rem:twist_comm}. It follows that on $N_{1,3,5}^c$ we have $\rho(\sigma_1\sigma_3^{-1})=\rho(\sigma_1\sigma_5^{-1})$ and 
    $\rho(\sigma_3\sigma_5^{-1})$ is the identity on $N_{1,3,5}^c$. We also know that $\rho(\sigma_3\sigma_5^{-1})$ is a multitwist about normal curves on $N_{3,5}^c$, which implies that in fact $\rho(\sigma_3\sigma_5^{-1})$ is the identity on $N_{3,5}^c$. By symmetry, the same conclusion is true for $\rho(\sigma_1\sigma_3^{-1})$ on $N_{1,3}^c$.
    \end{proof}
\begin{lemma}\label{lem:propagate:tau}
Let $p,q\in\{1,\ldots,n-1\}$ be such that $|p-q|>1$, and let $\tau\in\mcg(N)$ be such mapping class that:
    \begin{enumerate}
        \item[(i)] $\tau$ commutes with $\rho(\sigma_q)$, and 
        \item[(ii)] $\tau|_{N_p^c}=\rho(\sigma_p)|_{N_p^c}$ and $\tau|_{N_q^c}=\rho(\sigma_q)|_{N_q^c}$.
    \end{enumerate}
    Then for any $r\in\{1,\ldots, n-1\}$ such that $|r-p|>1$ and $|r-q|>1$ we have
    \begin{enumerate}
    \item $\tau$ commutes with $\rho(\sigma_r)$, and
    \item $\tau|_{N_r^c}=\rho(\sigma_r)|_{N_r^c}$.
    \end{enumerate}
\end{lemma}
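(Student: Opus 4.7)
The plan is to establish (2) first using only hypothesis (ii), and then deduce (1) from (2) together with hypothesis (i) via a disjoint-supports argument. Throughout, by ``$f$ is supported in a subsurface $S\subseteq N$'' I mean that $f\in\mcg(N)$ admits a representative homeomorphism equal to the identity on $N\setminus S$.

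For (2), I first observe that Lemma \ref{lem:13:equal:N} generalizes: for every pair $(i,j)$ with $|i-j|>1$, the element $\rho(\sigma_i\sigma_j^{-1})$ is supported in $N(a_i)\cup N(a_j)$. One proves this either by rerunning the argument of Lemma \ref{lem:13:equal:N} with appropriate auxiliary indices (possible since $n\ge 14$) or by conjugating in $\B_n'$ using the totally symmetric structure of $\X_n$. Applied to the pairs $(p,r)$ and $(q,r)$ together with hypothesis (ii), this gives the two decompositions
\[
\tau\rho(\sigma_r)^{-1}=\bigl(\tau\rho(\sigma_p)^{-1}\bigr)\bigl(\rho(\sigma_p)\rho(\sigma_r)^{-1}\bigr)=\bigl(\tau\rho(\sigma_q)^{-1}\bigr)\bigl(\rho(\sigma_q)\rho(\sigma_r)^{-1}\bigr),
\]
showing that $\tau\rho(\sigma_r)^{-1}$ is supported in both $N(a_p)\cup N(a_r)$ and $N(a_q)\cup N(a_r)$. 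Cutting along the disjoint two-sided multicurve $\{a_p,a_q,a_r\}$ and invoking \eqref{eq:ker_cut} identifies mapping classes supported in each of these regions up to the kernel $\langle t_{a_p},t_{a_q},t_{a_r}\rangle$. A short calculation then forces $\tau\rho(\sigma_r)^{-1}$ to be supported in the intersection $(N(a_p)\cup N(a_r))\cap(N(a_q)\cup N(a_r))=N(a_r)\cup(N(a_p)\cap N(a_q))=N(a_r)$, where the last equality uses $|p-q|>1$ and hence $N(a_p)\cap N(a_q)=\emptyset$. This proves (2).

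For (1), set $g=[\tau,\rho(\sigma_r)]$. From (2), picking representatives of $\tau$ and $\rho(\sigma_r)$ that agree on $N_r^c$, the commutator $g$ admits a representative equal to the identity on $N_r^c$, so $g$ is supported in $N(a_r)$. Using instead hypothesis (i) and $[\rho(\sigma_q),\rho(\sigma_r)]=1$ (which follows from $|r-q|>1$), I pick representatives of $\tau$ and $\rho(\sigma_q)$ that agree on $N_q^c$; a direct computation of the commutator on $N_q^c$ then shows $g$ is also supported in $N(a_q)$. Since $|r-q|>1$ forces $N(a_r)\cap N(a_q)=\emptyset$, the same cutting argument as in the proof of (2) gives $g=\mathrm{id}$, which is (1).

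The principal obstacle is the generalization of Lemma \ref{lem:13:equal:N} to arbitrary pairs $(i,j)$ with $|i-j|>1$: the original proof is tailored to the indices $(1,3)$ together with the auxiliary index $5$, so one needs either to choose a suitable auxiliary index for each pair $(i,j)$ or to argue by conjugation using the total symmetry of $\X_n$ in $\B_n'$. A secondary technical point is the ``intersection of supports'' lemma invoked twice above, which is standard given the exactness statement in \eqref{eq:ker_cut} applied to $\{a_p,a_q,a_r\}$.
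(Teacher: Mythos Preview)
Your argument is correct and takes a genuinely different route from the paper's own proof.

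The paper does not generalize Lemma \ref{lem:13:equal:N}. Instead it conjugates $\tau$ by $h=\rho(\eta)$, where $\eta\in\B_n$ fixes $\sigma_p$ and swaps $\sigma_q\leftrightarrow\sigma_r$, obtaining $\tau'=h\tau h^{-1}$ which visibly satisfies (1) and (2). The bulk of the paper's proof is then showing $\tau'=\tau$: first $[\tau,\tau']=1$ via Lemma \ref{lem:equal_on_subs}, then $\tau'\tau^{-1}|_S=\mathrm{id}$ via the periodicity of $\tau|_S$ and Lemma \ref{lem:id_on_subs}, and finally $\tau'\tau^{-1}=\mathrm{id}$ on $N$ via Lemma \ref{lem:equal_on_subs} again. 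Your approach is more elementary: once Lemma \ref{lem:13:equal:N} is extended to all pairs $(i,j)$ with $|i-j|>1$ (which is immediate by conjugation in $\B_n$, using Lemma \ref{lem:Delta_orbit}(1) to transport $N_{1,3}^c$ to $N_{i,j}^c$), the intersection-of-supports argument via \eqref{eq:ker_cut} finishes both (2) and (1) without invoking any Nielsen--Thurston-type input. What the paper's approach buys is that it reuses the machinery of Lemmas \ref{lem:id_on_subs} and \ref{lem:equal_on_subs} already in place; what yours buys is a shorter, purely combinatorial argument.

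Two small remarks. First, your phrase ``conjugating in $\B_n'$ using the totally symmetric structure of $\X_n$'' is slightly imprecise: total symmetry of $\X_n$ only handles pairs of odd-indexed generators, whereas you need the (standard) fact that any ordered pair $(\sigma_i,\sigma_j)$ with $|i-j|>1$ is conjugate in $\B_n$ to $(\sigma_1,\sigma_3)$. Second, hypothesis (i) is in fact unnecessary in your argument: the computation $g|_{N_q^c}=[\tau|_{N_q^c},\rho(\sigma_r)|_{N_q^c}]=[\rho(\sigma_q),\rho(\sigma_r)]|_{N_q^c}=\mathrm{id}$ uses only (ii) and $|q-r|>1$. (Indeed, (i) itself follows from (ii) by the same intersection-of-supports trick applied to $[\tau,\rho(\sigma_q)]$.)
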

\begin{proof}
From (ii) we know that 
\[\C^+(\tau|_S)\subset \C^+\left(\rho(\sigma_p)|_{S_p^c}\right) \cup \C^+\left(\rho(\sigma_q)|_{S_q^c}\right)=\emptyset \]
and $\tau(a_r)=a_r$, so $\tau$ is periodic on $S$ by Lemma \ref{apA:nofix}. 

    Let $\eta\in\B_n$ be such that
\[\eta\sigma_p\eta^{-1}=\sigma_p,\quad \eta\sigma_q\eta^{-1}=\sigma_r,\quad \eta\sigma_r\eta^{-1}=\sigma_q.\]
Then $h=\rho(\eta)$ satisfies
\[h(a_p)=a_p,\quad h(a_q)=a_r,\quad h(a_r)=a_q,\]
and $\tau'=h\tau h^{-1}$ commutes with $\rho(\sigma_r)$. Moreover, 
\[\tau'|_{N_p^c}=\rho(\sigma_p)|_{N_p^c},\quad \tau'|_{N_r^c}=\rho(\sigma_r)|_{N_r^c},\]
so it suffices to show that $\tau'=\tau$. 

First we prove that $\tau'$ commutes with $\tau$. Let $f=[\tau,\tau']$ and $F=S_{q,r}^c$. We have $\tau'|_F=\rho(\sigma_r)|_F$ and $\tau|_F=\rho(\sigma_q)|_F$, so $f|_{F}=id_F$. We also have $\tau'|_{N_p^c}=\tau|_{N_p^c}=\rho(\sigma_p)|_{N_p^c}$, so $f|_{N_p^c}=id_{N_p^c}$. This implies that $\C^+(f)=\emptyset$, so $f$, $F$ and $M=N$ satisfy the assumptions of Lemma \ref{lem:equal_on_subs} and $f=id$ on $N$.

It follows that $\tau'\tau^{-1}|_{S}$ is a finite order element, which is the identity on $S_p^c$, so by Lemma \ref{lem:id_on_subs} we have
$\tau'\tau^{-1}|_S=id_S$. We also have $\tau'\tau^{-1}|_{N_p^c}=id_{N_p^c}$, so $f=\tau'\tau^{-1}$, $F=S$ and $M=N$ satisfy the assumptions of Lemma \ref{lem:equal_on_subs}. Hence, $\tau=\tau'$.
\end{proof}
    \begin{prop}\label{prop:existence:tau}
        There exists $\tau\in\pmcg(N)$ such that
\begin{enumerate}
    \item $\tau$ commutes with $\rho(\sigma_i)$ for $1\le i<n$, and
    \item $\tau^{-1}\rho(\sigma_i)$ is the identity on $N_i^c$ for $1\le i<n$.
\end{enumerate}
    \end{prop}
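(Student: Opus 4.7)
The plan is to construct $\tau\in\pmcg(N)$ explicitly so that the hypotheses of Lemma~\ref{lem:propagate:tau} are satisfied for the pair $(p,q)=(1,3)$, and then to propagate the conclusions to every index by iterating that lemma.

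First, I would use Lemma~\ref{lem:13:equal:N} to regard $\rho(\sigma_1)\rho(\sigma_3)^{-1}$ as a mapping class supported in $N(a_1)\cup N(a_3)$. Since $a_1$ and $a_3$ are disjoint, I can choose the neighbourhoods $N(a_1)$ and $N(a_3)$ to be disjoint subsurfaces, and the inclusions $\mcg(N(a_i),\partial N(a_i))\hookrightarrow\pmcg(N)$ are injective in both the separating and non-separating cases. Hence the mapping classes supported in $N(a_1)\cup N(a_3)$ form the internal direct product of those supported in $N(a_1)$ and in $N(a_3)$, yielding a unique decomposition
\[\rho(\sigma_1)\rho(\sigma_3)^{-1}=g_1 g_3,\]
with $g_i$ supported in $N(a_i)$ and $g_1 g_3=g_3 g_1$. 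I then set $\tau=g_1^{-1}\rho(\sigma_1)=g_3\rho(\sigma_3)\in\pmcg(N)$.

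By the two equivalent expressions, $\tau\rho(\sigma_1)^{-1}=g_1^{-1}$ and $\tau\rho(\sigma_3)^{-1}=g_3$ are supported in $N(a_1)$ and in $N(a_3)$ respectively, which is condition~(ii) of Lemma~\ref{lem:propagate:tau} for $(p,q)=(1,3)$. For condition~(i), I must show that $\tau$ commutes with $\rho(\sigma_3)$, equivalently that $g_3$ does. Because $\rho(\sigma_1)$ commutes with $\rho(\sigma_3)$, conjugation by $\rho(\sigma_3)$ fixes the product $g_1 g_3$; because $\rho(\sigma_3)$ fixes the isotopy classes of both $a_1$ and $a_3$ (by Lemma~\ref{lem:crs}\,(1), the uniqueness in Remark~\ref{def:ai:unique}, and the invariance of $\A^n$ from Lemma~\ref{lem:normal_inv}), that conjugation preserves each of the two direct factors. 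Uniqueness of the decomposition then forces $\rho(\sigma_3)g_3\rho(\sigma_3)^{-1}=g_3$, as required.

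With the hypotheses of Lemma~\ref{lem:propagate:tau} established for $(p,q)=(1,3)$, I obtain conclusions (1) and (2) of the proposition for every $r\in\{5,6,\ldots,n-1\}$. Since $n\geq 14$, I can iterate: applying the lemma with $(p,q)=(5,7)$ and $r\in\{1,2\}$, and with $(p,q)=(6,8)$ and $r\in\{3,4\}$ (the new hypotheses at each step being supplied by the previous one), I cover every $r\in\{1,\ldots,n-1\}$ and obtain the required commutation and restriction equalities. The only delicate point in the plan is the commutation $[g_3,\rho(\sigma_3)]=1$, handled via uniqueness of the support decomposition; the rest is a short bookkeeping exercise with Lemma~\ref{lem:propagate:tau}.
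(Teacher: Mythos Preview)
Your argument is correct and follows the same overall plan as the paper: build $\tau$ so that it agrees with $\rho(\sigma_1)$ on $N_1^c$ and with $\rho(\sigma_3)$ on $N_3^c$, check the hypotheses of Lemma~\ref{lem:propagate:tau} for $(p,q)=(1,3)$, and then propagate. The resulting $\tau$ is the same element the paper constructs; the differences are only in the packaging and in one substep.

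The paper builds $\tau$ by choosing a homeomorphism representative $h_1$ of $\rho(\sigma_1)|_{N_1^c}$, extending it (via \cite[Claim~10.7]{CM}) by a representative $h_3$ of $\rho(\sigma_3)|_{N_3^c}$, and then proves $[\tau,\rho(\sigma_3)]=1$ by applying Lemma~\ref{lem:equal_on_subs} to the commutator. Your support--decomposition construction is the algebraic counterpart of this gluing, and your commutation argument---conjugation by $\rho(\sigma_3)$ preserves each direct factor because $\rho(\sigma_3)$ fixes $a_1$ and $a_3$, hence must fix the unique factorisation of $g_1g_3$---is more elementary: it avoids invoking the Nielsen--Thurston machinery behind Lemma~\ref{lem:equal_on_subs} at this particular step. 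The propagation sequence you use, $(1,3)\to(5,7)\to(6,8)$, is a harmless variant of the paper's $(1,3)\to(1,6)\to(4,6)$; both cover all indices since $n\ge 14$.
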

    \begin{proof}
Let $h_1$ be a homeomorphism of $N_1^c$ representing $\rho(\sigma_1)|_{N_1^c}$. By Lemma \ref{lem:13:equal:N}, $\rho(\sigma_1)=\rho(\sigma_3)$ on $N_{1,3}^c$, so by \cite[Claim 10.7]{CM} there exists a homeomorphism $h_3$ of $N_3^c$ representing  $\rho(\sigma_3)|_{N_3^c}$ and equal to $h_1$ on $N_{1,3}^c$.
Let $h$ be the homeomorphism of $N$ defined by $h|_{N_1^c}=h_1$ and $h|_{N_3^c}=h_3$.
We define $\tau$ to be the isotopy class of $h$. 

Let us prove, that $\tau$ commutes with $\rho(\sigma_3)$. If $f=[\tau,\rho(\sigma_3)]$, then 
\[f|_{N_1^c}=id_{N_1^c},\quad f|_{N_3^c}=id_{N_3^c},\]
so $\C^+(f)=\emptyset$. Hence, $f$, $F=N_1^c$ and $M=N$ satisfy the assumptions of Lemma \ref{lem:equal_on_subs} and $f=id_N$.

We proved that (1) and (2) hold for $i=3$ and that $\tau$ satisfies the assumptions of Lemma~\ref{lem:propagate:tau}. If we apply this lemma with $p=1$, $q=3$ and any $r\in\{5,\dots,n-1\}$, we obtain (1) and (2) for any $i\in\{5,\dots,n-1\}$. We then apply the same lemma with $p=1$, $q=6$ and $r=4$ to get the statement for $i=4$. Finally, we apply the lemma with $p=4$, $q=6$ and $r\in\{1,2\}$ to complete the proof.

  \end{proof}

\section{Finishing the proof of Theorem \ref{thm:main_pmcg}}\label{sec:finish}
A transvection $\rho'=\rho^{\tau^{-1}}$ of the original representation $\rho$ by $\tau^{-1}$ constructed in Proposition~\ref{prop:existence:tau} yields a new homomorphism such that 
\[\rho'(\sigma_i)|_{N_i^c}=id,\quad \text{for $i=1,\ldots,n-1$}.\]
In order to finish the proof of Theorem \ref{thm:main_pmcg} it is enough to show that $\rho'$ is either a standard twist representation or a \ctr. We do this in Propositions~\ref{prop:braid_on_N31_nonsep} and \ref{prop:braid_on_N31} below.

\begin{prop}\label{prop:braid_on_N31_nonsep}
Suppose that $A_i$ consists of a single non-separating curve and $\rho(\sigma_i)$ is the identity on $N_i^c$ for $i=1,\dots,n-1$. Then $\rho$ is a standard twist representation.
\end{prop}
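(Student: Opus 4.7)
The plan is to show that $\rho(\sigma_i)=t_{a_i}^k$ for a single integer $k$ (the same for every $i$), to extract from the defining relations of $\B_n$ both the chain intersection pattern and the constraint $k=\pm 1$, and finally to absorb the sign by a choice of orientation.

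Since $\rho(\sigma_i)$ is the identity on $N\setminus N(a_i)$, it is supported in the annular regular neighbourhood of the two-sided nonseparating curve $a_i$, so $\rho(\sigma_i)=t_{a_i}^{k_i}$ for some $k_i\in\Z$. For any indices $i,j$, pick $\eta\in\B_n$ with $\eta\sigma_i\eta^{-1}=\sigma_j$; by Lemma~\ref{lem:Delta_orbit} we have $\rho(\eta)(a_i)=a_j$, so conjugating yields $t_{a_j}^{\varepsilon k_i}=t_{a_j}^{k_j}$ for some $\varepsilon\in\{\pm 1\}$ (depending on whether $\rho(\eta)$ preserves the chosen orientation of a neighbourhood of $a_j$). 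Hence $|k_i|=|k_j|$, and moreover $k_i\ne 0$: otherwise every $k_j=0$ and $\rho$ would be cyclic, contradicting our standing assumption that $\C^+(\rho(\X_n))$ has special components.

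The commutation relations $\sigma_i\sigma_j=\sigma_j\sigma_i$ for $|i-j|>1$ force $t_{a_i}^{k_i}t_{a_j}^{k_j}=t_{a_j}^{k_j}t_{a_i}^{k_i}$, and the standard criterion for commuting powers of Dehn twists (which transfers to the nonorientable setting via the orientation double cover) then gives $I(a_i,a_j)=0$. The braid relation $\sigma_i\sigma_{i+1}\sigma_i=\sigma_{i+1}\sigma_i\sigma_{i+1}$ translates into the identity $t_{a_i}^{k_i}t_{a_{i+1}}^{k_{i+1}}t_{a_i}^{k_i}=t_{a_{i+1}}^{k_{i+1}}t_{a_i}^{k_i}t_{a_{i+1}}^{k_{i+1}}$, which I analyse in three cases. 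If $I(a_i,a_{i+1})=0$ the two sides commute and the relation reduces to $t_{a_i}^{k_i}=t_{a_{i+1}}^{k_{i+1}}$, impossible since the curves are distinct and nonisotopic. If $I(a_i,a_{i+1})\ge 2$, the twists $t_{a_i},t_{a_{i+1}}$ generate a rank-two free subgroup (standard result, again via the double cover), so no such relation can hold. If $I(a_i,a_{i+1})=1$, then $\langle t_{a_i},t_{a_{i+1}}\rangle$ is a quotient of $\B_3$; an explicit computation in $\B_3$ (e.g.\ in its faithful $\mathrm{SL}(2,\Z)$-representation sending $\sigma_1,\sigma_2$ to the standard unipotent generators) shows that $\sigma_1^p\sigma_2^q\sigma_1^p=\sigma_2^q\sigma_1^p\sigma_2^q$ with $pq\ne 0$ forces $p=q=\pm 1$. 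This braid-relation analysis is the main technical obstacle; it gives $I(a_i,a_{i+1})=1$ for $1\le i\le n-2$ and a common value $k=k_i\in\{\pm 1\}$.

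Combining these facts, $(a_1,\dots,a_{n-1})$ satisfies the definition of a chain of nonseparating two-sided curves and $\rho(\sigma_i)=t_{a_i}^k$ with $k\in\{\pm 1\}$. Reversing the chosen orientation of a regular neighbourhood of $\bigcup_i a_i$ replaces every right-handed twist by its inverse, so we may choose this orientation so that $k=1$; then $\rho$ coincides with the standard twist representation $\rho_C$ associated to $C=(a_1,\dots,a_{n-1})$.
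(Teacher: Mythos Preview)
Your approach is the same as the paper's: identify $\rho(\sigma_i)$ as a power $t_{a_i}^{k_i}$, use commutation to get $I(a_i,a_j)=0$ for $|i-j|>1$, and use the braid relation to force $I(a_i,a_{i+1})=1$ and $k_i=k_{i+1}\in\{\pm 1\}$. The paper simply cites \cite[Propositions 4.7 and 4.8]{StukowFM} for the last two facts, whereas you reprove them by a case analysis on $I(a_i,a_{i+1})$.

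That case analysis is essentially right, but one step is stated backwards. Knowing that $\langle t_{a_i},t_{a_{i+1}}\rangle$ is a \emph{quotient} of $\B_3$ does not let you deduce anything from a computation \emph{in} $\B_3$: a relation in a quotient need not lift. What you actually need is a quotient \emph{of} $\langle t_{a_i},t_{a_{i+1}}\rangle$ in which to compute. This is available: the regular neighbourhood of $a_i\cup a_{i+1}$ is a one-holed torus $T$ with $\mcg(T,\partial T)\cong\B_3$, and the kernel of the inclusion-induced map to $\mcg(N)$ is generated by a power of the boundary twist, hence central; so $\langle t_{a_i},t_{a_{i+1}}\rangle$ surjects onto $\B_3/Z(\B_3)\cong\mathrm{PSL}(2,\Z)$, and your matrix computation (which, incidentally, is not a \emph{faithful} representation of $\B_3$) then goes through there. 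With this correction your argument is complete.
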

\begin{proof}
Since $\rho(\sigma_i)$ is the identity on $N_i^c$, we know that 
\[\rho(\sigma_i)=t_{a_i}^{k_i}\]
is a power of a Dehn twist along $a_i$ for $1\le i<n$.  We have  \[t_{a_i}^{k_i}t_{a_{i+1}}^{k_{i+1}}t_{a_i}^{k_i}=t_{a_{i+1}}^{k_{i+1}}t_{a_i}^{k_i}t_{a_{i+1}}^{k_{i+1}},\]
which by \cite[Proposition 4.8]{StukowFM} implies $I(a_i,a_{i+1})=1$. Moreover, if local orientations used to define the twist agree, then  $k_i=k_{i+1}\in\{-1,1\}$. For $|i-j|>1$ we have $t_{a_i}^{k_i}t_{a_j}^{k_j}=t_{a_i}^{k_j}t_{a_i}^{k_i}$ which implies $I(a_i,a_j)=0$ by \cite[Proposition 4.7]{StukowFM}. It follows that $a_i$ for $1\le i\le n-1$ form a chain of non-separting curves and $\rho'$ is a standard twist representation . 
\end{proof}
\begin{prop}\label{prop:braid_on_N31}
Suppose that $A_i$ contains a separating curve $a_i$ bounding a Klein bottle with a hole $K_i$ and  $\rho(\sigma_i)$ is the identity on $N_i^c$ for $i=1,\dots,n-1$. Then $\rho$ is a \ctr.
\end{prop}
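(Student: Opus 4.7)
The plan is to show that $\rho(\sigma_i) = u_i^\epsilon$ for some fixed $\epsilon \in \{\pm 1\}$ uniform in $i$; both signs yield a crosscap transposition representation with respect to $C = (a_1,\dots,a_{n-1})$ (reversing the orientation convention of the collar of $a_i$ accounts for $\epsilon = -1$). Since $\rho(\sigma_i)$ is the identity on $N_i^c$, it lies in the image of the natural homomorphism $\mcg(K_i,\partial K_i) \to \pmcg(N)$, and we work inside $\mcg(K_i,\partial K_i)$ where $K_i \cong N_{2,1}$. The relative mapping class group of a one-holed Klein bottle is generated by the crosscap transposition $u_i$ (satisfying $u_i^2 = t_{a_i}$) and a crosscap slide $y_i$, and admits a normal form in which the action on one-sided curves distinguishes the two generators: $u_i$ swaps $\mu_i$ and $\mu_{i+1}$, whereas $y_i$ preserves the isotopy class of each.

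The second step is to rule out the $y_i$-factor and force $\rho(\sigma_i)$ to swap $\mu_i$ and $\mu_{i+1}$. The curve $\mu_{i+1}$ is canonically identified as the unique (up to isotopy) one-sided simple closed curve in the M\"obius band $K_i \cap K_{i+1}$. Using that $\rho(\sigma_j)$ fixes $\mu_r$ whenever $\mu_r \not\subset K_j$, and applying the braid relation $\rho(\sigma_i)\rho(\sigma_{i+1})\rho(\sigma_i) = \rho(\sigma_{i+1})\rho(\sigma_i)\rho(\sigma_{i+1})$ to the test curves $\mu_i$ and $\mu_{i+2}$, I would show that $\rho(\sigma_i)$ must interchange $\mu_i$ and $\mu_{i+1}$. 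This eliminates the $y_i$-factor in the normal form and yields $\rho(\sigma_i) = u_i^{m_i}$ with $m_i$ odd.

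The third step pins down $m_i = \pm 1$ uniformly. Substituting $\rho(\sigma_i) = u_i^{m_i}$ into the braid relation, and invoking the chain relation $u_i u_{i+1} u_i = u_{i+1} u_i u_{i+1}$ of \cite{SzepPM} together with $u_i^2 = t_{a_i}$, the problem reduces to an algebraic identity in the subgroup generated by $u_i$ and $u_{i+1}$. The cleanest way to extract the exponents is to lift to the orientation double cover via Theorem \ref{th:BirChil}: each $u_i$ lifts to an element that acts essentially as a Dehn twist about a two-sided curve in $S_{g-1,2b}$, these lifts form a chain, and the braid relation for the lifts translates to a braid relation between powers of Dehn twists along this chain. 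By \cite[Proposition 4.8]{StukowFM}---the same tool used in the proof of Proposition \ref{prop:braid_on_N31_nonsep}---this forces the exponents to be $\pm 1$ and equal along the chain. Descending back to $N$ gives $m_i = \epsilon$ for all $i$ with a common sign, so $\rho = \theta_C^{\epsilon}$.

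The main obstacle is Step 3: the braid relation between powers of crosscap transpositions does not reduce to an exponent constraint as directly as in the Dehn twist case, because $u_i$ has infinite order while $u_i^2 = t_{a_i}$ is a boundary twist, so distinct exponents of $u_i$ fall into different cosets modulo $\langle t_{a_i}\rangle$. The passage through the orientation double cover, where the Dehn twist machinery of \cite{StukowFM} applies, is the key technical point, and one must carefully verify that the lifts of $u_i$ behave as Dehn twists along a chain with consistent orientation conventions in order for the descent to yield the required uniformity of the sign $\epsilon$.
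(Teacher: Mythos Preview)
Your Step 3 does not go through. The orientation-preserving lift $\tilde{u}_i$ of the crosscap transposition $u_i$ to the double cover is \emph{not} a Dehn twist. Since $u_i$ is supported in $K_i\cong N_{2,1}$ and $u_i^2=t_{a_i}$, the lift $\tilde{u}_i$ is supported in $p^{-1}(K_i)\cong S_{1,2}$ and satisfies $\tilde{u}_i^{\,2}=t_{d_1}t_{d_2}$, where $d_1,d_2$ are the two lifts of $a_i$. No Dehn twist in $\mcg(S_{1,2},\partial S_{1,2})$ squares to $t_{d_1}t_{d_2}$; in fact $\tilde{u}_i$ is $(t_\alpha t_\beta)^3$ for a length-two chain $(\alpha,\beta)$ in $S_{1,2}$, via the chain relation. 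Consequently the lifts $\tilde u_i$ do not form a chain of Dehn twists, and \cite[Proposition~4.8]{StukowFM} gives no information about the exponents $m_i$. Your Step 2 is also shaky: the crosscap slide $y_i$ does \emph{not} preserve the isotopy classes of $\mu_i,\mu_{i+1}$ in $K_i$ (it is $t_{b_i}u_i$ up to convention, and $t_{b_i}$ moves both one-sided curves since $I(b_i,\mu_j)=1$), so the dichotomy ``$u_i$ swaps, $y_i$ fixes'' is false and the elimination of the second generator via the action on the $\mu_j$ is not justified as written.

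The paper's argument is organised differently and avoids both problems. It uses the presentation $\mcg(K_i,\partial K_i)=\langle u_i,t_{b_i}\mid t_{b_i}u_it_{b_i}=u_i\rangle$, where $b_i$ is the unique two-sided nonseparating curve in $K_i$, and writes $\rho(\sigma_i)=t_{b_i}^{m_i}u_i^{n_i}$. From the braid relation one gets $\rho(\sigma_2)(b_1)=\rho(\sigma_1)^{-1}(b_2)$. Comparing homology classes in $H_1(N;\Z)$ (using the basis $[\mu_1],[\mu_2],[\mu_3]$) forces, according to the parity of $n_i$, either $m_1=m_2=\pm 1$ (even case) or $m_1=m_2=0$ (odd case). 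The even case is then eliminated by an intersection-number computation with $b_2$, which would give $\rho(\sigma_1)=t_{b_1}^{\pm 1}$ and contradict $a_1\in\C^+(\rho(\sigma_1))$. In the odd case a second intersection-number count with $b_2$ yields $|n_1|=|n_2|=1$, and a direct check rules out opposite signs. No passage to the double cover is needed; the key test curves are the two-sided $b_i$, not the one-sided $\mu_i$.
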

\begin{proof}
By Proposition \ref{prop:ssc}, $a_i$ for $1\le i\le n-1$ form a chain of separating curves.
For $i=1,\dots,n-2$ let $\mu_{i+1}$ be the core of  the M\"obius band $K_{i}\cap K_{i+1}$, $\mu_1$ the core of $K_{1}\setminus K_{2}$ and $\mu_n$ the core of $K_{n-1}\setminus K_{n-2}$.
Choose an orientation of a regular neighbourhood of the union of $a_i$ for $i=1,\dots,n-1$ and let $t_{a_i}$ be the right Dehn twist about $a_i$ with respect to the chosen orientation. Consider the following elements of $\mcg(K_i,\partial K_i)$:
\begin{itemize}
\item  the crosscap transposition  $u_i$  swapping $\mu_i$ and $\mu_{i+1}$, and such that $u_i^2=t_{a_i}$,
\item  the Dehn twist $t_{b_i}$ about the unique nonseparating two-sided curve $b_i$ on $K_i$, 
\end{itemize}
where we assume that $t_{b_i}$ are right handed with respect to some orientation of a regular neighbourhood of the union of $b_i$ for $i=1,\dots,n-1$. The group $\mcg(K_i,\partial K_i)$ admits the following presentation (see \cite{StukowFM}):
\[\langle u_i, t_{b_i}\,\mid\, t_{b_i}u_it_{b_i}=u_i\rangle.\]
Since  $\rho(\sigma_i)$ is the identity on the complement of $K_i$, it belongs to $\mcg(K_i,\partial K_i)$ treated as a subgroup of $\ppmcg(N_{g,p})$. Therefore \[\rho(\sigma_i)=t_{b_i}^{m_i}u_i^{n_i}\] for some integers $m_i, n_i$ and $1\le i\le n-1$. It suffices to show $m_1=m_2=0$ and $n_1=n_2\in\{-1,1\}$. 
From the braid relation $\sigma_2\sigma_1\sigma_2^{-1}=\sigma_1^{-1}\sigma_2\sigma_1$ we have \[\rho(\sigma_2)(a_1)=\rho(\sigma_1)^{-1}(a_2)\] and since $b_i$ is the unique nonseparating two-sided curve in $K_i$ also
\begin{equation}\label{eq:sigma_b}
    \rho(\sigma_2)(b_1)=\rho(\sigma_1)^{-1}(b_2).
\end{equation}
Observe that $\rho(\sigma_i)$ is in the twist subgroup if and only if $n_i$ is even. Since $\sigma_1$ and $\sigma_2$ are conjugate, and the twist subgroup is normal, $n_1\equiv n_2\pmod{2}$.
We consider two cases.

\medskip
\noindent{\bf Case 1:} $n_1$ and $n_2$ are even, $n_i=2k_i$ for $i=1,2$. Then $\rho(\sigma_i)=t_{b_i}^{m_i}t_{a_i}^{k_i}$ for $i=1,2$.
Let us compare the homology classes $[\rho(\sigma_2)(b_1)]$ and $[\rho(\sigma_1)^{-1}(b_2)]$ in $H_1(N,\Z)$.
By abuse of notation we use the same symbol for a mapping class and for the induced automorphism of  $H_1(N,\Z)$. Note that $t_{a_i}$ induces the identity on $H_1(N,\Z)$. Let $x_i=[\mu_i]$ for $i=1,2,3$. For an appropriate choice of orientations we have $[b_1]=x_1+x_2$, $[b_2]=x_2+x_3$ and 
\begin{align*}
[\rho(\sigma_2)(b_1)]&=t_{b_2}^{m_2}[b_1]=[b_1]-m_2[b_2]=x_1+x_2-m_2(x_2+x_3),\\    
[\rho(\sigma_1)^{-1}(b_2)]&=t_{b_1}^{-m_1}[b_2]=[b_2]-m_1[b_1]=x_2+x_3-m_1(x_1+x_2).    
\end{align*}
From \eqref{eq:sigma_b} we have \[[\rho(\sigma_2)(b_1)]=\pm[\rho(\sigma_1)^{-1}(b_2)]\] which implies 
 $m_1=m_2\in\{-1,1\}$.
We can assume $m_1=m_2=1$ (otherwise we replace $\rho(\sigma_i)$ by  $\rho(\sigma_i)^{-1}$). 
Now \eqref{eq:sigma_b} is 
\[t_{a_2}^{k_2}t_{b_2}(b_1)=t_{a_1}^{-k_1}t_{b_1}^{-1}(b_2).\]
Note that $t_{b_2}(b_1)=t_{b_1}^{-1}(b_2)$ and denote this curve by $c$ (Figure \ref{fig:K12:c}). 
\begin{figure}[h]
\begin{center}
\includegraphics[width=0.55\customwidth]{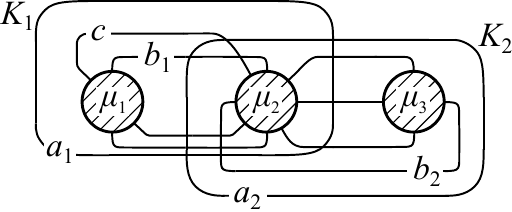}
\caption{Curve $c=t_{b_2}(b_1)=t_{b_1}^{-1}(b_2)$ in $K_1\cup K_2$.}\label{fig:K12:c} %
\end{center}
\end{figure}
If $k_1\ne 0$ then 
\[I(t_{a_1}^{-k_1}(c),b_2)=I(c,t_{a_1}^{k_1}(b_2))=4|k_1|-1\ge 3\]
(Figure \ref{fig:K12:2}), whereas
\[I(t_{a_2}^{k_2}(c),b_2)=I(c,t_{a_2}^{-k_2}(b_2))=I(c,b_2)=1.\]
\begin{figure}[h]
\begin{center}
\includegraphics[width=0.45\customwidth]{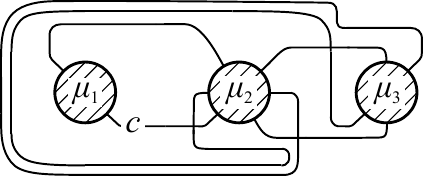}
\caption{Intersection of $c$ and $t_{a_1}(b_2)$.}\label{fig:K12:2} %
\end{center}
\end{figure}
We see that $t_{a_2}^{k_2}(c)=t_{a_1}^{-k_1}(c)$ implies $k_1=0$, and by symmetry also $k_2=0$.
We have obtained $\rho_1(\sigma_1)=t_{b_1}^{\pm 1}$ which contradicts the assumption $a_1\in\C^+(\rho(\sigma_1))$.

\medskip

\noindent{\bf Case 2:} $n_1$ and $n_2$ are odd, $n_i=2k_i+1$ for $i=1,2$. Now \[\rho(\sigma_i)=t_{b_i}^{m_i}t_{a_i}^{k_i}u_i=u_it_{b_i}^{-m_i}t_{a_i}^{k_i},\] for $i=1,2$ and
\begin{align*}
[\rho(\sigma_2)(b_1)]&=u_2t_{b_2}^{-m_2}[b_1]=x_1+x_3+m_2(x_2+x_3),\\    
[\rho(\sigma_1)^{-1}(b_2)]&=u_1^{-1}t_{b_1}^{-m_1}[b_2]=x_1+x_3-m_1(x_1+x_2),   
\end{align*}
which implies $m_1=m_2=0$. 
We have $\rho(\sigma_i)=u_i^{n_i}$ for $i=1,2$ and now it is not difficult to see that \eqref{eq:sigma_b} holds only for $n_1=n_2\in\{-1,1\}$. Indeed, we have
\[
I(u_2^{n_2}(b_1),b_2)=1,\quad
 I(u_1^{-n_1}(b_2),b_2)=2|n_1|-1,
 \]
which implies $|n_1|=1$, and by symmetry also $|n_2|=1$. It must be $n_1=n_2$ because
$u_1(b_2)\ne u_2(b_1)$. 
\end{proof}
  
\begin{rem}\label{rem:conjugate}
Comparing the proofs of Propositions \ref{prop:braid_on_N31_nonsep} and \ref{prop:braid_on_N31}, one might expect that, as is the case of twists, if two elements of the form 
\(t_{b_i}^{m_i}u_i^{n_i},\ t_{b_j}^{m_j}u_j^{n_j}\)
are conjugate, then $m_i=m_j$ and $n_i=n_j$. This is not true in general -- if $b_i$ and $u_i$ are as in the above proof and $k\in\mathbb{Z}$, then 
\[t_{b_i}^{2k}u_i=t_{b_i}^{k}t_{b_i}^{k}u_i=t_{b_i}^ku_it_{b_i}^{-k},\] which means that 
   $t_{b_i}^{2k}u_i$ is a crosscap transposition conjugate to $u_i$.    
   On the other hand, the proof of Proposition \ref{prop:braid_on_N31} implies that $u_i$ is the only element of $\mcg(K_i,\partial K_i)$ which satisfies the braid relation with $u_{i+1}$.

\end{rem}

\section{Proofs of Theorems \ref{thm:main_relmcg} and \ref{thm:app}}\label{sec:app}
We deduce Theorem \ref{thm:main_relmcg} from Theorem \ref{thm:main_pmcg} using the argument from \cite{Castel} and the central exact sequence
\[1\to\langle t_d\mid d\in\bndy(N)\rangle\to\mcg(N,\partial N)\stackrel{\forget{\partial N}}{\longrightarrow}\ppmcg(N)\to 1.\]
The following lemma is proved in \cite{Castel}.
\begin{lemma}[Lemma 3.1.4 of \cite{Castel}]\label{lem:central_ext}
Let $1\to N\to G\stackrel{\psi}{\to}\widetilde{G}\to 1$ be a central exact sequence of groups. Let $n\ge 3$ and let $\rho$ and $\rho'$ be two homomorphisms from $\B_n$ to $G$ such that $\psi\circ\rho=\psi\circ\rho'$. Then
\begin{enumerate}
    \item $\rho'$ is a transvection of $\rho$ by a central element,
    \item $\rho$ is cyclic if and only if $\psi\circ\rho$ is cyclic.
\end{enumerate}
\end{lemma}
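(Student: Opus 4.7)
The plan is as follows. For (1), I would define the function $\eta\colon\B_n\to N$ by $\eta(x)=\rho'(x)\rho(x)^{-1}$. The hypothesis $\psi\circ\rho=\psi\circ\rho'$ guarantees $\eta(x)\in\ker\psi=N$ for every $x$, and since $N$ lies in the center of $G$, a direct calculation using $\rho'(xy)=\rho'(x)\rho'(y)$ and $\rho(xy)=\rho(x)\rho(y)$ shows
\[\eta(xy)=\rho'(x)\eta(y)\rho(x)^{-1}=\eta(y)\rho'(x)\rho(x)^{-1}=\eta(y)\eta(x)=\eta(x)\eta(y),\]
where centrality of $\eta(y)$ is used in the second equality and abelianness of $N$ in the last. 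Thus $\eta$ is a homomorphism into an abelian group, so it factors through the abelianisation $\B_n^{ab}\cong\Z$ (which is generated by the common class of every $\sigma_i$). Consequently $\eta(\sigma_1)=\eta(\sigma_2)=\dots=\eta(\sigma_{n-1})$; call this common value $\tau\in N$. Then $\rho'(\sigma_i)=\tau\rho(\sigma_i)$ for every $i$, and since $\tau$ is central it commutes with each $\rho(\sigma_i)$, so $\rho'=\rho^\tau$ is a transvection of $\rho$ by the central element $\tau$.

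For (2), the ``only if'' direction is immediate: a homomorphic image of a cyclic group is cyclic. For the converse, assume $\psi\circ\rho$ is cyclic. Define an auxiliary homomorphism $\rho'\colon\B_n\to G$ by $\rho'(\sigma_i)=\rho(\sigma_1)$ for every $i$; this is well-defined because the constant assignment trivially satisfies all braid and commutation relations, and by construction $\rho'$ is cyclic. Since $\psi\circ\rho$ is cyclic and $\sigma_1,\dots,\sigma_{n-1}$ are conjugate in $\B_n$, their images under $\psi\circ\rho$ are all equal, so $\psi\circ\rho=\psi\circ\rho'$. By part (1) applied to this pair, there is a central $\tau\in N$ with $\rho'(\sigma_i)=\tau\rho(\sigma_i)$ for every $i$. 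Evaluating at $i=1$ gives $\tau=1$, and then $\rho(\sigma_i)=\rho'(\sigma_i)=\rho(\sigma_1)$ for every $i$, so $\rho$ is cyclic.

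There is no serious obstacle; the only delicate point is the verification that $\eta$ is a homomorphism, which relies essentially on the centrality of $N$ (so that conjugation by $\rho(x)$ acts trivially on $\eta(y)$). The assumption $n\ge 3$ is not used beyond the standard fact that all $\sigma_i$ become equal in $\B_n^{ab}$, which already holds for $n\ge 2$; the hypothesis is there only to match the context in which the lemma is applied.
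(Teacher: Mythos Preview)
Your proof is correct. The paper does not actually supply its own proof of this lemma; it is quoted verbatim from Castel and simply cited, so there is nothing to compare against beyond noting that your argument---defining $\eta(x)=\rho'(x)\rho(x)^{-1}$, using centrality of $N$ to check it is a homomorphism, and factoring through $\B_n^{ab}\cong\Z$---is the standard route and matches what one finds in the cited source.
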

Theorem \ref{thm:main_relmcg} follows from Theorem \ref{thm:main_pmcg} and the next proposition.
For brevity, we use the abbreviations \emph{s.t.r} and \emph{c.t.r} for standard twist representation and {\ctr} respectively.
\begin{prop}
Let $n\ge 3$, $N=N_{g,b}$ and $\rho\colon\B_n\to\mcg(N,\partial N)$ be a homomorphism. Then
\begin{enumerate}
    \item  $\rho$ is cyclic if and only if $\forget{\partial N}\circ\rho$ is cyclic;
    \item  $\rho$ is a transvection of an s.t.r  if and only if $\forget{\partial N}\circ\rho$ is transvection of an s.t.r;
    \item  $\rho$ is a transvection of a c.t.r if and only if $\forget{\partial N}\circ\rho$ is a transvection of a c.t.r.
\end{enumerate}
\end{prop}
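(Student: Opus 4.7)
The strategy is to reduce all three claims to Lemma \ref{lem:central_ext} applied to the central exact sequence
\[
1\to Z \to \mcg(N,\partial N)\stackrel{\forget{\partial N}}{\longrightarrow}\ppmcg(N)\to 1,
\]
where $Z=\langle t_d\mid d\in\bndy(N)\rangle\cong\Z^b$. The forward directions are immediate: since $\forget{\partial N}$ carries boundary-fixing Dehn twists and crosscap transpositions to their counterparts in $\ppmcg(N)$, it sends an s.t.r or \ctr in $\mcg(N,\partial N)$ to the corresponding object in $\ppmcg(N)$, and a transvection to a transvection. The backward direction of (1) is Lemma~\ref{lem:central_ext}(2).

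For the backward directions of (2) and (3), I first construct tautological lifts. Given a chain $C=(a_1,\dots,a_{n-1})$ in $N$, define $\widetilde{\rho_C},\widetilde{\theta_C}\colon\B_n\to\mcg(N,\partial N)$ by $\widetilde{\rho_C}(\sigma_i)=t_{a_i}$ and $\widetilde{\theta_C}(\sigma_i)=u_i$, with each generator realised by a homeomorphism supported in a neighbourhood of $a_i$ (respectively in $K_i$), hence disjoint from $\partial N$. Both maps are genuine homomorphisms because the relevant braid relations are satisfied on a subsurface lying in the interior of $N$, where the distinction between $\mcg(N,\partial N)$ and $\ppmcg(N)$ disappears; in particular, $\forget{\partial N}\circ\widetilde{\rho_C}=\rho_C$ and $\forget{\partial N}\circ\widetilde{\theta_C}=\theta_C$.

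Now suppose $\forget{\partial N}\circ\rho=\rho_C^\tau$ (respectively $\theta_C^\tau$) and choose any lift $\widetilde{\tau}\in\mcg(N,\partial N)$ of $\tau$. The central step is to verify that $\widetilde{\tau}$ already commutes with each $t_{a_i}$ (respectively $u_i$) in $\mcg(N,\partial N)$. For Dehn twists, represent $\widetilde{\tau}$ by a homeomorphism $h$ fixing $\partial N$ pointwise; then $h t_{a_i} h^{-1}=t_{h(a_i)}$, and $[\tau,t_{a_i}]=1$ in $\ppmcg(N)$ yields an isotopy $h(a_i)\simeq a_i$ which, because $a_i$ lies in the interior of $N$, can be chosen to fix $\partial N$ pointwise, so $t_{h(a_i)}=t_{a_i}$ in $\mcg(N,\partial N)$. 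For crosscap transpositions, $\tau$ commutes with $u_i$ and therefore with $t_{a_i}=u_i^2$; setting $z_i=[\widetilde{\tau},u_i]\in Z$, the centrality of $z_i$ gives
\[
z_i^2 \;=\; [\widetilde{\tau},u_i^2] \;=\; [\widetilde{\tau},t_{a_i}] \;=\; 1,
\]
and torsion-freeness of $Z\cong\Z^b$ forces $z_i=1$.

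With commutativity at hand, $\rho'(\sigma_i):=\widetilde{\tau}\cdot\widetilde{\rho_C}(\sigma_i)$ (resp.\ $\widetilde{\tau}\cdot\widetilde{\theta_C}(\sigma_i)$) defines a bona fide transvection, hence a homomorphism, satisfying $\forget{\partial N}\circ\rho'=\forget{\partial N}\circ\rho$. Lemma \ref{lem:central_ext}(1) then produces a central $c\in Z$ with $\rho=(\rho')^c$, so $\rho$ is a transvection of $\widetilde{\rho_C}$ (respectively $\widetilde{\theta_C}$) by $c\widetilde{\tau}$, which still commutes with each generator since $c$ is central. I expect the main obstacle to be the commutativity verification in the \ctr case: the direct topological argument used for Dehn twists does not apply verbatim to $u_i$ (conjugation can a priori introduce a nontrivial boundary twist), and it is the square-root identity $u_i^2=t_{a_i}$ combined with $Z$ being torsion-free that rescues the argument and allows us to bootstrap from the Dehn twist case.
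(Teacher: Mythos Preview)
Your proof is correct and follows the same overall strategy as the paper: lift the transvecting element, verify that the lift still commutes with the images of the generators so that the transvection makes sense upstairs, and then invoke Lemma~\ref{lem:central_ext}(1) to absorb the central discrepancy. The paper defers (2) to Castel and for (3) simply records that a lift $\tau'$ of $\tau$ fixes each $a_i$ and preserves orientation of its regular neighbourhood, then declares the transvection of $\theta_C$ by $\tau'$; your argument makes the commutation $[\widetilde{\tau},u_i]=1$ explicit via the square-root trick $z_i^2=[\widetilde{\tau},u_i^2]=[\widetilde{\tau},t_{a_i}]=1$ together with torsion-freeness of $Z\cong\Z^b$, which is a clean way to fill in a step the paper leaves implicit. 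One minor point: in your Dehn twist computation you should write $h\,t_{a_i}\,h^{-1}=t_{h(a_i)}^{\pm 1}$ on a nonorientable surface and then use that $\tau$ preserves the local orientation near $a_i$ (which follows from $[\tau,t_{a_i}]=1$ downstairs) to get the correct sign; this is exactly the ``preserves orientation of its regular neighbourhood'' clause in the paper and is needed both for (2) and for your reduction $[\widetilde{\tau},t_{a_i}]=1$ in (3).
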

\begin{proof}
    Part (1) follows from Lemma \ref{lem:central_ext}. The proof of (2) is the same as the proof of Proposition 3.2.2 (iv) in \cite{Castel}, so we only prove (3).

    If $\rho$ is a transvection of a c.t.r, then clearly $\forget{\partial N}\circ\rho$ is also a transvection of a c.t.r.
    Conversely, suppose that $\forget{\partial N}\circ\rho$ is a transvection by some element $\tau$ of a c.t.r determined by a chain of separating curves $C$. Then $\tau$ fixes each curve of $C$ and  preserves orientation of its regular neighbourhood. Let $\tau'$ be any element of $\mcg(N,\partial N)$ such that $\forget{\partial N}(\tau')=\tau$. Then $\tau'$ also fixes each curve of $C$ and  preserves orientation of its regular neighbourhood.
    Let $f'$ be the trasvection of $\theta_C\colon\B_n\to\mcg(N,\partial N)$ by ${\tau'}$. We have 
    $\forget{\partial N}\circ\rho'=\forget{\partial N}\circ\rho$ and by Lemma \ref{lem:central_ext} we know that $\rho$ is a transvection of $\rho'$ by a central element. It follows that $\rho$ is a transvection of a c.t.r.  
\end{proof}

To prove Theorem \ref{thm:app} we need generators of the twist subgroup.

\begin{lemma}\label{lem:twist_gens}
    Let $N=N_{g,b}$ for $g\ge 4$ and $b\ge 0$, and let 
    $C=(a_1,\dots,a_{g-1})$ be the chain of nonseparating curves in $N$ from Figure \ref{r01}. There exists a finite subset 
    $X\subset\curv^+(N)$ with the following properties.
    \begin{enumerate}
        \item For every $c\in X$ there exist $a_i,a_j\in C$ such that $I(c,a_i)=0$ and $I(c,a_j)=1$.
        \item $\T(N,\partial T)$ is generated by 
        \[\{t_{a_i}\mid a_i\in C\}\cup\{t_c\mid c\in X\}\cup\{t_d\mid d\in\bndy(N)\}.\]
    \end{enumerate}
\end{lemma}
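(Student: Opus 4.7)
My plan is to start from a known finite generating set of $\T(N,\partial N)$ consisting of Dehn twists about two-sided simple closed curves together with boundary twists. Such a generating set exists for $g\geq 4$ by the work of Stukow \cite{StukowOJM} and is typically built around a standard topological decomposition of $N_{g,b}$. I would take such a generating set and refine it so that every generator is a Dehn twist about either a chain curve $a_i\in C$, a boundary component $d\in\bndy(N)$, or a curve that I will collect into $X$, with each element of $X$ satisfying condition (1).

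The refinement relies on the change-of-coordinates principle: for any two-sided simple closed curve $c$ in $N$ not isotopic to a boundary component, there is a homeomorphism $\phi$ of $N$ fixing $\partial N$ pointwise that carries $c$ to a curve $c'$ in a ``standard position'' relative to the chain $C$, by which I mean that $c'$ is disjoint from some $a_i$ and meets some other $a_j$ transversely at exactly one point. This is always achievable when $g\geq 4$, since the chain $C$ has at least three curves and the complement $N\setminus \bigcup_{i} a_i$ is a planar surface (possibly with one additional crosscap), leaving enough room to maneuver $c$ into the required position. Writing $\phi$ as a product of Dehn twists drawn from our generating set (and noting that $\phi\in\T(N,\partial N)$ since it is built from twists), we obtain $t_c=\phi^{-1}t_{c'}\phi$, which expresses $t_c$ in terms of twists about $C$, boundary twists, and the twist $t_{c'}$ about the new curve satisfying (1).

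Collecting all such curves $c'$ that arise from the finitely many generators in the starting generating set yields the desired finite $X\subset\curv^+(N)$. The twists $\{t_{a_i}\}\cup\{t_c : c\in X\}\cup\{t_d : d\in\bndy(N)\}$ then generate $\T(N,\partial N)$ by construction.

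The main obstacle is the explicit topological verification of the change-of-coordinates step: one has to check that for each possible topological type of $c$ (nonseparating two-sided, separating bounding a one-holed Klein bottle, bounding pair, and so on) the move into standard position can be realized inside $\T(N,\partial N)$, and that the resulting $c'$ genuinely satisfies $I(c',a_i)=0$ and $I(c',a_j)=1$ for suitable $i,j$. For $g\geq 4$ this case analysis is straightforward because the chain is long enough and the complementary subsurface is simple, but it is the only place where a careful topological argument is needed.
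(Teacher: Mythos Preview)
Your argument has a genuine circularity gap. You write $t_c=\phi^{-1}t_{c'}\phi$ and assert that this ``expresses $t_c$ in terms of twists about $C$, boundary twists, and the twist $t_{c'}$'', but $\phi$ is only known to lie in $\T(N,\partial N)$; as a word it is built from the \emph{old} generators, including all of the $t_{c_k}$ you are trying to eliminate. Replacing each generator by a conjugate does not in general yield a generating set: already in $F_2=\langle a,b\rangle$ the pair $\{abab^{-1}a^{-1},\,b\}$, where the first element is a conjugate of $a$ by $\phi=ab$, generates a proper subgroup. To break the circularity you would need each $\phi_k$ to lie in the subgroup generated by the chain and boundary twists alone, and there is no reason to expect this, since those twists are supported on an orientable subsurface and cannot move curves relative to the crosscaps. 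Your change-of-coordinates claim is also false as stated: if $c$ is separating then $I(c',a_j)$ is even for every $j$, and if $c$ is nonseparating with orientable complement (possible for even $g$) then $I(c',a_j)=1$ would force $N$ to be a union of two orientable pieces along a circle, hence orientable. So you would at minimum need to verify that the specific curves in Stukow's generating set avoid these types.

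The paper proceeds quite differently and avoids both problems by citing explicit generating sets rather than an abstract change of coordinates. For $b\le 1$ it invokes Omori \cite{Omori}, whose two extra curves already satisfy (1). For $b\ge 2$ it takes Stukow's generating set \cite{StukowOJM}, observes that every curve there which fails condition (1) lies on a subsurface homeomorphic to $N_{g,1}$ containing the chain $C$, and then applies Omori's result to that subsurface to replace those curves by Omori's two good curves. The point is that Omori's theorem, applied to the subsurface, \emph{already expresses} the bad twists as words in generators known to be good; this is exactly the missing ingredient that your conjugation step fails to supply.
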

\begin{proof}
    For $b\le 1$ the lemma follows from the main result of \cite{Omori}. In this case $X$ consists of two curves.

    Let $b\ge 2$. It is proved in \cite{StukowOJM} that  $\T(N,\partial{N})$ is generated by \[\{t_{a_i}\mid a_i\in C\}\cup\{t_c\mid c\in X'\}\cup\{t_d\mid d\in\bndy(N)\},\] where 
    $X'\subset\curv^+(N)$ is  finite, but  $X'$ contains curves which don't satisfy (1). However, all such curves lie on a subsurface homeomorphic to $N_{g,1}$, which also contains $C$, so they can be replaced by the two curves from \cite{Omori} to obtain a set $X$ satisfying (1) and (2). 
\end{proof}

\begin{proof}[Proof of Theorem \ref{thm:app}]
Let $N=N_{g,b}$, $g\ge 13$, $b\ge 0$. We have \[\T(N)=\forget{\partial N}(\T(N,\partial{N})),\]
so it suffices to prove the theorem for a homomorphism
    \[\varphi\colon\T(N,\partial N)\to\pmcg(N_{g',b'}),\] where $g'\le2\left\lfloor\frac{g-1}{2}\right\rfloor$. Let $C$ and $X$ be as in Lemma \ref{lem:twist_gens} and let
    \[\rho_C\colon\B_g\to \T(N,\partial N)\] be the homomorphism defined by $\rho_C(\sigma_i)=t_{a_i}$ for $a_i\in C$. Consider \[\varphi\circ\rho_C\colon\B_g\to \pmcg(N_{g',b'}).\] 
    Since $g'\le2\left\lfloor\frac{g-1}{2}\right\rfloor$, $\varphi\circ\rho$ is neither a transvection of an s.t.r nor a transvection of a c.t.r by Remark \ref{rem:existence}. 
    
    Suppose $g\ge 14$. Then $\varphi\circ\rho_C$ is cyclic  by Theorem \ref{thm:main_relmcg}. Let $A=\varphi(t_{a_i})$ for all $a_i\in C$. Let $c\in X$ be arbitrary and let $a_i,a_j$ be as in (1) of Lemma \ref{lem:twist_gens}. We have $t_ct_{a_i}=t_{a_i}t_c$ and $t_ct_{a_j}t_c=t_{a_j}t_ct_{a_j}$ for appropriate choice of $t_c$. It follows that $\varphi(t_c)$ commutes with $A$ and also satisfies the braid relation with $A$, so $\varphi(t_c)=A$. It follows that $\varphi(\T(N,\partial{N}))$ is generated by $A$ and $\varphi(t_d)$ for $d\in\bndy(N)$ and hence it is abelian.  But  $\T(N,\partial{N})$ is perfect for $g\ge 7$ \cite{StukowOJM}, so $\varphi$ is trivial. 

    If $g=13$ then we add one curve to $C$ to obtain a chain $C'$ of length $13$ (see Remark \ref{rem:g_chain}) 
    and we apply Theorem \ref{thm:main_relmcg} to \[\varphi\circ\rho_{C'}\colon\B_{14}\to \pmcg(N_{g',b'}).\]
    We have $g'\le 12$, so $\varphi\circ\rho_{C'}$ is neither a transvection of an s.t.r nor a transvection of a c.t.r by Remark \ref{rem:existence}, so it must be cyclic. It follows that $\varphi$ is trivial by the same argument as above.
\end{proof}



\begin{thebibliography}{99}
%
\bibitem{Artin} E. Artin, Braids and permutations, Ann. of Math. 48 (1947), no. 3, 643-649.
%
\bibitem{BirChil} J. S. Birman, D. R. J. Chillingworth, On the homeotopy group of a non-orientable
surface, Proc. Cambridge Philos. Soc. 71 (1972), 437-448.
%
\bibitem{BLM} J. S. Birman, A. Lubotzky, J. McCarthy, Abelian and solvabe subgroups of the mapping class groups, Duke Math. J. 50 (1983), 1107--1120. 
%
\bibitem{Castel} F. Castel, Geometric representations of the braid groups, Ast\'erisque No. 378 (2016), vi+175 pp.
%
\bibitem{CM} L. Chen, A. Mukherjea, From braid groups to mapping class groups, Math. Z. 303 (2023), Paper No. 27, 26 pp.
%
\bibitem{CX} N. Colin, M. A. Xicot\'encatl, The Nielsen realization problem for non-orientable surfaces,
Topology Appl. 253 (2024), 108957.
%
\bibitem{FM} B. Farb, D. Margalit, A Primer on Mapping Class Groups, PMS-49, Princeton University Press, 2012. 
%
\bibitem{GGM}  D. L. Gon\c calves,  J. Guaschi, M. Maldonado, Embeddings and the (virtual) cohomological dimension of the braid and mapping class groups of surfaces, Confluentes Math. 10 (2018),  41--61.
%
\bibitem{GorLin} E. Gorin, V. Lin, Algebraic equations with continues coefficients, and certain questions of the algebraic theory of braids, Math. Sb. (N.S.) 78 (1969), 579--610.
%
\bibitem{Kerck} S. P. Kerckhoff, The Nielsen realization problem, Ann. of Math. 117 (1983), 235--265.
%
\bibitem{KM} K. Kordek, D. Margalit, Homomorphisms of commutator subgroups of braid groups, Bull. London Math. Soc. 54 (2022), 95--111.
%
\bibitem{Kuno} E. Kuno, Abelian subgroups of the mapping class groups for non-orientable surfaces, Osaka J. Math. 56 (2019), 91--100. 
%
\bibitem{LSz} M. Le\'sniak, B. Szepietowski, Generating the mapping class group of a nonorientable surface by crosscap transpositions, Topology Appl. 229 (2017), 20--26.
%
\bibitem{Lin} V. Lin, Braids and Permutations, Preprint (2004), https://arxiv.org/abs/math/0404528.
%
\bibitem{Omori} G. Omori, A small generating set for the twist subgroup of the mapping class group of a non-orientable surface by Dehn twists, Hiroshima Math. J. 48 (2018), 81--88.
%
\bibitem{ParisWBLN} L. Paris, Mapping class groups of non-orientable surfaces for beginners, Winter Braids Lecture Notes 
Vol. 1 (2014), Course no III, p. 1--17.
%
\bibitem{PSz} L. Paris, B. Szepietowski, A presentation for the mapping class group of a nonorientable surface, Bull. Soc. Math. France 143 (2015), 503--566.
%
\bibitem{Rod} J. Rodr\'iguez, Abelian actions on compact nonorientable Riemann surfaces, Glasg. Math. J. 64 (2022),  634--648.
%
\bibitem{StukowFM} M. Stukow, Dehn twists on nonorientable surfaces, Fund. Math. 189 (2006), 117--147.
%
\bibitem{StukowOJM} M. Stukow, The twist subgroup of the mapping class group of a nonorientable surface, Osaka J. Math. 46 (2009), 717--738.
%
\bibitem{SzepPM} { B. Szepietowski}, Embedding the braid group in mapping class groups, Publ. Mat. 54 (2010), 359--368.
%
\bibitem{SzepAGT} { B. Szepietowski}, Low-dimensional linear representations of the mapping class group of a nonorientable surface, Algebr. Geom. Topol. 14 (2014), 2445--2474.
%
\bibitem{WuY} Y-Q. Wu, Canonical reducing curves of surface homeomorphism. Acta Math. Sinica (N.S.) 3
(1987), no. 4, 305-–313.
\end{thebibliography}
\end{document}